\tikzstyle cross=[preaction={draw=white, -, line width=6pt}]
\tikzstyle normal=[thick]
\newcommand{\midarrow}{\tikz \draw[->] (0,0) -- +(.1,0);}
                        \theoremstyle{plain}
\newtheorem{theorem}{Theorem}[section]
\newtheorem{thm}{Theorem}
\newtheorem{lemma}[theorem]{Lemma}
\newtheorem{coro}[theorem]{Corollary}
\newtheorem{prop}[theorem]{Proposition}
\newtheorem{defn}[theorem]{Definition}
\theoremstyle{definition}
\newtheorem{rmk}[theorem]{Remark}
\newtheorem{example}[theorem]{Example}
\newtheorem*{Not}{Notations}
\newtheorem*{recall}{Recalls}
\theoremstyle{definition}
\def\BC{\mathbb C}
\def\BN{\mathbb N}
\def\BZ{\mathbb Z}
\def\BR{\mathbb R}
\def\BQ{\mathbb Q}
\def\CA{\mathcal A}
\def\CB{\mathcal B}
\def\CF{\mathcal F}
\def\CH{\mathcal H}
\def\CL{\mathcal L}
\def\CR{\mathcal R}
\def\CU{\mathcal U}
\def\Id{\mathrm{Id}}
\def\be { \begin{equation} }
\def\ee { \end{equation} }
\def\bpm { \begin{pmatrix} }
\def\epm { \end{pmatrix} }
\newcommand{\slt}{{\mathfrak{sl}(2)}}
\newcommand{\Uq}{{U_q\slt}}
\newcommand{\UqhL}{{U^{\frac{L}{2}}_q\slt}}
\newcommand{\End}{\operatorname{End}}
\newcommand{\Hom}{\operatorname{Hom}}
\newcommand{\sign}{\operatorname{sign}}
\newcommand{\mt}{\operatorname{\mathtt{t}}}
\newcommand{\RR}{\operatorname{R}}
\newcommand{\perm}{\operatorname{perm}}
\newcommand{\tens}{\operatorname{tens}}
\newcommand{\Conf}{\operatorname{Conf}}
\newcommand{\Rhom}{\operatorname{R}^{\mathrm{hom}}}
\newcommand{\Bn}{\mathcal{B}_n}
\newcommand{\PBn}{\mathcal{PB}_n}
\newcommand{\Sk}{\mathfrak{S}}
\newcommand{\Homeo}{\operatorname{Homeo}}
\newcommand{\Mod}{\operatorname{Mod}}
\newcommand{\Ker}{\operatorname{Ker}}
\newcommand{\qbin}[2]{\left[\begin{array}{c}
      #1 \\
      #2 \end{array}\right]}
\newcommand{\bapp}{\left. \begin{array}{rcl}}
\newcommand{\eapp}{\end{array} \right.}
\newcommand{\bfct}{\left\lbrace \begin{array}{rcl}}
\newcommand{\efct}{\end{array} \right.}
\newcommand{\XR}{X^{\BR}}
\newcommand{\XRm}{X^{\BR,-}}
\newcommand{\Hlf}{\operatorname{H} ^{\mathrm{lf}}}
\newcommand{\Habs}{\operatorname{\CH}^{\mathrm{abs}}}
\newcommand{\Hrelm}{\operatorname{\CH}^{\mathrm{rel }-}}
\newcommand{\Crelm}{\operatorname{C}^{\mathrm{rel }-}}
\newcommand{\Laurent}{\CR}
\newcommand{\Laurentmax}{\Laurent_{\text{max}}}
\newcommand{\EZnr}{E^0_{n,r}}
\title{A homological model for $\Uq$ Verma-modules and their braid representations}
\author{Jules Martel}
\date{}
\begin{document}

\maketitle

\begin{abstract}
We extend Lawrence's representations of the braid groups to relative homology modules, and we show that they are free modules over a ring of Laurent polynomials. We define homological operators and we show that they actually provide a representation for an integral version for $\Uq$. We suggest an isomorphism between a given basis of homological modules and the standard basis of tensor products of Verma modules, and we show it to preserve the integral ring of coefficients, the action of $\Uq$, the braid group representation and its grading. This recovers an integral version for Kohno's theorem relating absolute Lawrence representations with quantum braid representation on highest weight vectors. It is an extension of the latter theorem as we get rid of generic conditions on parameters, and as we recover the entire product of Verma-modules as a braid group and a $\Uq$-module. 
\end{abstract}

\tableofcontents

\section{Introduction}
We give two definitions for the braid group on $n$ strands. 
\begin{defn}\label{Artinpres}
Let $n\in \BN$.
\begin{itemize}
\item The {\em braid group} on $n$ strands $\Bn$ is the group generated by $n-1$ elements satisfying the so called {\em ``braid relations"}:
$$\Bn = \left\langle \sigma_1,\ldots,\sigma_{n-1} \Big| \begin{array}{ll} \sigma_i \sigma_j = \sigma_j \sigma_i & \text{ if } |i-j| \ge 2 \\ 
\sigma_i \sigma_{i+1} \sigma_i = \sigma_{i+1} \sigma_i \sigma_{i+1} & \text{ for } i=1,\ldots, n-2 \end{array} \right\rangle$$
\item The braid group on $n$ strands is the mapping class group of the punctured disk $D_n$ (defined in Section \ref{configandhomology}).
\[
\Bn = \Mod(D_n).
\]
\end{itemize}
\end{defn}

These two definitions provide two different directions to build representations. Quantum representations and homological representations. In this work we relate both theories. Quantum representations are built from the category of modules on a given quantum group which are purely algebraic tools, so that their topological meaning is quite mysterious. We study the quantum group arising from the quantized deformation of $\slt$, namely $\Uq$, and the question of its topological content is then natural. The goal is to relate Verma modules for $\Uq$ to homological modules, in the sense that we want to preserve both the action of $\Uq$, that of the braid group, and an integral structure of coefficients.

%\begin{center}
%{\bf 
\subsection{Homological representations}
%\end{center} 

In \cite{Law}, R. Lawrence builds graded homological representations of braid groups relying on the fact that braids are associated with homeomorphisms of the punctured disk. Indeed, this generalizes to configuration spaces of $r$ points in the punctured disk denoted $X_r$ and defined in Definition \ref{configspaceofpoints}. This becomes a linear representation while lifted to homology, namely to modules denoted $\Habs_r$ and defined precisely in Definition \ref{defofH}. R. Lawrence builds a family of graded representations for the braid groups over $\Habs_r$ ($r \in \BN$ is the grading) with local coefficients in a ring of Laurent polynomials $\Laurentmax$ over the configuration space of points inside the punctured disk (\cite{Law}). She has developed this idea around 1990 in her thesis, by the time it was already for the purpose of finding topological information in the Jones polynomial, an invariant of knots defined out of quantum representations of braid groups. S. Bigelow used her ideas to recover a definition of the Jones polynomial from these homology modules in \cite{Big3}. This provides a formula for the Jones polynomial as a pairing between homology classes allowing it to be compared with homological invariants in \cite{DWa} and \cite{Man} for instance. 

%At the same time, the field of quantum topology arised and some links with the homological theory of Lawrence seem to appear. V. Drinfel'd and T. Kohno relate independently (\cite{Drin} \cite{K0}) quantum representations of braid groups with monodromy representations coming from Knizhnik -- Zamolodchikov (KZ) equation, involving a topological action of braids on configuration spaces of points. We will refer to this theorem as Drinfel'd -- Kohno's. In \cite{FW}, G. Felder and C. Wieczerkowski build an action of the quantum group $\Uq$ on some module generated by topological objects of the punctured disk - $r$-loops - together with a natural action of the braid groups which commutes with the quantum one. The homological interpretations of this module remain conjectures (\cite[Conjecture~6.1,~6.2]{FW}) as well as its links with Lawrence's theory. Finally, in \cite{SchVar}, V. Schechtman and A. Varchenko obtain representations of quantum groups on some local system homology on configuration spaces of points. 

Lawrence's representations notoriety comes from D. Krammer and S. Bigelow's works, showing their faithfulness at the second level of the grading (\cite{Kra}, \cite{Big1}), the one we refer to as the {\em BKL representation}. It is the first known finite dimensional and faithful linear representation of braid groups. In \cite{P-P}, L. Paoluzzi and L. Paris show that the BKL representation only recovers a sub-representation of the entire homological representation with coefficients in the ring of Laurent polynomials.
%To do the latter, he does not use quantum formalism but skein relations satisfied by the Jones polynomial. He follows the same strategy to give a homological definition of the HOMFLY polynomial in \cite{Big4}. 

%\begin{center}
%{\bf 
\subsection{Quantum representations}
%\end{center} 

On the quantum side, one can build braid representations over tensor products of {\em Verma modules} for $\Uq$. Namely, let $V$ be the Verma module of $\Uq$ (we won't put the parameter it depends on in notations, so as to simplify them), for $n \in \BN$, the module $V^{\otimes n}$ is endowed with a quantum action of the braid group $\Bn$. Let $r \in \BN$, $W_{n,r}$ be the subspace of $V^{\otimes n}$ generated by vectors of subweight $r$ and $Y_{n,r}$ be the one generated by highest weight vectors of $W_{n,r}$. Spaces $W_{n,r}$ and $Y_{n,r}$ are subrepresentations of $\Bn$, and $V^{\otimes n} = \bigoplus_{r \in \BN} W_{n,r}$. All these definitions are rigorously given in Section \ref{GoodVerma}. 

In \cite{JK}, C. Jackson and T. Kerler establish explicitly an isomorphism between the BKL representation $\Habs_2$ and that on {\em highest weight vectors} and {subweights $2$} denoted $Y_{n,2}$. In \cite{Koh}, T. Kohno shows Lawrence's representations are isomorphic to those from KZ monodromy restricted to highest weight vectors (Kohno's theorem, 2012), themselves previously shown to be isomorphic to the braid representations on highest weight vectors $Y_{n,r}$ in \cite{Drin} and \cite{K0}. This establishes a direct and deep relation between Lawrence's representations and $\Uq$ R-matrix that is summed up in \cite[Theorem~4.5]{Itogarside}. Homological and quantum representations depend on $(n+1)$ variables. One can treat them as parameters, or can take as a ground ring of coefficients Laurent polynomials in these variables denoted $\Laurentmax$ in this work (considering {\em integral versions} for quantum modules). Yet Kohno's isomorphism (between $\Bn$ representations $Y_{n,r}$ and $\Habs_r$) holds for a generic set of parameters (it is not a morphism on the ring of Laurent polynomials, but on $\BC$ when quantum parameters are evaluated at ``generic'' values) and does not recover the whole product of Verma modules, but only the braid group action over the $Y_{n,r}$ for $r\in \BN$. In \cite{FW}, G. Felder and C. Wieczerkowski build an action of the quantum group $\Uq$ on some module generated by topological objects of the punctured disk - {\em $r$-loops} - together with a natural action of the braid groups which commutes with the quantum one. The homological interpretations of this module remain conjectures (\cite[Conjecture~6.1,~6.2]{FW}) as well as its links with Lawrence's theory. Finally, in \cite{SchVar}, V. Schechtman and A. Varchenko obtain representations of quantum groups on some local system homology on configuration spaces of points. %In \cite{ITO1}, \cite{Ito2}, \cite{An}, the authors use Kohno's isomorphism to give homological interpretations for two important families of quantum knot polynomials built from $\Uq$ modules: {\em colored Jones polynomials} and {\em colored Alexander polynomials} (also known as {\em ADO invariants}, first introduced by Akutsu -- Deguchi -- Ohtsuki). 
We sum-up the brief history of Lawrence's representations in three results.

\begin{theorem}\label{existingtheorems}
\begin{itemize}
\item[(i)] For all $r\in \BN$, $\Habs_r$ is a representation of $\Bn$. (\cite{Law})
\item[(ii)] The representation $\Habs_2$ is faithful. (\cite{Big1,Kra}).
\item[(iii)] There exists an isomorphism of $\Bn$-representations between $\Habs_r$ and quantum module $Y_{n,r}$. (\cite{JK} case $r=2$, \cite{Kohmulti} for generic values of parameters $q,\alpha_k$). 
\end{itemize}
\end{theorem}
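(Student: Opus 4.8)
Since the statement collects three known results, the ``proof'' amounts to citing and assembling the arguments of the authors listed; I indicate the shape of each and flag where the real difficulty sits.

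\emph{For (i)}, I would use the second description $\Bn=\Mod(D_n)$. A homeomorphism of $D_n$ fixing $\partial D_n$ pointwise induces a homeomorphism of the configuration space $X_r$ of $r$ points in $D_n$ (Definition~\ref{configspaceofpoints}) that fixes a chosen basepoint near the boundary; since it acts trivially on the abelianization data cutting out the local system, it lifts canonically to the abelian cover underlying $\Habs_r$ (Definition~\ref{defofH}). Functoriality of (relative/Borel--Moore) homology with coefficients in $\Laurentmax$ then produces a linear automorphism of $\Habs_r$, isotopic homeomorphisms induce the same one, and compatibility with composition is again functoriality, so the assignment factors through $\Bn$. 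This is exactly what is carried out from scratch in Section~\ref{configandhomology}, so (i) is in fact subsumed (and extended) by the present paper.

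\emph{For (ii)}, fix $r=2$, so $\Habs_2$ is the BKL representation over $\BZ[q^{\pm1},t^{\pm1}]$, and suppose $\beta\in\Bn$ acts as the identity. In Bigelow's argument one pairs the classes of ``forks'' against those of ``noodles'': a delicate comparison of algebraic and geometric intersection numbers in the double cover shows that if $\beta$ fixes all fork classes then it fixes the isotopy class of every simple closed curve in $D_n$, whence $\beta=1$ by the Alexander method. Krammer's argument instead fixes an explicit matrix form of the representation and performs an inductive reduction along a normal form of $\beta$. Either way the crux is the nondegeneracy of an intersection pairing on $\Habs_2$; this is the main obstacle, and I expect no analogue of it to be available for general $r$ (faithfulness is genuinely special to level $2$).

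\emph{For (iii)}, the bridge between the two sides is the theory of hypergeometric (Schechtman--Varchenko) integral solutions of the Knizhnik--Zamolodchikov equations. On the quantum side, $Y_{n,r}$ is the space of highest weight vectors of subweight $r$ inside $V^{\otimes n}$ with the braid action induced by the $\Uq$ $R$-matrix; by the Drinfeld--Kohno theorem together with \cite{K0} this agrees with the monodromy of the KZ connection on the sub-bundle of highest weight vectors. Given a twisted cycle $\gamma$ in $X_r$ representing a class of $\Habs_r$, integrating over $\gamma$ the master-function-weighted differential form attached to $V^{\otimes n}$ yields a horizontal section of that bundle, hence a vector in $Y_{n,r}$; this defines a linear map $\Habs_r\to Y_{n,r}$. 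It is $\Bn$-equivariant because the monodromy of such an integral along a braid is computed by the topological action of the braid on $\gamma$, and it is an isomorphism once one knows that the period matrix between the $\Laurentmax$-module $\Habs_r$ and the corresponding twisted cohomology is nondegenerate: for $r=2$ this is the explicit computation of \cite{JK}, and for general $r$ with $q$ and the $\alpha_k$ generic it is Kohno's theorem \cite{Kohmulti}. The main obstacle is precisely this nondegeneracy — perfectness of the hypergeometric pairing — which is where genericity of the parameters is used; dispensing with it, and upgrading the isomorphism to the full product $V^{\otimes n}$ together with its $\Uq$-action over $\Laurentmax$, is the goal of the present paper.
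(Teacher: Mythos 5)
Your proposal is a correct account of how the three cited results are established in the literature, and since Theorem \ref{existingtheorems} is stated in the paper purely as a survey of prior work, there is no in-paper proof to measure it against; the useful comparison is with how the paper later re-derives parts (i) and (iii) by its own methods. For (i), your argument --- a homeomorphism of $D_n$ induces one of $X_r$ fixing the basepoint and commuting with $\rho_r$, hence lifts to the abelian cover and acts on homology functorially, with isotopy invariance giving a factorization through $\Bn$ --- is exactly the argument the paper runs in Lemma \ref{RepBn} for the relative modules $\Hrelm_r$, and the absolute case is identical. For (iii), you take the historical route through Schechtman--Varchenko hypergeometric solutions of KZ, Drinfeld--Kohno, and the nondegeneracy of the period pairing as the source of the genericity hypotheses. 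The paper instead recovers (iii) as Corollary \ref{kohnotheorem} by a purely homological argument: the long exact sequence of the pair $(X_r,X_r^-)$ degenerates to a short exact sequence $0\to\Habs_r\to\Hrelm_r\to H_{r-1}(X_r^-;L_r)\to 0$, identifying $\Habs_r$ with the kernel of the boundary map, i.e.\ with $\ker E$; combined with the isomorphism $\Hrelm_r\cong W_{n,r}$ of Theorem \ref{homoquantumbraided} this matches $\Habs_r$ with $Y_{n,r}=W_{n,r}\cap\Ker E$ over the integral ring $\Laurentmax$, and the generic conditions of Kohno's theorem are then isolated concretely as invertibility of the quantum factorials in the multifork/code-sequence change of basis (Proposition \ref{genericFork}). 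Your route explains where the statement historically comes from; the paper's route is what makes the integral refinement and the extension beyond highest weight vectors possible. Two small corrections to your sketch of (ii): Bigelow's pairing lives in the $\BZ^2$-cover of the two-point configuration space (deck group generated by $q$ and $t$), not a double cover; and ``faithfulness is genuinely special to level $2$'' overstates the situation --- faithfulness for $r>2$ is not known to fail, it is simply that $r=2$ is where the noodle--fork nondegeneracy argument has been carried out and already suffices for linearity.
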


%\begin{center}
\subsection{Results of the paper}
%\end{center} 

The present work extends Lawrence's representations via relative homology, it clarifies and generalizes their links with quantum representations of braid groups obtained on tensor products of $\Uq$ Verma's. Inspired by \cite{FW}, we extend Lawrence modules to relative homology modules denoted $\Hrelm_r$ and defined in Definition \ref{defofH}. We endow these complexes with a homological action of the quantum group $\UqhL$ (an integral version for $\Uq$ defined in Section \ref{halfLusztigversion}) via homological actions of its generators (defined in Section \ref{homologicalactionofUq}), that leads to the following result.

\begin{theorem}[Theorem \ref{homologicalHabiro}, Section \ref{sectionhomologicalhabiro}]
The module $\CH = \bigoplus_{r\in \BN}  \Hrelm_r$ over the ring of Laurent polynomials $\Laurentmax$ is a representation of $\UqhL$. 
\end{theorem}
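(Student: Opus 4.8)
The plan is to read off from Section~\ref{halfLusztigversion} a presentation of $\UqhL$ by generators $E,\ K^{\pm1}$ and divided powers $F^{(k)}$ ($k\ge 0$), subject to the two $K$-conjugation relations, the divided-power multiplication law $F^{(k)}F^{(l)}=\qbin{k+l}{k}F^{(k+l)}$, and the integral commutation relation $EF^{(k)}-F^{(k)}E=F^{(k-1)}\bigl(\tfrac{q^{-(k-1)}K-q^{k-1}K^{-1}}{q-q^{-1}}\bigr)$, and then to check that the homological operators of Section~\ref{homologicalactionofUq} on $\CH=\bigoplus_{r\in\BN}\Hrelm_r$ satisfy each of these identities. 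Because those operators are induced by inclusions of configuration spaces and by maps that are already defined integrally, twisted only by the $\Laurentmax$-local system, they are automatically $\Laurentmax$-linear; so there is nothing to prove about the ring of coefficients and the whole content lies in the algebraic relations.

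The $K$-relations are a grading bookkeeping: $K$ acts on $\Hrelm_r$ by a single monomial of $\Laurentmax$ depending only on $r$ (and the fixed multiweight), while $E$ shifts the grading by $-1$ and $F^{(k)}$ by $+k$, so $KEK^{-1}=q^2E$ and $KF^{(k)}K^{-1}=q^{-2k}F^{(k)}$ reduce to comparing the $K$-scalars on $\Hrelm_r$ and $\Hrelm_{r\mp 1}$. The divided-power law should follow from the construction of $F^{(k)}$ as the operator attached to the insertion of a whole $k$-point configuration near $\partial D_n$: iterating the $k$-point and the $l$-point insertion is the $(k+l)$-point insertion, the $q$-binomial being exactly the contribution of the local system to the homology of the configuration space of the inserted points. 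In particular $F^{(k)}$ is visibly defined over $\Laurentmax$, so here $F^{(k)}F^{(l)}=\qbin{k+l}{k}F^{(k+l)}$ is a theorem and not a normalisation.

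The real work is the commutation relation, and the key device is that both $E$ and $F^{(k)}$ are supported in a fixed collar $c$ of $\partial D_n$. Using excision and a relative Mayer--Vietoris decomposition of the configuration space along $c$, the composites $EF^{(k)}$ and $F^{(k)}E$ are determined by the effect of the $E$- and $F$-maps on the homology of a small model subsurface of $D_n$ near $c$, carrying the $k$ (resp.\ $k-1$) inserted points and the restricted local system. I would write each composite as an explicit $\Laurentmax$-linear endomorphism of that finite-rank model homology — this is where the winding-number powers of $q$ and the weight parameter attached to $c$ enter — and verify that their difference is the $(k-1)$-point insertion postcomposed with $\tfrac{q^{-(k-1)}K-q^{k-1}K^{-1}}{q-q^{-1}}$, which on each weight space lies in $\Laurentmax$, so that the identity is meaningful integrally. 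An induction on $k$, with base case $k=1$ and inductive step the divided-power law just proved, reduces everything to the single model identity $EF-FE=\tfrac{K-K^{-1}}{q-q^{-1}}$.

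I expect this base computation to be the main obstacle. One must fix explicit relative cycles spanning the model homology near the collar, track them through the excision isomorphism and through the geometric $E$- and $F$-maps, and keep control of the relative boundary terms. It is precisely here that passing to the \emph{relative} modules $\Hrelm_r$, rather than Lawrence's absolute $\Habs_r$, is essential: relativisation is what makes $E$ well defined, and at the same time it is what makes the correction terms that would otherwise obstruct $EF-FE=\tfrac{K-K^{-1}}{q-q^{-1}}$ vanish. Establishing that cancellation is the crux; once the $k=1$ identity holds, the remaining relations follow formally.
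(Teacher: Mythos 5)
Your overall architecture is the paper's: verify the relations of the presentation of $\UqhL$ one by one, with the $K$-relations reduced to grading bookkeeping, the divided-power identities reduced (by induction and torsion-freeness) to the case $k=1$, and the commutator $[E,F^{(1)}]$ as the crux, relativization to $\Hrelm_r$ being exactly what makes $E$ well defined and what produces the $K-K^{-1}$ term. One difference of technique: for the crux the paper does not use excision or Mayer--Vietoris along a collar. It takes $E$ literally as the connecting homomorphism $\partial_*$ of the pair $(X_r,X_r^-)$, applies it to the explicit chain $F^{(1)}\cdot U_{\bf k}$ in the code-sequence basis, and splits the relative boundary into the face coming from the original arcs (which is $F^{(1)}\cdot E\cdot U_{\bf k}$) plus the two faces where the endpoints of the added loop reach $w_0$; the handle rule evaluates these two contributions as $\mt^{r}$ and $\mt^{-r}q^{-2\sum_i\alpha_i}$, whence $[E,F']=\mt^{r}-\mt^{-r}q^{-2\sum_i\alpha_i}=K-K^{-1}$ after normalization (Proposition \ref{relationeFK}). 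Note that this computation is \emph{not} local to a collar of $\partial D_n$: the coefficients $\mt^{\pm r}$ record the monodromy of the added point around all $r$ points of the existing configuration, which is precisely how the $r$-dependence of $K$ enters. A model subsurface near the boundary would have to carry that global winding datum, so your Mayer--Vietoris setup, while probably salvageable, buys nothing over the direct boundary computation.

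There is also a concrete error in your stated presentation. You write the commutation relation as $[E,F^{(k)}]=F^{(k-1)}\bigl(\frac{q^{-(k-1)}K-q^{k-1}K^{-1}}{q-q^{-1}}\bigr)$ and assert that the right-hand side lies in $\Laurentmax$ on each weight space. That is false in this setting: $K$ acts on $\Hrelm_r$ by $q^{\sum_i\alpha_i}\mt^{r}$ with the $\alpha_i$ formal, so dividing by $q-q^{-1}$ leaves the ring. The divided powers here are normalized as $F^{(n)}=\frac{(q-q^{-1})^n}{[n]_q!}F^n$ precisely so that the relation reads $[E,F^{(n+1)}]=F^{(n)}\left(q^{-n}K-q^{n}K^{-1}\right)$ with no denominator (Remark \ref{relationsUqhL}), and the base case is $[E,F^{(1)}]=K-K^{-1}$, not $(K-K^{-1})/(q-q^{-1})$. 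With that correction your reduction to $k=1$ goes through exactly as in the paper: $(F^{(1)})^k=[k]_q!\,F^{(k)}$ (Proposition \ref{dividedpower}) together with the freeness of the $\Laurentmax$-modules $\Hrelm_r$ lets you divide by $[k]_q!$ and deduce every divided-power relation formally from the three relations among $E$, $F^{(1)}$ and $K^{\pm1}$.
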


In Proposition \ref{HrelTrick}, we show that modules $\Hrelm_r$ are free modules on the ring of Laurent polynomials $\Laurentmax$, and that a basis (said ``integral'') is given by the family of  {\em multi-arcs}, see Corollary \ref{Arcsbasis}. This helps us recognizing this $\UqhL$ representation as a tensor product of Verma modules, what we sum-up in the following statement. 

\begin{theorem}[Theorem \ref{monoidality}, Section \ref{sectionmonoidality}]
For all $n \in \BN$, there exists a morphism of $\UqhL$-modules :
\[
V^{\otimes n} \to \CH
\] 
such that the standard integral basis of $V^{\otimes n}$ is sent to the {\em multi-arcs} basis. The integer $n$ corresponds to the number of punctures of the disk $D_n$ used to define the configuration space $X_r$.
\end{theorem}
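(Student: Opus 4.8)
## Proof Proposal

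The plan is to build the map $V^{\otimes n} \to \CH$ explicitly on the standard integral basis of $V^{\otimes n}$ and then verify it is a morphism of $\UqhL$-modules by checking compatibility with the four homological generators. Recall that $V^{\otimes n} = \bigoplus_{r \in \BN} W_{n,r}$, so it suffices to define a map $W_{n,r} \to \Hrelm_r$ for each $r$ and take the direct sum. The standard integral basis of $W_{n,r}$ is indexed by tuples $(r_1,\ldots,r_n)$ with $\sum r_i = r$ (the $r_i$ recording how many lowering operators $F$ act at each tensor slot); I would send such a basis vector to the multi-arc in $X_r$ consisting of, for each puncture $i$, a bunch of $r_i$ parallel arcs connecting puncture $i$ to the boundary of $D_n$, suitably ordered and normalized. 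By Proposition~\ref{HrelTrick} and Corollary~\ref{Arcsbasis} these multi-arcs form a basis of $\Hrelm_r$ over $\Laurentmax$, so the map defined this way is automatically a well-defined isomorphism of $\Laurentmax$-modules in each grading $r$; the content to be proved is that it intertwines the $\UqhL$-actions.

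The second and central step is the verification that this basis bijection is $\UqhL$-equivariant. Since $\CH$ is a $\UqhL$-representation by Theorem~\ref{homologicalHabiro}, and $V^{\otimes n}$ carries the standard (iterated coproduct) $\UqhL$-action, it is enough to check the identity $\vp(x \cdot b) = x \cdot \vp(b)$ for $x$ ranging over the generators $E$, $F$, $K^{\pm 1}$ (and the divided powers / extra generators of the half-Lusztig integral form) and $b$ ranging over the multi-arc basis. The action of $K$ is diagonal and matches the grading/weight bookkeeping on both sides, so that case is immediate. For $F$: on the quantum side $F$ acts on $V^{\otimes n}$ by the coproduct formula $\Delta^{(n-1)}(F) = \sum_i K^{\otimes(i-1)} \otimes F \otimes 1^{\otimes(n-i)}$, which on the standard basis adds one to some $r_i$ with an explicit $q$-power and quantum-integer coefficient; on the homological side, the operator realizing $F$ (from Section~\ref{homologicalactionofUq}) should geometrically append one more arc from the corresponding puncture, and one must match the Laurent-polynomial coefficients produced by the homological boundary/gluing formula against the quantum-integer coefficients. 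The case of $E$ is the more delicate one, since $E$ lowers the grading $r$ and the homological operator for $E$ is defined by a more involved geometric procedure (capping off or pushing an arc into the boundary puncture).

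Concretely, I would reduce the equivariance check to the smallest cases by a standard tensor/induction argument: the coproduct is coassociative, so once equivariance is known for $n=1$ (where $V$ itself is the relevant homological module, essentially by construction of the homological $\UqhL$-action on the base case) and for the way the homological modules for $D_n$ decompose relative to $D_{n-1}$ together with one extra puncture, the general $n$ follows by compatibility of the multi-arc bases with this decomposition — i.e. a multi-arc on $D_n$ is a multi-arc on $D_{n-1}$ together with $r_n$ arcs at the last puncture, matching the splitting $V^{\otimes n} = V^{\otimes(n-1)} \otimes V$. This is precisely where the "monoidality" in the theorem's label enters: the homological construction is (lax) monoidal with respect to stacking punctured disks, and the multi-arc basis is the monoidal product of multi-arc bases. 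I would isolate this compatibility as the key lemma and prove it by directly comparing the gluing map on configuration spaces $X_r(D_{n-1}) \times X_{r'}(\text{one puncture}) \to X_{r+r'}(D_n)$ with the quantum coproduct.

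The main obstacle I expect is the bookkeeping of local coefficients: matching the Laurent monomials in $q$ and the $\alpha_k$ that the homological operators attach (coming from the local system on $X_r$, i.e. from linking numbers of the appended arc with the existing configuration and with the punctures) against the precise $q$-powers and $\{\alpha_k\}$-dependent coefficients appearing in the standard action of $\UqhL$ on $V^{\otimes n}$ via the $R$-matrix coproduct. Getting the normalization of each multi-arc class right (orientations, basepaths, and the choice of lift to the covering where the local system lives) so that these coefficients agree on the nose — rather than up to an overall unit — is the technical heart of the argument, and it is what forces working with the integral form $\UqhL$ and the ring $\Laurentmax$ rather than with generic evaluations. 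Once the $n=1$ normalization and the gluing-versus-coproduct lemma are pinned down, the rest is a finite check on generators propagated by coassociativity.
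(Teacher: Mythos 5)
Your overall skeleton matches the paper's: define the map on the basis indexed by $\EZnr$ (partitions $(k_0,\dots,k_{n-1})$ with $\sum k_i=r$), use Proposition \ref{HrelTrick} and Corollary \ref{Arcsbasis} to get a module isomorphism in each weight, anchor everything at the $n=1$ identification of $\CH^{\alpha_1}$ with the Verma module $V^{\alpha_1}$, and then verify equivariance generator by generator. You also correctly locate the difficulty in the coefficient bookkeeping for $E$ and $F^{(1)}$ and in the normalization of the multi-arc lifts (this is exactly why the paper introduces the normalized family $\CA$ rather than $\CA'$).

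Where your proposal diverges — and where I think it has a genuine gap — is the mechanism for general $n$. You propose to reduce to $n=1$ by induction via a ``gluing-versus-coproduct'' lemma, comparing a space-level map $X_r(D_{n-1})\times X_{r'}(D_1)\to X_{r+r'}(D_n)$ with $\Delta$. No such space-level map does the job: configurations in $D_n$ do not split into configurations supported near separate punctures, and the homological operators are global — $F^{(1)}$ adds a dashed arc encircling \emph{all} punctures, and $E$ reads the boundary at the shared basepoint $w_0$ — so neither operator restricts to a product decomposition of $X_r$. The paper is explicit that the homological interpretation of the coproduct via gluing of once-punctured disks is only a \emph{suggestion} left open in the remark following Theorem \ref{monoidality}; it is a consequence one would like to extract from the theorem, not an ingredient available to prove it. What the paper actually does is compute, for general $n$ at once, the closed formulas for $E\cdot A(\mathbf{k})$ and $F^{(1)}\cdot A(\mathbf{k})$ in the multi-arc basis (Propositions \ref{Eonarcs} and \ref{Fonarcs}, obtained from the handle rule, arc-breaking, Lemma \ref{plaintodashed}, and the recursion of Lemma \ref{recursionSi} — which is itself an induction over the puncture index, but at the level of homology classes, not of spaces), and then matches these sums term by term with $\Delta^n(E)$ and $\Delta^n(F^{(1)})$ applied to $A(k_0)\otimes\cdots\otimes A(k_{n-1})$. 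Your proposal defers precisely this computation to the unproven gluing lemma, so as written it is incomplete: you would need either to carry out the explicit computation of the homological action on multi-arcs (as the paper does), or to give an actual construction of the monoidal structure on the homological side, which is strictly harder than the theorem itself. A smaller point: since $\UqhL$ is generated over $\Laurent_0$ by the divided powers $F^{(k)}$ and not just by $F^{(1)}$ (the relation $(F^{(1)})^k=[k]_q!\,F^{(k)}$ does not determine $F^{(k)}$ integrally), a fully integral verification should also address the $F^{(k)}$, e.g.\ via Proposition \ref{Fln=1} and Corollary \ref{dashedtodashed}.
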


Finally, we extend the natural Lawrence action of braid groups over these homological modules, and we show that it is the R-matrix representation obtained using $\UqhL$ Verma modules.

\begin{theorem}[Theorem \ref{homoquantumbraided}, Section \ref{sectionbraidhabiro}] \label{thmtressesintroeng}
For all $n \in \BN$ and all $r \in \BN$, the morphism :
\[
W_{n,r} \to \Hrelm_r
\]
induced by the previous theorem is an isomorphism of $\Bn$ - representations, so much that the morphism:
\[
V^{\otimes n} \to \CH = \bigoplus_{r\in \BN} \Hrelm_r
\]
from previous theorem is a morphism of $\UqhL$-modules {\em and} of $\Bn$-modules.
\end{theorem}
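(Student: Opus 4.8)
The plan is to leverage the two theorems already established — the $\UqhL$-module morphism $V^{\otimes n}\to\CH$ of Theorem \ref{monoidality} and the fact that $W_{n,r}$ and $\Hrelm_r$ are built from the same combinatorial data (the standard integral basis of subweight $r$ on one side, the multi-arcs basis on the other) — and to upgrade the existing morphism to a $\Bn$-equivariant one degree by degree in the grading $r$. The first step is to record that the morphism of Theorem \ref{monoidality} is graded: the standard integral basis vector of $V^{\otimes n}$ of subweight $r$ lands in the multi-arcs basis of $\Hrelm_r$, so the morphism restricts to $W_{n,r}\to\Hrelm_r$ for each $r$. By Proposition \ref{HrelTrick} and Corollary \ref{Arcsbasis}, $\Hrelm_r$ is $\Laurentmax$-free on the multi-arcs, and $W_{n,r}$ is $\Laurentmax$-free on the standard integral basis vectors of subweight $r$; since the morphism matches these two bases bijectively, it is an isomorphism of $\Laurentmax$-modules. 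So the only thing left is the compatibility with the two braid group actions: the quantum R-matrix action of $\Bn$ on $W_{n,r}\subset V^{\otimes n}$, and the (extended) Lawrence action of $\Bn=\Mod(D_n)$ on $\Hrelm_r$.

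For the braid-equivariance, I would argue on generators. It suffices to check that for each Artin generator $\sigma_i$ the square commutes; by Definition \ref{Artinpres} the braid relations are then automatically satisfied on both sides, so no further coherence check is needed. On the homological side $\sigma_i$ acts as the mapping class of a half-twist swapping the $i$-th and $(i+1)$-st punctures of $D_n$, and its effect on a multi-arc is computed by an explicit local picture — the arc is dragged through the half-twist, producing a $\Laurentmax$-linear combination of multi-arcs with coefficients recording the monodromy of the local system (powers of the $q$ and $\alpha_k$ variables). On the quantum side $\sigma_i$ acts by $\check R$ on the $i,i{+}1$ tensor factors, i.e. by $\tau\circ R$ where $R$ is the universal R-matrix of $\UqhL$ restricted to $V\otimes V$; its matrix in the standard integral basis is likewise explicit. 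The heart of the proof is the comparison of these two local formulas: one shows that the half-twist acting on the pair of arcs issuing from punctures $i,i{+}1$ produces exactly the same structure constants as $\check R$ acting on the corresponding pair of standard basis vectors of $V\otimes V$. This is a $\slt$-level, rank-one-in-the-braiding computation that can be isolated in the punctured disk $D_2$ and then transported to $D_n$ by locality (arcs not meeting the twisting region are untouched, matching the fact that $\check R$ acts as the identity away from the $i,i{+}1$ factors). One must be careful about the relative part of the homology — the boundary conditions defining $\Hrelm_r$ — and check that the half-twist respects them, but this is built into the definition of the Lawrence action being extended.

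The main obstacle I expect is precisely this local braiding comparison carried out \emph{integrally}, i.e. over $\Laurentmax$ rather than after specializing $q,\alpha_k$ to generic complex numbers. In Kohno's theorem the identification of structure constants is done with the parameters generic, where one has recourse to semisimplicity of $V\otimes V$ and to the uniqueness of intertwiners; here $V\otimes V$ need not decompose and the relevant Hom-spaces can jump, so the matching of coefficients must be proven as a polynomial identity in $q^{\pm1},\alpha_k^{\pm1}$ and not merely on a Zariski-dense set. The strategy to get around this is the usual one: prove the coefficient identity first over the fraction field $\LaurentFrac$ (where it reduces to the known generic statement, via Theorem \ref{existingtheorems}(iii) applied to the highest-weight subquotient together with the already-established $\UqhL$-linearity, which pins down the action on all of $W_{n,r}$ once it is known on $Y_{n,r}$), observe that both sides of the identity are a priori Laurent polynomials in the $\Laurentmax$-bases just constructed, and conclude that an equality of Laurent polynomials that holds over the fraction field holds over $\Laurentmax$. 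Once $\sigma_i$-equivariance is established for all $i$ and all $r$, the isomorphism $W_{n,r}\to\Hrelm_r$ is an isomorphism of $\Bn$-representations, and assembling over $r$ and combining with Theorem \ref{monoidality} gives that $V^{\otimes n}\to\CH=\bigoplus_{r\in\BN}\Hrelm_r$ is simultaneously a morphism of $\UqhL$-modules and of $\Bn$-modules, as claimed.
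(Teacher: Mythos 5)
Your skeleton matches the paper's: the map of Theorem \ref{monoidality} is graded, matches the standard integral basis of $W_{n,r}$ with the multi-arc basis of $\Hrelm_r$ (hence is an $\Laurentmax$-isomorphism by Corollary \ref{Arcsbasis}), and braid-equivariance is checked on Artin generators by a local computation in a twice-punctured disk, transported to $D_n$ by locality. This is exactly how the paper proceeds (Lemma \ref{braidedn=2} followed by the locality argument in the proof of Theorem \ref{homoquantumbraided}). The divergence is in how you handle what you correctly identify as the heart of the matter, and there your argument has a real gap.

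You propose to establish the coefficient identity first over the fraction field by invoking Theorem \ref{existingtheorems}(iii) on the highest-weight part and then bootstrapping to all of $W_{n,r}$ via $\UqhL$-linearity. This does not close the step. Kohno's theorem asserts the \emph{existence} of some isomorphism of $\Bn$-representations $Y_{n,r}\simeq \Habs_r$ for generic parameters; it says nothing about whether the \emph{specific} map constructed in Theorem \ref{monoidality} intertwines the two actions, and you cannot invoke Schur-type uniqueness of the intertwiner without already knowing your map is one. The bootstrap from $Y_{n,r}$ to $W_{n,r}$ additionally presupposes that the homological braid action commutes with the homological $E,F^{(k)},K$ — a fact the paper only obtains as a \emph{consequence} of the theorem you are trying to prove, not as an independent input. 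Finally, the integrality worry that motivates this detour is misplaced: the direct computation of the half-twist on a multi-arc (break the dashed arc at the intermediate puncture as in Example \ref{breakingdashed}, apply the handle rule to the resulting braid of red handles, then fuse the dashed loop onto the dashed arc via Proposition \ref{Fln=1}) is carried out entirely over $\Laurentmax$ and produces the coefficients $q^{2(k'+l)(k-l)+(-k+2l)\alpha_1-(k'+l)\alpha_2}{{k'+l}\choose{l}}_{\mt}\prod_m(1-q^{-2\alpha_1}\mt^{-m})$ on the nose, which one then recognizes as the matrix of $q^{-\alpha_1\alpha_2/2}\,q^{H\otimes H/2}\circ\sum_l q^{l(l-1)/2}E^l\otimes F^{(l)}\circ T$. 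No genericity or specialization argument is needed, and without that explicit computation (or an equivalent one) your proof of equivariance is incomplete.
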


We provide an integral basis for homology (i.e. basis as module on an integral ring of Laurent polynomials). The $\UqhL$-action and the $\Bn$-action preserve this structure, so does the isomorphism to the tensor product of Verma modules. This is an improvement regarding previous models, and is hopeful for topological quantum invariants built from these braid representations that needs parameters to be evaluated. For instance, $q$ being a root of unity is required to study several quantum invariants, and was not recovered by generic conditions of Kohno's Theorem (Theorem \ref{existingtheorems} (iii)). 

We show that the long exact sequence of relative homology becomes, in this model, a short one:
\[
0 \to \Habs_r \to \Hrelm_r \to H_{r-1}(X_r^-) \to 0,
\]
(where $X_r^-$ is defined in Definition \ref{defofH}) so that $\Hrelm_r$ extend Lawrence's representations. This work allows then an extension of Kohno's theorem beyond highest weight vectors, and to recover homologically the entire tensor product of $\Uq$ Verma modules. Lawrence's representations are sub-representations of it so that Kohno's theorem is a corollary of this work. Generic hypotheses are clarified and become algebraic thanks to the fact that all isomorphisms preserve the integral structure of coefficients, and the links between an integral basis (multi-arcs) and the multifork basis from Kohno's theorem are explicit. All of this is summed-up in Corollary \ref{kohnotheorem} and Proposition \ref{genericFork} in Section \ref{integralKohno}.

The obtained homological representations are a generalization of Lawrence's representations so they are generically faithful. They allow a homological recovering of several properties of the category of $\Uq$-modules.

We illustrate the weight structure of tensor product of Verma modules in the following diagram, at level $r$ of the grading:
\begin{center}
\begin{tikzpicture}
\tikzset{node distance=1.3cm, auto}

\node (A1) {$\cdots$ };
\node (A2) [right of=A1,,xshift=5cm] {$\cdots$};
%\node (A3) [right of=A2,,xshift=2cm] {$1$};
%\node (A4) [right of=A3,,xshift=2cm] { };
%\node (A5) [right of=A4,,xshift=2cm] { };
\node (B1) [below of=A1] {$W_{n,r}$ };
\node (B2) [right of=B1,,xshift=5cm] {$\Hrelm_r$};
%\node (B3) [right of=B2,,xshift=2cm] {$\BZ$};
%\node (B4) [right of=B3,,xshift=2cm] { };
%\node (B5) [right of=B4,,xshift=2cm] { };
\node (C1) [below of=B1] {$W_{n,r+1}$};
\node (C2) [right of=C1,,xshift=5cm] {$\Hrelm_{r+1}$};
%\node (C3) [right of=C2,,xshift=2cm] {$\Mod(S)$};
%\node (C4) [right of=C3,,xshift=2cm] {$\Mod(S')$};
%\node (C5) [right of=C4,,xshift=2cm] {$1$};
\node (D1) [below of=C1] {$\cdots$};
\node (D2) [right of=D1,,xshift=5cm] {$\cdots$};
%\node (D3) [right of=D2,,xshift=2cm] {$\Mod(S,p)$};
%\node (D4) [right of=D3,,xshift=2cm] {$\Mod(S')$};
%\node (D5) [right of=D4,,xshift=2cm] {$1$};
%\node (E1) [below of=D1] { };
%\node (E2) [right of=E1,,xshift=2cm] {$1$};
%\node (E3) [right of=E2,,xshift=2cm] {$1$};
%\node (E4) [right of=E3,,xshift=2cm] { };
%\node (E5) [right of=E4,,xshift=2cm] { };
%\node (D) [below of=A] {$1\otimes V_{n-1}$};
\draw[<->] (B1) to node {  } (B2);
\draw[<->] (C1) to node {  } (C2);
%\draw[<->] (A1) to node {  } (A2);
%\draw[<->] (D1) to node {  } (D2);
\draw[->] (A1) to[bend left=40] node[midway,right] {\small $F$} (B1);
%\draw[->] (A1) to[bend left=70] node[midway,right] {\small $K$} (A1);
\draw[->] (B1) to[bend left=30] node[midway,left] {\small $E$} (A1);
\draw[->] (B1) to[bend left=30] node[midway,right] {\small $F$} (C1);
\draw[->] (C1) to[bend left=30] node[midway,left] {\small $E$} (B1);
\draw[->] (C1) to[bend left=30] node[midway,right] {\small $F$} (D1);
\draw[->] (D1) to[bend left=30] node[midway,left] {\small $E$} (C1);
\draw[->] (A2) to[bend left=40] node[midway,right] {\small $F$} (B2);
\draw[->] (B2) to[bend left=30] node[midway,left] {\small $E$} (A2);
\draw[->] (B2) to[bend left=30] node[midway,right] {\small $F$} (C2);
\draw[->] (C2) to[bend left=30] node[midway,left] {\small $E$} (B2);
\draw[->] (C2) to[bend left=30] node[midway,right] {\small $F$} (D2);
\draw[->] (D2) to[bend left=30] node[midway,left] {\small $E$} (C2);
%\draw[->] (D2) to node {  } (E2);
%\draw[->] (A3) to node {  } (B3);
%\draw[->] (B3) to node {  } (C3);
%\draw[->] (C3) to node {  } (D3);
%\draw[->] (D3) to node {  } (E3);
%
%\draw[->] (C1) to node { } (C2);
%\draw[->] (D1) to node { } (D2);
%\draw[->] (C2) to node { } (C3);
%\draw[->] (D2) to node { } (D3);
%\draw[->] (C4) to node { } (C5);
%\draw[->] (D4) to node { } (D5);
%\draw[->] (C3) to node { } (C4);
%\draw[->] (D3) to node { } (D4);
%\draw[->] (B2) to node { $\simeq$ } (B3);
%\draw[->] (C4) to node { $\simeq$ } (D4);
%%\draw[->] (B2) to node { $\simeq$ } (B3);
%%\draw[->] (B2) to node { $\simeq$ } (B3);
%%\draw[->] (B2) to node { $\simeq$ } (B3);
%%\draw[->] (C) to node {$\iota_{n,1}$} (B);
%%\draw[->] (D) to node {$\iota_{n,1}$} (A);
%%\draw[->] (D) to node {$\Gassner_{n-1}(\beta')$} (C);
\end{tikzpicture} 
\end{center}
Horizontal arrows correspond to isomorphisms of braid representations from Theorem \ref{thmtressesintroeng}, while vertical arrows correspond to $\Uq$ generators action $E,F$: the quantum ones on the left side and the homological ones (homological definitions are given in this work) on the right side, that rules the weight structure on Verma modules. The direct sum of all spaces aligned vertically on the left gives the tensor product of Verma modules $V^{\otimes n}$ , while the one of all spaces aligned on the right corresponds to the homological module $\CH$. The homological interpretation of $\Uq$ generators follows, together with the ones of relations they satisfy and the R-matrix built using these generators. 

%\begin{center}
\subsection{Plan of the paper}
%\end{center} 

%
%The plan of the paper is the following. 
In Section \ref{configandhomology} we define topological spaces and homology modules used to build homological representations. In Section \ref{structureofhomo} we give examples of homology classes in $\Hrelm_r$, representing them by multi-arcs diagrams, then we study the structure of the homology complexes of interest. Namely, we prove the crucial Proposition \ref{HrelTrick}, stating that modules $\Hrelm_r$ are free over the ring of Laurent polynomials $\Laurentmax$, and that it is the only non vanishing module of the entire homology complex. In Section \ref{computationrules} we state all the rules we need to do computation in $\Hrelm_r$, and we use them to show that the family of multi-arcs is a basis of $\Hrelm_r$ as a module over $\Laurentmax$ in Proposition \ref{Arcsbasis}. In Section \ref{quantumalgebra} we recall definitions and notations for quantum algebra. Namely we define an integral version (i.e. as a free $\Laurentmax$-module) of $\Uq$ denoted $\UqhL$, and its version for Verma modules. We then present the braid representations defined over tensor product of Verma modules, and how to get finite dimensional representation out of them in Remark \ref{GoodsubrepKJ}. Finally main results of this paper can be found in Section \ref{homologicalmodel}. In Subsection \ref{homologicalactionofUq} we define homological operators corresponding to generators of $\UqhL$ and we prove Theorem \ref{homologicalHabiro} stating that it provides a representation of $\UqhL$. In Subsection \ref{computationUqaction} we compute the homological action of $\UqhL$ in the multi arcs basis and we prove Theorem \ref{monoidality} saying that this homological representation is isomorphic to a tensor product of Verma modules. In Subsection \ref{homologicalbraid} we recall how to build a homological action of braid groups over homological modules. Then we prove Theorem \ref{homoquantumbraided} saying that the isomorphism of $\UqhL$-modules relating homological modules with Verma modules is also an isomorphism of $\Bn$-representations and that it preserves their grading. In Section \ref{integralKohno} we show that Theorem \ref{homoquantumbraided} recovers Kohno's theorem (Theorem \ref{existingtheorems}, (iii)) in an integral version, and exhibits previous generic conditions on parameters required. In Section \ref{FWconjectures} we give partially positive answers to Conjecture 6.2 of \cite{FW}. Section \ref{appendix} is an Appendix recalling some definitions of homology theories we use, namely the locally finite (Borel-Moore) version of singular homology and the local ring of coefficients set-up. 

{\bf Acknowledgment} This work was achieved during the PhD of the author that was held in the {\em Institut de Mathématiques de Toulouse}, in {\em Université Paul Sabatier, Toulouse 3}. The author thanks very much his advisor Francesco Costantino for asking this problem, and is very grateful for all discussions, fruitful remarks and help that led to this paper. The author is also very grateful to anonymous referees for their relevant and important remarks and corrections. The author thanks L.-H. Robert and E. Wagner for useful comments. 

\section{Configuration space and homology}\label{configandhomology}

%\subsection{Configuration space and local system}
%Using notations from Section \ref{generalframework}, we define a special topological space out of the configuration space setting. 

%\begin{Not}
\begin{defn}\label{configspaceofpoints}
Let $r\in \BN$, $n \in \BN$, $D$ be the unit disk, and $\left\lbrace w_1 , \ldots , w_n \rbrace\right. \in D^n$ points lying on the real line in the interior of $D$. Let $D_n = D \setminus \left\lbrace w_1 , \ldots , w_n \rbrace\right.$ be the unit disk with $n$ punctures. Let:
\[
\Conf_r(D_n):= \left\lbrace (z_1 , \ldots , z_r ) \in (D_n)^r \text{ s.t. } \begin{array}{c} z_i \neq z_j \forall i,j  \end{array} \right\rbrace
\]
be the configuration space of points in the punctured disk $D_n$. 
We define the following space:
\begin{eqnarray}\label{NotConfig}
X_r(w_1 , \ldots , w_n) & = &  \Conf_r(D_n) \Big/ \Sk_r. 
\end{eqnarray}
to be the space of {\em unordered} configurations of $r$ points inside $D_n$, where the permutation group $\Sk_r$ acts by permutation on coordinates.
%The points $w_1 , \ldots , w_n$ will always be chosen so that they lie in the interior of $D$ and over the real line.
\end{defn} 

When no confusion arises in what follows, we omit the dependence in $w_1 , \ldots , w_n$ to simplify notations. All the following computations rely on a choice of base point that we fix from now on.

\begin{Not}[Base point]\label{basepoint}
Let ${\pmb \xi^r}= \lbrace  \xi_1 , \ldots , \xi_r \rbrace$ be the base point of $X_r$
%\[
%{\pmb \xi^r} 
%\]
chosen so that $ \xi_i \in \partial D_n$ $\forall i$ with negative imaginary parts, and so that:
\[
\Re(w_0)< \Re(\xi_r)< \Re(\xi_{r-1})< \ldots <\Re(\xi_1) <\Re(w_1).  
\]
We illustrate the disk with chosen points in the following figure.

\begin{equation*}
\begin{tikzpicture}[scale=0.7]
\node (w0) at (-3,0) {};
\node (w1) at (1,0) {};
\node (w2) at (2,0) {};
\node[gray] at (2.8,0) {\ldots};
\node (wn) at (3.4,0) {};
%\node (wn1) at (3,1) {};
%\node (wn) at (5,1) {};

%\node (x0) at (-4.8,-3) {};
%%\node at (-4.7,-3) {$\ldots$};
%\node  (x1) at (-4.3,-3) {};
%\node  (x2) at (-4.25,-3) {};
%\node  (xn1) at (,-3) {};
%\node  (xn) at (4,-3) {};

%\draw[dashed,gray] (-5,0) -- (5,0) -- (5,-3);
\draw[thick,gray] (4,2) -- (-3,2) -- (-3,-2) -- (4,-2) -- (4,2);% node[right] {$\partial D_n$};

%\node[above,red] at (-3.5,0) {$k_0$};
%\node[above,red] at (0,0) {$k_1$};
%\node[above,red] at (4,0) {$k_{n-1}$};

\node[below,red] at (-1,-2) {$\xi_r$};
\node[below,red] at (-0.3,-2) {$\xi_{r-1}$};
%\node[below,red] at (0.15,-2) {$\xi_{r''}$};
\node[below=3pt,red] at (0.3,-2) {\small $\ldots$};
\node[below,red] at (0.8,-2) {$\xi_{1}$};

\node at (-1,-2)[red,circle,fill,inner sep=1pt]{};
\node at (-0.3,-2) [red,circle,fill,inner sep=1pt]{};
%\node at (0.15,-2) [red,circle,fill,inner sep=1pt]{};
\node at (0.8,-2) [red,circle,fill,inner sep=1pt]{};

%
%\node[red] at (-3.8,-1) {$\ldots$};
%\node[red] at (-1.2,-0.9) {$\ldots$};
%\node[red] at (0.25,-0.9) {$\ldots$};
%\node[red] at (2.9,-0.5) {$\ldots$};

\node[gray] at (w0)[left=5pt] {$w_0$};
\node[gray] at (w1)[above=5pt] {$w_1$};
\node[gray] at (w2)[above=5pt] {$w_2$};
%\node[gray] at (wn1)[above=5pt] {$w_{n-1}$};
\node[gray] at (wn)[above=5pt] {$w_n$};
\foreach \n in {w1,w2,wn}
  \node at (\n)[gray,circle,fill,inner sep=3pt]{};
\node at (w0)[gray,circle,fill,inner sep=3pt]{};
\end{tikzpicture} .
\end{equation*}

%In what follows, distances between the $\xi_i$'s may be deformed in drawings but the order on real parts remains the important fact. 
We draw a square boundary for the disk, in order for the reader not to confuse it with arcs we will be drawing inside.

\end{Not}

We give a presentation of $\pi_1(X_r, {\pmb \xi^r})$ as a braid subgroup ({\em the mixed braid group}), which can be deduced from the one given in the introduction of \cite{Z1}, and will be explain with drawings. 

\begin{rmk}\label{pi_1X_r}
The group $\pi_1(X_r, {\pmb \xi^r})$ is isomorphic to the subgroup of $\CB_{r+n}$ generated by:
\[
\langle \sigma_1 , \ldots , \sigma_{r-1}, B_{r,1} , \ldots , B_{r,n} \rangle 
\]
where the $\sigma_i$ ($i=1,\ldots ,r-1$) are standard generators of $\CB_{r+n}$, and $B_{r,k}$ (for $k=1,\ldots ,n$) is the following pure braid:
\[
B_{r,k} = \sigma_{r} \cdots \sigma_{r+k-2} \sigma_{r+k-1}^2 \sigma_{r+k-2}^{-1} \cdots \sigma_{r}^{-1} .
\]
\end{rmk}

To see the correspondence between loops in $X_r$ and generators of the above braid subgroup we draw two examples.

\begin{example}
Two types of braid generators for $\pi_1(X_r, {\pmb \xi^r})$ are given in Remark \ref{pi_1X_r}, which correspond to two types of loops generating $\pi_1(X_r, {\pmb \xi^r})$. We give examples for both kinds.

\begin{itemize}
\item The braid $\sigma_1$ corresponds to a loop swapping $\xi_r$ and $\xi_{r-1}$ letting other base point coordinates fixed. This can be seen by drawing the movie of the loop in Figure \ref{pi1sigma}.
\begin{figure}[h!]
\begin{center}
\def\svgwidth{0.5\columnwidth}
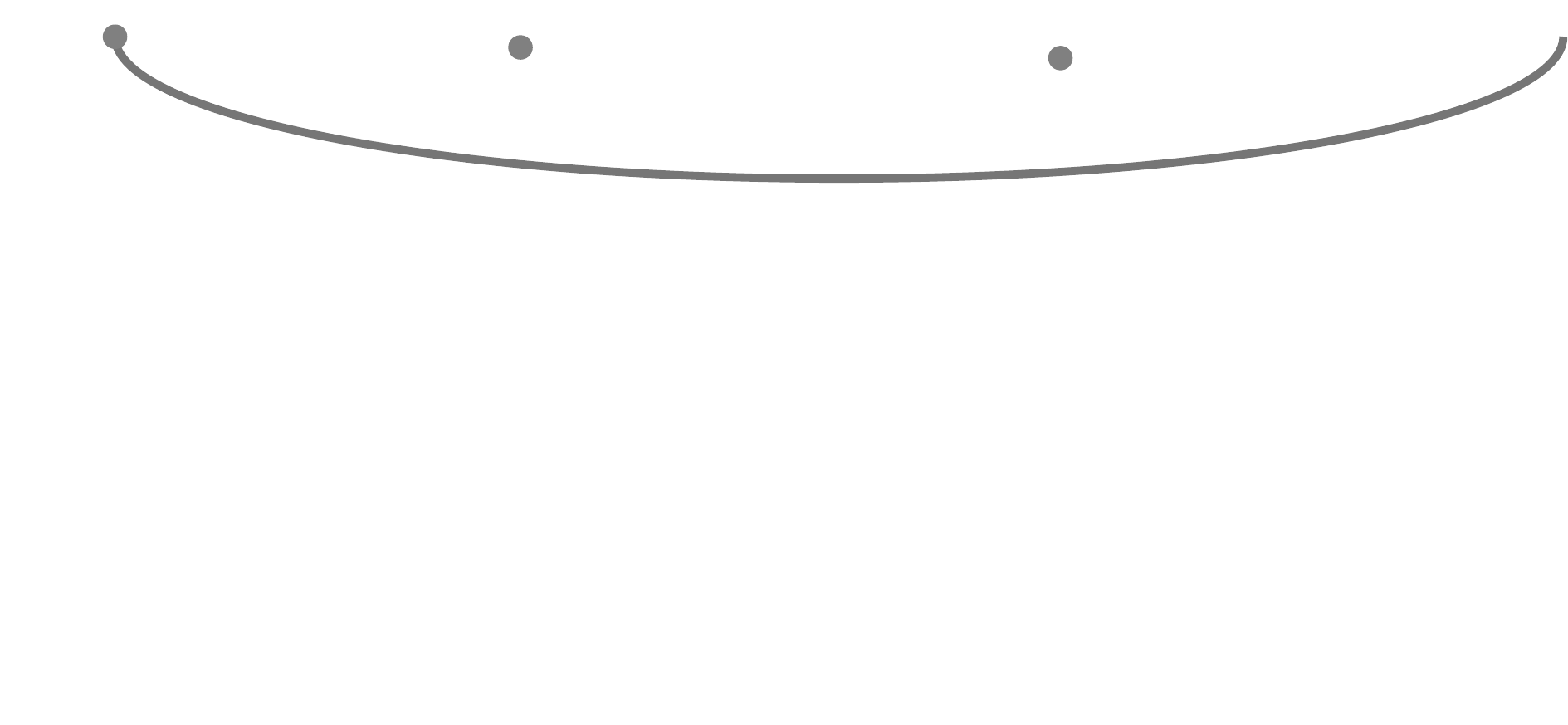
\caption{Generator $\sigma_1$. \label{pi1sigma}}
\end{center}
\end{figure}

\item The braid $B_{r,k}$ for $k \in \lbrace 1 , \ldots , n \rbrace$ corresponds to $\xi_1$ running once around $w_k$ before going back keeping other base point coordinates fixed. The correspondence in terms of standard braid generators can be seen by drawing the movie of this loop in Figure \ref{pi1B}.

\begin{figure}[h!]
\begin{center}
\def\svgwidth{0.5\columnwidth}
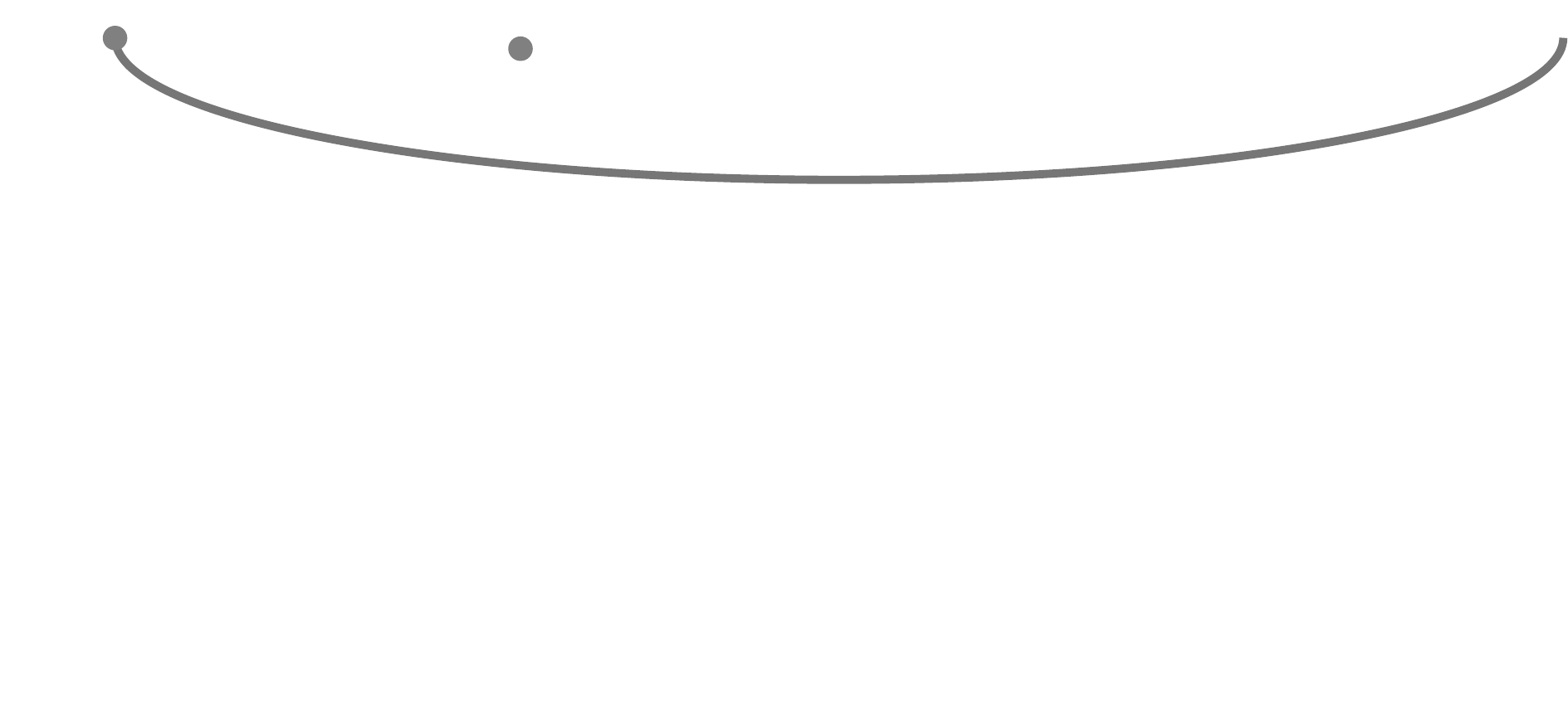
\caption{Generator $B_{r,k}$ \label{pi1B}}
\end{center}
\end{figure}
\end{itemize}
\end{example}

\begin{rmk}
In all what follows and in the above example, braids representing loops in $X_r$ are read from top to bottom. 
\end{rmk}

Using this set up, we define the local system of interest. 

\begin{defn}[Local system $L_r$.]\label{localsystXr}
Let $L_r(w_1,\ldots ,w_n)$ be the local system defined by the following algebra morphism:
\[
\rho_r : \bfct
\BZ\left[ \pi_1(X_r, {\pmb \xi^r}) \right] & \to & \BZ \left[ s_i^{\pm1} , t^{\pm 1}  \right]_{i=1,\ldots ,n}\\
\sigma_i & \mapsto & t \\
B_{r,k} & \mapsto & s_k^2 . \\
\efct
\]
In what follows we will use the notation $q^{\alpha_k}:=s_k$ for all $k = 1 , \ldots, n$. Using this notation, the morphism becomes:
\[
\rho_r : \bfct
\BZ\left[ \pi_1(X_r, {\pmb \xi^r}) \right] & \to & \BZ \left[ q^{\pm \alpha_i} , t^{\pm 1}  \right]_{i=1,\ldots ,n}\\
\sigma_i & \mapsto & t \\
B_{r,k} & \mapsto & q^{2\alpha_k} . \\
\efct
\]
When no confusion is possible we will omit the dependence in $(w_1,\ldots,w_n)$ in the notations to simplify them.
\end{defn}

\begin{rmk}
As it is a one dimensional local system it is abelian in the sense that:
\[
\rho_r ( s_1 s_1) = \rho_r(s_1) \rho_r(s_2) = \rho_r(s_2) \rho_r(s_1)
\]
for $s_1,s_2 \in \pi_1(X_r, {\pmb \xi^r})$. Moreover this local system corresponds to the maximal abelian cover of $X_r$, see Section 2 of \cite{Koh}.
\end{rmk}

We will use homology modules with coefficients in this local system, so that we fix notations from now on.

%Let $L_r$ be the maximal abelian local system of $X_r$, defined above. 
\begin{defn}\label{defofH}
Let $w_0 = -1$ be the leftmost point on the boundary of the disk, we define the following set:
\[
X_r^-(w_1 , \ldots , w_n) = \left\lbrace \left\lbrace z_1 , \ldots , z_r \right\rbrace \in X_r(w_1 , \ldots , w_n) \text{ s.t. } \exists i, z_i=w_0 \right\rbrace .
\]
Let $r \in \BN$ and $\Laurentmax = \BZ \left[ q^{\pm \alpha_i} , t^{\pm 1}  \right]_{i=1,\ldots ,n}$. We let {\em $\Hlf$} designates the homology of locally finite chains, and we use the following notations for homology with local coefficients in the ring $\Laurentmax$:
\[
\begin{array}{ccc}
\Habs_r  :=  \Hlf _r \left( X_r ; L_r \right) &
\text{ and } & % \\
\Hrelm_r := \Hlf _r \left( X_r, X^{-}_r ; L_r \right).
\end{array}
\] 
The second one is the homology of the pair $\left( X_r, X^{-}_r  \right)$. See the Appendix, Section \ref{appendix}, for recalls about these homology theories (locally finite/Borel-Moore, with local coefficients).
\end{defn}

\begin{rmk}
All the local system construction of homology classes (see the Appendix) depends on a choice for a lift of base point ${\pmb \xi}$ that we make here, namely let $\widehat{\pmb \xi}$ be a lift of ${\pmb \xi}$ in the cover corresponding to the local system $L_r$. For a different choice $\widehat{\pmb \xi'}$ of lift, all the classes are multiplied by the same (invertible) monomial $\rho_r(\widehat{\pmb \xi} \to \widehat{\pmb \xi'})$ of $\Laurentmax$, namely the local coefficient of a path relating $\widehat{\pmb \xi}$ and $\widehat{\pmb \xi'}$.
\end{rmk}

We recall signature and permutation of braids that will be needed.

\begin{Not}
We will call {\em signature} of a braid, the signature of the permutation induced by the braid, and we will use the following notation for morphisms:
\[
\sign: \Bn \xrightarrow{\perm} \Sk_n \xrightarrow{\sign} \lbrace \pm 1 \rbrace
\]
\end{Not}

\section{Structure of the homology}\label{structureofhomo}

\subsection{Examples of classes}\label{Examplesofclasses}

\begin{defn}\label{Eznrdef}
We define the set of partitions of $n$ in $r$ integers as follows:
\[
\EZnr = \lbrace (k_0, \ldots , k_{n-1}) \in \BN^n \text{ s.t. } \sum k_i=r  \rbrace .
\]
\end{defn}

We now define two families of topological objects indexed by $\EZnr$, that will correspond to classes in $\Hrelm_r$.  

\begin{Not}
We draw topological objects inside the punctured disk, without drawing the boundary of the disk entirely, for an easier reading. The gray color is used to draw the punctured disk. Red arcs are going from a coordinate of the base point ${\pmb \xi}$ of $X_r$ lying in its boundary to a dashed black arc. Dashed black arcs are oriented, from left to right if nothing is specified and if no confusion arises.  Finally, for all the following objects, the red arcs will end up going like in the following picture inside the dashed box, so that all families of red arcs are attached to the base point $\lbrace \xi_1, \ldots , \xi_r \rbrace$ of $X_r$ (here, $r'=r-k_{0}$).

\begin{equation*}
\begin{tikzpicture}[scale = 0.8]
\node (w0) at (-5,1) {};
\node (w1) at (-1,1) {};
\node (w2) at (1,1) {};
\node[gray] at (2.0,1.0) {\ldots};
\node (wn1) at (3,1) {};
\node (wn) at (5,1) {};

%\node (x0) at (-4.8,-3) {};
%%\node at (-4.7,-3) {$\ldots$};
%\node  (x1) at (-4.3,-3) {};
%\node  (x2) at (-4.25,-3) {};
%\node  (xn1) at (,-3) {};
%\node  (xn) at (4,-3) {};

\draw[dashed,gray] (-5,0) -- (5,0) -- (5,-3);
\draw[gray] (-5,1.5) -- (-5,-3) -- (6,-3) node[right] {$\partial D_n$};

\node[above,red] at (-3.5,0) {$k_0$};
\node[above,red] at (0,0) {$k_1$};
\node[above,red] at (4,0) {$k_{n-1}$};

\draw[red] (-4,0.3) -- (-4,0) -- (-4.8,-3) node[below] {\small $\xi_r$};
\draw[red] (-3,0.3) -- (-3,0) -- (-4.3,-3) node[below] {\small $\xi_{r'}$};
\draw[red] (-0.5,0.3) -- (-0.5,0) -- (-4,-3);
\draw[red] (0.5,0.3) -- (0.5,0) -- (-3.8,-3);
\draw[red] (3.5,0.3) -- (3.5,0)-- (-3.2,-3) node[below] {\small $\xi_{k_{n-1}}$};
\draw[red] (4.5,0.3) -- (4.5,0)-- (-2.5,-3) node[below] {\small $\xi_1$};

\node[red] at (-3.8,-1) {$\ldots$};
\node[red] at (-1.2,-0.9) {$\ldots$};
\node[red] at (0.25,-0.9) {$\ldots$};
\node[red] at (2.9,-0.5) {$\ldots$};

\node[gray] at (w0)[left=5pt] {$w_0$};
\node[gray] at (w1)[above=5pt] {$w_1$};
\node[gray] at (w2)[above=5pt] {$w_2$};
\node[gray] at (wn1)[above=5pt] {$w_{n-1}$};
\node[gray] at (wn)[above=5pt] {$w_n$};
\foreach \n in {w1,w2,wn1,wn}
  \node at (\n)[gray,circle,fill,inner sep=3pt]{};
\node at (w0)[gray,circle,fill,inner sep=3pt]{};
\end{tikzpicture}
\end{equation*}
\begin{itemize}
\item[{\bf Code sequences}] Let $ {\bf k} = (k_0 , \ldots , k_{n-1}) \in \EZnr$ we define the {\em code sequence} $U_{\bf k}=U(k_0,\ldots,k_{n-1})$ to be the following drawing.
\begin{equation*}
\begin{tikzpicture}
\node (w0) at (-5,0) {};
\node (w1) at (-3,0) {};
\node (w2) at (-1,0) {};
\node[gray] at (0.0,0.0) {\ldots};
\node (wn1) at (1,0) {};
\node (wn) at (3,0) {};
  
%\tikzstyle{grisEncadre}=[thick, dashed, fill=gray!20]
\draw[dashed] (w0) -- (w1) node[midway] {$k_0$};
\draw[dashed] (w1) -- (w2) node[midway] {$k_1$};
\draw[dashed] (wn1) -- (wn) node[midway] {$k_{n-1}$};
%%\draw (-2,0) to (-1,0);
%
%\draw (1,0) to (2,0);
%%\draw (2,0) to (3,0);

\node[gray] at (w0)[left=5pt] {$w_0$};
\node[gray] at (w1)[above=5pt] {$w_1$};
\node[gray] at (w2)[above=5pt] {$w_2$};
\node[gray] at (wn1)[above=5pt] {$w_{n-1}$};
\node[gray] at (wn)[above=5pt] {$w_n$};
\foreach \n in {w1,w2,wn1,wn}
  \node at (\n)[gray,circle,fill,inner sep=3pt]{};
\node at (w0)[gray,circle,fill,inner sep=3pt]{};

\draw[double,thick,red] (-4,-0.1) -- (-4,-2);
\draw[double,thick,red] (2,-0.1) -- (2,-2);
\draw[double,thick,red] (-2,-0.1) -- (-2,-2);

\draw[dashed,gray] (-5,-2) -- (3,-2);
\draw[dashed,gray] (3,-2) -- (3,-3);

\node[gray,circle,fill,inner sep=0.8pt] at (-4.8,-3) {};
\node[below,gray] at (-4.8,-3) {$\xi_r$};
\node[below=5pt,gray] at (-4.2,-3) {$\ldots$};
\node[gray,circle,fill,inner sep=0.8pt] at (-3.5,-3) {};
\node[below,gray] at (-3.5,-3) {$\xi_1$};

\draw[red] (-4.8,-3) -- (-4,-2);
\draw[red] (-3.5,-3) -- (2,-2);
\node[red] at (-3,-2.4) {$\ldots$};

\draw[gray] (-5,0) -- (-5,1);
\draw[gray] (-5,0) -- (-5,-3);
\draw[gray] (-5,-3) -- (4,-3) node[right] {$\partial D_n$};
\end{tikzpicture}
\end{equation*}
The indexes $k_i$'s stand to illustrate the fact that $k_i$ configuration points are embedded in the corresponding dashed segment, as we explain in what follows. We have attached to an indexed $k_i$ dashed arc a red arc called a {\em $(k_i)$-handle}. It is represented by a little red tube which is a simpler representation used to represent $k_i$ parallel red arcs that are called {\em handles}. We let $\CU= \left\lbrace U(k_0, \ldots , k_{n-1}) \right\rbrace_{{\bf k} \in \EZnr}$. The definition of these objects comes from \cite{Big0}.

\item[\bf Multi-Arcs.] By analogy, for ${\bf k} \in \EZnr$ we define a multi-arc $A'_{\bf k} = A'(k_0, \ldots , k_{n-1} )$ to be the following picture:
\begin{equation*}
\begin{tikzpicture}[decoration={
    markings,
    mark=at position 0.5 with {\arrow{>}}}
    ]
\node (w0) at (-5,0) {};
\node (w1) at (-3,0) {};
\node (w2) at (-1,0) {};
\node[gray] at (0.0,0.0) {\ldots};
\node (wn1) at (1,0) {};
\node (wn) at (3,0) {};

\draw[dashed] (w0) -- (w1) node[midway] (k0) {$k_0$};
\draw[dashed] (w0) to[bend right=20] node[near end] (k1) {$k_1$} (w2);
\draw[dashed] (w0) to[bend right=40] node[pos=0.85] (k2) {$k_{n-2}$} (wn1);
\draw[dashed] (w0) to[bend right=60] node[pos=0.85] (k3) {$k_{n-1}$} (wn);

\node[gray] at (w0)[left=5pt] {$w_0$};
\node[gray] at (w1)[above=5pt] {$w_1$};
\node[gray] at (w2)[above=5pt] {$w_2$};
\node[gray] at (wn1)[above=5pt] {$w_{n-1}$};
\node[gray] at (wn)[above=5pt] {$w_n$};
\foreach \n in {w1,w2,wn1,wn}
  \node at (\n)[gray,circle,fill,inner sep=3pt]{};
\node at (w0)[gray,circle,fill,inner sep=3pt]{};

\draw[double,thick,red] (-4,-0.10) -- (-4,-3);
\draw[double,thick,red] (k1) -- (-2.1,-3);
\draw[double,thick,red] (k2) -- (0.05,-3);
\draw[double,thick,red] (k3) -- (2.1,-3);

\draw[dashed,gray] (-5,-3) -- (3.5,-3);
\draw[dashed,gray] (3.5,-3) -- (3.5,-4);

\node[gray,circle,fill,inner sep=0.8pt] at (-4.8,-4) {};
\node[below,gray] at (-4.8,-4) {$\xi_r$};
\node[below=5pt,gray] at (-4.2,-4) {$\ldots$};
\node[gray,circle,fill,inner sep=0.8pt] at (-3.5,-4) {};
\node[below,gray] at (-3.5,-4) {$\xi_1$};

%\draw[red] (-4.8,-4) -- (-4.5,-3.8);
%\draw[red] (-3.5,-4) -- (-3,-3.9);

\draw[red] (-4.8,-4) -- (-4,-3);
\draw[red] (-3.5,-4) -- (2.1,-3);
\node[red] at (-2,-3.4) {$\ldots$};

\draw[gray] (-5,0) -- (-5,1);
\draw[gray] (-5,0) -- (-5,-4);
\draw[gray] (-5,-4) -- (4,-4) node[right] {$\partial D_n$};

\end{tikzpicture}
\end{equation*}
As for code sequences, there is a $(k_i)$-handle arriving to a dashed arc indexed by $k_i$, this will be used to define the associated homology class. We call $\CA'= \left\lbrace A'(k_0, \ldots , k_{n-1}) \right\rbrace_{{\bf k} \in \EZnr}$ the family of all standard multi-arcs. This family of objects is new in the literature. 
\end{itemize}
\end{Not}

We provide a natural way to assign a class in $\Hrelm_r$ to these drawings. Let $X$ be the letter $U$ or $A'$ to treat both cases at the same time. Let ${\bf k} \in \EZnr$ and for all $i = 1 , \ldots ,n$, let:
\[
\phi_i : I_i \to D_n
\]
be the embedding of the dashed black arc number $i$ of $X(k_0 , \ldots , k_{n-1})$ indexed by $k_{i-1}$, where $I_i$ is a unit interval.
Let $\Delta^k$ be the standard (open) $k$ simplex:
\[
\Delta^k = \lbrace 0 < t_1 < \cdots < t_k < 1 \rbrace 
\]
for $k \in \BN$.
For all $i$, we consider the map $\phi^{k_{i-1}}$:
\[
\phi^{k_{i-1}}: \bfct
\Delta^{k_{i-1}} & \to & X_{k_{i-1}} \\
(t_1, \ldots , t_{k_{i-1}} ) & \mapsto & \lbrace \phi_i(t_1) , \ldots, \phi_i(t_{k_{i-1}}) \rbrace
\efct
\]
which is a singular locally finite $(k_{i-1})$-chain and moreover a cycle in $X_{k_{i-1}}$. One can think of the image of the simplex $\Delta^{k_{i-1}}$ to be the space of configurations of $k_{i-1}$ points inside the dashed arc. It provides a locally finite cycle as going to a face of the simplex corresponds to going to a collision between either two configuration points, either a configuration point with a puncture. Namely, points in the boundary of the simplex are removed points of the configuration space $X_r$, these simplices are closed submanifold going to infinity, so that they are locally finite cycles, see the Appendix. There is a cycle associated with each dashed arc, so that by considering the product of maps $(\phi^{k_{0}},\ldots,\phi^{k_{n-1}})\in \Conf_r(D_n)$ which is naturally sent to $X_r$, one  generalizes this fact by associating an $r$-cycle of $X_r$ with each object $X(k_0 , \ldots , k_{n-1})$, see following Remark \ref{chainwithdisjointsupport}. This shows how the union of dashed arcs defines a class in the homology with coefficient in $\BZ$.

To get a class in the local system homology, one has to choose a lift of the chain to the maximal abelian cover $L_r$ associated with the morphism $\rho_r$. The way to do so is using the red handles of $X(k_0 , \ldots , k_{n-1})$ with which is naturally associated a path:
\[
{\bf h}=\lbrace h_1,\ldots,h_r \rbrace: I \to X_r
\]
joining the base point $\pmb{\xi}$ and the $r$-chain assigned to the union of dashed arcs. At the cover level ($\widehat{X_r}$) there is a unique lift $\widehat{{\bf h}}$ of ${\bf h}$ that starts at $\widehat{{\pmb \xi}}$. The lift of $X(k_0, \ldots , k_{n-1})$ passing by $\widehat{\pmb \xi} (1)$ defines a cycle in $\Crelm_r$, and we still call $X(k_0 , \ldots , k_{n-1})$ the associated class in $\Hrelm_r$ as we will only use this class out of the original object. 

\begin{rmk}
If $\phi_i$ and $\phi_i'$ are two parametrizations of the dashed arc $D^{k_{i-1}}$, then $\phi_i$ and $\phi_i'$ are homotopic, so are the associated maps $\phi^{k_{i-1}}$ and ${\phi'}^{k_{i-1}}$. Then, the homology classes associated with $\phi^{k_{i-1}}$ and ${\phi'}^{k_{i-1}}$ are equal and this guarantees that objects are well defined. 
\end{rmk}

\begin{rmk}\label{chainwithdisjointsupport}
If $\phi^{k_1}$ and $\phi^{k_2}$ corresponds to chains with disjoint supports, there exists an associated chain $\lbrace  \phi^{k_1}, \phi^{k_2} \rbrace \in X_{k_1+k_2}$. 
\end{rmk}
%\end{itemize} 

\begin{rmk}\label{factorstoConf}
The map $\phi^{k}$ for any $k$ factors through $\Conf_k(D_n)$, namely it is the composition of a map to $\Conf_k(D_n)$ and the quotient to $X_r$, as follows:
\[
\phi^{k}: \Delta^{k} \to \Conf_k(D_n) \to X_k. 
\]
In what follows it will sometimes be more convenient to think about the image of $\phi^{k}$ as a submanifold of $\Conf_k(D_n)$ before considering the quotient by permutations.
\end{rmk}

\begin{example}
By analogy, there is a natural class in $\Hrelm_r$ associated with the following diagram:
\begin{equation*}
\vcenter{\hbox{\begin{tikzpicture}[decoration={
    markings,
    mark=at position 0.5 with {\arrow{>}}}
    ]
\node (w0) at (-5,0) {};
\node (w1) at (-3,0) {};
\node (w2) at (-1,0) {};
\node[gray] at (0.0,0.0) {\ldots};
\node (wn1) at (1,0) {};
\node (wn) at (3,0) {};

\coordinate (x0) at (-4.9,-3) {};
\coordinate (x1) at (-4.85,-3.2) {};
\coordinate (xr1) at (-3.8,-3) {};
\coordinate (xr) at (-3.2,-3) {};

\draw[dashed,postaction={decorate}] (w0) to[bend right=30] node[pos=0.25,below] {} node[pos=0.7] (k0) {} node[above,pos=0.7] {$r-1$} (wn1);
\draw[postaction={decorate}] (w1) to[bend left=20] node[pos=0.25,above] {} node[pos=0.75,below left] (ff) {} (wn1);

\node[gray] at (w0)[left=5pt] {$w_0$};
\node[gray] at (w1)[above=5pt] {$w_1$};
\node[gray] at (w2)[left=5pt] {$w_2$};
\node[gray] at (wn1)[above=5pt] {$w_{n-1}$};
\node[gray] at (wn)[left=5pt] {$w_n$};
\foreach \n in {w1,w2,wn1,wn}
  \node at (\n)[gray,circle,fill,inner sep=3pt]{};
\node at (w0)[gray,circle,fill,inner sep=3pt]{};

\draw[double,red,thick] (k0) -- (-4.5,-2) -- (-4.5,-2.5);
\draw[red] (-4.5,-2.5) -- (x0);
\draw[red] (-4.5,-2.5) -- (xr1);
\draw[red] (ff) to[bend left=40] (3.5,0) to[bend left=10] (xr);

\foreach \n in {x0,xr1,xr}
  \node at (\n)[gray,circle,fill,inner sep=1pt]{};

\node[below,gray] at (x0) {$\xi_r$};
\node[below right,gray] at (x1) {$\ldots$};
\node[below,gray] at (xr1) {$\xi_2$};
\node[below,gray] at (xr) {$\xi_1$};

\draw[gray] (-5,0) -- (-5,1);
\draw[gray] (-5,0) -- (-5,-3);
\draw[gray] (-5,-3) -- (4,-3) node[right] {$\partial D_n$};

\end{tikzpicture}}} \in \Hrelm_r
\end{equation*}
{\em When we draw a plain arc, it corresponds to the image of a $1$-dimensional simplex, and one configuration point embedded,} while a dashed arc indexed by $(r-1)$ corresponds to an $(r-1)$-simplex so to $(r-1)$ configuration points embedded. Red handles are considered, defining a cycle with local coefficients as above.
\end{example}

\subsection{Structural result}

We now study the algebraic structure of $\Hrelm_r$. 
 
\begin{prop}\label{HrelTrick}
For $r\in \BN$, the module $\Hrelm_r$ is a free $\Laurentmax$-module of dimension ${n+r-1}\choose{r}$, generated by the family $\CU$ of code sequences. Moreover, it is the only non vanishing module of the complex $\Hlf_{\bullet} \left( X_r,X_r^-;L_r  \right)$. 
\end{prop}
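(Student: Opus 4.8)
The plan is to compute the homology $\Hlf_\bullet(X_r, X_r^-; L_r)$ by means of an explicit CW/cell-type decomposition of $X_r$ adapted to the pair $(X_r, X_r^-)$, in the spirit of Bigelow's ``forks and noodles'' arguments and the stratification used in \cite{Big0}. First I would set up a handle/disc decomposition of the punctured disk $D_n$: choose disjoint closed sub-intervals $E_0, \dots, E_{n-1}$ of the real axis, where $E_{j}$ runs from just right of $w_j$ to just left of $w_{j+1}$ (with $w_0$ the boundary point and $E_0$ the leftmost one touching $\partial D_n$), so that $D_n$ deformation retracts onto the union $\bigcup_j E_j$ together with small loops around the punctures. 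Lifting this to the unordered configuration space, a configuration of $r$ points is pushed by the retraction into $\bigcup_j E_j$; the combinatorial type of such a configuration is exactly a tuple $(k_0, \dots, k_{n-1}) \in \EZnr$ recording how many of the $r$ points land in each $E_j$. This produces a decomposition of $X_r$ into pieces $C_{\bf k}$, one for each ${\bf k} \in \EZnr$, where $C_{\bf k} \cong \prod_j \Conf_{k_j}(E_j)/\Sk_{k_j} \cong \prod_j \Delta^{k_j}$ is an open $r$-cell; the code sequence $U_{\bf k}$ is precisely the closure of such a cell, decorated with the red handle that specifies the lift to the cover defining $L_r$.

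Next I would identify $X_r^-$ within this picture: $X_r^-$ is the locus where one of the $r$ points equals $w_0 \in \partial D_n$, i.e. sits at the left endpoint of $E_0$. Under the retraction this is a subcomplex, and the cells of $X_r$ \emph{not} in (a neighborhood of) $X_r^-$ are exactly the cells $C_{\bf k}$ together with their faces that do not touch $w_0$; crucially every face of $\overline{C_{\bf k}}$ corresponds either to a collision of two configuration points inside some $E_j$, or to a configuration point hitting a puncture $w_j$ ($j \ge 1$), or to a point reaching $w_0$ — and the last type lies in $X_r^-$. So in the relative locally-finite chain complex $\Clf_\bullet(X_r, X_r^-; L_r)$ the cells $\{C_{\bf k}\}_{{\bf k}\in\EZnr}$ are locally finite relative $r$-cycles (their boundary, after quotienting by $X_r^-$, vanishes because each non-$X_r^-$ boundary face is itself a lower-dimensional stratum going off to ``infinity'', hence contributes $0$ to the Borel–Moore boundary — this is the standard fact recalled in Section \ref{Examplesofclasses} that these simplices are closed submanifolds going to infinity). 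Hence $\Clf_r(X_r, X_r^-; L_r)$ is free over $\Laurentmax$ on the $\binom{n+r-1}{r} = \#\EZnr$ classes $U_{\bf k}$, and there are no relative cells in any degree $> r$ (dimension reasons: $\dim X_r = r$) nor, after collapsing $X_r^-$, any relative cells in degree $< r$ that survive — the lower strata all get absorbed into $X_r^-$ or have non-trivial incident higher cells. Therefore $\Hlf_d(X_r, X_r^-; L_r) = 0$ for $d \neq r$, and $\Hlf_r(X_r, X_r^-; L_r)$ is free on $\CU$.

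The main obstacle I expect is making the last homological-algebra step rigorous: one must show that the relative chain complex is concentrated in degree $r$ (equivalently, that after collapsing $X_r^-$ the space is an $r$-dimensional object with the homotopy type of a wedge of $r$-spheres, one per element of $\EZnr$), and that the boundary maps into and out of degree $r$ genuinely vanish on the local system. The subtlety is that the retraction of $X_r$ onto the ``spine'' $\bigcup_j E_j$ does not literally land in $X_r$ (collisions!), so one works with a thickened version and must check the collapse $X_r/X_r^- \simeq \bigvee_{\EZnr} S^r$ using the cell structure together with the fact that the only boundary faces escaping $X_r^-$ are non-compact (Borel–Moore exactness). I would handle this by an explicit Mayer–Vietoris or filtration-by-strata argument: filter $X_r$ by $F_p = \{$configurations with at most $p$ points in $\bigcup_{j\ge 1}(\text{small loop around } w_j)\}$ or simply by the number of points forced against punctures, and show the associated spectral sequence (with $\Laurentmax$-coefficients) degenerates with the $E_1$-page free on $\CU$ in total degree $r$. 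Finally, once $\Hrelm_r$ is known to be free with basis the cell classes, I would note the cell classes are exactly $\CU$ by construction, and that the count $\#\EZnr = \binom{n+r-1}{r}$ is the standard stars-and-bars identity, completing the proof.
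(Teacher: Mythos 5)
Your outline follows the same strategy as the paper's proof (which is adapted from Bigelow's Lemma 3.1 in \cite{Big0}): reduce to configurations of points on the real segment, observe that the resulting space $\XR_r(w_0)$ is the disjoint union $\bigsqcup_{{\bf k}\in \EZnr} U_{\bf k}$ of open cells $\prod_j \Delta^{k_j}\cong \BR^r$, and conclude from the Borel--Moore homology of a disjoint union of open $r$-cells that the complex is concentrated in degree $r$ and free of rank $\sharp \EZnr = {{n+r-1}\choose{r}}$ over $\Laurentmax$. That endgame is correct and matches the paper.

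However, there is a genuine gap at exactly the step you flag as the ``main obstacle'': the justification that $\Hlf_\bullet(X_r,X_r^-;L_r)$ is computed by the real-line model. Your proposed fix --- a filtration by ``number of points forced against punctures'' whose spectral sequence is asserted to degenerate with $E_1$-page free on $\CU$ --- is not an argument; proving that degeneration is essentially equivalent to the statement you are trying to prove, and the filtration you name is not obviously the right one. The paper resolves this with a concrete four-step chain of isomorphisms (Lemma \ref{bigtrick}): (1) use the inverse-limit-over-compacts definition of Borel--Moore homology, so that one works with pairs $\left( X_r, (X_r\setminus A_\epsilon)\cup X_r^-\right)$ where $A_\epsilon$ excludes near-collisions and near-punctures; (2) the ``compressing trick'': the radial homotopy onto an $\epsilon/2$-neighborhood of $[w_0,w_n)$ \emph{is} a homotopy of these pairs, precisely because any configuration that would collide during the compression already lies in $X_r\setminus A_\epsilon$ and stays there (the collision problem you worry about is absorbed into the relative part, not avoided); (3) excision of the closed locus $V$ of configurations with coinciding real parts or real part at a puncture, which sits inside the interior of the relative subspace; (4) the vertical-line retraction $\lbrace x_i\rbrace \mapsto \lbrace \Re(x_i)\rbrace$ on the complement $U$ of $V$, which is well defined exactly because $V$ has been excised. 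A further short argument (homotopy equivalence of $(\XR_r\setminus A_\epsilon^{\BR})\cup\XRm_r$ with $\XR_r\setminus A_\epsilon^{\BR,w_0}$, then excision of $\XRm_r$) converts the relative complex into the absolute Borel--Moore complex of $\XR_r(w_0)$. Without supplying these steps (or a genuinely worked-out substitute), your proof does not close; everything after the reduction is fine.
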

\begin{proof}
All over the proof, the local ring of coefficients will remain $L_r$ so that we omit it in the notations. Let $\XR_r$ be the set $\lbrace x_1 , \ldots , x_r \rbrace \in X_r$ such that $x_1 , \ldots , x_r$ lie in the segment $\left[ w_0 , w_n \right)$. Set $\XRm_r = \XR_r \cap X_r^-$. We use these simpler spaces to compute the homology, thanks to the following lemma that can be seen as a Bigelow interpretation of the Salvetti retract complex associated with hyperplanes arrangement \cite{Sal}. This method is adapted from Lemma 3.1 of \cite{Big0}. 

\begin{lemma}[Bigelow's trick]\label{bigtrick}
The following map:
\begin{align}
\Hlf_{\bullet} \left( \XR_r,\XRm_r ; L_r  \right) \to \Hlf_{\bullet} \left( X_r,X_r^-;L_r  \right)
\end{align}
induced by inclusion is an isomorphism.
\end{lemma}
\begin{proof}[Proof of Lemma \ref{bigtrick}]
Let $\epsilon >0$ and $A_{\epsilon}$ be the set of $\lbrace x_1 , \ldots , x_r \rbrace \in X_r$ such that $|x_i - x_j| \ge \epsilon$ and $|x_i - w_k| \ge \epsilon$ for all distinct $i,j=1,\ldots , r$ and $k = 1 , \ldots , n$. This family of compact sets yields a basis of compact sets for $X_r$ so that it suffices to show that for all sufficiently small $\epsilon$ the map:
\[
H_{\bullet} \left( \XR_r, \left(\XR_r \setminus A_\epsilon \right) \cup \XRm_r  \right) \to H_{\bullet} \left( X_r, \left(X_r \setminus A_\epsilon \right) \cup X_r^- \right)
\]
induced by inclusion is an isomorphism. This is sufficient by means of the inductive limit over compact sets definition of Borel-Moore homology, see Remark \ref{BMtoLF} in the Appendix.

Let $D_n' \subset D_n$ be a closed $(\epsilon/2)$-neighborhood of the interval $\left[ w_0 , w_n \right)$. Let $X'_r$ be the configuration space of $r$ points in $D_n'$, and $X'^-_r=X'_r \cap X^-_r$ be the ones with a coordinate in $w_0$. We have that the map:
\begin{equation}\label{compressingtrick}
H_{\bullet} \left( X'_r, \left(X'_r \setminus A_\epsilon \right) \cup X'^-_r  \right) \to H_{\bullet} \left( X_r, \left(X_r \setminus A_\epsilon \right) \cup X_r^- \right)
\end{equation}
induced by inclusion is an isomorphism. 
To see this, note that the obvious homotopy shrinking $X_r$ to $X_r'$ is a homotopy of the pairs involved. In other words, points in $X_r \setminus A_{\epsilon}$ corresponding to close points stay in it because the homotopy is a contraction. We will refer to this process - proving that (\ref{compressingtrick}) is an isomorphism - as the {\em compressing trick} later on. %We call this process proving that (\ref{compressingtrick}) is an isomorphism the {\em compressing trick}. 

Let $V$ be the set of $\lbrace x_1 , \ldots , x_r \rbrace \in X_r$ with either $\Re(x_i) = \Re(x_j)$ for some $i,j \in \lbrace 1, \ldots , r \rbrace$ or $\Re(x_i) = w_k$ for some $i \in \lbrace 1, \ldots , r \rbrace$ and $k \in \lbrace 1, \ldots , n \rbrace$. Let $U=X'_r \setminus V$. Note that $V$ is a closed subset contained in $X'_r \setminus A_\epsilon $ which is the interior of  $\left(X'_r \setminus A_\epsilon \right) \cup X'^-_r $. This shows that $V$ satisfies the required hypothesis to perform the excision of the pair, so that the following map:
\[
H_{\bullet} \left( U, \left(U \setminus A_\epsilon \right) \cup \left( X'^-_r \cap U \right)  \right) \to H_{\bullet} \left( X'_r, \left(X'_r \setminus A_\epsilon \right) \cup X'^-_r \right)
\]
induced by inclusion is an isomorphism by the excision theorem. 

Finally there is an obvious {\em vertical line} deformation retraction that sends $U$ to $\XR_r$ taking $\lbrace x_1 , \ldots , x_r \rbrace$ to $\lbrace \Re(x_1) , \ldots , \Re(x_r) \rbrace$. This is again a contraction homotopy so that $U \setminus A_{\epsilon}$ is preserved and $X'_r \cap U$ is sent to $\XRm_r$. This retraction guarantees that the map:
\[
H_{\bullet} \left( \XR_r, \left(\XR_r \setminus A_\epsilon \right) \cup  \XRm_r \right)   \to H_{\bullet} \left( U, \left(U \setminus A_\epsilon \right) \cup \left( X'^-_r \cap U \right) \right)
\]
induced by inclusion is an isomorphism, and concludes the proof of Lemma \ref{bigtrick}.
\end{proof}

To end the proof of the proposition, it remains to compute the complex $\Hlf_{\bullet} \left( \XR_r,\XRm_r ; L_r  \right)$. Let $A^{\BR}_{\epsilon} \in \XR_r$ be the set of configurations $\lbrace x_1 , \ldots , x_r \rbrace$ of $\XR_r$ such that $|x_i - x_j| \ge \epsilon$ and $|x_i - w_k| \ge \epsilon$ where $i,j=1, \ldots , r$ and $k = 1, \ldots, n$. Let $A^{\BR,w_0}_{\epsilon}$ be $A^{\BR}_{\epsilon}$ with the additional condition that $|x_i - w_0| \ge \epsilon$ for $i= 1 , \ldots , r$. We are going to show that for sufficiently small $\epsilon$, the following complex:
\[
H_{\bullet} \left( \XR_r,\left( \XR_r \setminus A^{\BR}_{\epsilon} \right) \cup \XRm_r ; L_r  \right)
\]
is isomorphic to the Borel-Moore one of a disjoint union of open simplexes defined by code sequences. This will end the computation of $\Hlf_{\bullet} \left( \XR_r,\XRm_r ; L_r  \right)$ by definition of Borel-Moore homology. To do so, first we remark that the following spaces are homotopically equivalent:
\begin{align*}
\left( \XR_r \setminus A^{\BR}_{\epsilon} \right) \cup \XRm_r = & \left\lbrace \begin{array}{ll} & |x_i - x_j| < \epsilon \text{ for } i,j=1 , \ldots , r \\ \lbrace x_1 , \ldots , x_r \rbrace \in \XR_r \text{ s.t. } & \text{ or } |x_i - w_k| < \epsilon \text{ for } k=1,\ldots ,n \\ &  \text{ or } x_i=w_0 \end{array} \right\rbrace
\end{align*}
\begin{align*}
\simeq \left\lbrace \begin{array}{ll} & |x_i - x_j| < \epsilon \text{ for } i,j=1 , \ldots , r \\ \lbrace x_1 , \ldots , x_r \rbrace \in \XR_r \text{ s.t. } & \text{ or } |x_i - w_k| < \epsilon \text{ for } k=1,\ldots ,n \\ &  \text{ or } |x_i-w_0|<\epsilon \end{array} \right\rbrace &
= \XR_r \setminus A^{\BR,w_0}_{\epsilon} .
\end{align*}
This shows that the two following complexes are isomorphic:
\[
H_{\bullet} \left( \XR_r,\left( \XR_r \setminus A^{\BR}_{\epsilon} \right) \cup \XRm_r ; L_r  \right) \simeq H_{\bullet} \left( \XR_r, \XR_r \setminus A^{\BR,w_0}_{\epsilon} ; L_r  \right) .
\]
Then one remarks that $\XRm_r$ is closed in $A^{\BR,w_0}_{\epsilon}$ so that we can perform the excision and that the map:
\[
H_{\bullet} \left( \XR_r \setminus \XRm_r, \left( \XR_r \setminus A^{\BR,w_0}_{\epsilon} \right) \setminus \XRm_r ; L_r  \right) \to H_{\bullet} \left( \XR_r, \XR_r \setminus A^{\BR,w_0}_{\epsilon} ; L_r  \right) 
\]
induced by inclusion is an isomorphism. Let $\XR_r(w_0) \subset \XR_r$ be the space of configurations without any coordinate in $w_0$. The space $\XR_r(w_0)$ is exactly the space of configurations of $r$ points in $\left( w_0 , w_n \right)$ such that every coordinate is different from $w_k$ for $k=0 , \ldots , n$. For sufficiently small $\epsilon$, we have shown that the two complexes:
\[
H_{\bullet} \left( \XR_r,\left( \XR_r \setminus A^{\BR}_{\epsilon} \right) \cup \XRm_r ; L_r  \right) \simeq H_{\bullet} \left( \XR_r(w_0), \XR_r(w_0) \setminus A^{\BR,w_0}_{\epsilon} ; L_r  \right)
\]
are isomorphic. Then, as the family of $A^{\BR,w_0}_{\epsilon}$ is a compact set basis for $\XR_r(w_0)$, we end up with the complexes:
\[
\Hlf_{\bullet} \left( \XR_r,\XRm_r ; L_r  \right) \simeq \Hlf_{\bullet} \left( \XR_r(w_0) ; L_r  \right)
\]
being isomorphic. To conclude the computation we take Bigelow's decomposition of $\XR_r(w_0)$ using code sequences as follows and as it was done in \cite{Big0}.
For ${\bf k} \in E^0_{n,r}$, the set of all $\lbrace x_1 , \ldots , x_r \rbrace \in X_r$ such that $x_1 , \ldots , x_r \in \left( w_0 , w_n  \right)$ and:
\[
\sharp \left( \lbrace x_1 , \ldots, x_r \rbrace \cap \left( w_i , w_{i+1}  \right) \right) = k_i
\]
for $i = 0 , \ldots , n-1$ is exactly $U(k_0,\ldots , k_{n-1})$, and one remarks that:
\[
\XR_r(w_0) = \bigsqcup_{{\bf k} \in E^0_{n,r}} U_{\bf k}. 
\]
From this disjoint union of open simplexes, we deduce that $\Hlf_{r} \left( \XR_r(w_0) ; L_r  \right)$ is the direct sum of $\sharp E^0_{n,r} =$ $ {n+r-1}\choose{r}$ copies of $\Laurentmax$ while all other $\Hlf_{k} \left( \XR_r(w_0) ; L_r  \right)$ for $k \neq r$ vanishes. The complex $\Hlf_{\bullet} \left( \XR_r,\XRm_r ; L_r  \right)$ has the same decomposition, which concludes the proof. 

\end{proof}

Bigelow's trick was initially used to show the following. % in Lemma 3.1 of \cite{Big0}.
\begin{prop}[Lemma 3.1 \cite{Big0}]\label{relativetoabolute}
The morphism:
\[
\Hlf_{\bullet} \left( \XR_r(w_0) ; L_r  \right) \to \Hlf_{\bullet} \left( X_r(w_0) ; L_r  \right)
\]
induced by inclusion is an isomorphism of complexes. 
\end{prop}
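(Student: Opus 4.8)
The plan is to deduce the proposition from Lemma~\ref{bigtrick} (Bigelow's trick), by recognising both Borel--Moore homology groups in the statement as \emph{relative} Borel--Moore homology groups. Write $X_r^-\subset X_r$ and $\XRm_r\subset\XR_r$ for the closed subspaces of configurations having a coordinate equal to $w_0$ (as in Definition~\ref{defofH}), so that $X_r(w_0)=X_r\setminus X_r^-$ and $\XR_r(w_0)=\XR_r\setminus\XRm_r$ are their open complements; these subspaces really are closed since $w_0$, lying on the boundary of the closed disc, is a closed point of $D_n$. Note also that the inclusion $\XR_r\hookrightarrow X_r$ carries $\XRm_r$ into $X_r^-$ and $\XR_r(w_0)$ into $X_r(w_0)$, and that $\XR_r(w_0)$ is closed in $X_r(w_0)$, so the map in the statement is a genuine Borel--Moore pushforward.

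First I would invoke the standard identification, for a closed subset $Z$ of a locally compact Hausdorff space $Y$, of the Borel--Moore homology of the open complement with relative Borel--Moore homology, $\Hlf_\bullet(Y\setminus Z;L)\cong\Hlf_\bullet(Y,Z;L)$ with coefficients in a local system $L$ --- this is the excision-plus-direct-limit argument recalled in Section~\ref{appendix}, and is precisely the step already used at the end of the proof of Proposition~\ref{HrelTrick} to pass from $\Hlf_\bullet(\XR_r,\XRm_r;L_r)$ to $\Hlf_\bullet(\XR_r(w_0);L_r)$. Applying it simultaneously to $(Y,Z)=(X_r,X_r^-)$ and to $(\XR_r,\XRm_r)$, and using its naturality with respect to the inclusion of pairs $(\XR_r,\XRm_r)\hookrightarrow(X_r,X_r^-)$, yields a commutative square
\[
\begin{CD}
\Hlf_\bullet(\XR_r(w_0);L_r) @>{\sim}>> \Hlf_\bullet(\XR_r,\XRm_r;L_r)\\
@VVV @VVV\\
\Hlf_\bullet(X_r(w_0);L_r) @>{\sim}>> \Hlf_\bullet(X_r,X_r^-;L_r)
\end{CD}
\]
in which the horizontal arrows are isomorphisms and every arrow is induced by an inclusion. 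The right-hand vertical arrow is exactly the map of Lemma~\ref{bigtrick}, which is an isomorphism; hence so is the left-hand vertical arrow, in each degree --- that is, it is an isomorphism of complexes, as asserted.

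I expect the only real content to be the verification, at the level of rigour of the rest of the paper, of the two inputs used above: that the open-complement/relative identification is natural with respect to the (non-open) inclusion $\XR_r(w_0)\hookrightarrow X_r(w_0)$ --- for which one checks that a cofinal family of compacts in $X_r(w_0)$ restricts to a cofinal family of compacts in $\XR_r(w_0)$ --- and that excision holds in the Borel--Moore setting with local coefficients. Both are of the same flavour as the bookkeeping in the proof of Lemma~\ref{bigtrick}. An alternative, self-contained route --- the one Bigelow follows for Lemma~3.1 of \cite{Big0}, of which Lemma~\ref{bigtrick} is the relative variant --- is to prove the proposition directly by the three-move recipe used there, applied to the open spaces $X_r(w_0)$ and $\XR_r(w_0)$ themselves: a compressing trick shrinking $X_r(w_0)$ onto an $\varepsilon/2$-neighbourhood of the interval $(w_0,w_n)$, then excision of the ``vertical walls'' $V=\{\Re(x_i)=\Re(x_j)\text{ or }\Re(x_i)=w_k\}$, then a vertical-line deformation retraction onto the real locus $\XR_r(w_0)$. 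The main obstacle in either approach is purely a matter of care: making sure that every homotopy used is a contraction, so that the sets cut out by the $\varepsilon$-conditions --- which govern the direct limit computing Borel--Moore homology --- are preserved throughout.
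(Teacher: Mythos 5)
Your argument is correct, but note that the paper itself offers no proof of this statement at all: it is quoted as Lemma~3.1 of \cite{Big0}, and the intended argument is precisely the ``three-move recipe'' (compress to an $\varepsilon/2$-neighbourhood of $(w_0,w_n)$, excise the vertical walls $V$, retract vertically onto the real locus) that you sketch only as an alternative at the end --- that alternative is Bigelow's proof and is the most direct route. Your primary route, deducing the statement from Lemma~\ref{bigtrick} via the identification of Borel--Moore homology of an open complement with relative Borel--Moore homology of the pair, is a genuinely different and legitimate derivation, but be aware of where it sits in the paper's logical order: the isomorphism $\Hlf_\bullet(X_r(w_0);L_r)\cong\Hlf_\bullet(X_r,X_r^-;L_r)$ that you invoke as ``standard'' is exactly the first bullet of Corollary~\ref{HrelStruc}, which the paper \emph{derives from} the present proposition together with Proposition~\ref{HrelTrick}. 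So to avoid circularity you must establish that identification independently, by running for the pair $(X_r,X_r^-)$ the same thickening-plus-excision-plus-inverse-limit argument that the proof of Proposition~\ref{HrelTrick} carries out for $(\XR_r,\XRm_r)$ (replace the condition $x_i=w_0$ by $|x_i-w_0|<\epsilon$, check the deformation retraction, excise $X_r^-$, and use that the sets $A_\epsilon^{w_0}$ are cofinal among compacts of $X_r(w_0)$); this is doable but is real work, not a citation to the Appendix, which only recalls the inverse-limit definition. Granting that, your commutative square and the cofinality check for compacts of $\XR_r(w_0)$ inside those of $X_r(w_0)$ do close the argument, and as a bonus your route proves Corollary~\ref{HrelStruc} simultaneously. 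What each approach buys: Bigelow's direct proof is self-contained and is what the citation points to; yours reuses Lemma~\ref{bigtrick} and makes the relative and absolute statements visibly equivalent, at the price of having to justify the open-complement/relative identification up front.
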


From this and from the proof of Proposition \ref{HrelTrick}, one gets the following corollary.

\begin{coro}\label{HrelStruc}
\begin{itemize}
\item The morphism: $\Hlf_{\bullet} \left( X_r(w_0) ; L_r  \right) \to \Hlf_{\bullet} \left( X_r,X^-_r ; L_r  \right)$ induced by inclusion is an isomorphism.  
\item The family $\CU = \left(U_{\bf k} \right)_{{\bf k } \in E^0_{n,r}}$ yields a basis of $\Hrelm_r$ as an $\Laurentmax$-module. 
\end{itemize}
\end{coro}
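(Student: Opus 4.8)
The plan is to obtain both statements at once from a single commutative square of inclusion-induced maps, feeding it the three isomorphisms already available: Lemma \ref{bigtrick}, Proposition \ref{relativetoabolute}, and the chain of moves carried out inside the proof of Proposition \ref{HrelTrick}. Since $\XR_r(w_0)=\XR_r\setminus\XRm_r$ and likewise $X_r(w_0)\cap X_r^-=\emptyset$, all four arrows in
\[
\begin{CD}
(\XR_r(w_0),\emptyset) @>{i_1}>> (X_r(w_0),\emptyset)\\
@V{j_1}VV @V{j_2}VV\\
(\XR_r,\XRm_r) @>{i_2}>> (X_r,X_r^-)
\end{CD}
\]
are honest inclusions of pairs inside $X_r$, so the square commutes and, after applying $\Hlf_\bullet(-;L_r)$, one has $(j_2)_*\circ(i_1)_*=(i_2)_*\circ(j_1)_*$. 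Here $(i_1)_*$ is an isomorphism by Proposition \ref{relativetoabolute} and $(i_2)_*$ is an isomorphism by Lemma \ref{bigtrick}. First I would argue that $(j_1)_*$ is an isomorphism: this is exactly what the proof of Proposition \ref{HrelTrick} establishes, once one records that the homotopy equivalence $(\XR_r\setminus A^{\BR}_\epsilon)\cup\XRm_r\simeq\XR_r\setminus A^{\BR,w_0}_\epsilon$ comes from a deformation retraction of pairs inside $\XR_r$, that the subsequent excision of $\XRm_r$ is inclusion-induced, and that passing to the direct limit over $\epsilon$ turns the resulting isomorphism into the inclusion-induced map $\Hlf_\bullet(\XR_r(w_0);L_r)\to\Hlf_\bullet(\XR_r,\XRm_r;L_r)$. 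Commutativity of the square then forces $(j_2)_*$ to be an isomorphism, which is the first bullet.

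For the second bullet I would chase the standard basis through the square. The proof of Proposition \ref{HrelTrick} exhibits the disjoint decomposition $\XR_r(w_0)=\bigsqcup_{\bf k\in\EZnr}U_{\bf k}$ into open $r$-simplices, whence $\Hlf_\bullet(\XR_r(w_0);L_r)$ is concentrated in degree $r$ and $\Hlf_r(\XR_r(w_0);L_r)=\bigoplus_{\bf k\in\EZnr}\Laurentmax\,[U_{\bf k}]$, where $[U_{\bf k}]$ is the locally finite fundamental class of the open simplex $U_{\bf k}$ together with its lift to the cover along the attached red handle. By construction this $[U_{\bf k}]$ is precisely the code-sequence class $U_{\bf k}\in\Hrelm_r$ of Subsection \ref{Examplesofclasses}, transported through $(i_2)_*\circ(j_1)_*$. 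Since $(i_2)_*$ and $(j_1)_*$ are isomorphisms, $(i_2)_*\circ(j_1)_*$ carries the basis $\{[U_{\bf k}]\}_{\bf k\in\EZnr}$ of $\Hlf_r(\XR_r(w_0);L_r)$ to the family $\CU=\{U_{\bf k}\}_{\bf k\in\EZnr}$, so $\CU$ is a basis of $\Hrelm_r$ over $\Laurentmax$. (One could also simply note that $\CU$ generates the free $\Laurentmax$-module $\Hrelm_r$, of rank $\binom{n+r-1}{r}=\#\EZnr$, by Proposition \ref{HrelTrick}, and that a generating set of cardinality equal to the rank is automatically a basis of a finitely generated free module over a commutative ring; but the argument through the square is the one that also pins down the explicit basis.)

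The only real obstacle is the naturality bookkeeping in the first paragraph: one has to revisit each step of the proof of Proposition \ref{HrelTrick} — the compressing trick, the vertical-line retraction, the homotopy equivalence of the pair, the excision of $\XRm_r$, and the inductive limit over the compact exhaustions $A^{\BR}_\epsilon$ — and rephrase each as a morphism of pairs compatible with the inclusions appearing in the square, so that the composite isomorphism $\Hlf_\bullet(\XR_r,\XRm_r;L_r)\xrightarrow{\sim}\Hlf_\bullet(\XR_r(w_0);L_r)$ is literally the inverse of $(j_1)_*$. None of these steps obstructs anything; they merely have to be stated functorially rather than as bare isomorphisms, after which both bullets follow immediately.
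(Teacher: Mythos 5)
Your argument is correct and is essentially the paper's own proof made explicit: the paper derives the corollary in one line by combining Proposition \ref{relativetoabolute} with the chain of inclusion-induced isomorphisms established inside the proof of Proposition \ref{HrelTrick}, which is exactly your commutative square with $(i_1)_*$, $(i_2)_*$, $(j_1)_*$ forcing $(j_2)_*$. Your closing remark that a generating family of cardinality equal to the rank of a free module is automatically a basis is a clean way to secure the second bullet without the naturality bookkeeping.
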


We conclude this part with two remarks about the proof of Proposition \ref{HrelTrick}.

\begin{rmk}
\begin{itemize}
\item The proof of Proposition \ref{HrelTrick} is constructive in the sense that it provides a process to express homology classes in the $\CU$ basis. This will be used in next sections. 
\item All along the proof of Proposition \ref{HrelTrick}, the local system does not change, no morphism of the latter is needed. The proof relies only on topological operations such as excisions and homotopy equivalences. In some sense the proof is rigid regarding the local ring of coefficients, and should be adaptable with another one.
\end{itemize}
\end{rmk}

\section{Computation rules}\label{computationrules}

%In this section we state three properties that will allow us to perform all the homology computations we need in the next sections. The first property deals with a change of handle for a fixed mixed class. 

\subsection{Homology techniques}

\begin{rmk}[Handle rule]\label{handlerule}
Let $B$ be a singular locally finite $r$-cycle of $C_r(X_r,X^-_r,\BZ)$. We've seen a process to choose a lift of $B$ to the homology with local coefficients in $L_r$, using a handle which is a path joining ${\pmb \xi}$ to $x \in B$. Let $\alpha$ and $\beta$ be two different paths joining ${\pmb \xi}$ and $B$. Let $\widehat{B}^\alpha$ and $\widehat{B}^\beta$ be the lifts of $B$ chosen using $\alpha$ and $\beta$ respectively. By the {\em handle rule} we have the following relation in $\Hrelm_r$:
\begin{equation*}
\widehat{B}^\alpha = \rho_r(\alpha \beta^{-1}) \widehat{B}^\beta
\end{equation*}
where $\rho_r$ is the representation of $\pi_1(X_r, {\pmb \xi^r})$ used to construct $L_r$ in Definition \ref{localsystXr}. This expresses how the local system coordinate of a homological class is translated after a change of handle. 
\end{rmk}

\begin{rmk}\label{example0}
One must be careful while permuting red arcs of a multi-arcs or code sequence-like class (see Section \ref{Examplesofclasses}). Indeed, as the parametrization of the underlying simplex is ruled by the order of relating arcs to the base point: such a permutation of red handles multiplies the class by its signature. We show one example. 

We have the following equality between these two classes in $\Hrelm_2$: 
\[
\left(\vcenter{\hbox{ \begin{tikzpicture}[decoration={
    markings,
    mark=at position 0.5 with {\arrow{>}}}
    ]
\coordinate (w0) at (-1,0) {};
\coordinate (w1) at (1,0) {};
\coordinate (x0) at (-1,-1) {};
\coordinate (x1) at (1,-1) {};

\draw[postaction={decorate}] (w0) -- node[above,pos=0.3] (k0) {} (w1);
\draw[postaction={decorate}] (w0) to[bend left=30] node[pos=0.8] (k1) {} (w1);

\draw[red] (k0) -- node[midway] (a) {} (k0|-x0);
\draw[red] (k1) arc (180:90:0.4) arc (90:-90:0.4) -- (k1|-x1);

\node[right,red] at (a) {$\alpha$};

\node[gray] at (w0)[left=5pt] {$w_i$};
\node[gray] at (w1)[right=5pt] {$w_j$};
\foreach \n in {w0,w1}
  \node at (\n)[gray,circle,fill,inner sep=3pt]{};

\end{tikzpicture} }}\right)
= \sign(\alpha \beta^{-1}) \rho_r(\alpha \beta^{-1}) 
\left(\vcenter{\hbox{ \begin{tikzpicture}[decoration={
    markings,
    mark=at position 0.5 with {\arrow{>}}}
    ]
\coordinate (w0) at (-1,0) {};
\coordinate (w1) at (1,0) {};
\coordinate (x0) at (-1,-1) {};
\coordinate (x1) at (1,-1) {};

\draw[postaction={decorate}] (w0) -- node[above,pos=0.75] (k1) {} (w1);
\draw[postaction={decorate}] (w0) to[bend left=30] node[above,pos=0.3] (k0) {} (w1);

\draw[red] (k0) -- node[pos=0.66] (b) {} (k0|-x0);
\draw[red] (k1) -- (k1|-x1);

\node[right,red] at (b) {$\beta$};

\node[gray] at (w0)[left=5pt] {$w_i$};
\node[gray] at (w1)[right=5pt] {$w_j$};
\foreach \n in {w0,w1}
  \node at (\n)[gray,circle,fill,inner sep=3pt]{};

\end{tikzpicture} }}\right) 
\]
with $\rho_r(\alpha \beta^{-1})=t^{-1}q^{-2\alpha_j}$ and $\sign(\alpha \beta^{-1})=-1$. Indeed, we suppose that the drawing is empty everywhere outside the parentheses besides the red handles $\alpha$ and $\beta$  that join the base point ${\pmb \xi}$ in the boundary. We suppose also that $\alpha$ and $\beta$ follow exactly same paths outside parentheses. This allows us to draw the braid $\alpha \beta^{-1}$ in Figure \ref{exemple0}.

\begin{figure}[h!]
\begin{center}
\def\svgwidth{0.4\columnwidth}
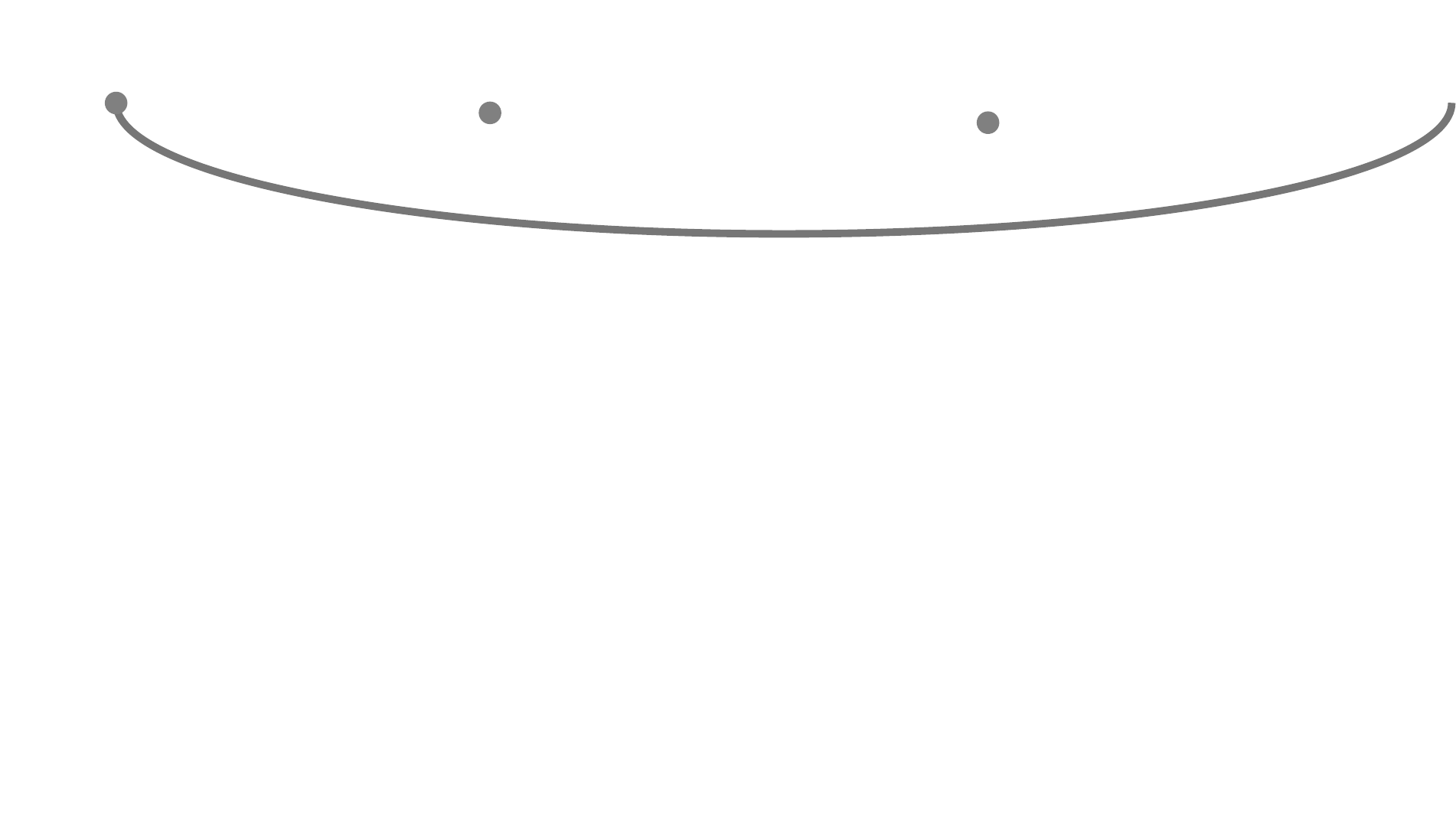
\caption{The braid $\alpha \beta^{-1}$ \label{exemple0}}
\end{center}
\end{figure}

The figure continues outside of the box, but as the path to the base point is the same for  $\alpha$ and $\beta$ the path upper box is the inverse of the lower one. As the local system is abelian, the out box parts of the braid won't contribute to $\rho_r(\alpha \beta^{-1})$ (nor to $\sign(\alpha \beta^{-1})$). Considering the definition of $\rho_r$ (Definition \ref{localsystXr}), one sees that the local system coordinate of the above path is $t^{-1}q^{-2\alpha_j}$ so is the one of $\alpha \beta^{-1}$.

One should also notice that the signature of the braid is $(-1)^{e(t)}$, where $e(t)$ is the exponent of $t$ in the local coefficient. This last remark is equivalent to the following very useful remark for what follows:
\[
\sign(\alpha \beta^{-1}) \rho_r(\alpha \beta^{-1}) = \rho_r(\alpha \beta^{-1})_{|t=\mt}
\]
where $\mt := -t$. This will be extensively use in all following computations. 
 
\end{rmk}

We reformulate the compressing trick used in the proof of Proposition \ref{HrelTrick} in a local version.

\begin{prop}[Compressing trick]\label{compress2}
Let $D_p \subset D_n$ (and $D_p^0 \subset D_n$ respectively) be a topological punctured disk with punctures $w_{n_1} ,\ldots ,w_{n_p}$ and $n_i \in \lbrace 1,\ldots ,n \rbrace$ for $i=1,\ldots ,p$ (resp. $D_p^0$ contains also $w_0$). Let $X_r(D_p)$ (resp. $X_r(D_p^0)$) be the space of configuration of $r$ points inside $D_p$ (resp $D_p^0$). Let $D_p'$ (resp ${D'}_p^0$) be an $\epsilon$-neighborhood of a segment in $D_p$ (resp $D_p^0$) joining the points $w_{n_1} , \ldots , w_{n_p}$ (resp. having an end in $w_0$) and contained in the real axis, with $\epsilon$ small enough to have $D_p' \subset D_p$. Then the morphisms:
\[
H_{\bullet} \left( X_r(D'_p) \right) \to H_{\bullet} \left( X_r(D_p) \right)
\]
and
\[
H_{\bullet} \left( X_r({D'}_p^0) , X_r({D'}_p^0)^-  \right) \to H_{\bullet} \left( X_r(D_p^0),  X_r(D_p^0)^- \right)
\]
induced by inclusion are isomorphisms (the module $X_r({D'}_p^0)^-$ stands for  configurations with one point in $w_0$). All the homology modules are Borel-Moore ones (or equivalently of locally finite chains) and considered with coefficients in the local system $L_r$ restricted to the space of interest, so that we omit it in the notations.
\end{prop}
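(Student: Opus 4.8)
The plan is to mimic, in this local setting, the chain of homotopy equivalences and excisions already carried out in the proof of Proposition \ref{HrelTrick}, but now performed inside the sub-disk $D_p$ (respectively $D_p^0$) rather than the whole $D_n$. The only new ingredient is that one must check the deformation retraction of $D_p$ onto the $\epsilon$-neighborhood $D_p'$ of the chosen real segment is a homotopy of \emph{configuration} spaces which, moreover, preserves the subspace of ``close'' configurations and the subspace $X_r(D_p^0)^-$ of configurations meeting $w_0$. Since $L_r$ restricted to $X_r(D_p)$ (resp. $X_r(D_p^0)$) is pulled back along these maps, the induced maps on local-coefficient homology make sense.

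First I would fix a straight-line segment $\gamma \subset D_p$ passing through $w_{n_1},\ldots,w_{n_p}$ lying on the real axis, and let $D_p' = N_\epsilon(\gamma)$ with $\epsilon$ small enough that $D_p' \subset D_p$ and that $D_p'$ still contains all the relevant punctures (and $w_0$ in the $D_p^0$ case). There is an obvious strong deformation retraction $\Phi_t \colon D_p \to D_p$ onto $D_p'$ obtained by radially contracting the transverse direction toward $\gamma$; it fixes the punctures pointwise, hence induces a deformation retraction $X_r(D_p) \to X_r(D_p')$ of configuration spaces (the product $\Phi_t^{\times r}$ descends through $\Conf_r$ and then through the $\Sk_r$-quotient, exactly as in Remark \ref{factorstoConf}). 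Because $\Phi_t$ is distance non-increasing in the transverse direction and an isometry along $\gamma$ near the punctures, the subset of configurations with some $|x_i-x_j|<\epsilon'$ or $|x_i-w_k|<\epsilon'$ is preserved along the homotopy — this is the \emph{contraction} argument already invoked in establishing that (\ref{compressingtrick}) is an isomorphism. In the punctured-with-$w_0$ case one checks additionally that $\Phi_t$ fixes $w_0$, so $X_r(D_p^0)^-$ is carried into itself, hence the retraction is a homotopy of pairs $\bigl(X_r(D_p^0), X_r(D_p^0)^-\bigr) \to \bigl(X_r({D'}_p^0), X_r({D'}_p^0)^-\bigr)$.

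Having a deformation retraction of pairs, compatible with the local system $L_r$ (which is simply restricted/pulled back), the induced maps on $\Hlf_\bullet(-;L_r)$ and on relative $\Hlf_\bullet(-,-;L_r)$ are isomorphisms by homotopy invariance of local-coefficient Borel-Moore homology. Strictly speaking, as in the global proof one passes to the compact exhaustion $A_\epsilon$ and works with ordinary relative homology $H_\bullet\bigl(X_r(D_p), (X_r(D_p)\setminus A_\epsilon)\cup\cdots\bigr)$, notes that the retraction preserves these compacts up to homotopy, deduces the isomorphism at each compact stage, and then takes the inductive limit (cf. Remark \ref{BMtoLF}); I would state it this way to keep the argument parallel to Proposition \ref{HrelTrick}.

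The main obstacle I anticipate is purely bookkeeping rather than conceptual: one must be careful that the deformation retraction really is a homotopy of the \emph{pairs} in play — i.e. that configurations which become ``$\epsilon$-collided'' during the homotopy were already in the subspace being quotiented out — and that nothing escapes $D_p$ during the contraction (which forces the smallness condition on $\epsilon$ relative to the geometry of $D_p$). Once those transversality/contraction checks are in place, everything else is a verbatim transcription of the compressing-trick step inside the proof of Proposition \ref{HrelTrick}, applied to the sub-disk, so I would keep the write-up short and refer back to that proof for the details of the local-coefficient homotopy invariance and the compact-exhaustion limit.
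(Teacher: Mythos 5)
Your proposal is correct and follows essentially the same route as the paper, which simply observes that the argument establishing that the map (\ref{compressingtrick}) is an isomorphism in the proof of Lemma \ref{bigtrick} — the contraction onto an $\epsilon$-neighborhood of the real segment, checked to be a homotopy of the pairs involved and passed through the compact exhaustion — carries over verbatim when performed inside $D_p$ (resp.\ $D_p^0$). Your additional checks (punctures and $w_0$ fixed, close configurations preserved under the contraction) are exactly the points the paper's one-line proof implicitly relies on.
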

\begin{proof}
The proof is exactly the same as the one of (\ref{compressingtrick}) being an isomorphism, in the proof of Lemma \ref{bigtrick}, but performed inside $D_p$ (resp. $D_p^0$).
\end{proof}

\begin{prop}[Combing process.]\label{combing}
Let $M=M(D_1^{k_1} , \ldots , D_d^{k_d})$ be a class associated with a drawing made of disjoint dashed arcs $D_1$ indexed by $k_1$, $D_2$ indexed by $k_2$ and so on, all of them related to the base point ${\pmb \xi}$ by red handles. Suppose the $(k_1)$-handle reaches $D^{k_1}_1$ in a point $x$. Let $D^{k_1}_1 = {D}_1^- \cup_{x} {D}_1^+$ be a subdivision of arc $D_1$ following its orientation. Let $D$ be an arc joining $x$ to some $w \in \lbrace w_0 , \ldots ,w_n \rbrace$, and such that $D$ is disjoint from all the $D^{k_1}_i$'s. Let $l \in \lbrace 0 , \ldots , k_1 \rbrace$, and $M^l$ be the following class obtained from $M$ by modifying its drawing as follows:
\[
M^l = M\left( \left(D_1^- \star D \right)^{l} ,\left(   D^{-1} \star D_1^+ \right)^{k_1-l} ,D_2^{k_2}, \ldots , D_d^{k_d} \right)
\]
so that the initial arc $D_1$ is divided into two, one indexed by $l$ the other one by $k_1-l$. Handles are preserved from $M$, except for the $(k_1)$-handle that is divided into two: one $(l)$-handle joining $\left(D_1^- \star D \right)^{l}$ in $x$ and one $(k_1-l)$-handle joining $\left( \left(D_1^+ \star D \right)^{-1} \right)^{k_1-l}$ in $x$. There is the following homological relation:
\[
M= \sum_{l=0}^{k_1} M^l .
\]
See Examples \ref{breakingplain} and \ref{breakingdashed} of such combing, which should help the understanding of the statement. 
\end{prop}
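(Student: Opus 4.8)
The plan is to realize the identity $M = \sum_{l=0}^{k_1} M^l$ as the image, under the appropriate local-system homology, of a decomposition of the underlying $r$-cycle that happens one factor at a time. Since $M$ is built from disjoint dashed arcs $D_1, \dots, D_d$ with $k_i$ configuration points living on $D_i$, the cycle is a product $\phi^{k_1}\times\phi^{k_2}\times\cdots\times\phi^{k_d}$ in the sense of Remark \ref{chainwithdisjointsupport}, and only the $D_1$-factor is being modified; all the other factors, and all the other handles, are carried along unchanged. So it suffices to prove the statement for a single dashed arc $D_1$ indexed by $k_1$ with its $(k_1)$-handle reaching $D_1$ at $x$, i.e. to prove the relation at the level of the chain $\phi^{k_1}\colon \Delta^{k_1}\to X_{k_1}$ together with its lift, and then take the product with the fixed remaining data. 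This reduces the problem to an essentially one-dimensional-in-$k_1$ combinatorial statement about configurations on an interval.

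Next I would make the geometric move precise. Write $D_1 = D_1^- \cup_x D_1^+$, and let $D$ be the auxiliary arc from $x$ to a puncture $w$, disjoint from the $D_i$. The arc $D_1^-\star D$ and the arc $D^{-1}\star D_1^+$ together form, up to homotopy rel endpoints and rel the rest of the diagram, a subdivision of $D_1$ with a detour along $D$ and back along $D^{-1}$; since $D$ is traversed once in each direction the two detours cancel homotopically, so $D_1^-\star D$ and $D^{-1}\star D_1^+$ glued at $x$ is homotopic to $D_1$. The space of configurations of $k_1$ points on $D_1$ (an interval) is the open simplex $\Delta^{k_1}$, and distributing $k_1$ unordered points among the two sub-arcs $D_1^-\star D$ and $D^{-1}\star D_1^+$ gives, for each $l\in\{0,\dots,k_1\}$, the open simplex $\Delta^l\times\Delta^{k_1-l}$ — a product configuration with $l$ points on the first sub-arc and $k_1-l$ on the second. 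The closures of these cells together with the diagonal/collision strata form the closed simplex $\Delta^{k_1}$; passing to locally finite (Borel–Moore) chains, the fundamental class of $\Delta^{k_1}$ is the sum of the fundamental classes of the $\Delta^l\times\Delta^{k_1-l}$, because the common faces are codimension $\ge 1$ and lie in the removed locus (collisions of configuration points, or of a configuration point with a puncture), hence contribute nothing to the locally finite cycle — this is precisely the mechanism invoked in Section \ref{Examplesofclasses} for why these simplices are locally finite cycles in the first place.

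Then I would lift to the local system $L_r$. The handle for $M^l$ is, by construction, the $(k_1)$-handle of $M$ split into an $(l)$-handle reaching $(D_1^-\star D)^l$ at $x$ and a $(k_1-l)$-handle reaching $(D^{-1}\star D_1^+)^{k_1-l}$ at $x$; both start at $\pmb\xi$ by following exactly the original handle path. Since all these lifts agree with the lift of $M$ near the starting point $\widehat{\pmb\xi}$ and the chosen handles are literally obtained by restriction/splitting of the handle of $M$, the lift of each cell $\Delta^l\times\Delta^{k_1-l}$ used to define $M^l$ is exactly the restriction of the chosen lift of $M$ to that cell; no extra monomial factor appears. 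One must check there is no sign: the orientation of $\Delta^{k_1}$ induced from the product orientations on the $\Delta^l\times\Delta^{k_1-l}$ is consistent (the subdivision is orientation-preserving), and the ordering of the sub-handles within the tuple of all red handles is inherited from the position of the original $(k_1)$-handle, so the signature/parametrization subtlety of Remark \ref{example0} does not enter — I would state this explicitly, since it is the only place a hidden sign could creep in. Summing over $l$ gives $M = \sum_{l=0}^{k_1} M^l$ in $\Hrelm_r$, and re-multiplying by the untouched factors $D_2^{k_2},\dots,D_d^{k_d}$ and their handles finishes the proof, pointing the reader to Examples \ref{breakingplain} and \ref{breakingdashed}.

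The main obstacle I expect is the bookkeeping needed to verify that no monomial and no sign is introduced: one has to be careful that the detour arc $D$ is genuinely cancelled (it is traversed once forward and once backward, and $D$ is disjoint from the other arcs, so its contribution to $\rho_r$ is trivial for the same abelianness reason used in Remark \ref{example0}), and that the chain-level subdivision of $\Delta^{k_1}$ into the $\Delta^l\times\Delta^{k_1-l}$ is compatible with the chosen lifts to the cover. Everything else — the product structure over the untouched arcs, the homotopy invariance of the classes, the Borel–Moore fundamental-class decomposition — is either routine or already recorded in Section \ref{Examplesofclasses} and Remark \ref{chainwithdisjointsupport}.
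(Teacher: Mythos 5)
Your proposal is correct and follows essentially the same route as the paper: reduce to a single dashed arc, subdivide the simplex $\Delta^{k_1}$ into the $k_1+1$ cells according to how the configuration points distribute around the detour point, observe that the chain-level identity is just this subdivision (the paper makes the deformation explicit as an isotopy $h_t$ dragging the arc onto the split arc, which is the same homotopy you invoke), and note that the handles are obtained by restriction so the lift and local coefficients are unaffected. The only nitpick is that the two sub-arcs $D_1^-\star D$ and $D^{-1}\star D_1^+$ are concatenated through the puncture $w$ (not ``glued at $x$''), which is in fact what makes each piece a locally finite cycle, as you correctly use a few lines later.
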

\begin{proof}
Suppose the class $M=M(D_1^{k_1})$ is made of only one dashed arc. Let $\phi^{k_1}$:
\[
\phi^{k_1} : \bfct
\Delta^{k_1} & \to & X_{k_1} \\
(t_1 , \ldots , t_{k_1} ) & \mapsto & \lbrace \phi(t_i), i=1,\ldots , k_1 \rbrace
\efct
\]
be the chain naturally associated with the indexed $k_1$ dashed arc of the considered class, where $\phi$ is a parametrization of $D^{k_1}$. We subdivide the simplex: for $l \in \lbrace 0 , \ldots , k_1 \rbrace$ let $ \Delta^{k_1,l}$ be defined as follows:
\[
\Delta^{k_1,l} = \lbrace (t_1 , \ldots , t_{k_1}) \in \Delta^k \text{ s.t. } t_{l} < \phi^{-1}(x) < t_{l+1} \rbrace
\]
which image by $\phi^{k_1}$ corresponds to configurations for which the handle together with $D$ arrive between images of $t_l$ and $t_{l+1}$. Let $\phi^{k_1,l}$ be the restriction of $\phi^{k_1}$ to $\Delta^{k_1,l}$. Let:
\[
h_t : I \to D_n
\]
be an isotopy (rel. endpoints) sending the arc $D^{k_1}$ to the right one of Figure \ref{ht1} (arcs oriented from left to right). 

\begin{figure}[h!]
\begin{center}
%\fontsize{10pt}{15pt}\selectfont% or whatever fontsize you like
\def\svgwidth{0.7\columnwidth}
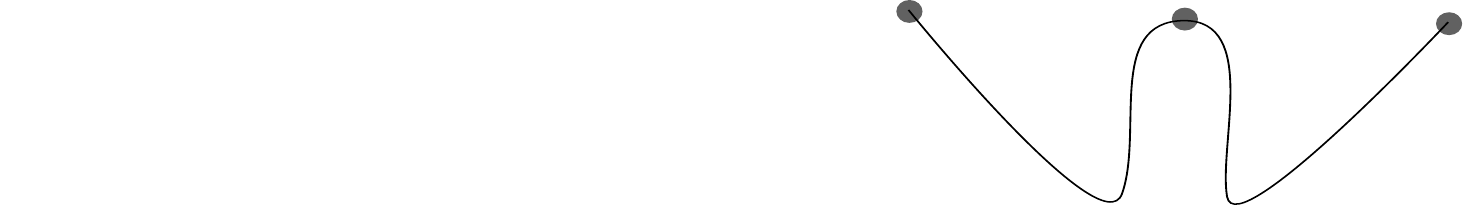
\caption{The isotopy $h_t$. \label{ht1}}
\end{center}
\end{figure}

For all $t$ in $I$, let $\phi_t^{k_1}$ be the following map:
\[
\phi_t^{k_1} : \bfct
\Delta^{k_1} & \to & X_{k_1} \\
(t_1 , \ldots , t_{k_1} ) & \mapsto & \lbrace h_t \circ \phi(t_i), i=1,\ldots , k_1 \rbrace
\efct
\]
and let $\phi_t^{k_1,l}$ be the following map:
\[
\phi_t^{k_1,l} : \bfct
\Delta^{k_1,l} & \to & X_{k_1} \\
(t_1 , \ldots , t_{k_1} ) & \mapsto & \lbrace h_t \circ \phi(t_i), i=1,\ldots , k_1 \rbrace ,
\efct
\]
namely the restriction to $\Delta^{k_1,l}$.
Let $\left[ \phi_t^{k_1} \right]$ and $\left[ \phi_t^{k_1,l} \right]$ be the corresponding chains. One remarks that $\phi_0^{k_1,l} = \phi^{k_1,l}$ and $\phi_0^{k_1} = \phi^{k_1}$. In terms of chains we have the following equality holding for all $t \in I$:
\[
\left[ \phi_t^{k_1} \right] = \sum_l  \left[ \phi_t^{k_1,l} \right],
\]
this is because $\lbrace \Delta^{k_1,l}, l = 0 , \ldots , k_1 \rbrace$ is a subdivision of $\Delta^{k_1}$. 
For $t=0$ this chain is $\left[  \phi^{k_1} \right]$ while for $t=1$, terms of the sum are Borel-Moore cycles homologous to $M^l$. It shows that $\left[  \phi^{k_1} \right]$ and $\sum_l M^l$ are homotopic so that the relation $M= \sum_{l=0}^{k_1} M^l$ holds in $\Hlf_r(X_r,X_r^-, \BZ)$. Then the lifting process is unchanged as handles are preserved. It proves the proposition for a class composed by one dashed arc, and it generalizes to all classes with disjoint dashed arcs, as only the first component is involved in the combing.

%
%one sees the $k_1$ possibilities to divide this map into two parts:
%\[
%I^l \times I^{k_1-l} \to \left(D_1^- \star D \right)^l \times \left( \left(D_1^+ \star D \right)^{-1} \right)^{k_1-l} \to X_r
%\]
%These are chain maps and give the relation $M= \sum_{l=0}^{k_1} M^l$ in $H^{lf}_r(X_r,X_r^-, \BZ)$. Then - as before - the lifting process is unchanged as handles are preserved. 

\end{proof}

Two examples of combings that will be used many times. 

\begin{example}[Breaking a plain arc]\label{breakingplain}
By considering a path joining the red handle to $w_i$ one can check the following relations between homology class (all arcs oriented from left to right):
\begin{align*}
\left(\vcenter{\hbox{
\begin{tikzpicture}[decoration={
    markings,
    mark=at position 0.5 with {\arrow{>}}}
    ]
\node (w0) at (-1,0) {};
\node[gray] at (-0.2,0) {$\ldots$}; 
\node (w1) at (0.5,0) {};
\node (wn) at (1.5,0) {};
\coordinate (a) at (-2,-1);
\draw (w0) to[bend right=30pt] node[pos=0.25,below] {\tiny $P_1^-$} node[pos=0.75,below] {\tiny $P_1^+$} coordinate[midway] (solo) (wn);
\node[gray] at (w0)[left=5pt] {$w_0$};
\node[gray] at (wn)[above=5pt] {$w_{i+1}$};
\node[gray] at (w1)[above=5pt] {$w_{i}$};
\foreach \n in {w0,wn,w1}
  \node at (\n)[gray,circle,fill,inner sep=3pt]{};
\draw[red] (solo) -- (solo|-a);
\draw[gray,thick] (-1,0.3) -- (w0) -- (-1,-1.5);
\end{tikzpicture}
}} \right)& = 
\left(\vcenter{\hbox{
\begin{tikzpicture}[decoration={
    markings,
    mark=at position 0.5 with {\arrow{>}}}
    ]
\node (w0) at (-1,0) {};
\node[gray] at (-0.2,0) {$\ldots$}; 
\node (w1) at (0.5,0) {};
\node (wn) at (1.5,0) {};
\coordinate (a) at (-2,-1);
\draw (w0) to[bend right=30pt] node[pos=0.4,below] {\tiny $P_1^-$} (solo) to node[midway,right] {\tiny $P$} (w1);
\node[gray] at (w0)[left=5pt] {$w_0$};
\node[gray] at (wn)[above=5pt] {$w_{i+1}$};
\node[gray] at (w1)[above=5pt] {$w_{i}$};
\foreach \n in {w0,wn,w1}
  \node at (\n)[gray,circle,fill,inner sep=3pt]{};
\draw[red] (solo) -- (solo|-a);
\draw[gray,thick] (-1,0.3) -- (w0) -- (-1,-1.5);
\end{tikzpicture}
}} \right)
+\left(\vcenter{\hbox{
\begin{tikzpicture}[decoration={
    markings,
    mark=at position 0.5 with {\arrow{>}}}
    ]
\node (w0) at (-1,0) {};
\node[gray] at (-0.2,0) {$\ldots$}; 
\node (w1) at (0.5,0) {};
\node (wn) at (1.5,0) {};
\coordinate (a) at (-2,-1);
\draw (w1) to node[pos=0.35,left] {\tiny $P^{-1}$} (solo) to[bend right=30pt] node[pos=0.6,below] {\tiny $P_1^{+}$} (wn);
\node[gray] at (w0)[left=5pt] {$w_0$};
\node[gray] at (wn)[above=5pt] {$w_{i+1}$};
\node[gray] at (w1)[above=5pt] {$w_{i}$};
\foreach \n in {w0,wn,w1}
  \node at (\n)[gray,circle,fill,inner sep=3pt]{};
\draw[red] (solo) -- (solo|-a);
\draw[gray,thick] (-1,0.3) -- (w0) -- (-1,-1.5);
\end{tikzpicture}
}} \right) \\
& = \left(\vcenter{\hbox{
\begin{tikzpicture}[decoration={
    markings,
    mark=at position 0.5 with {\arrow{>}}}
    ]
\node (w0) at (-1,0) {};
\node[gray] at (-0.2,0) {$\ldots$}; 
\node (w1) at (0.5,0) {};
\node (wn) at (1.5,0) {};
\coordinate (a) at (-2,-1);
\draw (w0) to[bend right=30pt] node[above,pos=0.6] (solo) {} (w1);
\node[gray] at (w0)[left=5pt] {$w_0$};
\node[gray] at (wn)[above=5pt] {$w_{i+1}$};
\node[gray] at (w1)[above=5pt] {$w_{i}$};
\foreach \n in {w0,wn,w1}
  \node at (\n)[gray,circle,fill,inner sep=3pt]{};
\draw[red] (solo) -- (solo|-a);
\draw[gray,thick] (-1,0.3) -- (w0) -- (-1,-1.5);
\end{tikzpicture}
}} \right)
+\left(\vcenter{\hbox{
\begin{tikzpicture}[decoration={
    markings,
    mark=at position 0.5 with {\arrow{>}}}
    ]
\node (w0) at (-1,0) {};
\node[gray] at (-0.2,0) {$\ldots$}; 
\node (w1) at (0.5,0) {};
\node (wn) at (1.5,0) {};
\coordinate (a) at (-2,-1);
\draw (w1) to node[above,midway] (solo) {} (wn);
\node[gray] at (w0)[left=5pt] {$w_0$};
\node[gray] at (wn)[above=5pt] {$w_{i+1}$};
\node[gray] at (w1)[above=5pt] {$w_{i}$};
\foreach \n in {w0,wn,w1}
  \node at (\n)[gray,circle,fill,inner sep=3pt]{};
\draw[red] (solo) -- (solo|-a);
\draw[gray,thick] (-1,0.3) -- (w0) -- (-1,-1.5);
\end{tikzpicture}
}} \right)
\end{align*}
where drawings are the same outside boxes. To obtain the second line we have applied small isotopies not changing the homology class. One remarks that before the small isotopies being applied, handles are unchanged. 
\end{example}

\begin{example}[Breaking a dashed arc]\label{breakingdashed}
By considering a path joining the red handle to $w_i$ one can check the following relations between homology class:
\begin{equation*}
\left(\vcenter{\hbox{
\begin{tikzpicture}[decoration={
    markings,
    mark=at position 0.5 with {\arrow{>}}}
    ]
\node (w0) at (-1.5,0) {};
\node (wj) at (1.5,0) {};
\node (wi) at (0.5,0) {};
\coordinate (a) at (-1,-1.5);

\draw[dashed] (w0) to[bend right=40] node[above=0.1pt,pos=0.5] (k0) {$k$} (wj);
%\draw[dashed] (w0) to[bend left=30] node[pos=0.3,above] (k1) {$l$} (w1);

\node[gray] at (w0)[left=5pt] {$w_0$};
\node[gray] at (wj)[right=5pt] {$w_j$};
\node[gray] at (wi)[left=5pt] {$w_i$};
\foreach \n in {w0,wi,wj}
  \node at (\n)[gray,circle,fill,inner sep=3pt]{};

\draw[double,red,thick] (k0) -- (k0|-a);
%\draw[double,red,thick] (k1) -- (k1|-a);
\draw[gray,thick] (-1.5,0.3) -- (w0) -- (-1.5,-1.5);

\end{tikzpicture}
}}\right)
=
\sum_{l=0}^k
\left(\vcenter{\hbox{
\begin{tikzpicture}[decoration={
    markings,
    mark=at position 0.5 with {\arrow{>}}}
    ]
\node (w0) at (-1.5,0) {};
\node (wj) at (1.5,0) {};
\node (wi) at (0.5,0) {};
\coordinate (a) at (-1,-1.5);

\draw[dashed] (w0) to[bend right=40] node[above=0.1pt,pos=0.5] (k0) {$l$} (wi);
\draw[dashed] (wi) to node[pos=0.5,above] (k1) {\small $k-l$} (wj);

\node[gray] at (w0)[left=5pt] {$w_0$};
\node[gray] at (wj)[above right] {$w_{i+1}$};
\node[gray] at (wi)[above left] {$w_i$};
\foreach \n in {w0,wi,wj}
  \node at (\n)[gray,circle,fill,inner sep=3pt]{};

\draw[double,red,thick] (k0) -- (k0|-a);
\draw[double,red,thick] (k1) -- (k1|-a);
\draw[gray,thick] (-1.5,0.3) -- (w0) -- (-1.5,-1.5);

\end{tikzpicture}
}} \right).
\end{equation*}
where drawings are the same outside boxes. 
\end{example}

\subsection{Diagram rules}

We use homology techniques presented in the previous section to set diagram rules between homology classes. These rules expressed with coefficients in the ring $\Laurentmax$ involve quantum numbers that we introduce now.

\begin{defn}\label{quantumt}
Let $i$ be a positive integer. We define the following elements of $\BZ \left[ t^{\pm 1} \right] \subset \Laurentmax$.
%\[
%\begin{array}{ccc}
\begin{equation*}
(i)_t := (1+t+ \cdots + t^{i-1}) = \frac{1-t^i}{1-t} , \text{  } (k)_t! := \prod_{i=1}^k (i)_t, \text{ and }  {{k}\choose{l}}_t := \frac{(k)_t!}{(k-l)_t! (l)_t!} %= \frac{(t,t)_k}{(t,t)_l (t,t)_{k-l}} .
%\end{array}
%\]
\end{equation*}
\end{defn}

\begin{Not}
In what follows we will use extensively the variable $-t$ instead of $t$, so that we fix a notation for it:
\[
\boxed{ \mt:=-t }
\]
\end{Not}

\begin{Not}
Since we work with Borel-Moore homology with local coefficients, one can think of it as the following complex:
\[
H_{\bullet} \left( X_r, \left(X_r \setminus A_\epsilon \right) \cup X_r^- ; L_r \right)
\]
for a small $\epsilon$, with $A_{\epsilon}$ defined as in the proof of Proposition \ref{HrelTrick}. A dashed arc indexed by $k>1$ corresponds to an embedding of $k$ points (a $k$-simplex) inside the arc. 

As the order of points does not matter - working in $X_r$ - one can think of the dashed arc as in Figure \ref{dashedmodel}.

\begin{figure}[h!]
\begin{center}
%\fontsize{10pt}{15pt}\selectfont% or whatever fontsize you like
\def\svgwidth{0.7\columnwidth}
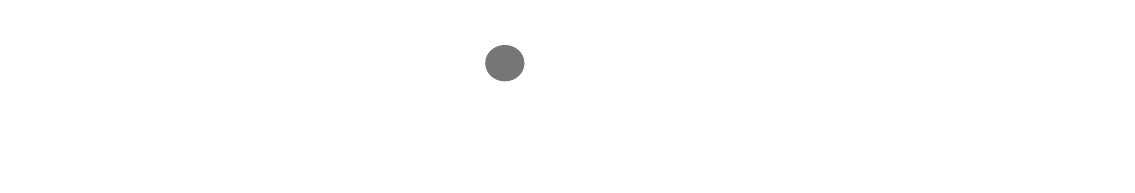
\caption{Dashed arc model. \label{dashedmodel}}
\end{center}
\end{figure}

On the left side we see a standard piece of an element of $\CU$ and on the right side, one can think of this element as the image of one point by the following embedding:
\[
\Delta^k \to \left[ w_i, w_j \right]
\]
where $M_i$ is the image of $t_i$, the $i^{th}$ coordinate of $\Delta^k$. The $M_i$'s are represented by gray boxes to keep in mind that we work relatively to $X_r \setminus A_{\epsilon}$. Every point is lifted to the maximal abelian cover ($\widehat{X_r}$) using the red handle reaching it. A first diffeomorphism of $D_n$ has been applied, allowing one to imagine this picture with $w_i$ facing $w_j$. This diffeomorphism does not change homology classes. 
\end{Not}

The above picture will be useful to deal with the proof of the following crucial homological relations showing a first appearance of quantum numbers.

\begin{lemma}\label{plaintodashed}
Let $k>1$ be an integer. The following equalities hold in $\Hrelm_{\bullet}$:
\begin{equation*}
\left(\vcenter{\hbox{
\begin{tikzpicture}[decoration={
    markings,
    mark=at position 0.5 with {\arrow{>}}}
    ]
\node (w0) at (-1,0) {};
\node (w1) at (1,0) {};
\coordinate (a) at (-1,-1);

\draw[dashed] (w0) -- (w1) node[pos=0.66] (k0) {$k$};
\draw[postaction={decorate}] (w0) to[bend left=40] node[pos=0.3,above] (k1) {} (w1);

\node[gray] at (w0)[left=5pt] {$w_i$};
\node[gray] at (w1)[right=5pt] {$w_j$};
\foreach \n in {w0,w1}
  \node at (\n)[gray,circle,fill,inner sep=3pt]{};

\draw[double,red,thick] (k0) -- (k0|-a);
\draw[red] (k1) -- (k1|-a);

\end{tikzpicture}
}}\right)
=
(k+1)_{\mt} \left(\vcenter{\hbox{
\begin{tikzpicture}[decoration={
    markings,
    mark=at position 0.5 with {\arrow{>}}}
    ]
\node (w0) at (-1.5,0) {};
\node (w1) at (1.5,0) {};
\coordinate (a) at (-1,-1);

\draw[dashed] (w0) -- (w1) node[pos=0.5,above] (k0) {$(k+1)$};

\node[gray] at (w0)[left=5pt] {$w_i$};
\node[gray] at (w1)[right=5pt] {$w_j$};
\foreach \n in {w0,w1}
  \node at (\n)[gray,circle,fill,inner sep=3pt]{};

\draw[double,red,thick] (k0) -- (k0|-a);

\end{tikzpicture}
}}\right)
\end{equation*}
\begin{equation*}
\left(\vcenter{\hbox{
\begin{tikzpicture}[decoration={
    markings,
    mark=at position 0.5 with {\arrow{>}}}
    ]
\node (w0) at (-1,0) {};
\node (w1) at (1,0) {};
\coordinate (a) at (-1,-1);

\draw[dashed] (w0) -- (w1) node[pos=0.33] (k0) {$k$};
\draw[postaction={decorate}] (w0) to[bend left=40] node[pos=0.66,above] (k1) {} (w1);

\node[gray] at (w0)[left=5pt] {$w_i$};
\node[gray] at (w1)[right=5pt] {$w_j$};
\foreach \n in {w0,w1}
  \node at (\n)[gray,circle,fill,inner sep=3pt]{};

\draw[double,red,thick] (k0) -- (k0|-a);
\draw[red] (k1) -- (k1|-a);

\end{tikzpicture}
}}\right)
=
(k+1)_{\mt^{-1}} \left(\vcenter{\hbox{
\begin{tikzpicture}[decoration={
    markings,
    mark=at position 0.5 with {\arrow{>}}}
    ]
\node (w0) at (-1.5,0) {};
\node (w1) at (1.5,0) {};
\coordinate (a) at (-1,-1);

\draw[dashed] (w0) -- (w1) node[pos=0.5,above] (k0) {$(k+1)$};

\node[gray] at (w0)[left=5pt] {$w_i$};
\node[gray] at (w1)[right=5pt] {$w_j$};
\foreach \n in {w0,w1}
  \node at (\n)[gray,circle,fill,inner sep=3pt]{};

\draw[double,red,thick] (k0) -- (k0|-a);

\end{tikzpicture}
}}\right)
\end{equation*}
\begin{equation*}
\left(\vcenter{\hbox{
\begin{tikzpicture}[decoration={
    markings,
    mark=at position 0.5 with {\arrow{>}}}
    ]
\node (w0) at (-1,0) {};
\node (w1) at (1,0) {};
\coordinate (a) at (-1,-1);

\draw[dashed] (w0) -- (w1) node[above,pos=0.33] (k0) {$k$};
\draw[postaction={decorate}] (w0) to[bend right=40] node[pos=0.66,above] (k1) {} (w1);

\node[gray] at (w0)[left=5pt] {$w_i$};
\node[gray] at (w1)[right=5pt] {$w_j$};
\foreach \n in {w0,w1}
  \node at (\n)[gray,circle,fill,inner sep=3pt]{};

\draw[double,red,thick] (k0) -- (k0|-a);
\draw[red] (k1) -- (k1|-a);

\end{tikzpicture}
}}\right)
=
(k+1)_{\mt} \left(\vcenter{\hbox{
\begin{tikzpicture}[decoration={
    markings,
    mark=at position 0.5 with {\arrow{>}}}
    ]
\node (w0) at (-1.5,0) {};
\node (w1) at (1.5,0) {};
\coordinate (a) at (-1,-1);

\draw[dashed] (w0) -- (w1) node[pos=0.5,above] (k0) {$(k+1)$};

\node[gray] at (w0)[left=5pt] {$w_i$};
\node[gray] at (w1)[right=5pt] {$w_j$};
\foreach \n in {w0,w1}
  \node at (\n)[gray,circle,fill,inner sep=3pt]{};

\draw[double,red,thick] (k0) -- (k0|-a);

\end{tikzpicture}
}}\right)
\end{equation*}
\begin{equation*}
\left(\vcenter{\hbox{
\begin{tikzpicture}[decoration={
    markings,
    mark=at position 0.5 with {\arrow{>}}}
    ]
\node (w0) at (-1,0) {};
\node (w1) at (1,0) {};
\coordinate (a) at (-1,-1);

\draw[dashed] (w0) -- (w1) node[above,pos=0.66] (k0) {$k$};
\draw[postaction={decorate}] (w0) to[bend right=40] node[pos=0.33,above] (k1) {} (w1);

\node[gray] at (w0)[left=5pt] {$w_i$};
\node[gray] at (w1)[right=5pt] {$w_j$};
\foreach \n in {w0,w1}
  \node at (\n)[gray,circle,fill,inner sep=3pt]{};

\draw[double,red,thick] (k0) -- (k0|-a);
\draw[red] (k1) -- (k1|-a);

\end{tikzpicture}
}}\right)
=
(k+1)_{\mt^{-1}} \left(\vcenter{\hbox{
\begin{tikzpicture}[decoration={
    markings,
    mark=at position 0.5 with {\arrow{>}}}
    ]
\node (w0) at (-1.5,0) {};
\node (w1) at (1.5,0) {};
\coordinate (a) at (-1,-1);

\draw[dashed] (w0) -- (w1) node[pos=0.5,above] (k0) {$(k+1)$};

\node[gray] at (w0)[left=5pt] {$w_i$};
\node[gray] at (w1)[right=5pt] {$w_j$};
\foreach \n in {w0,w1}
  \node at (\n)[gray,circle,fill,inner sep=3pt]{};

\draw[double,red,thick] (k0) -- (k0|-a);

\end{tikzpicture}
}}\right)
\end{equation*}
where we suppose that the classes are the same everywhere outside parentheses, red handles joining same base points and following same paths. 
\end{lemma}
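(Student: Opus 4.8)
The plan is to reduce each identity to a local computation in a neighborhood of the segment $[w_i,w_j]$ and then to recognize the left-hand side as a combing sum of $k+1$ copies of the $(k+1)$-dashed arc, each weighted by a power of $\mt$.

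First I would apply the compressing trick (Proposition~\ref{compress2}) to the sub-disk containing only the two punctures $w_i$ and $w_j$: this shows that it is enough to prove the relations for classes whose supporting configurations lie in an $\epsilon$-neighborhood of the real segment $[w_i,w_j]$, and it lets me pass, via a diffeomorphism of $D_n$, to the dashed arc model of Figure~\ref{dashedmodel}. In that model the $k$-dashed arc is the image of an embedding $\Delta^k\hookrightarrow(w_i,w_j)$, i.e.\ an ordered configuration $M_1<\dots<M_k$ of real points each lifted along its red handle, while the plain arc becomes a short arc carrying a single configuration point $P$ sitting just above (resp.\ below) the segment, with its handle attached at a fixed point of that arc.

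Next I would run the vertical-line deformation retraction used in the proof of Proposition~\ref{HrelTrick}. After projecting the picture onto the real axis the chain underlying the left-hand side becomes a configuration of $k+1$ real points — the $M_j$'s together with the real part of $P$ — and the real part of $P$ is forced to lie in one of the $k+1$ open gaps $(M_l,M_{l+1})$, where by convention $M_0=w_i$ and $M_{k+1}=w_j$. This decomposes the underlying $\BZ$-chain as a sum over $l=0,\dots,k$, which is precisely an instance of the combing process (Proposition~\ref{combing}, and Examples~\ref{breakingplain} and~\ref{breakingdashed}); for each $l$ the resulting $\BZ$-chain is homologous to that of the $(k+1)$-dashed arc from $w_i$ to $w_j$. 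It then remains only to compute, term by term, how the preferred lift to the local system $L_r$ differs from the standard lift of the $(k+1)$-dashed arc.

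The heart of the proof, and the step I expect to be the main obstacle, is this last bookkeeping of handle coefficients. In the $l$-th term the handle of the extra point $P$ must be brought into the position of the $(l+1)$-st handle of the $(k+1)$-dashed arc, which forces it to be commuted past the handles of the $l$ points among $M_1,\dots,M_k$ lying on one side of it; by the handle rule (Remark~\ref{handlerule}) each such move multiplies the class by $\rho_r$ of a $\sigma$-type loop, that is by $t$, while the accompanying reordering of red handles contributes the matching sign, and by Remark~\ref{example0} the two effects combine through the identity $\sign(\cdot)\rho_r(\cdot)=\rho_r(\cdot)|_{t=\mt}$ into a single factor $\mt$ per move. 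Hence the $l$-th term equals $(\mt)^{l}$ times the $(k+1)$-dashed arc, and summing over $l$ yields the coefficient $\sum_{l=0}^{k}(\mt)^l=(k+1)_{\mt}$, proving the first identity. The other three identities follow from the same computation after the evident symmetries — reflecting the disk, which exchanges \emph{above} and \emph{below}, and relocating the attaching points of the red handles to the opposite ends of the arcs — each of which reverses the sense in which the handle of $P$ winds past the intervening points; the resulting exponent of $\mt$ is then $+l$ or $-l$ according to the side of the plain arc and the positions of the handles as displayed in the four diagrams, giving the coefficient $(k+1)_{\mt}$ or $(k+1)_{\mt^{-1}}$ as stated.
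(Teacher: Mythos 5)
Your proposal is correct and follows essentially the same route as the paper: compress to a neighbourhood of $[w_i,w_j]$, subdivide the product chain $I\times\Delta^k$ according to which of the $k+1$ gaps the extra point falls into, identify each piece with the $(k+1)$-simplex, and then combine the permutation sign with the handle-rule coefficient $t^{l}$ into $\mt^{l}$, summing to $(k+1)_{\mt}$. The only cosmetic difference is that you package the sign and the local coefficient together via the substitution $t\mapsto\mt$ of Remark~\ref{example0}, whereas the paper computes the sign $(-1)^{i-1}$ from the coordinate permutation $\tau_i$ and the factor $t^{i-1}$ from the braid of Figure~\ref{handlerulecompress} separately before combining them.
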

\begin{proof}
We prove the first equality - last three correspond to symmetric situations so they are proved similarly.
The idea of the proof is an application of the compressing trick from Proposition \ref{compress2}, which consists in applying a homotopy compressing the disk until points cannot approach each other vertically anymore without meeting. Namely, let $D$ be the disk depicted in the parentheses. While compressing $D$ to an open $\frac{\epsilon}{2}$-neighborhood $D'$ of $\left(w_i , w_j \right)$, the plain arc from the top will approach the dashed arc. As we work in Borel-Moore homology, so relatively to $X_r \setminus A_{\epsilon}$ for a small $\epsilon$, at some points, the point lying on the plain arc will cut the dashed arc to put its $\epsilon$-neighborhood in. As there are $k$ points lying on the dashed arc, there are $k+1$ possibilities of cuts (between $\left( w_0,M_1 \right), \left( M_1,M_2 \right), \ldots , \left( M_{k-1},M_k \right)$ or $\left( M_{k} , w_j \right)$). The situation may be summed up as the equality of Figure \ref{forkdecomposeddd}. In the figure, we distinguish the point $M$ from the plain arc coming between $M_{i-1}$ and $M_i$ in the sum.

\begin{figure}[h]
\begin{center}
%\fontsize{10pt}{15pt}\selectfont% or whatever fontsize you like
\def\svgwidth{1.1\columnwidth}
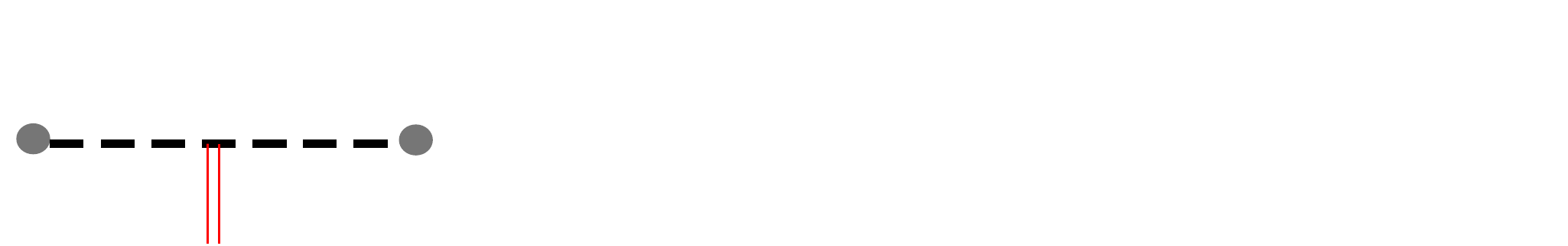
\caption{Homological relation. \label{forkdecomposeddd}}
\end{center}
\end{figure}

To be more precise, let $\phi^k$ be the chain:
\[
\Delta^k \to X_k
\] 
associated with the indexed $k$ dashed arc. Let $\psi$:
$
I \to D_n
$
be the one associated with the plain one. Then:
\[
\Psi = \lbrace  \psi, \phi^k \rbrace : I\times \Delta^k \to X_{k+1}
\]
is the chain associated with the left object of the equality we are studying. For $i=1 , \ldots , k+1$, let $\Delta_i$ be:
\[
\Delta_i = \lbrace (t,t_1 , \ldots , t_k) \in I \times \Delta^k \text{ s.t. } t_{i-1} < t < t_i \rbrace
\]
and $\Psi_i$ be the restriction of $\Psi$ to $\Delta_i$. In terms of chains we have the equality:
\[
\left[ \Psi \right]= \sum_i \left[ \Psi_i \right],
\]
as the set $\lbrace \Delta_i , i=1 , \ldots , k+1 \rbrace$ is a subdivision of $I \times \Delta^k$.
Every $\Delta_i$ is naturally homeomorphic to the standard simplex $\Delta^{k+1}$, but it involves a permutation of coordinates in the parametrization of the simplex:
\[
\tau_i: (t,t_1,\ldots , t_k) \mapsto (t_1 , \ldots , t_{i-1} , t , t_i , \ldots , t_k)
\]
where $\tau_i$ can be seen as an element of $\Sk_{k+1}$. 
By homotoping the plain arc to the dashed one, one obtains a homotopy from $\Psi_i$ to $\phi^{k+1} \circ \tau_i$, for all $i\in \lbrace 1 , \ldots , k+1 \rbrace$, and considered as chains of $\Conf_k(D_n)$. Then:
\[
\left[ \Psi \right]= \sum_{i=1}^{k+1} \sign(\tau_i) \left[ \phi^{k+1} \right] = \sum_{i=1}^{k+1} (-1)^{i-1} \left[ \phi^{k+1} \right] .
\]
This shows that the relation:

\begin{equation}\label{dansZ}
\left(\vcenter{\hbox{
\begin{tikzpicture}[decoration={
    markings,
    mark=at position 0.5 with {\arrow{>}}}
    ]
\node (w0) at (-1,0) {};
\node (w1) at (1,0) {};
\coordinate (a) at (-1,-1);

\draw[dashed] (w0) -- (w1) node[pos=0.5] (k0) {$k$};
\draw[postaction={decorate}] (w0) to[bend left=40] node[pos=0.3,above] (k1) {} (w1);

\node[gray] at (w0)[left=5pt] {$w_i$};
\node[gray] at (w1)[right=5pt] {$w_j$};
\foreach \n in {w0,w1}
  \node at (\n)[gray,circle,fill,inner sep=3pt]{};

%\draw[double,red,thick] (k0) -- (k0|-a);
%\draw[red] (k1) -- (k1|-a);

\end{tikzpicture}
}}\right)
=
\sum_{i=1}^{k+1} (-1)^{i-1} \left(\vcenter{\hbox{
\begin{tikzpicture}[decoration={
    markings,
    mark=at position 0.5 with {\arrow{>}}}
    ]
\node (w0) at (-1.5,0) {};
\node (w1) at (1.5,0) {};
\coordinate (a) at (-1,-1);

\draw[dashed] (w0) -- (w1) node[pos=0.5,above] (k0) {$(k+1)$};

\node[gray] at (w0)[left=5pt] {$w_i$};
\node[gray] at (w1)[right=5pt] {$w_j$};
\foreach \n in {w0,w1}
  \node at (\n)[gray,circle,fill,inner sep=3pt]{};
%
%\draw[double,red,thick] (k0) -- (k0|-a);

\end{tikzpicture}
}}\right)
\end{equation}
holds in $H(X_r,X_r^-,\BZ)$. This can be seen as Figure \ref{forkdecomposeddd} without handles. (A drawing without handles corresponds to an unlifted homology class.)

Now it's just a matter of reorganizing the handles in the elements of the sum in Figure \ref{forkdecomposeddd} to get a dashed arc model. Using the handle rule, one can check that for $i \in \lbrace 1, \ldots , k+1 \rbrace$ we have the equality of Figure \ref{tipure} in the local system homology.

\begin{figure}[h]
\begin{center}
%\fontsize{10pt}{15pt}\selectfont% or whatever fontsize you like
\def\svgwidth{0.9\columnwidth}
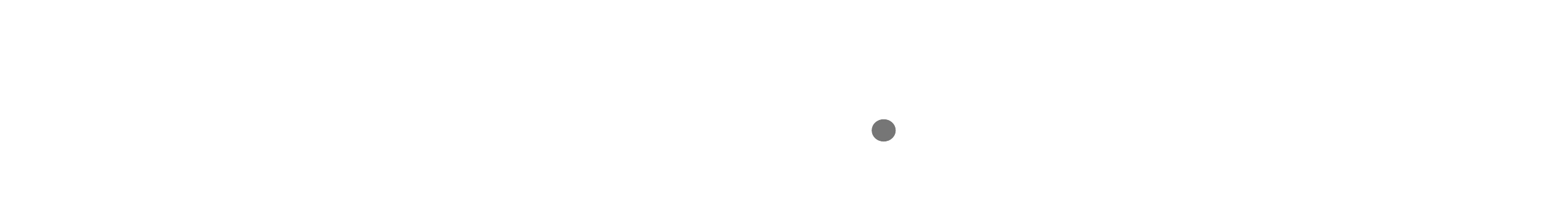
\caption{Local system relation. \label{tipure}}
\end{center}
\end{figure}

To see this, we draw the braid associated with this change of handle, in Figure \ref{handlerulecompress}, so that one verifies its local coordinate to be $t^{i-1}$ (as $(i-1)$ red strands are passing successively in front of the $i^{th}$ one).
\begin{figure}[!h!]
\begin{center}
%\fontsize{10pt}{15pt}\selectfont% or whatever fontsize you like
\def\svgwidth{0.45\columnwidth}
\def\svgscale{0.2}
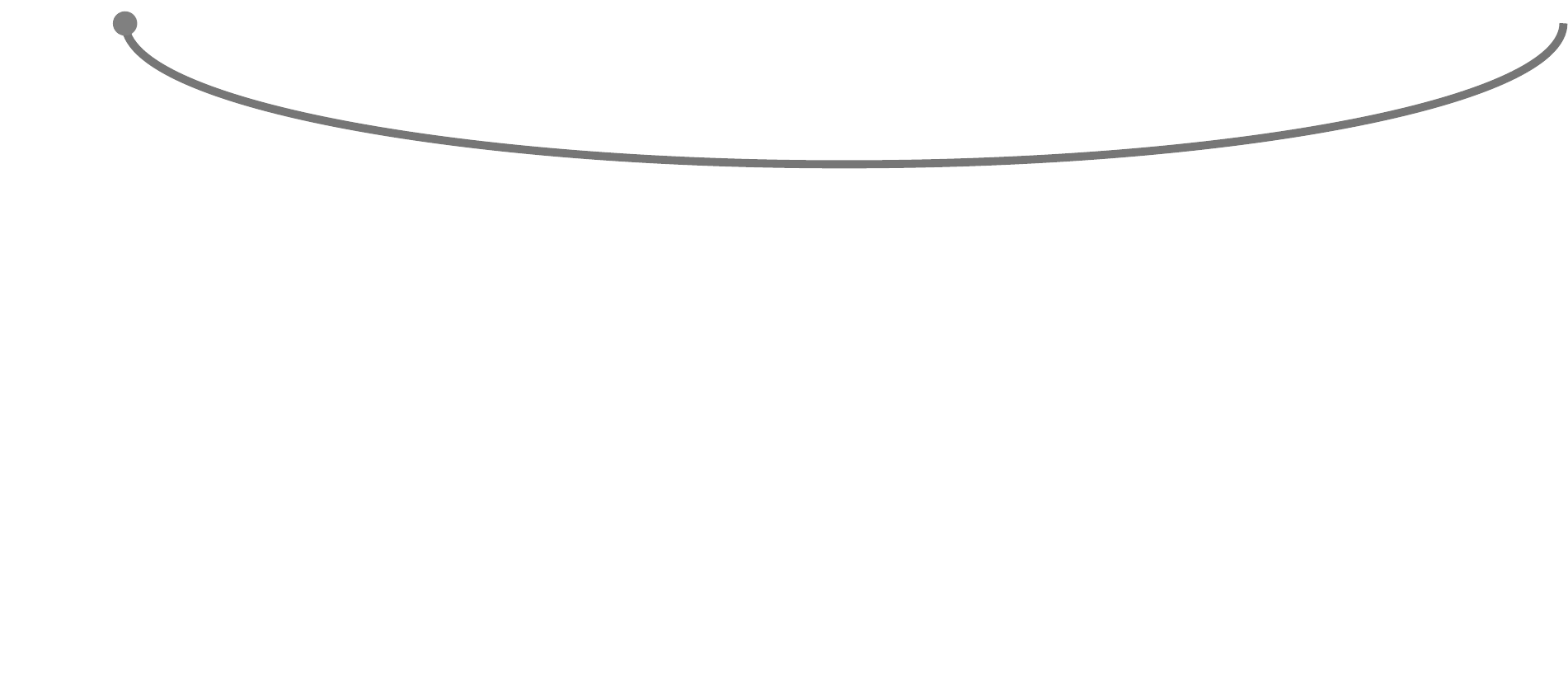
\caption{Handle rule. \label{handlerulecompress}}
\end{center}
\end{figure}

 Again, in the picture, one has to imagine that the red handles are going back to the base point before and after this box following same paths so that these parts do not contribute to the local system coefficient. Now, we can conclude the following equality:
 
\begin{equation*}
\left(\vcenter{\hbox{
\begin{tikzpicture}[decoration={
    markings,
    mark=at position 0.5 with {\arrow{>}}}
    ]
\node (w0) at (-1,0) {};
\node (w1) at (1,0) {};
\coordinate (a) at (-1,-1);

\draw[dashed] (w0) -- (w1) node[pos=0.66] (k0) {$k$};
\draw[postaction={decorate}] (w0) to[bend left=40] node[pos=0.3,above] (k1) {} (w1);

\node[gray] at (w0)[left=5pt] {$w_i$};
\node[gray] at (w1)[right=5pt] {$w_j$};
\foreach \n in {w0,w1}
  \node at (\n)[gray,circle,fill,inner sep=3pt]{};

\draw[double,red,thick] (k0) -- (k0|-a);
\draw[red] (k1) -- (k1|-a);

\end{tikzpicture}
}}\right)
=
\sum_{i=1}^{k+1} (-1)^{i-1} t^{i-1} \left(\vcenter{\hbox{
\begin{tikzpicture}[decoration={
    markings,
    mark=at position 0.5 with {\arrow{>}}}
    ]
\node (w0) at (-1.5,0) {};
\node (w1) at (1.5,0) {};
\coordinate (a) at (-1,-1);

\draw[dashed] (w0) -- (w1) node[pos=0.5,above] (k0) {$(k+1)$};

\node[gray] at (w0)[left=5pt] {$w_i$};
\node[gray] at (w1)[right=5pt] {$w_j$};
\foreach \n in {w0,w1}
  \node at (\n)[gray,circle,fill,inner sep=3pt]{};

\draw[double,red,thick] (k0) -- (k0|-a);

\end{tikzpicture}
}}\right)
\end{equation*} 
where the term $t^{i-1}$ comes from what we've just remarked, and $(-1)^{i-1}$ comes from \ref{dansZ}. This concludes the proof of the  equality we were looking for remembering notation $\mt=-t$. 

\end{proof}
We study one simple example for illustrating the above lemma.

\begin{example}
Let's study the case $k=1$ of the above lemma that gives the following equality:
\begin{equation}\label{theequation}
\left(\vcenter{\hbox{
\begin{tikzpicture}[decoration={
    markings,
    mark=at position 0.5 with {\arrow{>}}}
    ]
\node (w0) at (-1,0) {};
\node (w1) at (1,0) {};
\coordinate (a) at (-1,-1);

\draw[postaction={decorate}] (w0) -- (w1);
\draw[postaction={decorate}] (w0) to[bend left=40] node[pos=0.3,above] (k1) {} (w1);

\node[gray] at (w0)[left=5pt] {$w_1$};
\node[gray] at (w1)[right=5pt] {$w_2$};
\foreach \n in {w0,w1}
  \node at (\n)[gray,circle,fill,inner sep=3pt]{};

\draw[red] (k0) -- (k0|-a);
\draw[red] (k1) -- (k1|-a);

\end{tikzpicture}
}}\right)
=
(1-t) \left(\vcenter{\hbox{
\begin{tikzpicture}[decoration={
    markings,
    mark=at position 0.5 with {\arrow{>}}}
    ]
\node (w0) at (-1.5,0) {};
\node (w1) at (1.5,0) {};
\coordinate (a) at (-1,-1);

\draw[dashed] (w0) -- (w1) node[pos=0.5,above] (k0) {$2$};

\node[gray] at (w0)[left=5pt] {$w_1$};
\node[gray] at (w1)[right=5pt] {$w_2$};
\foreach \n in {w0,w1}
  \node at (\n)[gray,circle,fill,inner sep=3pt]{};

\draw[double,red,thick] (k0) -- (k0|-a);

\end{tikzpicture}
}}\right).
\end{equation}
Let's first consider the case $t=q^{\alpha_1} = q^{\alpha_2} =1$ consisting in working with $\BZ$-homology, namely without considering the cover. As $t=1$, the above relation becomes:
\begin{equation*}
\left(\vcenter{\hbox{
\begin{tikzpicture}[decoration={
    markings,
    mark=at position 0.5 with {\arrow{>}}}
    ]
\node (w0) at (-1,0) {};
\node (w1) at (1,0) {};
\coordinate (a) at (-1,-1);

\draw[postaction={decorate}] (w0) -- (w1);
\draw[postaction={decorate}] (w0) to[bend left=40] node[pos=0.3,above] (k1) {} (w1);

\node[gray] at (w0)[left=5pt] {$w_1$};
\node[gray] at (w1)[right=5pt] {$w_2$};
\foreach \n in {w0,w1}
  \node at (\n)[gray,circle,fill,inner sep=3pt]{};

%\draw[red] (k0) -- (k0|-a);
%\draw[red] (k1) -- (k1|-a);

\end{tikzpicture}
}}\right)
=0 .
\end{equation*}

The left term corresponds to an embedding of a square in $\Conf_2(D_2)$ (considering only two punctures $w_1$ and $w_2$ in the disk), itself defining a cycle in $X_2$ and a class in $\Hlf_2(X_2; \BZ)$. This square is represented in Figure \ref{carre1} where points $a,b,c,d$ are respectively $(w_1,w_1),(w_1,w_2),(w_2,w_2)$ and $(w_2,w_1)$ and gray tubes are parts of hyperplanes (equations written in the figure). The arrows recalls the orientation of embedded intervals. 
\begin{figure}[h!]
\begin{center}
\def\svgwidth{0.7\columnwidth}
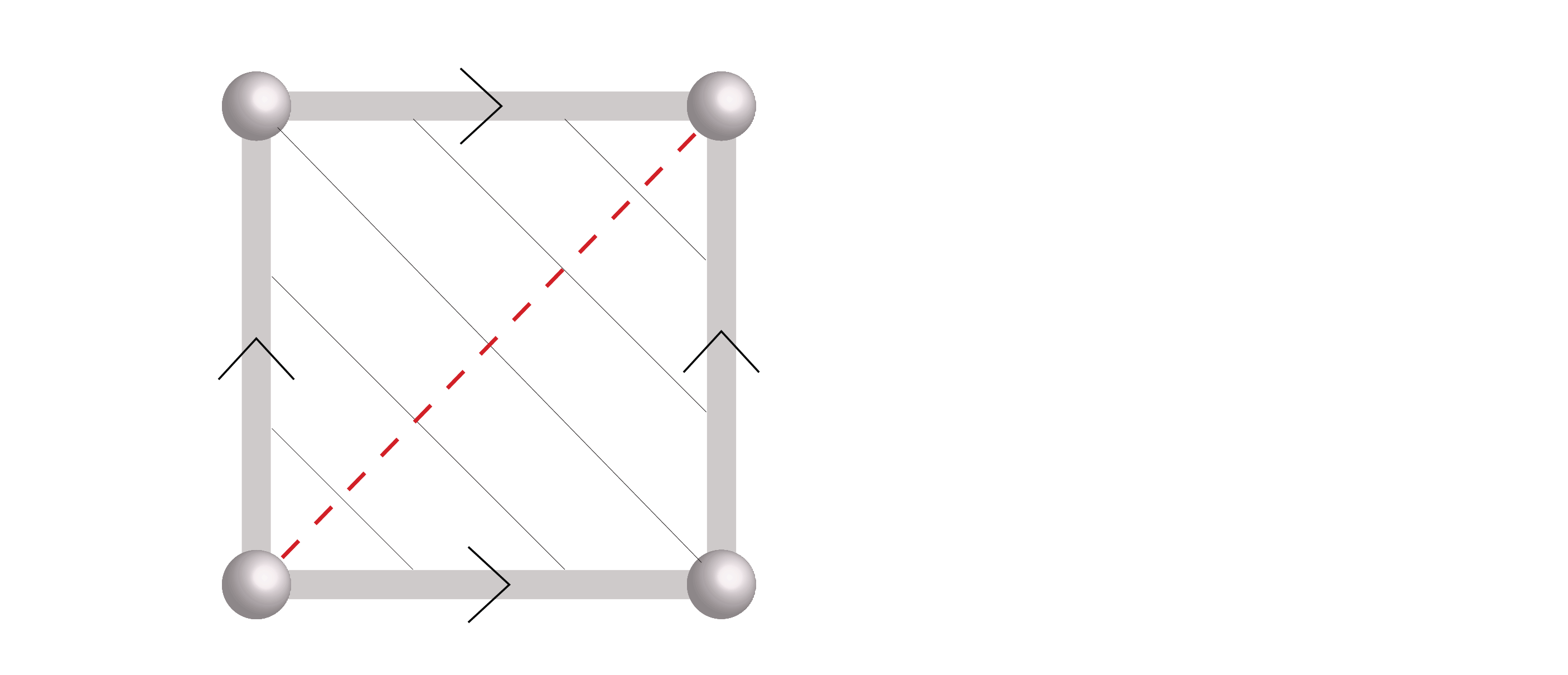
\caption{The square corresponding to the left term. \label{carre1}}
\end{center}
\end{figure}
%\end{example}

The red dashed diagonal stands to help the reader figuring out how to decompose the square as the gluing of two simplices. These two simplices are identified after making the quotient by the permutation group but with opposite orientations which justifies that this chain is zero, so it is as a homology class. Now we remove conditions $t=q^{\alpha_1} = q^{\alpha_2} =1$, and we go back to Equation \ref{theequation}. Then now $a,b,c,d$ and hyperplanes from Figure \ref{carre1} are lifted to $\widehat{Conf_2(D_2)}$, the cover associated with $\rho_2$. In Figure \ref{carre2}, the square from Figure \ref{carre1} is seen from another side so to make the hyperplane $\lbrace z_1 = z_2 \rbrace$ appeared. More precisely, Figure \ref{carre1} can be think as a top view of Figure \ref{carre2}. 

\begin{figure}[h!]
\begin{center}
\def\svgwidth{0.7\columnwidth}
%\def\svgscale{0.3}
%% Creator: Inkscape inkscape 0.92.5, www.inkscape.org
%% PDF/EPS/PS + LaTeX output extension by Johan Engelen, 2010
%% Accompanies image file '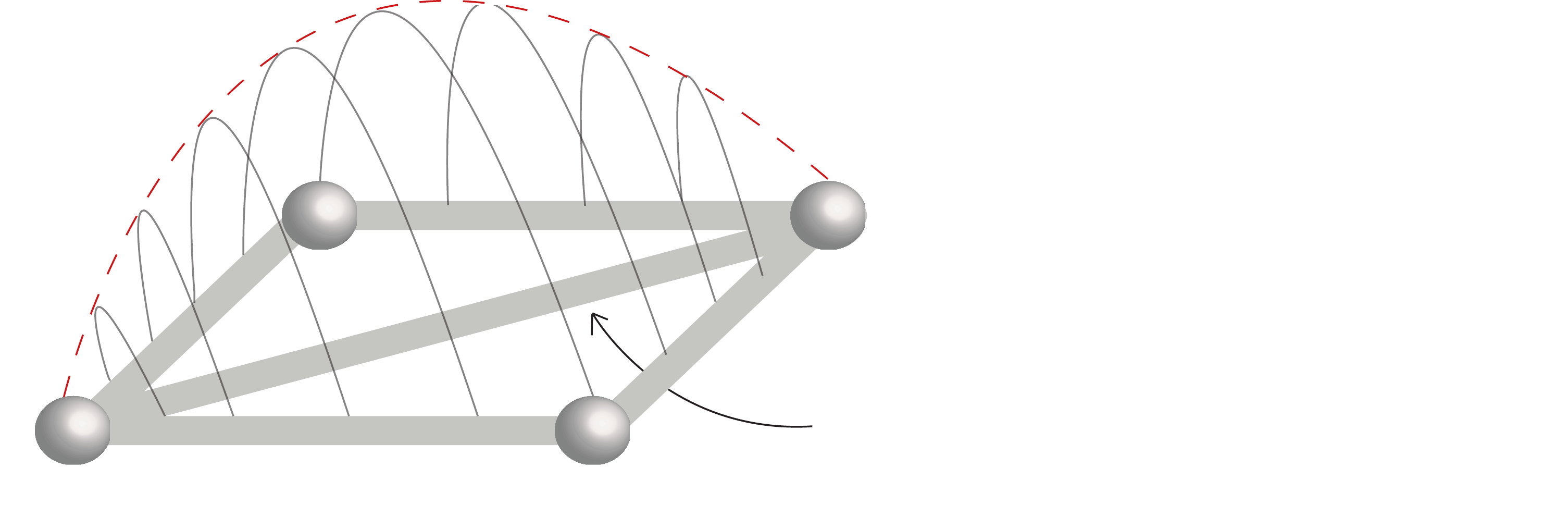' (pdf, eps, ps)
%%
%% To include the image in your LaTeX document, write
%%   \input{<filename>.pdf_tex}
%%  instead of
%%   \includegraphics{<filename>.pdf}
%% To scale the image, write
%%   \def\svgwidth{<desired width>}
%%   \input{<filename>.pdf_tex}
%%  instead of
%%   \includegraphics[width=<desired width>]{<filename>.pdf}
%%
%% Images with a different path to the parent latex file can
%% be accessed with the `import' package (which may need to be
%% installed) using
%%   \usepackage{import}
%% in the preamble, and then including the image with
%%   \import{<path to file>}{<filename>.pdf_tex}
%% Alternatively, one can specify
%%   \graphicspath{{<path to file>/}}
%% 
%% For more information, please see info/svg-inkscape on CTAN:
%%   http://tug.ctan.org/tex-archive/info/svg-inkscape
%%
\begingroup%
  \makeatletter%
  \providecommand\color[2][]{%
    \errmessage{(Inkscape) Color is used for the text in Inkscape, but the package 'color.sty' is not loaded}%
    \renewcommand\color[2][]{}%
  }%
  \providecommand\transparent[1]{%
    \errmessage{(Inkscape) Transparency is used (non-zero) for the text in Inkscape, but the package 'transparent.sty' is not loaded}%
    \renewcommand\transparent[1]{}%
  }%
  \providecommand\rotatebox[2]{#2}%
  \newcommand*\fsize{\dimexpr\f@size pt\relax}%
  \newcommand*\lineheight[1]{\fontsize{\fsize}{#1\fsize}\selectfont}%
  \ifx\svgwidth\undefined%
    \setlength{\unitlength}{866.80341461bp}%
    \ifx\svgscale\undefined%
      \relax%
    \else%
      \setlength{\unitlength}{\unitlength * \real{\svgscale}}%
    \fi%
  \else%
    \setlength{\unitlength}{\svgwidth}%
  \fi%
  \global\let\svgwidth\undefined%
  \global\let\svgscale\undefined%
  \makeatother%
  \begin{picture}(1,0.3216801)%
    \lineheight{1}%
    \setlength\tabcolsep{0pt}%
    \put(0,0){\includegraphics[width=\unitlength,page=1]{carre2.pdf}}%
    \put(0.52822011,0.04801065){\color[rgb]{0,0,0}\makebox(0,0)[lt]{\lineheight{1.25}\smash{\begin{tabular}[t]{l}$\lbrace z_1 = z_2 \rbrace$\end{tabular}}}}%
    \put(-0.00287289,0.01236195){\color[rgb]{0,0,0}\makebox(0,0)[lt]{\lineheight{1.25}\smash{\begin{tabular}[t]{l}$a$\end{tabular}}}}%
    \put(0.57332666,0.19278809){\color[rgb]{0,0,0}\makebox(0,0)[lt]{\lineheight{1.25}\smash{\begin{tabular}[t]{l}$c$\end{tabular}}}}%
    \put(0.40817856,0.00508671){\color[rgb]{0,0,0}\makebox(0,0)[lt]{\lineheight{1.25}\smash{\begin{tabular}[t]{l}$d$\end{tabular}}}}%
    \put(0.40817856,0.00508671){\color[rgb]{0,0,0}\makebox(0,0)[lt]{\lineheight{1.25}\smash{\begin{tabular}[t]{l}$d$\end{tabular}}}}%
  \end{picture}%
\endgroup%

\caption{The square corresponding to the left term. \label{carre2}}
\end{center}
\end{figure}

The isotopy crashing the plain arc on the dashed arc in the proof of the above Lemma \ref{plaintodashed} corresponds to an isotopy crashing the dashed surface from Figure \ref{carre2} on the ground square (containing the hyperplane $\lbrace z_1 = z_2 \rbrace$). Figure \ref{carre3} shows a movie of such an isotopy viewed from the top. 
\begin{figure}[h!]
\begin{center}
\def\svgwidth{0.7\columnwidth}
%\def\svgscale{0.3}
%% Creator: Inkscape inkscape 0.92.5, www.inkscape.org
%% PDF/EPS/PS + LaTeX output extension by Johan Engelen, 2010
%% Accompanies image file '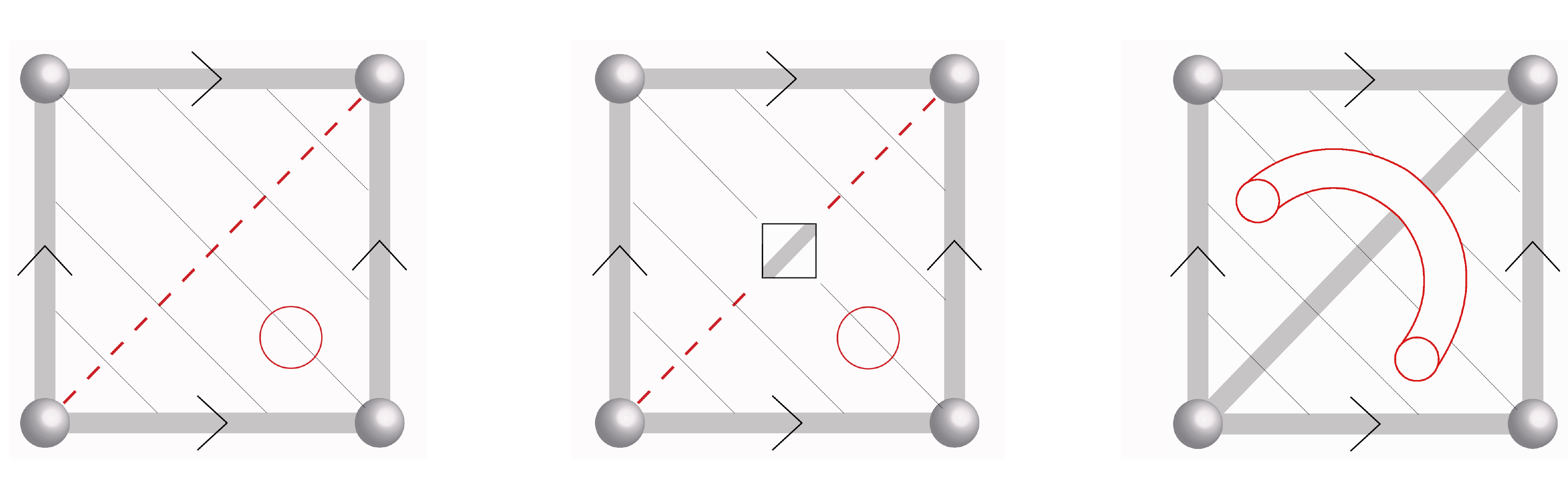' (pdf, eps, ps)
%%
%% To include the image in your LaTeX document, write
%%   \input{<filename>.pdf_tex}
%%  instead of
%%   \includegraphics{<filename>.pdf}
%% To scale the image, write
%%   \def\svgwidth{<desired width>}
%%   \input{<filename>.pdf_tex}
%%  instead of
%%   \includegraphics[width=<desired width>]{<filename>.pdf}
%%
%% Images with a different path to the parent latex file can
%% be accessed with the `import' package (which may need to be
%% installed) using
%%   \usepackage{import}
%% in the preamble, and then including the image with
%%   \import{<path to file>}{<filename>.pdf_tex}
%% Alternatively, one can specify
%%   \graphicspath{{<path to file>/}}
%% 
%% For more information, please see info/svg-inkscape on CTAN:
%%   http://tug.ctan.org/tex-archive/info/svg-inkscape
%%
\begingroup%
  \makeatletter%
  \providecommand\color[2][]{%
    \errmessage{(Inkscape) Color is used for the text in Inkscape, but the package 'color.sty' is not loaded}%
    \renewcommand\color[2][]{}%
  }%
  \providecommand\transparent[1]{%
    \errmessage{(Inkscape) Transparency is used (non-zero) for the text in Inkscape, but the package 'transparent.sty' is not loaded}%
    \renewcommand\transparent[1]{}%
  }%
  \providecommand\rotatebox[2]{#2}%
  \newcommand*\fsize{\dimexpr\f@size pt\relax}%
  \newcommand*\lineheight[1]{\fontsize{\fsize}{#1\fsize}\selectfont}%
  \ifx\svgwidth\undefined%
    \setlength{\unitlength}{1126.12375738bp}%
    \ifx\svgscale\undefined%
      \relax%
    \else%
      \setlength{\unitlength}{\unitlength * \real{\svgscale}}%
    \fi%
  \else%
    \setlength{\unitlength}{\svgwidth}%
  \fi%
  \global\let\svgwidth\undefined%
  \global\let\svgscale\undefined%
  \makeatother%
  \begin{picture}(1,0.30925721)%
    \lineheight{1}%
    \setlength\tabcolsep{0pt}%
    \put(0,0){\includegraphics[width=\unitlength,page=1]{carre3.pdf}}%
    \put(0.28447424,0.14567425){\color[rgb]{0,0,0}\makebox(0,0)[lt]{\lineheight{1.25}\smash{\begin{tabular}[t]{l}$\to$\end{tabular}}}}%
    \put(0.66460973,0.13696279){\color[rgb]{0,0,0}\makebox(0,0)[lt]{\lineheight{1.25}\smash{\begin{tabular}[t]{l}$\to$\end{tabular}}}}%
    \put(-0.00221133,0.00708318){\color[rgb]{0,0,0}\makebox(0,0)[lt]{\lineheight{1.25}\smash{\begin{tabular}[t]{l}$a$\end{tabular}}}}%
    \put(-0.00141938,0.28901701){\color[rgb]{0,0,0}\makebox(0,0)[lt]{\lineheight{1.25}\smash{\begin{tabular}[t]{l}$b$\end{tabular}}}}%
    \put(0.25992381,0.2842653){\color[rgb]{0,0,0}\makebox(0,0)[lt]{\lineheight{1.25}\smash{\begin{tabular}[t]{l}$c$\end{tabular}}}}%
    \put(0.25675599,0.00391536){\color[rgb]{0,0,0}\makebox(0,0)[lt]{\lineheight{1.25}\smash{\begin{tabular}[t]{l}$d$\end{tabular}}}}%
  \end{picture}%
\endgroup%

\caption{The movie of the isotopy. \label{carre3}}
\end{center}
\end{figure}

The red circle shows where the path corresponding to red handles arrives. The last step of the movie shows the sum of two simplices glued from either side of the hyperplane $\lbrace z_1 = z_2 \rbrace$. The red tube shows how the red handle path has to bypass this hyperplane (involving the $t$ coefficient). All this isotopy happens in $\widehat{\Conf_2(D_2)}$. Working in $X_2$, upper and lower simplices are identified (with different lifts and orientation), so that in $\widehat{X_2}$ the end of the isotopy is equal to:

\begin{figure}[h!]
\begin{center}
\def\svgwidth{0.3\columnwidth}
%\def\svgscale{0.3}
%% Creator: Inkscape inkscape 0.92.5, www.inkscape.org
%% PDF/EPS/PS + LaTeX output extension by Johan Engelen, 2010
%% Accompanies image file '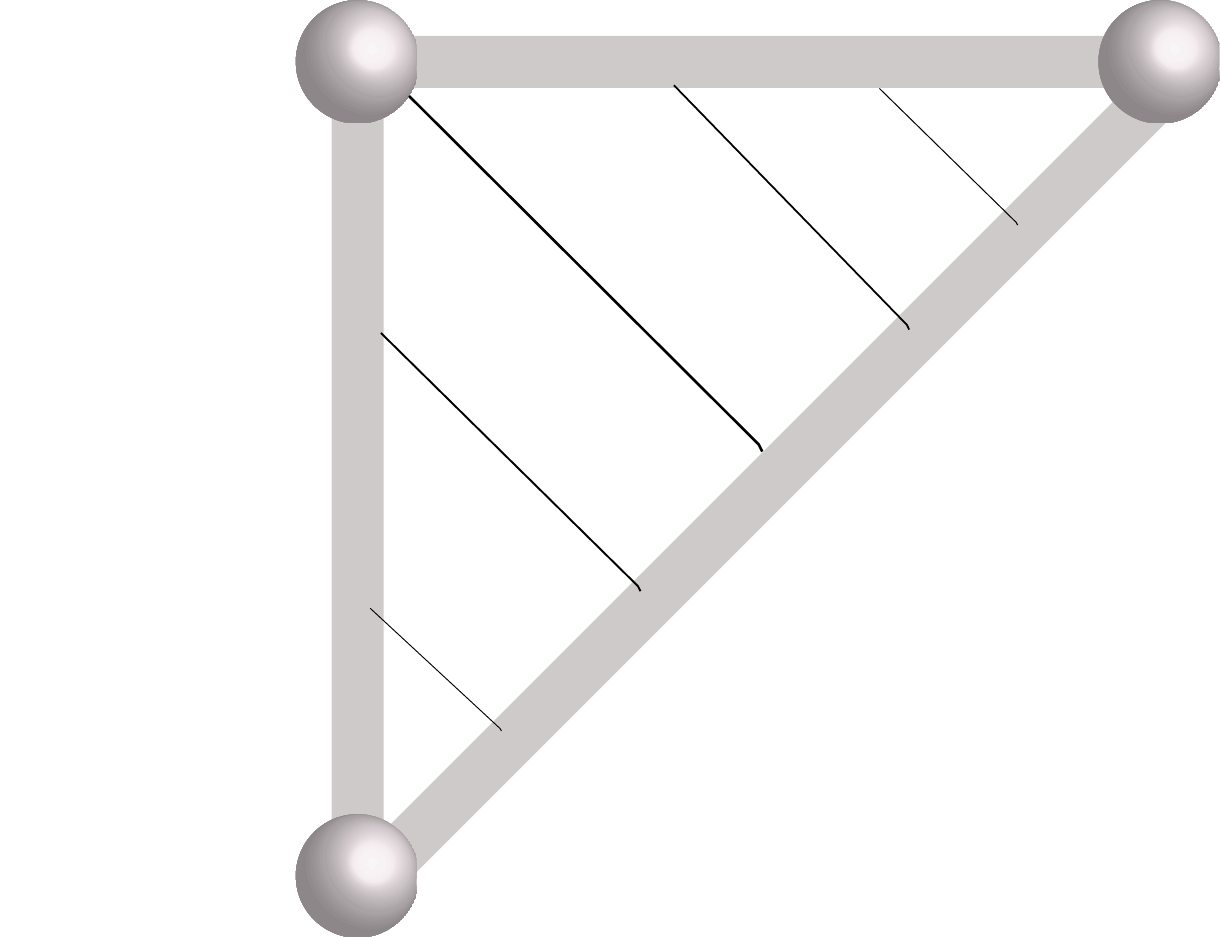' (pdf, eps, ps)
%%
%% To include the image in your LaTeX document, write
%%   \input{<filename>.pdf_tex}
%%  instead of
%%   \includegraphics{<filename>.pdf}
%% To scale the image, write
%%   \def\svgwidth{<desired width>}
%%   \input{<filename>.pdf_tex}
%%  instead of
%%   \includegraphics[width=<desired width>]{<filename>.pdf}
%%
%% Images with a different path to the parent latex file can
%% be accessed with the `import' package (which may need to be
%% installed) using
%%   \usepackage{import}
%% in the preamble, and then including the image with
%%   \import{<path to file>}{<filename>.pdf_tex}
%% Alternatively, one can specify
%%   \graphicspath{{<path to file>/}}
%% 
%% For more information, please see info/svg-inkscape on CTAN:
%%   http://tug.ctan.org/tex-archive/info/svg-inkscape
%%
\begingroup%
  \makeatletter%
  \providecommand\color[2][]{%
    \errmessage{(Inkscape) Color is used for the text in Inkscape, but the package 'color.sty' is not loaded}%
    \renewcommand\color[2][]{}%
  }%
  \providecommand\transparent[1]{%
    \errmessage{(Inkscape) Transparency is used (non-zero) for the text in Inkscape, but the package 'transparent.sty' is not loaded}%
    \renewcommand\transparent[1]{}%
  }%
  \providecommand\rotatebox[2]{#2}%
  \newcommand*\fsize{\dimexpr\f@size pt\relax}%
  \newcommand*\lineheight[1]{\fontsize{\fsize}{#1\fsize}\selectfont}%
  \ifx\svgwidth\undefined%
    \setlength{\unitlength}{351.17782583bp}%
    \ifx\svgscale\undefined%
      \relax%
    \else%
      \setlength{\unitlength}{\unitlength * \real{\svgscale}}%
    \fi%
  \else%
    \setlength{\unitlength}{\svgwidth}%
  \fi%
  \global\let\svgwidth\undefined%
  \global\let\svgscale\undefined%
  \makeatother%
  \begin{picture}(1,0.76816636)%
    \lineheight{1}%
    \setlength\tabcolsep{0pt}%
    \put(0,0){\includegraphics[width=\unitlength,page=1]{carre4bis.pdf}}%
    \put(-0.00709109,0.39501728){\color[rgb]{0,0,0}\makebox(0,0)[lt]{\lineheight{1.25}\smash{\begin{tabular}[t]{l}$(1-t)$\end{tabular}}}}%
  \end{picture}%
\endgroup%

\caption{The end of the isotopy considered in $\Habs_2$. \label{carre4}}
\end{center}
\end{figure}

as a class in $\Habs_2$. This is the right term of Equation \ref{theequation}. 

\end{example}

From Lemma \ref{plaintodashed} we deduce several corollaries. A first straightforward consequence of Lemma \ref{plaintodashed} is the following.

\begin{coro}\label{1forkto1code}
Let $k>1$ be an integer, the following equality holds in $\Hrelm_{\bullet}$:
\begin{equation*}
\left(\vcenter{\hbox{
\begin{tikzpicture}[decoration={
    markings,
    mark=at position 0.5 with {\arrow{>}}}
    ]
\node (w0) at (-1,0) {};
\node (w1) at (1,0) {};
\coordinate (a) at (-1,-1);

\draw[postaction={decorate}] (w0) to[bend left=40] node[pos=0.3,above] (k1) {} (w1);
\node at (w1)[above=4pt,left=23pt] {.};
\node at (w1)[left=23pt] {.};
\node at (w1)[below=4pt,left=23pt] {.};
\node at (w1)[above=20pt,left=20pt] {$k$};
\draw[postaction={decorate}] (w0) to[bend left=-40] node[pos=0.7,above] (kk) {} (w1);

\node[gray] at (w0)[left=5pt] {$w_i$};
\node[gray] at (w1)[right=5pt] {$w_j$};
\foreach \n in {w0,w1}
  \node at (\n)[gray,circle,fill,inner sep=3pt]{};

\draw[red] (k1) -- (k1|-a);
\draw[red] (kk) -- (kk|-a);

\end{tikzpicture}
}}\right)
=
(k)_{\mt}! \left(\vcenter{\hbox{
\begin{tikzpicture}[decoration={
    markings,
    mark=at position 0.5 with {\arrow{>}}}
    ]
\node (w0) at (-1.5,0) {};
\node (w1) at (1.5,0) {};
\coordinate (a) at (-1,-1);

\draw[dashed] (w0) -- (w1) node[pos=0.5,above] (k0) {$k$};

\node[gray] at (w0)[left=5pt] {$w_i$};
\node[gray] at (w1)[right=5pt] {$w_j$};
\foreach \n in {w0,w1}
  \node at (\n)[gray,circle,fill,inner sep=3pt]{};

\draw[double,red,thick] (k0) -- (k0|-a);

\end{tikzpicture}
}}\right)
\end{equation*}.
\end{coro}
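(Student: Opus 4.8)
The plan is to prove the statement by induction on $k$, peeling off one plain arc at a time and absorbing it into a growing dashed arc by repeated use of Lemma \ref{plaintodashed}. The base of the induction is $k=1$: a plain arc carrying a single configuration point is, by the very construction of these classes, the class of a dashed arc indexed by $1$ (an open $1$-simplex embedded in the arc), and $(1)_{\mt}!=1$, so there is nothing to prove at that level.

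For the inductive step, assume the equality is known for $k-1$. In the left-hand picture, single out the innermost $k-1$ plain arcs together with their red handles. Since the combing and compressing operations are local, this sub-configuration is exactly the left-hand side of the statement for $k-1$, so by the induction hypothesis it equals $(k-1)_{\mt}!$ times the class obtained by replacing those $k-1$ plain arcs by a single dashed arc indexed by $k-1$, equipped with the corresponding $(k-1)$-handle. After this substitution the picture consists of a single plain arc lying on one side of a dashed arc indexed by $k-1$ joining $w_i$ to $w_j$; up to an isotopy straightening the dashed arc (which does not change the class), this is precisely the left-hand side of the first identity of Lemma \ref{plaintodashed} with $k$ replaced by $k-1$. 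Applying that lemma yields $(k)_{\mt}$ times a dashed arc indexed by $k$, and combining the two factors gives $(k-1)_{\mt}!\,(k)_{\mt}=(k)_{\mt}!$, as claimed.

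The only delicate point is the bookkeeping of the red handles. When invoking the induction hypothesis on the inner sub-bundle and then Lemma \ref{plaintodashed} on the remaining outer arc, one must verify that in each instance the handles join the same base-point coordinates along the same paths as in the statement being applied, and — crucially — that the form of Lemma \ref{plaintodashed} that is used is one of the two contributing a factor $(k)_{\mt}$ and not one of the two contributing $(k)_{\mt^{-1}}$; which case occurs is dictated, exactly as in the four cases of that lemma, by the side on which the handle meets the plain arc and by which side of the dashed arc it lies on, and is arranged by the same handle-rule manipulations (Remark \ref{handlerule}, Remark \ref{example0}) used in the proof of Lemma \ref{plaintodashed}. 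This handle bookkeeping is the main thing to be careful about; everything else is a direct iteration.
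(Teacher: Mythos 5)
Your proposal is correct and follows essentially the same route as the paper, whose proof is simply the statement that the result follows by recursion on $k$ with the recursion step supplied by Lemma \ref{plaintodashed}; you have merely spelled out the induction and rightly flagged the handle bookkeeping (which of the four cases of Lemma \ref{plaintodashed} applies, so that each step contributes $(i)_{\mt}$ rather than $(i)_{\mt^{-1}}$) as the only delicate point.
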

\begin{proof}
The proof is made by recursion on $k$. The recursion property is given by Lemma \ref{plaintodashed}. 
\end{proof}

Lemma \ref{plaintodashed} allows also one to compute the fusion between two dashed arcs.

\begin{coro}\label{dashedtodashed}
For integers $k,l>1$, there is the following relation between classes:
\begin{equation*}
\left(\vcenter{\hbox{
\begin{tikzpicture}[decoration={
    markings,
    mark=at position 0.5 with {\arrow{>}}}
    ]
\node (w0) at (-1,0) {};
\node (w1) at (1,0) {};
\coordinate (a) at (-1,-1);

\draw[dashed] (w0) to[bend right=30](w1) node[pos=0.7] (k0) {$k$};
\draw[dashed] (w0) to[bend left=30] node[pos=0.3,above] (k1) {$l$} (w1);

\node[gray] at (w0)[left=5pt] {$w_i$};
\node[gray] at (w1)[right=5pt] {$w_j$};
\foreach \n in {w0,w1}
  \node at (\n)[gray,circle,fill,inner sep=3pt]{};

\draw[double,red,thick] (k0) -- (k0|-a);
\draw[double,red,thick] (k1) -- (k1|-a);

\end{tikzpicture}
}}\right)
=
{{k+l}\choose{l}}_{\mt}
\left(\vcenter{\hbox{
\begin{tikzpicture}[decoration={
    markings,
    mark=at position 0.5 with {\arrow{>}}}
    ]
\node (w0) at (-1.5,0) {};
\node (w1) at (1.5,0) {};
\coordinate (a) at (-1,-1);

\draw[dashed] (w0) -- (w1) node[pos=0.5,above] (k0) {$(k+l)$};

\node[gray] at (w0)[left=5pt] {$w_i$};
\node[gray] at (w1)[right=5pt] {$w_j$};
\foreach \n in {w0,w1}
  \node at (\n)[gray,circle,fill,inner sep=3pt]{};

\draw[double,red,thick] (k0) -- (k0|-a);

\end{tikzpicture}
}}\right).
\end{equation*} 
\end{coro}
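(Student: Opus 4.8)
The plan is to reduce the two-arc fusion to the one-arc absorption rule of Lemma \ref{plaintodashed}, using Corollary \ref{1forkto1code} to pass between a dashed arc indexed by $l$ and a bundle of $l$ parallel plain arcs, and then to cancel a nonzero scalar by appealing to the fact that $\Hrelm_{\bullet}$ is free, hence torsion-free, over the integral domain $\Laurentmax$ (Proposition \ref{HrelTrick}).

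First I would unfold the arc indexed by $l$: replacing it by $l$ parallel plain arcs running alongside it only alters a neighbourhood of the region it occupies, and the dashed arc indexed by $k$ can be isotoped into a disjoint region, so Corollary \ref{1forkto1code} applies ``in context'' and gives
\[
(l)_{\mt}!\,\bigl[\,\text{dashed }k,\ \text{dashed }l\,\bigr] \;=\; \bigl[\,\text{dashed }k,\ l\text{ parallel plain arcs}\,\bigr].
\]
The legitimacy of applying a diagram relation in the presence of a further disjoint arc is exactly the content of the local compressing trick, Proposition \ref{compress2}, together with the disjoint-support observation of Remark \ref{chainwithdisjointsupport}. Next I would reabsorb the $l$ plain arcs into the dashed arc one at a time: fusing the first plain arc with the dashed arc indexed by $k$ via Lemma \ref{plaintodashed} (the case $k=1$ being covered by the Example following that lemma) yields $(k+1)_{\mt}$ times a dashed arc indexed by $k+1$; fusing the next plain arc contributes a factor $(k+2)_{\mt}$, and so on. Hence
\[
\bigl[\,\text{dashed }k,\ l\text{ plain arcs}\,\bigr] \;=\; (k+1)_{\mt}(k+2)_{\mt}\cdots(k+l)_{\mt}\;\bigl[\,\text{dashed }(k+l)\,\bigr] \;=\; \frac{(k+l)_{\mt}!}{(k)_{\mt}!}\;\bigl[\,\text{dashed }(k+l)\,\bigr].
\]

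Combining the two identities and using the elementary $q$-factorial relation $(k+l)_{\mt}!/(k)_{\mt}! = (l)_{\mt}!\,{{k+l}\choose{l}}_{\mt}$, I obtain $(l)_{\mt}!\,\bigl[\text{dashed }k,\text{dashed }l\bigr] = (l)_{\mt}!\,{{k+l}\choose{l}}_{\mt}\,\bigl[\text{dashed }(k+l)\bigr]$. Since $(l)_{\mt}! = \prod_{i=1}^{l}(i)_{\mt}$ is a nonzero element of the integral domain $\Laurentmax$ and $\Hrelm_{\bullet}$ is $\Laurentmax$-free by Proposition \ref{HrelTrick}, hence torsion-free, I may cancel $(l)_{\mt}!$ to conclude. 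I expect the only delicate point to be the ``in context'' justification in the first two steps --- that applying Corollary \ref{1forkto1code} and Lemma \ref{plaintodashed} to a sub-configuration does not disturb the rest --- which is handled by the local form of the compressing trick; a completely self-contained alternative would be to rerun the compressing-trick argument of Lemma \ref{plaintodashed} directly on the two dashed arcs, where the $k$ points and the $l$ points get interleaved in all $\binom{k+l}{l}$ possible shuffles, each contributing its sign times $t$ raised to its number of inversions, and to recognise the resulting signed $t$-weighted count as ${{k+l}\choose{l}}_{\mt}$ via the standard shuffle description of the Gaussian binomial coefficient.
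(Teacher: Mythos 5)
Your proof is correct and follows essentially the same route as the paper: the paper unfolds \emph{both} dashed arcs into $k+l$ plain arcs via Corollary \ref{1forkto1code}, obtaining $(k)_{\mt}!(l)_{\mt}!$ times the left-hand class equal to $(k+l)_{\mt}!$ times the right-hand one, and then cancels, whereas you unfold only the $l$-arc and reabsorb the plain arcs one at a time via Lemma \ref{plaintodashed} --- the same computation organized slightly differently. Your explicit appeal to torsion-freeness of the free $\Laurentmax$-module to justify cancelling $(l)_{\mt}!$ is a point the paper leaves implicit, and is a welcome addition.
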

\begin{proof}
The two following equalities are direct consequences of previous Corollary \ref{plaintodashed}.
\[
\begin{array}{rl}
(k)_{\mt}! (l)_{\mt}! \left(\vcenter{\hbox{
\begin{tikzpicture}[decoration={
    markings,
    mark=at position 0.5 with {\arrow{>}}}
    ]
\node (w0) at (-1,0) {};
\node (w1) at (1,0) {};
\coordinate (a) at (-1,-1);

\draw[dashed] (w0) to[bend right=30](w1) node[pos=0.7] (k0) {$k$};
\draw[dashed] (w0) to[bend left=30] node[pos=0.3,above] (k1) {$l$} (w1);

\node[gray] at (w0)[left=5pt] {$w_i$};
\node[gray] at (w1)[right=5pt] {$w_j$};
\foreach \n in {w0,w1}
  \node at (\n)[gray,circle,fill,inner sep=3pt]{};

\draw[double,red,thick] (k0) -- (k0|-a);
\draw[double,red,thick] (k1) -- (k1|-a);

\end{tikzpicture}
}}\right) &
=
\left(\vcenter{\hbox{
\begin{tikzpicture}[decoration={
    markings,
    mark=at position 0.5 with {\arrow{>}}}
    ]
\node (w0) at (-1,0) {};
\node (w1) at (1,0) {};
\coordinate (a) at (-1,-1);

\draw[postaction={decorate}] (w0) to[bend left=40] node[pos=0.3,above] (k1) {} (w1);
\node at (w1)[above=4pt,left=23pt] {.};
\node at (w1)[left=23pt] {.};
\node at (w1)[below=4pt,left=23pt] {.};
\node at (w1)[above=20pt,left=20pt] {$k+l$};
\draw[postaction={decorate}] (w0) to[bend left=-40] node[pos=0.7,above] (kk) {} (w1);

\node[gray] at (w0)[left=5pt] {$w_i$};
\node[gray] at (w1)[right=5pt] {$w_j$};
\foreach \n in {w0,w1}
  \node at (\n)[gray,circle,fill,inner sep=3pt]{};

\draw[red] (k1) -- (k1|-a);
\draw[red] (kk) -- (kk|-a);

\end{tikzpicture}
}}\right) \\
& = (k+l)_{\mt}! 
\left(\vcenter{\hbox{
\begin{tikzpicture}[decoration={
    markings,
    mark=at position 0.5 with {\arrow{>}}}
    ]
\node (w0) at (-1.5,0) {};
\node (w1) at (1.5,0) {};
\coordinate (a) at (-1,-1);

\draw[dashed] (w0) -- (w1) node[pos=0.5,above] (k0) {$(k+l)$};

\node[gray] at (w0)[left=5pt] {$w_i$};
\node[gray] at (w1)[right=5pt] {$w_j$};
\foreach \n in {w0,w1}
  \node at (\n)[gray,circle,fill,inner sep=3pt]{};

\draw[double,red,thick] (k0) -- (k0|-a);

\end{tikzpicture}
}}\right).
\end{array}
\]
%To prove this, we need $(k)_t!$ and $(l)_t!$ to be invertible, so that we work for generic values of $t$ or in the field of fractions $\LaurentFrac$. 
%Dividing the dashed arc indexed by $l$ into $l$ parallel plain arcs multiply the class by $\frac{1}{(l)_t!}$, from corollary \ref{1forkto1code}. Then we add one by one the plain arcs to the dashed one indexed by $k$. It multiplies the class by $(k+1)_t$ to add the first one, $(k+2)_t$ for the second, and so on until $(k+l)_t$ to add the final one. Finally the class has been multiplied by:
%\[
%\frac{1}{(l)_t!}\prod_{i=1}^{l} (k+i)_t = {{k+l}\choose{l}}_t
%\]
%which corresponds to the result of the proposition. 
One concludes using the integral equality:
\[
(k+l)_{\mt}! = (k)_{\mt}!(l)_{\mt}! {{k+l}\choose{l}}_{\mt}
\]
and simplification by $(k)_{\mt}!(l)_{\mt}!$.
\end{proof}

\subsection{Basis of multi-arcs}

We recall that $\CA'$ and $\CU$ are respectively families of code sequences and of multi-arcs, and that $\CU$ was shown to be a basis of $\Hrelm_r$ as an $\Laurentmax$-module, see Corollary \ref{HrelStruc}. We prove a proposition relating multi-arcs with code sequences. 

\begin{prop}\label{ArcstoCodes}
Let ${\bf k} \in \EZnr$. There is the following expression for the standard multi-arc in terms of code sequences.
\begin{align*}
A'(k_0, \ldots , k_{n-1}) = & \sum_{l_{n-1} = 0}^{k_{n-1}}  \sum_{l_{n-2} = 0}^{k_{n-2}+l_{n-1}} \cdots  \sum_{l_{1} = 0}^{k_{1}+l_2} \left(  \prod_{i=0}^{n-2} {{k_i+l_{i+1}}\choose{l_{i+1}}}_{\mt} U(k_0',k_1'',\ldots,k_{n-2}'',k_{n-1}') \right)\\
\end{align*}
where $k_0'=k_0+l_1,k_{n-1}'= k_{n-1}-l_{n-1}$ and $k_i''= k_{i}+l_{i+1}-l_{i}$ for $i=1, \ldots , n-2$. 
\end{prop}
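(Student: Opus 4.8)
The plan is to prove the identity by induction on the number of punctures $n$, peeling one arc off the multi-arc $A'(k_0,\ldots,k_{n-1})$ at each step. For $n=1$ the multi-arc is a single dashed arc from $w_0$ to $w_1$ carrying $k_0$ points, which is by definition the code sequence $U(k_0)$; the right-hand side is an empty sum with empty product, so the base case is immediate. Recall that for general $n$ the arcs of $A'(k_0,\ldots,k_{n-1})$ are nested, the $i$-th one running from $w_0$ to $w_i$ underneath $w_1,\ldots,w_{i-1}$ and carrying $k_{i-1}$ points.

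For the inductive step I would first apply the combing process (Proposition~\ref{combing}), in the concrete form of Example~\ref{breakingdashed}, to the outermost arc --- the one from $w_0$ to $w_n$, passing under $w_1,\ldots,w_{n-1}$ --- breaking it at the puncture $w_{n-1}$ immediately preceding its endpoint. This produces
\[
A'(k_0,\ldots,k_{n-1}) \;=\; \sum_{l_{n-1}=0}^{k_{n-1}} A'_{l_{n-1}},
\]
where $A'_{l_{n-1}}$ keeps the arcs to $w_1,\ldots,w_{n-1}$ unchanged and adds a new arc from $w_0$ to $w_{n-1}$ carrying $l_{n-1}$ points together with a straight arc from $w_{n-1}$ to $w_n$ carrying $k_{n-1}-l_{n-1}$ points, disjoint from everything else. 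Since the arc from $w_0$ to $w_n$ ran alongside the arc to $w_{n-1}$ indexed $k_{n-2}$, the new arc to $w_{n-1}$ and that old arc cobound a disk, so I would fuse them using Corollary~\ref{dashedtodashed}, picking up the scalar ${{k_{n-2}+l_{n-1}}\choose{l_{n-1}}}_{\mt}$ and an arc from $w_0$ to $w_{n-1}$ indexed $k_{n-2}+l_{n-1}$. Both moves are local: Example~\ref{breakingdashed} and Corollary~\ref{dashedtodashed} are stated for diagrams agreeing outside a box, and the underlying manipulations are the local compressing trick (Proposition~\ref{compress2}) and the disjoint-support construction (Remark~\ref{chainwithdisjointsupport}); hence they can be carried out inside the $(n-1)$-punctured sub-disk containing $w_0,w_1,\ldots,w_{n-1}$ while the leftover arc from $w_{n-1}$ to $w_n$ is carried along untouched.

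After fusing, $A'_{l_{n-1}}$ equals ${{k_{n-2}+l_{n-1}}\choose{l_{n-1}}}_{\mt}$ times the multi-arc $A'(k_0,\ldots,k_{n-3},k_{n-2}+l_{n-1})$ on the $(n-1)$-punctured disk, with the straight arc indexed $k_{n-1}-l_{n-1}$ appended on the right. Applying the induction hypothesis to this smaller multi-arc expresses it as a nested sum over $l_1,\ldots,l_{n-2}$ (with $l_{n-2}$ ranging up to $k_{n-2}+l_{n-1}$) of $\prod_{i=0}^{n-3}{{k_i+l_{i+1}}\choose{l_{i+1}}}_{\mt}$ times a code sequence on $w_0,\ldots,w_{n-1}$; appending the disjoint straight arc makes each of these a code sequence on $w_0,\ldots,w_n$ with first entry $k_0+l_1$, last entry $k_{n-1}-l_{n-1}$, and intermediate entries $k_i+l_{i+1}-l_i$ (the former last entry $k_{n-2}+l_{n-1}-l_{n-2}$ of the inner code sequence becoming the new $(n-1)$-st entry $k''_{n-2}$). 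Multiplying the fusion scalar into the product coming from the induction hypothesis yields $\prod_{i=0}^{n-2}{{k_i+l_{i+1}}\choose{l_{i+1}}}_{\mt}$, and collecting the sum over $l_{n-1}$ with the nested sums over $l_1,\ldots,l_{n-2}$ reproduces exactly the claimed formula, closing the induction.

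The genuinely conceptual point is the locality invoked above: one must check that ``break the outermost arc, then fuse its upper piece with the next arc'' can legitimately be done one arc at a time, i.e.\ that Example~\ref{breakingdashed} and Corollary~\ref{dashedtodashed}, being local identities, remain valid with their boxes sitting inside a sub-disk of $D_n$ and the rest of the nested multi-arc carried along; this follows from the local versions of the compressing and combing arguments, but it deserves an explicit remark. The remaining work is pure bookkeeping --- substituting $k_{n-2}+l_{n-1}$ for the last parameter in the $(n-1)$-puncture formula and matching every binomial exponent and every code-sequence entry --- routine but easy to slip on. I would also record the degenerate cases of the fusion step, $l_{n-1}\in\{0,1\}$ or $k_{n-2}\le 1$, where ${{k_{n-2}+l_{n-1}}\choose{l_{n-1}}}_{\mt}$ with the usual conventions still gives the right coefficient (reducing, when needed, to Corollary~\ref{1forkto1code} or to a trivial identity).
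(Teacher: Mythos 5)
Your proof is correct and follows essentially the same route as the paper's: the paper performs exactly your break-at-$w_{n-1}$ (Example \ref{breakingdashed}) followed by fusion with the adjacent arc (Corollary \ref{dashedtodashed}) and then iterates arc by arc, which is precisely your induction on $n$ unwound. Your explicit attention to the locality of the two moves inside a sub-disk and to the degenerate cases is a welcome addition, but it does not change the argument.
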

\begin{proof}
Let ${\bf k} \in \EZnr$ and $A'$ its associated multi-arcs. We treat one by one the dashed arcs of $A'$ starting by the one ending at $w_n$ then the one ending at $w_{n-1}$ and so on. The first step is the following:
\begin{align*}
\left(\vcenter{\hbox{
\begin{tikzpicture}[decoration={
    markings,
    mark=at position 0.5 with {\arrow{>}}}
    ]
\node (w0) at (-1.5,0) {};
\node[gray] at (0,0) {$\ldots$};
\node (wn) at (2.5,0) {};
\node (wn1) at (1.5,0) {};
\coordinate (a) at (-2,-2);
\draw[dashed] (w0) to[bend right=50] node[pos=0.7] (k0) {\small $k_{n-1}$} (wn);
\draw[dashed] (w0) to[bend right=30] node[pos=0.3] (k1) {\small $k_{n-2}$} (wn1);
\node[gray] at (w0)[left=5pt] {$w_0$};
\node[gray] at (wn)[above=5pt] {$w_n$};
\node[gray] at (wn1)[above=5pt] {$w_{n-1}$};
\foreach \n in {w0,wn,wn1}
  \node at (\n)[gray,circle,fill,inner sep=3pt]{};
\draw[double,red,thick] (k0) -- (k0|-a);
\draw[double,red,thick] (k1) -- (k1|-a);
\draw[gray,thick] (-1.5,0.3) -- (w0) -- (-1.5,-2.5);
\end{tikzpicture}
}} \right)
& =
\sum_{l_{n-1}=0}^{k_{n-1}}
\left(\vcenter{\hbox{
\begin{tikzpicture}[decoration={
    markings,
    mark=at position 0.5 with {\arrow{>}}}
    ]
\node (w0) at (-1.5,0) {};
\node[gray] at (0,0) {$\ldots$};
\node (wn) at (2.5,0) {};
\node (wn1) at (1.5,0) {};
\coordinate (a) at (-2,-2);
\draw[dashed] (w0) to[bend right=80] node[pos=0.7] (k0) {\small $l_{n-1}$} (wn1);
\draw[dashed] (w0) to[bend right=30] node[pos=0.3,above] (k1) {\small $k_{n-2}$} (wn1);
\draw[dashed] (wn1) -- (wn) node[above,pos=0.5] (k2) {\small $k_{n-1}'$};
\node[gray] at (w0)[left=5pt] {$w_0$};
\node[gray] at (wn)[above=5pt] {$w_n$};
\node[gray] at (wn1)[above left] {$w_{n-1}$};
\foreach \n in {w0,wn,wn1}
  \node at (\n)[gray,circle,fill,inner sep=3pt]{};
\draw[double,red,thick] (k0) -- (k0|-a);
\draw[double,red,thick] (k1) -- (k1|-a);
\draw[double,red,thick] (k2) -- (k2|-a);
\draw[gray,thick] (-1.5,0.3) -- (w0) -- (-1.5,-2.5);
\end{tikzpicture}
}} \right)\\
& =
\sum_{l_{n-1}=0}^{k_{n-1}} {{k_{n-2}+l_{n-1}}\choose{l_{n-1}}}_{\mt}
\left(\vcenter{\hbox{
\begin{tikzpicture}[decoration={
    markings,
    mark=at position 0.5 with {\arrow{>}}}
    ]
\node (w0) at (-1.5,0) {};
\node[gray] at (0,0) {$\ldots$};
\node (wn) at (2.5,0) {};
\node (wn1) at (1.5,0) {};
\coordinate (a) at (-2,-2);
\draw[dashed] (w0) to[bend right=60] node[above=0.2pt,pos=0.5] (k0) {\tiny $(k_{n-2}+l_{n-1})$} (wn1);
%\draw[dashed] (w0) to[bend right=30] node[above=0.2pt,pos=0.3,above] (k1) {\tiny $k_{n-2}$} (wn1);
\draw[dashed] (wn1) -- (wn) node[above=1pt,pos=0.5] (k2) {\tiny $k_{n-1}'$};
\node[gray] at (w0)[left=5pt] {$w_0$};
\node[gray] at (wn)[above=5pt] {$w_n$};
\node[gray] at (wn1)[above left] {$w_{n-1}$};
\foreach \n in {w0,wn,wn1}
  \node at (\n)[gray,circle,fill,inner sep=3pt]{};
\draw[double,red,thick] (k0) -- (k0|-a);
%\draw[double,red,thick] (k1) -- (k1|-a);
\draw[double,red,thick] (k2) -- (k2|-a);
\draw[gray,thick] (-1.5,0.3) -- (w0) -- (-1.5,-2.5);
\end{tikzpicture}
}}\right)
\end{align*}
with $k_{n-1}'=k_{n-1}-l_{n-1}$. The first equality is a breaking of dashed arc, see Example \ref{breakingdashed}. The second equality is a direct application of Corollary \ref{dashedtodashed}. The end of the proof is an iteration of this process. Next step is the following, with $k_{n-2}'=k_{n-2}+l_{n-1}$:
\begin{align*}
\left(\vcenter{\hbox{
\begin{tikzpicture}[decoration={
    markings,
    mark=at position 0.5 with {\arrow{>}}}
    ]
\node (w0) at (-1.5,0) {};
\node[gray] at (-0.5,0) {$\ldots$};
\node (wn) at (2.5,0) {};
\node (wn1) at (1.5,0) {};
\node (wn2) at (0.5,0) {};
\coordinate (a) at (-2,-2);
\draw[dashed] (w0) to[bend right=70] node[above,pos=0.7] (k0) {\small $k_{n-2}'$} (wn1);
%\draw[dashed] (w0) to[bend right=30] node[above=0.2pt,pos=0.3] (k1) {\tiny $k_{n-2}$} (wn1);
\draw[dashed] (wn1) -- (wn) node[above,pos=0.5] (k2) {\tiny $k_{n-1}'$};
\draw[dashed] (w0) to[bend right=20] node[pos=0.4] (k3) {\small $k_{n-3}$} (wn2);
\node[gray] at (w0)[left=5pt] {$w_0$};
\node[gray] at (wn)[above=5pt] {$w_n$};
%\node[gray] at (wn1)[above=5pt] {$w_{n-1}$};
\node[gray] at (wn2)[above=5pt] {$w_{n-2}$};
\foreach \n in {w0,wn,wn1,wn2}
  \node at (\n)[gray,circle,fill,inner sep=3pt]{};
\draw[double,red,thick] (k0) -- (k0|-a);
%\draw[double,red,thick] (k1) -- (k1|-a);
\draw[double,red,thick] (k2) -- (k2|-a);
\draw[double,red,thick] (k3) -- (k3|-a);
\draw[gray,thick] (-1.5,0.3) -- (w0) -- (-1.5,-2.5);
\end{tikzpicture}
}}\right)
& = \sum_{l_{n-2}=0}^{k_{n-2}'}
\left(\vcenter{\hbox{
\begin{tikzpicture}[decoration={
    markings,
    mark=at position 0.5 with {\arrow{>}}}
    ]
\node (w0) at (-1.5,0) {};
\node[gray] at (-0.5,0) {$\ldots$};
\node (wn) at (2.5,0) {};
\node (wn1) at (1.5,0) {};
\node (wn2) at (0.5,0) {};
\coordinate (a) at (-2,-2);
\draw[dashed] (wn1) to node[above,pos=0.5] (k0) {\tiny $k_{n-2}''$} (wn2);
\draw[dashed] (wn1) -- (wn) node[above,pos=0.5] (k2) {\tiny $k_{n-1}'$};
\draw[dashed] (w0) to[bend right=20] node[pos=0.3] (k3) {\small $k_{n-3}$} (wn2);
\draw[dashed] (w0) to[bend right=90] node[above,pos=0.6] (k4) {\small $l_{n-2}$} (wn2);
\node[gray] at (w0)[left=5pt] {$w_0$};
\node[gray] at (wn)[above=5pt] {$w_n$};
%\node[gray] at (wn1)[above=5pt] {$w_{n-1}$};
%\node[gray] at (wn2)[above=5pt] {$w_{n-2}$};
\foreach \n in {w0,wn,wn1,wn2}
  \node at (\n)[gray,circle,fill,inner sep=3pt]{};
\draw[double,red,thick] (k0) -- (k0|-a);
%\draw[double,red,thick] (k1) -- (k1|-a);
\draw[double,red,thick] (k2) -- (k2|-a);
\draw[double,red,thick] (k3) -- (k3|-a);
\draw[double,red,thick] (k4) -- (k4|-a);
\draw[gray,thick] (-1.5,0.3) -- (w0) -- (-1.5,-2.5);
\end{tikzpicture}
}}\right) \\
& = \sum_{l_{n-2}=0}^{k_{n-2}'} {{k_{n-3}+l_{n-2}}\choose{l_{n-2}}}_{\mt}
\left(\vcenter{\hbox{
\begin{tikzpicture}[decoration={
    markings,
    mark=at position 0.5 with {\arrow{>}}}
    ]
\node (w0) at (-1.5,0) {};
\node[gray] at (-0.5,0) {$\ldots$};
\node (wn) at (2.5,0) {};
\node (wn1) at (1.5,0) {};
\node (wn2) at (0.5,0) {};
\coordinate (a) at (-2,-2);
\draw[dashed] (wn1) to node[above,pos=0.5] (k0) {\tiny $k_{n-2}''$} (wn2);
\draw[dashed] (wn1) -- (wn) node[above,pos=0.5] (k2) {\tiny $k_{n-1}'$};
\draw[dashed] (w0) to[bend right=80] node[above,pos=0.5] (k3) {\small $k_{n-3}+l_{n-2}$} (wn2);
\node[gray] at (w0)[left=5pt] {$w_0$};
\node[gray] at (wn)[above=5pt] {$w_n$};
%\node[gray] at (wn1)[above=5pt] {$w_{n-1}$};
%\node[gray] at (wn2)[above=5pt] {$w_{n-2}$};
\foreach \n in {w0,wn,wn1,wn2}
  \node at (\n)[gray,circle,fill,inner sep=3pt]{};
\draw[double,red,thick] (k0) -- (k0|-a);
%\draw[double,red,thick] (k1) -- (k1|-a);
\draw[double,red,thick] (k2) -- (k2|-a);
\draw[double,red,thick] (k3) -- (k3|-a);
\draw[gray,thick] (-1.5,0.3) -- (w0) -- (-1.5,-2.5);
\end{tikzpicture}
}} \right)
\end{align*}
where $k_{n-2}'' = k_{n-2}'-l_{n-2}$. A complete iteration of this process gives the formula of the proposition. 
\end{proof}

By looking at the diagonal terms of the matrix expressing mutli-arcs in the code sequence basis, one gets the following corollary.

\begin{coro}[Basis of multi-arcs]\label{Arcsbasis}
The family $\CA'$ of multi-arcs is a basis of $\Hrelm_r$ as an $\Laurentmax$-module.
\end{coro}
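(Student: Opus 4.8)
The plan is to deduce Corollary \ref{Arcsbasis} directly from the explicit change-of-basis formula in Proposition \ref{ArcstoCodes}, by observing that the transition matrix from the multi-arc family $\CA'$ to the code-sequence family $\CU$ is triangular with invertible diagonal entries. Since $\CU$ is already known to be a basis of $\Hrelm_r$ over $\Laurentmax$ by Corollary \ref{HrelStruc}, it suffices to exhibit an ordering of $\EZnr$ for which this transition matrix is triangular and has diagonal coefficients equal to units of $\Laurentmax$.

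First I would set up the ordering. Both $\CA'$ and $\CU$ are indexed by ${\bf k}\in\EZnr$. In the formula of Proposition \ref{ArcstoCodes}, the code sequence appearing in the term indexed by $(l_1,\ldots,l_{n-1})$ is $U(k_0',k_1'',\ldots,k_{n-2}'',k_{n-1}')$ with $k_0'=k_0+l_1$, $k_{n-1}'=k_{n-1}-l_{n-1}$ and $k_i''=k_i+l_{i+1}-l_i$. The ``extreme'' term is obtained for $l_1=\cdots=l_{n-1}=0$, which gives precisely $U(k_0,\ldots,k_{n-1})=U_{\bf k}$; all other terms have $l_{n-1}>0$ or, more to the point, have strictly more configuration points pushed to the left, i.e.\ the partial sums $k_0'+k_1''+\cdots+k_j''=k_0+\cdots+k_j+l_{j+1}$ are $\geq$ the corresponding partial sums for ${\bf k}$, with at least one strict inequality whenever some $l_{i}>0$. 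So I would order $\EZnr$ by the lexicographic order on the vector of partial sums $(k_0,\;k_0+k_1,\;\ldots,\;k_0+\cdots+k_{n-2})$ (equivalently, a dominance-type order): then $A'_{\bf k}$ is a $\Laurentmax$-linear combination of $U_{\bf k}$ and of $U_{\bf k'}$ with ${\bf k'}$ strictly larger in this order, i.e.\ the transition matrix is (upper) triangular.

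Next I would identify the diagonal entry. Taking all $l_{i}=0$ in the product $\prod_{i=0}^{n-2}{{k_i+l_{i+1}}\choose{l_{i+1}}}_{\mt}$ gives $\prod_{i=0}^{n-2}{{k_i}\choose{0}}_{\mt}=1$, so the coefficient of $U_{\bf k}$ in the expansion of $A'_{\bf k}$ is exactly $1$. Hence the transition matrix is triangular with all diagonal entries equal to $1$, in particular invertible over $\Laurentmax$, so it is an automorphism of the free module $\Hrelm_r$ and sends the basis $\CU$ to the family $\CA'$ (up to relabeling); therefore $\CA'$ is itself a basis.

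I do not expect a serious obstacle here: the only point requiring a little care is the \emph{verification that the chosen order really makes the matrix triangular}, i.e.\ checking that every code sequence $U(k_0',k_1'',\ldots,k_{n-1}')$ occurring with a nonzero coefficient and $(l_1,\ldots,l_{n-1})\neq 0$ is strictly larger than $U_{\bf k}$ in the partial-sums order. This is the step I would write out carefully: from $k_j'$-partial sum $=k_0+\cdots+k_j+l_{j+1}$ one sees the partial sums only increase, and the first index $j$ with $l_{j+1}>0$ produces a strict increase, so the combinatorial order works. Everything else (invertibility of a unipotent triangular matrix over $\Laurentmax$, hence the basis property) is immediate, and indeed this is exactly the argument the corollary's preamble alludes to with ``looking at the diagonal terms of the matrix.''
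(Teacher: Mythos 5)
Your argument is correct and is essentially the paper's own proof: both rely on Proposition \ref{ArcstoCodes} to see that the transition matrix from $\CA'$ to the known basis $\CU$ is triangular with all diagonal entries equal to $1$ (the $l_i=0$ term), hence invertible over $\Laurentmax$. Note that your "partial-sums" order coincides with the plain lexicographic order on $\EZnr$ used in the paper, since all tuples have the same total sum $r$, so even the ordering is the same.
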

\begin{proof}
Let $\EZnr$ being given the lexical order. This yields an order on families $\CA'$ and $\CU$. One can see from Proposition \ref{ArcstoCodes} that with this order, the matrix expressing multi-arcs in the code sequence basis is upper-triangular. The determinant of this matrix is given by the product of diagonal terms. The diagonal terms are the binomial in the sum of the formula from Proposition \ref{ArcstoCodes} corresponding to $l_i=0$ for all $i \in \lbrace 1,\ldots , n-1 \rbrace$. In these cases, the binomials are equal to $1$ so that the determinant of the matrix is $1$. As $\CU$ is a basis and the change of basis determinant is invertible, the proof is complete. 
\end{proof}

The family of multi-arcs will play a central role in this work as it is a basis of the homology thanks to this last result.

\section{Quantum algebra}\label{quantumalgebra}

This section is independent from previous ones. We fix notations for quantum algebra objects that will be recovered by the above introduced homological modules. 

%\subsection{Half-Lusztig version for $\Uq$}\label{halfLusztigversion}

The most standard definition of the quantum algebra $\Uq$ is as a vector space over a rational field.

\begin{defn}\label{Uqnaif}
The algebra $\Uq$ is the algebra over $\BQ(q)$ generated by elements $E,F$ and $K^{\pm 1}$, satisfying the following relations:
\begin{align*}
KEK^{-1}=q^2E & \text{ , } KFK^{-1}=q^{-2}F \\
\left[E, F \right] = \frac{K-K^{-1}}{q-q^{-1}} & \text{ and }
KK^{-1}=K^{-1}K=1 .
\end{align*}
The algebra $\Uq$ is endowed with a coalgebra structure defined by $\Delta$ and $\epsilon$ as follows:
\[
\begin{array}{rl}
\Delta(E)= 1\otimes E+ E\otimes K, & \Delta(F)= K^{-1}\otimes F+ F\otimes 1 \\
\Delta(K) = K \otimes K, & \Delta(K^{-1}) = K^{-1}\otimes K^{-1} \\
\epsilon(E) = \epsilon(F) = 0, & \epsilon(K) = \epsilon(K^{-1}) = 1
\end{array}
\]
and an antipode is defined as follows:
\[
S(E) = EK^{-1}, S(F)=-KF,S(K)=K^{-1},S(K^{-1}) = K.
\]
This provides a {\em Hopf algebra} structure (neither commutative nor cocommutative), so that the category of modules over $\Uq$ is monoidal, namely there is a natural action over tensor products of modules given by the coproduct. 
\end{defn}

\begin{rmk}[Specialization issue]\label{specializationissue}
The {\em specialization} process of the parameter $q$ is algebraically the following. 
Let $\xi \in \BC$ be a complex number. By specialization of $q$ to the parameter $\xi$ one considers the morphism:
\[
eval: \bfct
\BQ(q) & \to & \BC \\
q & \mapsto & \xi
\efct
\]
and the following complex vector space:
\[
U_{\xi} = \BC \otimes_{eval} \Uq .
\]
By working with $q$ as a variable, one can find problems to evaluate if $\xi$ is not transcendental for instance. To define quantum topological invariants from $\Uq$-modules, we are sometimes interested in $q$ being a root of unity, for which the ground ring $\BQ(q)$ is not appropriate. 
\end{rmk}

The above remark justifies the definition of integral versions for $\Uq$, the aim of next subsection.

\begin{defn}[Integral version, {\cite[\S~9.2]{C-P}}]
Let $\Laurent_0 = \BZ\left[ q^{\pm 1} \right]$ be the ring of Laurent polynomials in the single variable $q$. An {\em integral version} for $\Uq$ is an $\Laurent_0$-subalgebra $U_{\Laurent_0}$ of $\Uq$ such that the natural map:
\[
U_{\Laurent_0} \otimes_{\Laurent_0} \BQ(q) \to \Uq
\]
is an isomorphism of $\BQ(q)$ algebras.

Then, for $\xi \in \BC^*$, the specialization of $U_{\Laurent_0}$ to $\xi$ means the following vector space:
\[
U_{\xi} = \BC \otimes_{eval} U_{\Laurent_0} \text{ with } eval : \Laurent_0 \to \BC. 
\]
%One can replace $\BC$ by $\BQ(\xi)$ or even $\BZ \left[ \xi^{\pm 1} \right]$ if necessary. 
\end{defn}

We introduce another version for quantum numbers. We will relate them with those from Definition \ref{quantumt} in Remark \ref{quantumtq}, later on.

\begin{defn}\label{quantumq}
Let $i$ be a positive integer. We define the following elements of $\BZ \left[ q^{\pm 1} \right]$.
\begin{equation*}
\left[ i \right]_q := \frac{q^i-q^{-i}}{q-q^{-1}} , \text{  } \left[ k \right]_q! := \prod_{i=1}^k \left[ i \right]_q , \text{  } \qbin{k}{l}_q := \frac{\left[ k \right]_q!}{\left[ k-l \right]_q! \left[ l \right]_q!} .
\end{equation*}
\end{defn}

\subsection{An integral version}\label{halfLusztigversion}

In this section, we define an integral version for $\Uq$ that will be central for the present work. This integral version is similar to the one introduced by Lusztig in \cite{Lus}. The difference is that we consider only the divided powers of $F$ as generators, not those of $E$. This version is introduced in \cite{Hab} and \cite{JK} (with subtle differences in the definitions of divided powers for $F$). We follow the one of \cite{JK}, so that we first define the divided powers, presenting a minor difference from the original ones of Lusztig. Let:
\[
F^{(n)} =  \frac{(q-q^{-1})^n}{\left[ n \right]_q!} F^n .
\]
 Let $\Laurent_0 = \BZ\left[ q^{\pm 1} \right]$ be the ring of integral Laurent polynomials in the variable $q$. 

\begin{defn}[Half integral algebra, \cite{Hab}, \cite{JK}]\label{Halflusztig}
Let $\UqhL$ be the $\Laurent_0$-subalgebra of $\Uq$ generated by $E$, $K^{\pm 1}$ and $F^{(n)}$ for $n\in \BN^*$. We call it a {\em half integral version} for $\Uq$, the word half to illustrate that we consider only half of divided powers as generators. 
\end{defn}

\begin{rmk}[Relations in $\UqhL$, {\cite[(16)~(17)]{JK}}]\label{relationsUqhL}
The relations among generators involving divided powers are the following ones:
\[
KF^{(n)}K^{-1} = q^{-2n}F^{(n)}
\]
\[
\left[ E, F^{(n+1)}  \right] = F^{(n)} \left( q^{-n} K - q^n K^{-1}  \right) \text{ and }
F^{(n)} F^{(m)} = \qbin{n+m}{n}_q F^{(n+m)} .
\]
Together with relations from Definition \ref{Uqnaif}, they complete the presentation of $\UqhL$. 

$\UqhL$ inherit a Hopf algebra structure, making its category of modules monoidal. The coproduct is given by:
\[
\Delta(K) = K \otimes K \text{ , } \Delta(E) = E \otimes K + 1 \otimes E , \text{ and } \Delta(F^{(n)}) = \sum_{j=0}^n q^{-j(n-j)}K^{j-n} F^{(j)} \otimes F^{(n-j)}. 
\]
\end{rmk}

\begin{prop}
The algebra $\UqhL$ admits the following set as an $\Laurent_0$-basis:
\[
\left\lbrace K^l E^m F^{(n)} , l \in \BZ, m,n \in \BN \right\rbrace .
\]
\end{prop}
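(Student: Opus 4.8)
The plan is to prove that $\left\lbrace K^l E^m F^{(n)} : l \in \BZ, m,n \in \BN \right\rbrace$ is an $\Laurent_0$-basis of $\UqhL$ by combining a spanning argument with a linear-independence argument that is reduced to the already-known structure of $\Uq$ over $\BQ(q)$. First I would establish that these elements span $\UqhL$ over $\Laurent_0$. Since $\UqhL$ is generated as an $\Laurent_0$-algebra by $E$, $K^{\pm 1}$ and the $F^{(n)}$, it suffices to show that the $\Laurent_0$-span $M$ of the proposed set is closed under left multiplication by each of these generators; then $M$ is a subalgebra containing $1 = K^0 E^0 F^{(0)}$, hence equals $\UqhL$. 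Closure under $K$ is immediate. Closure under $E$ requires commuting $E$ past $F^{(n)}$, for which I would iterate the relation $[E, F^{(n+1)}] = F^{(n)}(q^{-n}K - q^nK^{-1})$ from Remark \ref{relationsUqhL}; each application lowers the $F$-degree and produces coefficients in $\Laurent_0$, and one reorders the resulting $K$'s to the left using $KEK^{-1} = q^2 E$. Closure under left multiplication by $F^{(1)} = F$ on a monomial $K^l E^m F^{(n)}$ is the delicate point: one must move $F$ past $E^m$, which by iterating $[E,F] = (K - K^{-1})/(q - q^{-1})$ introduces denominators; the key is that after collecting terms the end result $F E^m F^{(n)}$ expressed in the PBW-type monomials has coefficients that lie in $\Laurent_0$ precisely because the divided powers $F^{(n)}$ absorb the factorials — this is the same phenomenon that makes Lusztig's integral form well-defined, and one can either invoke the divided-power relation $F^{(1)}F^{(n)} = \qbin{n+1}{1}_q F^{(n+1)} = [n+1]_q F^{(n+1)}$ directly (handling the $F$-part) together with the commutation of $F$ past $E$ being controlled.

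Actually, a cleaner route for the $F$-multiplication: rather than left-multiply by $F$, note that it suffices to show $M$ is also closed under \emph{right} multiplication by $F^{(n)}$ (equivalently, one may build monomials in the order $K^l E^m F^{(n)}$ directly). One shows $E^m \cdot F$ lies in $M$ by downward induction on $m$ using $EF = FE + (K-K^{-1})/(q-q^{-1})$ and the fact that $E^{m-1}(K - K^{-1})$ reorders to $\Laurent_0$-combinations of $K^{\pm 1}E^{m-1}$; then $E^m F^{(n)}$ for general $n$ follows since $F^{(n)}$ is (over $\BQ(q)$) a scalar multiple of $F^n$ and the scalars cancel against the $[k]_q!$ appearing when one re-expresses products of $F$'s via $F^{(a)}F^{(b)} = \qbin{a+b}{a}_q F^{(a+b)}$, all binomial coefficients being genuine Laurent polynomials. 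Accumulating, $M$ is a two-sided ideal-free subalgebra equal to $\UqhL$, giving the spanning statement.

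For linear independence over $\Laurent_0$: suppose $\sum_{l,m,n} c_{l,m,n} K^l E^m F^{(n)} = 0$ with $c_{l,m,n} \in \Laurent_0$, almost all zero. Base-change to $\BQ(q)$ via the inclusion $\Laurent_0 \hookrightarrow \BQ(q)$. Since $F^{(n)} = \frac{(q - q^{-1})^n}{[n]_q!} F^n$ and $\frac{(q-q^{-1})^n}{[n]_q!} \in \BQ(q)^\times$, the relation becomes $\sum_{l,m,n} c_{l,m,n} \frac{(q-q^{-1})^n}{[n]_q!} K^l E^m F^n = 0$ in $\Uq$. By the classical PBW theorem for $\Uq$ (the monomials $\{K^l E^m F^n : l \in \BZ, m,n \in \BN\}$ form a $\BQ(q)$-basis of $\Uq$ — this is the standard basis underlying Definition \ref{Uqnaif}), every coefficient $c_{l,m,n}\frac{(q-q^{-1})^n}{[n]_q!}$ vanishes in $\BQ(q)$, hence $c_{l,m,n} = 0$ for all $(l,m,n)$. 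This proves linear independence, and together with the spanning step completes the proof.

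The main obstacle I expect is the spanning step, specifically verifying integrality (coefficients in $\Laurent_0$, not just in $\BQ(q)$) when commuting $F$ or $E$ past powers of the other generator and then re-expressing the result in divided-power monomials. One must be careful that the denominators $q - q^{-1}$ produced by $[E,F]$ are genuinely cancelled by the factorials hidden in the divided powers $F^{(n)}$; the bookkeeping is essentially the derivation of the quantum Serre-type/commutation identities for Lusztig's form, and citing \cite[\S 9.2]{C-P} or the relations already recorded in Remark \ref{relationsUqhL} keeps it short. The linear-independence half is essentially free once one reduces to the classical PBW basis of $\Uq$.
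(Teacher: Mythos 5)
The paper states this proposition without proof (it is essentially imported from the integral forms of Habiro and Jackson--Kerler), so there is no in-paper argument to compare against. Your strategy is complete and correct in outline: spanning by showing that the $\Laurent_0$-span $M$ of the monomials is stable under left multiplication by the generators, and linear independence by base change to $\BQ(q)$, where $F^{(n)}$ differs from $F^n$ by the nonzero scalar $(q-q^{-1})^n/[n]_q!$ and the classical PBW basis $\{K^lE^mF^n\}$ of $\Uq$ applies. The independence half is exactly right and is the cleanest possible argument, given that $\UqhL$ is \emph{defined} as a subalgebra of $\Uq$.

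A few local corrections to the spanning half. First, with the paper's normalization $F^{(n)} = \frac{(q-q^{-1})^n}{[n]_q!}F^n$ one has $F^{(1)} = (q-q^{-1})F \neq F$; in particular $F$ itself need not lie in $\UqhL$, and the generator you must push through is $F^{(1)}$, not $F$. Second, closure of $M$ under left multiplication by $E$ is immediate --- $E\,K^lE^mF^{(n)} = q^{-2l}K^lE^{m+1}F^{(n)}$ --- and requires no commutation past $F^{(n)}$; the relation $[E,F^{(j)}] = F^{(j-1)}(q^{1-j}K - q^{j-1}K^{-1})$ from Remark \ref{relationsUqhL} is instead precisely what is needed for the one genuinely nontrivial case, left multiplication by $F^{(j)}$: iterating it moves $F^{(j)}$ rightward past $E^m$, each step dropping the $F$-degree and emitting a $K^{\pm1}$ that reorders to the left with coefficients in $\Laurent_0$, and the residual product $F^{(c)}F^{(n)} = \qbin{c+n}{c}_qF^{(c+n)}$ is integral since Gaussian binomials lie in $\BZ[q^{\pm1}]$. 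Because this commutation relation is denominator-free, the worry about cancelling the $q-q^{-1}$ coming from $[E,F]$ never arises if you work with divided powers throughout. Finally, the assertion that ``it suffices to show $M$ is closed under right multiplication by $F^{(n)}$'' is not justified as stated: an arbitrary word in the generators interleaves $E$'s and $F^{(j)}$'s, so you do need left-multiplication closure for $F^{(j)}$ (or an explicit double induction on word length and $F$-degree); the computation you sketch for $E^m F$ is the right one, but it should be packaged inside that induction rather than offered as a separate route.
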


\subsection{Verma modules and braiding}\label{VermaBraiding}

Now we define a special family of universal objects in the category of $\Uq$-modules, we express their presentation in the special case of $\UqhL$ and we give a braiding for this family of modules. Namely, the {\em Verma modules} are infinite dimensional modules which have a universal (among quantum groups) definition, and which depend on a parameter. Again, we work with this parameter as a variable with an integral ring, letting $\Laurent_1 := \BZ \left[ q^{\pm 1} , s^{\pm 1} \right]$. In \cite{JK}, they give an explicit presentation for the integral Verma-module of $\UqhL$, that we recall here.
%
%\begin{defn}[Universal integral Verma modules, {\cite[\S~10.1.A]{C-P}}]\label{universalVerma}
%Let $U$ be an integral version of $\Uq$ and $s$ be a variable. The Verma module $V^{s}$ is the infinite $U$-module defined as follows:
%\[
%V^{s} = \left( U \otimes \BZ \left[ s^{\pm 1} \right]  \right) \Big/ \CI
%\]
%where $\CI$ is the left ideal generated by $E$ and $K-s1$.
%\end{defn}
\begin{defn}[Verma modules for $\UqhL$, {\cite[(18)]{JK}}]\label{GoodVerma}
Let $V^{s}$ be the Verma module of $\UqhL$. It is the infinite $\Laurent_1$-module, generated by vectors $\lbrace v_0, v_1 \ldots \rbrace$, and endowed with an action of $\UqhL$, generators acting as follows:
\[
K \cdot v_j = s q^{-2j} v_{j} \text{ , } E \cdot v_j = v_{j-1} \text{ and } F^{(n)} v_j = \left( \qbin{n+j}{j}_q \prod_{k=0}^{n-1} \left( sq^{-k-j} - s^{-1}q^{j+k} \right) \right) v_{j+n} .
\]
\end{defn}

\begin{rmk}[Weight vectors]\label{weightdenomination}
We will often make implicitly the change of variable $s := q^{\alpha}$ and denote $V^s$ by $V^{\alpha}$. This choice made to use a practical and usual denomination for eigenvalues of the $K$ action (which is diagonal in the given basis). Namely we say that vector $v_j$ is {\em of weight $\alpha - 2j$}, as $K \cdot v_j = q^{\alpha - 2j} v_j$. The notation with $s$ shows an integral Laurent polynomials structure strictly speaking. 
\end{rmk}

\begin{defn}[$R$-matrix, {\cite[(21)]{JK}}]\label{goodRmatrix}
Let $s=q^{\alpha}$ , $t=q^{\alpha'}$. The operator $q^{H \otimes H /2}$ is the following:
\[
q^{H \otimes H /2}:
\bfct
V^{s} \otimes V^{t} & \to & V^{s} \otimes V^{t}  \\
v_i \otimes v_j & \mapsto & q^{(\alpha - 2i)(\alpha'-2j)} v_i \otimes v_j 
\efct .
\]
We define the following R-matrix (see \cite[Part~2]{Kas} for the denomination):
\[
R := q^{H \otimes H/2} \sum_{n=0}^\infty q^{\frac{n(n-1)}{2}} E^n \otimes F^{(n)} .
\]
It is not yet a well defined object as the sum is infinite, but it will be well defined as an operator on Verma modules, see the following proposition (the sum always cuts off when applied to tensor product of Verma vectors). 
\end{defn}

\begin{prop}[{\cite[Theorem~7]{JK}}]\label{UqhLbraiding}
Let $V^s$ and $V^t$ be Verma modules of $\UqhL$ (with $s=q^{\alpha}$ and $t=q^{\alpha'}$). Let $\RR$ be the following operator:
\[
\RR:= q^{-\alpha \alpha' /2} T \circ R 
\]
where $T$ is the twist defined by $T(v\otimes w ) = w \otimes v$. Then $\RR$ provides a braiding for $\UqhL$ integral Verma modules. 
Namely, the morphism:
\[
Q: \bfct
\Laurent_1\left[ \Bn \right] & \to & \End_{\Laurent_1, \UqhL} \left({V^s}^{\otimes n}\right)  \\
\sigma_i & \mapsto & 1^{\otimes i-1} \otimes \RR \otimes 1^{\otimes n-i-2}
\efct
\]
is an $\Laurent_1$-algebra morphism. It provides a representation of $\Bn$ such that its action commutes with that of $\UqhL$. 
\end{prop}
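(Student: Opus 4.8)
The statement to prove is Proposition~\ref{UqhLbraiding}: that the operator $\RR = q^{-\alpha\alpha'/2} T\circ R$ furnishes a braiding on the category of integral Verma modules for $\UqhL$, equivalently that $\sigma_i \mapsto 1^{\otimes i-1}\otimes \RR\otimes 1^{\otimes n-i-2}$ defines an algebra morphism $\Laurent_1[\Bn] \to \End_{\Laurent_1,\UqhL}({V^s}^{\otimes n})$ commuting with the $\UqhL$-action.

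The plan is to verify the three defining properties of a braiding separately. First I would check that $\RR$ is a well-defined $\Laurent_1$-linear endomorphism of $V^s\otimes V^t$: for this one observes that applying $\sum_n q^{n(n-1)/2}E^n\otimes F^{(n)}$ to a vector $v_i\otimes v_j$ only involves the finitely many terms $n\le i$, since $E^n v_i = v_{i-n}$ vanishes for $n>i$; hence the infinite sum truncates and $R$ (and $\RR$) acts as an honest operator on each tensor product of Verma modules. Second, I would show $\RR$ is a $\UqhL$-module morphism, i.e. $\RR\circ\Delta(x) = \Delta^{\mathrm{op}}(x)\circ\RR$ for all generators $x\in\{K,E,F^{(n)}\}$; here I would invoke the standard fact (or check directly on generators using the coproduct formulas of Remark~\ref{relationsUqhL}) that $R$ intertwines $\Delta$ and $\Delta^{\mathrm{op}}$, combined with the easy observation that the twist $T$ swaps $\Delta$ and $\Delta^{\mathrm{op}}$ and that $q^{H\otimes H/2}$ conjugates the non-Cartan part of the coproduct appropriately — this is exactly the quasi-triangular structure of $U_q\slt$ restricted to Verma modules, and the normalization $q^{-\alpha\alpha'/2}$ is a scalar so it does not affect this intertwining property. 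Third, and this is the genuine content, I would verify that $\RR$ satisfies the \emph{braid relation} $(\RR\otimes 1)(1\otimes\RR)(\RR\otimes 1) = (1\otimes\RR)(\RR\otimes 1)(1\otimes\RR)$ on $V^s\otimes V^t\otimes V^u$; this is equivalent to the quantum Yang--Baxter equation $R_{12}R_{13}R_{23} = R_{23}R_{13}R_{12}$ for the (truncating) $R$-matrix, together with tracking how the scalar normalizations $q^{-\alpha\alpha'/2}$ etc.\ and the twists recombine. Once the braid relation and the commuting-squares relation hold for the single generator $\RR$, the extension to a morphism on $\Laurent_1[\Bn]$ is formal: the local operators $1^{\otimes i-1}\otimes\RR\otimes\cdots$ automatically satisfy the far-commutativity relation $\sigma_i\sigma_j=\sigma_j\sigma_i$ for $|i-j|\ge 2$ (disjoint tensor factors), and the braid relation for adjacent indices follows from the three-fold identity just established; functoriality of the $\UqhL$-action under $\Delta$ guarantees each local $\RR$ is a module map on the full tensor power, so the whole representation commutes with $\UqhL$.

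The main obstacle I anticipate is the Yang--Baxter / braid-relation step, specifically making it rigorous in the \emph{integral} setting over $\Laurent_1 = \BZ[q^{\pm1},s^{\pm1}]$ rather than over a field where one could cite the universal $R$-matrix formalism of Drinfeld--Jimbo wholesale. Over $\Laurent_1$ one must be careful that all the manipulations — reordering the truncating sums, using the identities $F^{(n)}F^{(m)} = \qbin{n+m}{n}_q F^{(n+m)}$ and $[E,F^{(n+1)}] = F^{(n)}(q^{-n}K - q^nK^{-1})$ from Remark~\ref{relationsUqhL}, and the Hopf-algebraic identities $(\Delta\otimes\mathrm{id})(R) = R_{13}R_{23}$, $(\mathrm{id}\otimes\Delta)(R)=R_{13}R_{12}$ — make sense as identities of operators on tensor products of Verma modules and involve no division that leaves $\Laurent_1$. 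The cleanest route is probably to note that $\UqhL$ embeds into $\Uq$ (Definition~\ref{Halflusztig}) and $V^s$ is the restriction of the $\Uq$ Verma module, so the braid relation, known for $U_q\slt$-Verma modules over $\BQ(q)(s)$, holds there; then since all matrix coefficients of $\RR$ in the basis $\{v_i\otimes v_j\}$ lie in $\Laurent_1$ (after the $q^{-\alpha\alpha'/2}$ normalization the weight factor $q^{(\alpha-2i)(\alpha'-2j)}$ combines with $q^{-\alpha\alpha'/2}$ to give $q^{-\alpha\alpha'/2 + (\alpha-2i)(\alpha'-2j)}$, which expands to a genuine Laurent monomial in $q,s,t$ times a power of $q$ — one should double-check this is integral, which is the one small computation worth doing carefully), the identity of operators with $\Laurent_1$-entries that holds after base change to $\BQ(q)(s)$ already holds over $\Laurent_1$ by injectivity of $\Laurent_1 \hookrightarrow \BQ(q)(s)$. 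I would also remark that $\RR$ is invertible over $\Laurent_1$ (its inverse is built from $R^{-1} = \sum (-1)^n q^{-n(n-1)/2}E^n\otimes F^{(n)}\cdot(\text{Cartan part})$, again truncating), so the morphism factors through $\Laurent_1[\Bn]$ rather than just the braid monoid, completing the proof.
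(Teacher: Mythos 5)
The paper does not prove this proposition at all: it is imported verbatim from Jackson--Kerler (\cite[Theorem~7]{JK}), so there is no internal argument to compare yours against. Your outline is, in substance, a faithful reconstruction of the standard proof: truncation of the sum on Verma vectors, the intertwining property $R\Delta(x)=\Delta^{\mathrm{op}}(x)R$ checked on generators, the Yang--Baxter/braid relation via the hexagon identities, far-commutativity for non-adjacent generators, and the reduction of the integral statement to the rational one by injectivity of $\Laurent_1\hookrightarrow \BQ(q,s)$ once all matrix coefficients of $\RR$ are seen to lie in $\Laurent_1$. You correctly isolate the one computation that actually needs doing, namely the integrality of the combined Cartan factor: with the exponent $\tfrac12(\alpha-2i)(\alpha'-2j)$ (as in \cite{JK}; note the paper's displayed formula in Definition~\ref{goodRmatrix} omits the factor $\tfrac12$ in the exponent, which you should reconcile) the normalization $q^{-\alpha\alpha'/2}$ yields $q^{-j\alpha-i\alpha'+2ij}=s^{-j}t^{-i}q^{2ij}$, an honest Laurent monomial, and the remaining coefficients are integral because the divided powers $F^{(n)}$ act on Verma vectors with coefficients in $\Laurent_1$ by Definition~\ref{GoodVerma}. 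Your invertibility remark also goes through since $R$ differs from the invertible Cartan part by a transformation that is unipotent on each finite-dimensional weight space $W_{2,r}$. In short, the proposal is a correct and complete plan; what it buys over the paper is an actual argument, at the cost of redoing what \cite{JK} already established.
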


\begin{rmk}\label{coloredquantum}
One can consider a braid action over $V^{s_1} \otimes \cdots \otimes V^{s_n}$ so that the morphism $Q$ is well defined but becomes multiplicative (i.e. algebra morphism) only when restricted to the pure braid group $\PBn$, so to be an endomorphism. %Then one can consider the induced representation of $\Bn$, see Definition \ref{inducedpure}, or restrict to a representation of the colored braid groupoid. 
\end{rmk}

\subsection{Finite dimensional braid representations}

Although braid group representations over products of Verma modules are infinite dimensional, we find finite dimensional subrepresentations using the comutativity of the braid action with the quantum structure.

\begin{rmk}\label{GoodsubrepKJ}
For $r \in \BN$:
\begin{itemize}
\item The subspace $W_{n,r} = \Ker( K - s^nq^{-2r})$ of $({V^s})^{\otimes n}$ provides a sub-representation of $\Bn$.
\item The subspace $Y_{n,r} = W_{n,r} \cap \Ker E \subset W_{n,r}$ provides a sub-representation of $\Bn$. 
\end{itemize}
We usually call $W_{n,r}$ the space of {\em subweight $r$} vectors, while $Y_{n,r}$ is called the space of {\em highest weight} vectors. 
\end{rmk}

Using the definition of the coproduct, the following remark is easily checked.

\begin{rmk}[Weight structure]\label{weightscheme}
The weight structure is managed by actions of generators: the action of $F^{(1)}$ sends an element in $W_{n,r}$ to an element in $W_{n,r+1}$ while the action of $E$ sends an element in $W_{n,r}$ to an element in $W_{n,r-1}$. Moreover, the tensor product of Verma modules is graded by weights:
\[
\left( V^{s}\right)^{\otimes n} = \bigoplus_{r\in \BN} W_{n,r} .
\]
\end{rmk}

\begin{theorem}[Irreducibility of highest weight modules, {\cite[Theorem~21]{JK}}]\label{IrreducibleJK}
The $\Bn$-representations $Y_{n,r}$ are irreducible over the fraction field $\BQ \left( q,s \right)$.
\end{theorem}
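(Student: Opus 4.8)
The plan is to reduce irreducibility of the braid representation on $Y_{n,r}$ to a statement about the $\UqhL$-action, using the fact that the two actions commute (Proposition \ref{UqhLbraiding}). Working over the fraction field $\FK = \BQ(q,s)$, the module $W := (V^s)^{\otimes n} \otimes_{\Laurent_1} \FK$ decomposes as $\bigoplus_r W_{n,r}$ by weight, with each $W_{n,r}$ finite dimensional. Over $\FK$ the quantum parameters are generic, so the usual highest-weight theory for $U_q\slt$ applies: the tensor product of $n$ Verma modules decomposes, as a $\UqhL$-module, into a direct sum of Verma submodules, one generated by each highest weight vector. A basis of highest weight vectors at subweight $r$ is exactly a basis of $Y_{n,r}$, and the map sending a highest weight vector $y$ to the Verma module $\UqhL\cdot y$ it generates identifies $Y_{n,r}$ with $\Hom_{\UqhL}(V^{s^n}, W)$ (morphisms of $\UqhL$-modules landing in the appropriate weight). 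This is the standard ``double centralizer''-style setup.

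First I would make precise the claim that, over $\FK$, the algebra of $\UqhL$-endomorphisms of $W$ that commute with the braid action is generated by the braid group, or rather the dual statement: the braid group representation on $Y_{n,r}$ is irreducible if and only if the natural map $\FK[\Bn] \to \End_{\UqhL}(W)$ has image acting irreducibly on each isotypic block. The key input is that the decomposition of $W$ into $\UqhL$-isotypic components is, because all Verma modules appearing are irreducible and pairwise non-isomorphic for generic $s$ (they have distinct highest weights $s^n q^{-2r}$ for distinct $r$, but within a fixed $r$ the summands are isomorphic copies of the same Verma $V^{s^n q^{-2r}}$), a decomposition $W \cong \bigoplus_r V^{s^n q^{-2r}} \otimes Y_{n,r}$ with $\Bn$ acting only on the second factor. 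Since $V^{s^n q^{-2r}}$ is irreducible and the two actions commute, Schur's lemma over $\FK$ (or rather the Jacobson density / double centralizer theorem, valid here because $\FK$ is a field and the modules are of countable dimension with finite-dimensional weight spaces) gives $\End_{\UqhL}(W)_{\text{weight }r} \cong \End_{\FK}(Y_{n,r})$, and the braid group surjects onto this.

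So the heart of the matter is showing that $\FK[\Bn] \to \End_{\FK}(Y_{n,r})$ is surjective. Here I would invoke precisely what \cite{JK} prove, or reprove it: the braiding $\RR$ together with its conjugates $\sigma_i \mapsto 1^{\otimes i-1}\otimes \RR \otimes 1^{\otimes n-i-1}$ generate, over the generic field $\FK$, the full commutant of $\UqhL$ acting on $W$. Concretely one shows that the image of $\FK[\Bn]$ already separates the $\UqhL$-isotypic components (using that the eigenvalues of $\RR$ on $V^s \otimes V^t$ are distinct generically, so one can build idempotents projecting onto each block) and then acts irreducibly on each block by an induction on $n$: for $n=2$, $Y_{2,r}$ is one-dimensional and there is nothing to prove; for the inductive step, restrict the braid action to the subgroup $\Bn[n-1] \subset \Bn$ fixing the last strand, use that $Y_{n,r}$ decomposes under $\Bn[n-1] \times \UqhL$ into pieces $Y_{n-1,r'}$ tensored with a multiplicity space coming from the last Verma factor, apply the inductive hypothesis to each $Y_{n-1,r'}$, and then use the extra generator $\sigma_{n-1}$ (which does not commute with $\Bn[n-1]$) to glue these into a single irreducible. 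The generic distinctness of the $\RR$-eigenvalues is what makes the gluing work.

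The main obstacle is this last inductive gluing step: one must verify that $\sigma_{n-1}$ genuinely intertwines the distinct $\Bn[n-1]$-isotypic pieces $Y_{n-1,r'}$ appearing in $Y_{n,r}$, i.e. that no proper nonzero $\Bn$-invariant subspace can be $\Bn[n-1]$-invariant. This is a standard but delicate clebsch-Gordan / branching computation, and it is exactly the content of \cite[Theorem~21]{JK}; in the write-up I would either cite it directly or reproduce the eigenvalue computation for $\RR$ on $V^{s}\otimes V^{q^{\alpha'}}$ (the eigenvalues are, up to the scalar $q^{-\alpha\alpha'/2}$, of the form $\pm q^{c(r)}$ for explicit $c(r)$ depending on the subweight, pairwise distinct over $\FK$) to justify that the required idempotents and intertwiners exist. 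Everything else — the commuting-action setup, the reduction to the commutant, the $n=2$ base case — is formal once one works over the generic field $\FK$ where all Verma modules in sight are irreducible.
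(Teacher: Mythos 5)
You should first note that the paper does not prove this statement at all: Theorem \ref{IrreducibleJK} is imported verbatim from \cite[Theorem~21]{JK} and used as a black box, so there is no internal proof to compare your route against. The question is therefore only whether your proposal is a self-contained proof, and as written it is not.

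Your framework is sound and is indeed the right shape of argument: over $\FK=\BQ(q,s)$ all the Verma modules in sight are irreducible and pairwise non-isomorphic across distinct highest weights, so $(V^s)^{\otimes n}\otimes_{\Laurent_1}\FK\cong\bigoplus_r V^{s^nq^{-2r}}\otimes Y_{n,r}$ as a $\UqhL\times\Bn$-bimodule, the commutant of $\UqhL$ is $\bigoplus_r\End_\FK(Y_{n,r})$, and irreducibility of each $Y_{n,r}$ is equivalent to surjectivity of $\FK[\Bn]$ onto each block. The base case ($Y_{2,r}$ one-dimensional) and the multiplicity-free branching $Y_{n,r}\cong\bigoplus_{r'=0}^{r}Y_{n-1,r'}$ as $\CB_{n-1}$-modules are also correct. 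But the entire content of the theorem is concentrated in the one step you do not carry out: showing that $\sigma_{n-1}$ fails to preserve every proper nonzero sum of the $\CB_{n-1}$-isotypic blocks $Y_{n-1,r'}$ (together with the verification that these blocks are pairwise non-isomorphic as $\CB_{n-1}$-modules, which for $n=3$ already requires knowing the $\RR$-eigenvalues on the one-dimensional spaces $Y_{2,r'}$ are generically distinct). You explicitly defer this step to \cite[Theorem~21]{JK} --- that is, to the statement being proved --- which makes the argument circular. Everything before that point is a reformulation of the problem, not a reduction of it; to turn this into a proof you would need to actually compute the matrix coefficients of $\sigma_{n-1}$ between the branching components (or the $\RR$-eigenvalue data you allude to) and exhibit a nonzero entry linking every pair of adjacent blocks, which is precisely the Clebsch--Gordan computation Jackson and Kerler perform.
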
 

\clearpage

\section{Homological model for $\UqhL$ Verma modules}\label{homologicalmodel}

In this section we recover quantum algebra representations in homological modules. 

\subsection{Homological action of $\UqhL$}\label{homologicalactionofUq}

We recall the scheme for the Verma module grading that is explained in Remark \ref{weightscheme}.
\[
\begin{tikzpicture}
\node (a) at (-1.2,0) {$W_{n,r}$};
\node (b) at (1.2,0) {$W_{n,r+1}$};
\draw[->] (a) to[bend right] node[below] {$F^{(1)}$} (b);
\draw[->] (b) to[bend right] node[above] {$E$} (a);
\end{tikzpicture}.
\]
The goal of this section is to construct homological operators $E, K^{\pm 1}$ and $F^{(k)}$ such that they mimic the weight structure existing on quantum Verma modules. Namely we want homological operators to fit with the following scheme:
\[
\begin{tikzpicture}
\node (a) at (-1.2,0) {$\Hrelm_r$};
\node (b) at (1.2,0) {$\Hrelm_{r+1}$};
\draw[->] (a) to[bend right] node[below] {$F^{(1)}$} (b);
\draw[->] (b) to[bend right] node[above] {$E$} (a);
\end{tikzpicture}.
\]

Definitions for homological operators were inspired by \cite{FW}. In their article, the authors define such operators acting upon a topological module built from configuration space $X_r$. The fact that their module has a homological definition remained conjectures, namely Conjecture 6.1 and 6.2 of \cite{FW}. 

The following remark, relating the two types of quantum numbers we have introduced in Definitions \ref{quantumt} and \ref{quantumq}, will be usefull for computations.

\begin{rmk}\label{quantumtq}
Let $\mt=q^{-2}$, the following relations hold in $\BZ \left[ q^{\pm 1} \right]$:
\[
(i)_{\mt}= q^{1-i} \left[ i \right]_q \text{ , } (k)_{\mt}!= q^{\frac{-k(k-1)}{2}} \left[ k \right]_q! \text{ , } {{k+l}\choose{l}}_{\mt} = q^{-kl}\qbin{k+l}{l}_q .
\]
\end{rmk}

\subsubsection{Action of $F^{(1)}$, and its divided powers.} 

We want the operator $F^{(1)} $ to go from $\Hrelm_r$ to $\Hrelm_{r+1}$, it has to increase by one the degree of a chain while passing from $X_r$ to $X_{r+1}$ for the topological space. By extension, we will build operators $F^{(k)}$, for $k> 1$ going from $\Hrelm_r$ to $\Hrelm_{r+k}$. We define them using the family $\CU$ shown to be an $\Laurentmax$-basis of the homology, although it is not difficult to define the operator without a basis, but it complicates notations. 

\begin{defn}[Divided powers of $F$]\label{actionofF}
We define the following family of homological operators:
\[
F^{(k)} : \left\lbrace \bapp
\Hrelm_r & \to & \Hrelm_{r+k} \\
 U(k_0, \ldots , k_{n-1}) & \mapsto & q^{k(1-k)/2} q^{k \sum_{i=1}^{n} \alpha_i} \left( \vcenter{\hbox{
\begin{tikzpicture}[scale=0.7, every node/.style={scale=0.7}]
\node (w0) at (-2,0) {};
\node (w1) at (-0.5,0) {};
%\node (w2) at (-1,0) {};
\node[gray] at (0.0,0.0) {\ldots};
\node (wn1) at (0.5,0) {};
\node (wn) at (1.5,0) {};
%\tikzstyle{grisEncadre}=[thick, dashed, fill=gray!20]
\draw[dashed] (w0) -- (w1) node[above,midway] {\small $k_0$};
%\draw[dashed] (w1) -- (w2) node[midway] {\small $k_1$};
\draw[dashed] (wn1) -- (wn) node[above,midway] {\small $k_{n-1}$};
%%\draw (-2,0) to (-1,0);
%
%\draw (1,0) to (2,0);
%%\draw (2,0) to (3,0);
\node[gray] at (w0)[above left=2pt] {$w_0$};
\node[gray] at (w1)[above=2pt] {$w_1$};
%\node[gray] at (w2)[above=2pt] {\tiny $w_2$};
\node[gray] at (wn1)[above=2pt] {};
\node[gray] at (wn)[above=2pt] { $w_n$};
\foreach \n in {w1,wn1,wn}
  \node at (\n)[gray,circle,fill,inner sep=2pt]{};
\node at (w0)[gray,circle,fill,inner sep=2pt]{};
\draw[thick,red] (-1.25,-0.1) -- (-1.25,-1.2) -- (-1.75,-2);
%\draw[double,thick,red] (2,-0.1) -- (2,-2);
\draw[thick,red] (1,-0.1) -- (1,-1.2) -- (-1.65,-2);
%\draw[dashed,gray] (-5,-2) -- (3,-2);
%\draw[dashed,gray] (3,-2) -- (3,-3);
%\node[gray,circle,fill,inner sep=0.8pt] at (-4.8,-3) {};
%\node[below,gray] at (-4.8,-3) {$\xi_0$};
%\node[below=5pt,gray] at (-4.4,-3) {$\ldots$};
%\node[gray,circle,fill,inner sep=0.8pt] at (-3.9,-3) {};
%\node[below,gray] at (-3.9,-3) {$\xi_r$};
%\node[gray,circle,fill,inner sep=0.8pt] at (-3.2,-3) {};
%\node[below,gray] at (-3.2,-3) {$\xi_{r+1}$};
\draw[dashed,postaction={decorate}] (w0) to[bend right=70] (2,0) to[bend right=70] node[midway, above] {$k$} (w0); 
\draw[red] (2,0) -- (2,-1.2) -- (-1.3,-2);
%\draw[red] (-4.8,-3) -- (-4,-2);
%\draw[red] (-3.9,-3) -- (2,-2);
%\draw[red] (-3.2,-3) to[bend right=5] (0.2,-2.5);
%\node[red] at (-4,-2.8) {$\ldots$};
\draw[gray] (-2,0) -- (-2,1.5) -- (2.2,1.5) -- (2.2,-2) -- (-2,-2);
\draw[gray] (-2,0) -- (-2,-2);
%\draw[gray] (-5,-3) -- (4,-3) node[right] {$\partial D_n$};
\end{tikzpicture}
}} \right). \eapp \right.
\]
\end{defn}

\begin{rmk}
\begin{itemize}
\item In terms of homology class with coefficients in $\BZ$, involved by the union of dashed arcs corresponding to a product of simplexes, the operator $F^{(k)}$ simply adds an indexed $k$ dashed arc that rounds once along the boundary in counterclockwise direction. 
\item For the local coefficient definition, we chose to simplify the drawing by adding a straight handle, but it costs a coefficient $q^{k \sum_{i=1}^{n} \alpha_i}$ that one can removed using another more complicated family of handles. We will work with the simpler drawing and will add the coefficient ad-hoc in following computations so that we define an intermediate operator:
\[
(F')^{(k)} : \left\lbrace \bapp
\Hrelm_r & \to & \Hrelm_{r+k} \\
 U(k_0, \ldots , k_{n-1}) & \mapsto & \left( \vcenter{\hbox{
\begin{tikzpicture}[scale=0.7, every node/.style={scale=0.7}]
\node (w0) at (-2,0) {};
\node (w1) at (-0.5,0) {};
%\node (w2) at (-1,0) {};
\node[gray] at (0.0,0.0) {\ldots};
\node (wn1) at (0.5,0) {};
\node (wn) at (1.5,0) {};
%\tikzstyle{grisEncadre}=[thick, dashed, fill=gray!20]
\draw[dashed] (w0) -- (w1) node[above,midway] {\small $k_0$};
%\draw[dashed] (w1) -- (w2) node[midway] {\small $k_1$};
\draw[dashed] (wn1) -- (wn) node[above,midway] {\small $k_{n-1}$};
%%\draw (-2,0) to (-1,0);
%
%\draw (1,0) to (2,0);
%%\draw (2,0) to (3,0);
\node[gray] at (w0)[above left=2pt] {$w_0$};
\node[gray] at (w1)[above=2pt] {$w_1$};
%\node[gray] at (w2)[above=2pt] {\tiny $w_2$};
\node[gray] at (wn1)[above=2pt] {};
\node[gray] at (wn)[above=2pt] { $w_n$};
\foreach \n in {w1,wn1,wn}
  \node at (\n)[gray,circle,fill,inner sep=2pt]{};
\node at (w0)[gray,circle,fill,inner sep=2pt]{};
\draw[thick,red] (-1.25,-0.1) -- (-1.25,-1.2) -- (-1.75,-2);
%\draw[double,thick,red] (2,-0.1) -- (2,-2);
\draw[thick,red] (1,-0.1) -- (1,-1.2) -- (-1.65,-2);
%\draw[dashed,gray] (-5,-2) -- (3,-2);
%\draw[dashed,gray] (3,-2) -- (3,-3);
%\node[gray,circle,fill,inner sep=0.8pt] at (-4.8,-3) {};
%\node[below,gray] at (-4.8,-3) {$\xi_0$};
%\node[below=5pt,gray] at (-4.4,-3) {$\ldots$};
%\node[gray,circle,fill,inner sep=0.8pt] at (-3.9,-3) {};
%\node[below,gray] at (-3.9,-3) {$\xi_r$};
%\node[gray,circle,fill,inner sep=0.8pt] at (-3.2,-3) {};
%\node[below,gray] at (-3.2,-3) {$\xi_{r+1}$};
\draw[dashed,postaction={decorate}] (w0) to[bend right=70] (2,0) to[bend right=70] node[midway, above] {$k$} (w0); 
\draw[red] (2,0) -- (2,-1.2) -- (-1.3,-2);
%\draw[red] (-4.8,-3) -- (-4,-2);
%\draw[red] (-3.9,-3) -- (2,-2);
%\draw[red] (-3.2,-3) to[bend right=5] (0.2,-2.5);
%\node[red] at (-4,-2.8) {$\ldots$};
\draw[gray] (-2,0) -- (-2,1.5) -- (2.2,1.5) -- (2.2,-2) -- (-2,-2);
\draw[gray] (-2,0) -- (-2,-2);
%\draw[gray] (-5,-3) -- (4,-3) node[right] {$\partial D_n$};
\end{tikzpicture}
}} \right), \eapp \right. 
\]
such that $F'^{(k)} = q^{k(1-k)/2} q^{k \sum_{i=1}^{n} \alpha_i} F^{(k)}$. %This choice to simplify drawings in computations. 
\end{itemize} 
\end{rmk}

The following proposition justifies the {\em divided powers} denomination. 

\begin{prop}[Divided powers of $F$]\label{dividedpower}
There is the following relation between elements of $\Hom_{\Laurentmax} \left( \Hrelm_r , \Hrelm_{r+k} \right)$:
\[
(F^{(1)})^k = q^{k(k-1)/2} (k)_{\mt}! F^{(k)} .
\]
Let $\mt=q^{-2}$, then:
\[
(F^{(1)})^k= \left[ k \right]_q! F^{(k)}.
\]
\end{prop}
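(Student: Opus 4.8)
The plan is to prove the first identity $(F^{(1)})^k = q^{k(k-1)/2}(k)_{\mt}!\, F^{(k)}$ by computing the iterated application of $F^{(1)}$ on a code sequence $U(k_0,\ldots,k_{n-1})$ and recognizing the result, up to the scalar, as the homology class defining $F^{(k)}$. I would first pass to the auxiliary operator $(F')^{(k)}$, since it differs from $F^{(k)}$ only by the explicit monomial $q^{k(1-k)/2}q^{k\sum\alpha_i}$; this removes the bookkeeping of straight handles from the heart of the argument. Applying $F'^{(1)}$ once to $U(k_0,\ldots,k_{n-1})$ adds a single plain arc rounding the boundary counterclockwise (a $1$-simplex, one configuration point). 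Applying it a second time adds another such plain arc; the two plain arcs run parallel along the boundary, and by Corollary~\ref{1forkto1code} (the $\CA'$-to-$\CU$ fusion of $k$ parallel arcs into one dashed arc indexed by $k$) the $j$-fold iterate $(F'^{(1)})^j$ produces $(j)_{\mt}!$ times the class with a single dashed arc indexed by $j$ rounding the boundary — i.e. $(j)_{\mt}!$ times the diagram defining $F'^{(j)}$ on that code sequence.

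The one subtlety is that the handles attached to the successive plain arcs are not literally the ones appearing in Corollary~\ref{1forkto1code}: each new arc created by $F'^{(1)}$ is fitted with a handle that must pass in front of (or behind) the arcs already present near $w_0$. I would invoke the handle rule (Remark~\ref{handlerule}) together with the sign-versus-$t$ observation of Remark~\ref{example0} (namely $\sign(\gamma)\rho_r(\gamma)=\rho_r(\gamma)_{|t=\mt}$) to absorb those local coefficients and signature corrections exactly into the $\mt$-quantum factorial $(k)_{\mt}!$; this is the same mechanism that already appeared in the proof of Lemma~\ref{plaintodashed}, so it should go through with only minor adjustment of which strand passes in front of which. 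Equivalently, one can simply remark that the family of parallel boundary-rounding plain arcs with straight handles is precisely the object on the left-hand side of Corollary~\ref{1forkto1code} (after an ambient diffeomorphism of $D_n$ bringing the rounding configuration to the standard one), so the corollary applies verbatim. Thus $(F'^{(1)})^k = (k)_{\mt}!\,F'^{(k)}$ as elements of $\Hom_{\Laurentmax}(\Hrelm_r,\Hrelm_{r+k})$, for every basis element $U_{\bf k}$, hence as operators.

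Finally I would translate this back through the scalar relations $F'^{(k)} = q^{k(1-k)/2}q^{k\sum_i\alpha_i}F^{(k)}$ and $F'^{(1)} = q^{0}\cdot q^{\sum_i\alpha_i}F^{(1)}$ (the case $k=1$ of the same relation, where $q^{k(1-k)/2}=1$). Substituting gives
\[
q^{k\sum_i\alpha_i}(F^{(1)})^k \;=\; (k)_{\mt}!\,q^{k(1-k)/2}q^{k\sum_i\alpha_i}F^{(k)},
\]
and cancelling the invertible monomial $q^{k\sum_i\alpha_i}$ yields $(F^{(1)})^k = q^{k(k-1)/2}(k)_{\mt}!\,F^{(k)}$, which is the first claim. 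For the second claim, substitute $\mt = q^{-2}$ and use Remark~\ref{quantumtq}, which gives $(k)_{\mt}! = q^{-k(k-1)/2}[k]_q!$; the two powers of $q$ then cancel and one obtains $(F^{(1)})^k = [k]_q!\,F^{(k)}$. The main obstacle is the careful handle-tracking in the middle step — making sure the accumulated signature exactly matches the sign pattern hidden inside $(k)_{\mt}!$ — but this is routine given Remarks~\ref{handlerule} and~\ref{example0} and the proof of Lemma~\ref{plaintodashed}.
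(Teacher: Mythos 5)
Your proof follows exactly the route of the paper: iterate $F'^{(1)}$ to produce $k$ parallel boundary-rounding plain arcs, fuse them into a single dashed arc indexed by $k$ via the mechanism of Corollary \ref{1forkto1code}, conclude $(F'^{(1)})^k=(k)_{\mt}!\,F'^{(k)}$, and then renormalize; this is the paper's argument verbatim.

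One bookkeeping point deserves attention: the final renormalization as you wrote it does not yield your stated conclusion. From your penultimate display $q^{k\sum_i\alpha_i}(F^{(1)})^k=(k)_{\mt}!\,q^{k(1-k)/2}q^{k\sum_i\alpha_i}F^{(k)}$ one gets the exponent $k(1-k)/2=-k(k-1)/2$, not $+k(k-1)/2$. The source of the flip is that the relation you quoted, $F'^{(k)}=q^{k(1-k)/2}q^{k\sum_i\alpha_i}F^{(k)}$, is stated in the paper but is inconsistent with the displayed formulas of Definition \ref{actionofF}, which actually give $F^{(k)}=q^{k(1-k)/2}q^{k\sum_i\alpha_i}F'^{(k)}$ (so in particular $F'^{(1)}=q^{-\sum_i\alpha_i}F^{(1)}$). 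Using the relation in that orientation, the same computation produces $(F^{(1)})^k=q^{k(k-1)/2}(k)_{\mt}!\,F^{(k)}$ as claimed, and the specialization $\mt=q^{-2}$ via $(k)_{\mt}!=q^{-k(k-1)/2}[k]_q!$ then gives $[k]_q!\,F^{(k)}$ correctly, as you say.
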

\begin{proof}
This is a direct consequence of the following equality of classes:
\begin{equation*}
\left( \vcenter{\hbox{\begin{tikzpicture}[scale=0.7,decoration={
    markings,
    mark=at position 0.5 with {\arrow{>}}}
    ]
%        \draw[white!90!black] (0,0) circle (2);
\node[gray,left] (P) at (-2,0) {$w_0$};
%\node[gray,below] (w_1) at (-1.3,0) {$w_1$};
%\node[gray,below] (w_2) at (-0.5,0) {$w_2$};
%\node at (0.2,0) {\ldots};
%\node[below] (w_n) at (1.2,0) {$w_n$};
%\node[below] (w1) at (-1.3,0.1) {};
%\node[below] (w2) at (-0.5,0.1) {};
%\node[below] (wn) at (1.2,0.1) {};
%\foreach \n in {w1,w2,wn}
%  \node at (\n)[above,circle,fill,inner sep=0.5pt]{};  
\draw (-2,0) to[bend right=30] node[pos=0.7,above] (k) {}  (-0.5,-1.65);
\draw[postaction={decorate}] (-0.5,-1.65) to[bend left=-55](1.8,0);
\draw (-2,0) to[bend left=30] (-0.5,1.65);
\draw (-0.5,1.65) to[bend right=-55] (1.8,0);
\draw[red] (k) to (-1.6,-1.5);
\node[red,left] at (-1.6,-1.3) {};

\node at (2.1,0) {$\ldots$};
\node at (2.2,0.3) {$k$};

\draw (-2,0) to[bend right=30] node[pos=0.9,above] (k1) {}  (-0.5,-2);
\draw[postaction={decorate}] (-0.5,-2) to[bend left=-55](2.5,0);
\draw (-2,0) to[bend left=30] (-0.5,2);
\draw (-0.5,2) to[bend right=-55] (2.5,0);
\draw[red] (k1) to (-1.6,-2);
%\node[red,left] at (-1.6,-1.3) {};

\draw[gray] (-2,0) -- (-2,0.7);
\draw[gray] (-2,0) -- (-2,-0.7);
\end{tikzpicture} }} \right)
= (k)_{\mt}!
\left( \vcenter{\hbox{\begin{tikzpicture}[scale=0.7,decoration={
    markings,
    mark=at position 0.5 with {\arrow{>}}}
    ]
%        \draw[white!90!black] (0,0) circle (2);
\node[gray,left] (P) at (-2,0) {$w_0$};
%\node[below] (w_1) at (-1.3,0) {$w_1$};
%\node[below] (w_2) at (-0.5,0) {$w_2$};
%\node at (0.2,0) {\ldots};
%\node[below] (w_n) at (1.2,0) {$w_n$};
%\node[below] (w1) at (-1.3,0.1) {};
%\node[below] (w2) at (-0.5,0.1) {};
%\node[below] (wn) at (1.2,0.1) {};
%\foreach \n in {w1,w2,wn}
%  \node at (\n)[above,circle,fill,inner sep=0.5pt]{};
\draw[dashed] (-2,0) to[bend right=30] node[pos=0.7,above] (k) {}  (-0.5,-1.65);
\draw[dashed,postaction={decorate}] (-0.5,-1.65) to[bend left=-55](1.8,0);
\draw[dashed] (-2,0) to[bend left=30] (-0.5,1.65);
\draw[dashed] (-0.5,1.65) to[bend right=-55] node[above,midway] {$k$} (1.8,0);
\draw[double,red] (k) to (-1.6,-1.3);
\node[red,left] at (-1.6,-1.3) {};

\draw[gray] (-2,0) -- (-2,0.7);
\draw[gray] (-2,0) -- (-2,-0.7);
\end{tikzpicture} }} \right).
\end{equation*}
which can be proved as Corollary \ref{1forkto1code}, and whatever stands inside the circles. On the right, there are $k$ parallel arcs rounding along the boundary in counterclockwise direction, while on the left there is one dashed arc rounding along the boundary. This shows that $F'^k = (k)_{\mt} ! F'^{(k)}$, and the first statement is immediate. To get the second equality, for $\mt=q^{-2}$ one uses directly Remark \ref{quantumtq}. 
\end{proof}

\subsubsection{Actions of $E$ and $K$} 

To define the action of $E\in \Hom_{\Laurentmax} \left( \Hrelm_r, \Hrelm_{r-1} \right)$ we need a way to remove one configuration point. This is the purpose of morphisms defined in the following definition.

\begin{defn}\label{addingpoint}
\begin{itemize}
\item Let $\psi^r$ be the following homeomorphism:
\[
\psi^r : \bfct
X_r \setminus X_r^- & \to & X_{r+1}^- \\
Z & \mapsto & Z \cup {w_0} \\
{\pmb \xi^r}  & \mapsto & \lbrace \xi_1 , \ldots , \xi_r , w_0 \rbrace 
\efct .
\]
\item $\psi^r$ induces:
\[
\psi^r_* : \pi_1(X_r \setminus X_r^-, \pmb \xi^r ) \to \pi_1(X_{r+1}^-,\lbrace \pmb \xi^r , w_0 \rbrace ) .
\]
We provide a natural way to transport the base point on the right to ${\pmb \xi^{r+1}}$, namely we move $w_0$ along $\partial D_n$ through a path $\varphi^r$ defined as follows:
\[
\varphi^r: \bfct
I & \to & X_{r+1} \\
t & \mapsto & \varphi^r(t) = \lbrace \varphi_1(t) , \xi_r, \ldots , \xi_1 \rbrace
\efct
\]
where $\varphi_1$ goes from  $\xi_{r+1}$ to $w_0$ along $\partial D_n$ in the clockwise direction, while other coordinates remain fixed in ${\pmb \xi^{r}}$. % $\varphi_2$ goes from $\xi_{r+1}$ to $\xi_r$ along $\partial D_n$, and so on, ending with $\varphi_{r+1}$ going from $\xi_2$ to $\xi_1$ along $\partial D_n$.
\item We let then $\Phi^r$ be the composition of the above $\psi^r_*$ with the morphism induced by the change of base point through conjugation by $\varphi^r$.
\[
\Phi^r: \bapp
\pi_1(X_r \setminus X_r^-, {\pmb \xi^r}) & \to & \pi_1(X_{r+1}, {\pmb \xi^{r+1}}) 
\eapp .
\]
%This morphism induces a right shift of coordinates of the base point (the new coordinate arriving at the leftmost).
\end{itemize} 

In what follows we will often omit the indices $r$ in $\varphi^r$, ${\pmb \xi^r}$ and $\Phi^r$, to simplify notations when no confusion is possible. 

\end{defn}

\begin{lemma} \label{localsystemiso}
The morphism $\Phi^r$ lifted to the local system level:
\[
\Phi^r: L_r\restriction_{X_r \setminus X_r^-} \to L_{r+1}\restriction_{X_{r+1}^-}
\]
is an isomorphism of local systems.
\end{lemma}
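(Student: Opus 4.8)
The plan is to unwind the definition of $\Phi^r$ as a composition of three maps, and to check that each one induces an isomorphism on the relevant local systems. Recall that $\Phi^r = c_{\varphi^r} \circ \psi^r_*$, where $\psi^r_*$ is induced by the homeomorphism $\psi^r : X_r \setminus X_r^- \xrightarrow{\sim} X_{r+1}^-$ of Definition \ref{addingpoint} and $c_{\varphi^r}$ is the change of basepoint (conjugation by the path $\varphi^r$) carrying $\{\pmb\xi^r, w_0\}$ to $\pmb\xi^{r+1}$. Since a local system on a space is the same data as a representation of its fundamental group, and the two local systems in the statement are $L_r|_{X_r \setminus X_r^-} = \rho_r \circ \iota_*$ and $L_{r+1}|_{X_{r+1}^-} = \rho_{r+1} \circ j_*$ for the respective inclusions $\iota, j$, proving that $\Phi^r$ is an isomorphism of local systems amounts to proving that $\Phi^r$ is a group isomorphism compatible with the two monodromy morphisms $\rho_r$ and $\rho_{r+1}$, i.e. that $\rho_{r+1} \circ \Phi^r = \rho_r$ on $\pi_1(X_r \setminus X_r^-, \pmb\xi^r)$, up to the canonical identification of the coefficient rings (both local systems have fibre $\Laurentmax$).

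First I would treat the homeomorphism part. Since $\psi^r$ is a homeomorphism of spaces, $\psi^r_*$ is automatically an isomorphism of fundamental groups, and it pulls back $L_{r+1}|_{X_{r+1}^-}$ to a local system on $X_r \setminus X_r^-$; the content is to identify this pullback with $L_r|_{X_r \setminus X_r^-}$. Concretely, a loop $\gamma$ in $X_r \setminus X_r^-$ based at $\pmb\xi^r$ is sent by $\psi^r$ to the loop $\gamma \cup \{w_0\}$ (a configuration of $r$ moving points together with the stationary point $w_0$) based at $\{\pmb\xi^r, w_0\}$ in $X_{r+1}^-$. The monodromy of $L_{r+1}$ along this loop is computed, via $\rho_{r+1}$, from the winding data of the $r+n+1$ strands; but the extra strand sits at $w_0$ and never moves, so it contributes neither a $\sigma$-type crossing nor a $B$-type encirclement. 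Hence $\rho_{r+1}(\psi^r_*\gamma) = \rho_r(\gamma)$ on the nose, and the change of basepoint by $\varphi^r$ only conjugates inside $\pi_1$, which the abelian target $\Laurentmax$ cannot see; thus $\rho_{r+1} \circ \Phi^r = \rho_r$. Composing, $\Phi^r$ is an isomorphism of groups intertwining the monodromies, hence an isomorphism of local systems.

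The main technical point — and the step I would expect to demand the most care — is the bookkeeping of the basepoint move along $\partial D_n$: one must check that the path $\varphi^r$ (dragging $w_0$ along the boundary from $\xi_{r+1}$, in the clockwise direction, with the other coordinates fixed) stays in $X_{r+1}$ throughout, i.e. that the moving point $w_0$ never collides with the fixed $\xi_i$ nor with a puncture $w_k$, and that conjugation by $\varphi^r$ therefore genuinely lands in $\pi_1(X_{r+1}, \pmb\xi^{r+1})$; and, more subtly, that this particular choice of $\varphi^r$ is the one implicitly fixed when one chose lifts of the basepoints to the maximal abelian covers, so that no stray monomial of $\Laurentmax$ is introduced. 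Because the relevant covers correspond to the \emph{maximal abelian} quotient (Remark after Definition \ref{localsystXr}), a different choice of $\varphi^r$ would at worst multiply $\Phi^r$ by an invertible monomial, which is still an isomorphism of local systems; so in fact the conclusion is robust. I would state this explicitly as the reason the argument goes through regardless of the cosmetic choices, and then conclude that $\Phi^r : L_r|_{X_r\setminus X_r^-} \to L_{r+1}|_{X_{r+1}^-}$ is an isomorphism of local systems, with inverse induced by $(\psi^r)^{-1}$ together with the reverse basepoint move.
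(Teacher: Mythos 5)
Your proposal is correct and follows essentially the same route as the paper: the proof there consists precisely of the commutative diagram $\rho_{r+1}\circ\Phi^r=\rho_r$, justified by the observation that $\Phi^r$ adds a stationary strand at $w_0$ (contributing no crossing or encirclement) and that the basepoint conjugation is invisible to the abelian target. Your extra remarks on the robustness of the choice of $\varphi^r$ up to an invertible monomial match the paper's surrounding discussion and do not change the argument.
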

\begin{proof}
Underlying space are homeomorphic through $\psi$ (addition of $w_0$). Let $\rho_r$ be the representation of $\pi_1 (X_r , {\pmb \xi^r} )$ providing the local system $L_r$. The following diagram is commutative:
\begin{equation*}
\begin{tikzcd}
 \pi_1 (X_r \setminus X_r^- , {\pmb \xi^r} ) \arrow[r,"\Phi^r"] \arrow[d,"\rho_r"]
   & \pi_1 (X_{r+1} , {\pmb \xi^{r+1}} )  \arrow[d,"\rho_{r+1}"]\\ 
\BZ^{n+1} = \bigoplus_{i \in \lbrace 1 , \ldots ,n \rbrace} \BZ \langle q^{\alpha_i} \rangle \oplus \BZ \langle t \rangle \arrow[r,"\Id"]
   & \bigoplus_{i \in \lbrace 1 , \ldots ,n \rbrace} \BZ \langle q^{\alpha_i} \rangle \oplus \BZ \langle t \rangle
\end{tikzcd}
\end{equation*}
which proves the lemma. The commutation is easy to verify thinking of the representation of $\pi_1 (X_r \setminus X_r^- , {\pmb \xi^r} )$ given in  Remark \ref{pi_1X_r}. The morphism $\Phi^r$ simply adds a straight strands to the braid, not modifying its image by $\rho_{r}$. 
\end{proof}

\begin{rmk}\label{localsystemisoHomologic}
We formulate the above Lemma \ref{localsystemiso} for homologies. In other words, the choice of path $\varphi$ in Definition \ref{addingpoint} yields the following isomorphism:
\[
\Phi^r: \Hlf_r ({X_r \setminus X_r^-};L_r) \to \Hlf_r(X_{r+1}^-;L_{r+1}).
\]
Let an element in $\Hlf_r ({X_r \setminus X_r^-};L_r)$ being given by a couple $(\left[ \Delta \right], h)$ where $\left[ \Delta \right]$ is the class of a chain in $\Hlf_r ({X_r \setminus X_r^-};\BZ)$ and $h$ a path relating $\pmb{\xi^r}$ to $\Delta$ (case of interest). Then its image by $\Phi^r$ is determined by the couple $\left( \left[ \lbrace \Delta, w_0 \rbrace \right], \lbrace h,w_0 \rbrace \circ \varphi^r \right)$. One must pay attention to the fact that the isomorphism between homologies depends on the choice of $\varphi$, so does the operator $E$ defined in Definition \ref{actionofE} below. 
\end{rmk}

\begin{rmk} \label{identificationHr1}
This remark is a recall. We have the following equality:
\[
H_{r-1}(X_{r-1} \setminus X^-_{r-1}; L_{r-1}) = H_{r-1}(X_{r-1}(w_0);L_{r-1}) = \Hrelm_{r-1} . 
\]
where $X_r(w_0)$ is the space of configurations of $X_r$ without coordinate in $w_0$. The first equality is the fact that $X_{r-1} \setminus X^-_{r-1}$ and $X_{r-1}(w_0)$ are canonically homeomorphic. The second one is Corollary \ref{HrelStruc}. 
\end{rmk}

From this identification one is able to define an operator $E$ as in the following definition. 

\begin{defn}[Action of $E$] \label{actionofE}
Let $E$ be the operator defined as follows:
\begin{equation*}
\begin{tikzcd}
E: \Hrelm_r \arrow[r,"\partial_*"] & H_{r-1}(X_r^-;L_r) \arrow[r,"\left({\Phi^r}\right)^{-1}"] & H_{r-1}(X_{r-1} \setminus X^-_{r-1}; L_{r-1}) = \Hrelm_{r-1}. 
\end{tikzcd}
\end{equation*}
The arrow $\partial_*$ is the boundary map of the exact sequence of the pair $(X_r, X_r^-)$. The arrow $\left({\Phi^r}\right)^{-1}$ is the inverse isomorphism provided by Lemma \ref{localsystemiso} (see Remark \ref{localsystemisoHomologic}) and the last equality is the above Remark \ref{identificationHr1}.
\end{defn}

\begin{rmk}\label{Ecalculelebord}
The definition of $E$ is the boundary map of the relative exact sequence of the pair involved, the rest are just isomorphic identifications of homology modules. Namely, the operator $E$ reads the part of the boundary that lies in $X_r^-$. 
\end{rmk}
We give a first example of computation with a standard code sequence.

\begin{example}[Action of $E$ on a code sequence] \label{Ecodeseq}
Let ${\bf k} = (k_0 , \ldots , k_{n-1}) \in \EZnr$, and $U_{\bf k}$ its associated standard code sequence. One can check the following property:
\[
E \cdot U_{\bf k} = U(k_0 -1 , \ldots , k_{n-1}).
\]
Consider first $U(k_0 , 0 , \ldots , 0)$ and let $\phi^{k_0}$ be the chain associated with the indexed $k_0$ dashed arc. We recall our parametrization of the standard simplex:
\[
\Delta^{k_0} = \lbrace 0 \leq t_1 \leq \cdots \leq t_{k_0} \leq 1 \rbrace
\]
so that its only boundary part sent to configurations with one coordinate in $w_0$ is $\lbrace t_1 = 0 \rbrace \in \Delta^k$. We remark that $\phi^{k_0}$ restricted to $\lbrace t_1 = 0 \rbrace$ is $\phi^{k_0-1}$ (chain associated with same dashed arc but indexed by $k_0-1$). This by shifting left the parametrization: $(0,t_2,\ldots,t_{k_0}) \mapsto (t_2,\ldots,t_{k_0})$ (which does not involve any permutation which could change orientation). Consequently, one sees that the equality holds at the level of homology over $\BZ$. 

To deal with the handle rule lifting process, we remark that only the leftmost configuration point embedded in $U(k_0 , \ldots , k_{n-1})$ can join $w_0$. This is saying that the only part of the boundary of $U(k_0 , \ldots , k_{n-1})$ lying in $X_r^-$ corresponds to the leftmost point being in $w_0$. No local coefficient appears while applying $(\Phi^r)^{-1}$ (Lemma \ref{localsystemiso}) thanks to the fact that the handle joining the leftmost configuration point is the leftmost handle, and it joins $\xi_r$, namely the leftmost base point's coordinate. Another way to say this is by remarking that the path following the leftmost handle, then going to $w_0$ along $U_{\bf k}$ then back to $\xi_r$ along the boundary can be homotoped to $w_0$ without perturbing other handles. In other words, composition (of the path corresponding to red handles) with the inverse of path $\varphi^r$ (Definition \ref{addingpoint}) does not involve any change of local coordinate.  
\end{example}

The action of the operator $K$ is a diagonal action encoding the value of $r$.
\begin{defn}[Action of $K$] \label{actionofK}
For $r \in \BN$, the operator $K$ is the following diagonal action over $\Hrelm_r$:
\[
K = q^{ \sum_i \alpha_i} \mt^{r} \Id_{\Hrelm_r}.  
\]
We define the operator $K^{-1}$ to be the inverse of $K$. 
\end{defn}

\subsubsection{Homological $\Uq$ representation.}\label{sectionhomologicalhabiro}

Let $\CH = \bigoplus_{r\in \BN} \Hrelm_r$, the actions of $E, F^{(1)} $ and $K$ are endomorphisms of $\CH$. We have the following proposition.

\begin{prop}\label{relationeFK}
The operators $E,F^{(1)} $ and $K$ satisfy the following relations:
\[
KE = \mt^{-1} EK \text{ , } KF^{(1)}  =  \mt F^{(1)} K \text{ and } \left[ E,F^{(1)}  \right]  =  K - K^{-1} .
%\eapp
\]
\end{prop}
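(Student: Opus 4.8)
The plan is to verify the three relations directly on the basis $\CU$ of code sequences, using the explicit descriptions of $E$, $F^{(1)}$ and $K$ obtained in Definitions \ref{actionofF}, \ref{actionofE} and \ref{actionofK}, together with the combing/diagram rules of Section \ref{computationrules}. The two commutation relations $KE=\mt^{-1}EK$ and $KF^{(1)}=\mt F^{(1)}K$ are essentially bookkeeping: $K$ acts on $\Hrelm_r$ by the scalar $q^{\sum_i\alpha_i}\mt^r$, while $E$ lowers $r$ by one and $F^{(1)}$ raises it by one. So for $x\in\Hrelm_r$ one computes $KEx = q^{\sum\alpha_i}\mt^{r-1}Ex$ and $EKx = q^{\sum\alpha_i}\mt^r Ex$, giving $KE=\mt^{-1}EK$; symmetrically $KF^{(1)} = q^{\sum\alpha_i}\mt^{r+1}F^{(1)}x$ and $F^{(1)}Kx = q^{\sum\alpha_i}\mt^r F^{(1)}x$, giving $KF^{(1)}=\mt F^{(1)}K$. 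These hold term by term on the grading, so no basis computation is even needed beyond recording the scalars.

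The substantive relation is $[E,F^{(1)}]=K-K^{-1}$, and here I would work with a fixed code sequence $U_{\bf k}=U(k_0,\dots,k_{n-1})$. On one side, by Example \ref{Ecodeseq} we have $E\cdot U_{\bf k}=U(k_0-1,k_1,\dots,k_{n-1})$ (and $E\cdot U_{\bf k}=0$ when $k_0=0$). On the other side I must compute $F^{(1)}U_{\bf k}$, which by Definition \ref{actionofF} adds a single strand looping counterclockwise around the whole boundary, carrying the prefactor $q^{\sum_i\alpha_i}$ (the $k=1$ case of the $q^{k(1-k)/2}q^{k\sum\alpha_i}$ coefficient, with $q^{0}=1$); I would then express the resulting class back in the $\CU$ basis by combing that new loop arc past the punctures $w_{n},w_{n-1},\dots,w_1$ one at a time, using Proposition \ref{combing} and the plain-to-dashed rules of Lemma \ref{plaintodashed} and Corollary \ref{dashedtodashed}, keeping track of the $(\cdot)_{\mt}$ quantum-number coefficients. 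Then I apply $E$ (drop one point from the leftmost cell) to each resulting code sequence, and also compute $F^{(1)}(E\cdot U_{\bf k})$ by the same combing. Subtracting the two, the bulk terms $U(k_0,\dots)$ cancel and what survives should be exactly $(q^{\sum\alpha_i}\mt^{k_0} - q^{-\sum\alpha_i}\mt^{-k_0})U_{\bf k}$; comparing with Definition \ref{actionofK}, the $K$-eigenvalue on $\Hrelm_r$ restricted to $U_{\bf k}$ is $q^{\sum\alpha_i}\mt^{k_0+k_1+\cdots+k_{n-1}}=q^{\sum\alpha_i}\mt^{r}$, but the telescoping in the combing argument isolates the dependence on $k_0$ precisely so that the answer reorganizes as $K-K^{-1}$ acting on $\Hrelm_r$. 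I expect to organize this as: compute $F^{(1)}U_{\bf k}$ in the basis, apply $E$, compute $E\,U_{\bf k}$, apply $F^{(1)}$, subtract, and check the coefficient of each basis element vanishes except for the coefficient of $U_{\bf k}$ itself.

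The main obstacle will be the combing computation of $F^{(1)}U_{\bf k}$ in the $\CU$ basis: the new loop arc encircling the boundary meets every puncture, so repeatedly applying the combing process of Proposition \ref{combing} together with the fusion rule of Corollary \ref{dashedtodashed} produces a sum of code sequences with binomial coefficients $\binom{k_i+l_{i+1}}{l_{i+1}}_{\mt}$ of the kind appearing in Proposition \ref{ArcstoCodes}, and one must be careful that the handle rule (Remark \ref{handlerule}) and the signature subtlety (Remark \ref{example0}, where $\sign(\alpha\beta^{-1})\rho_r(\alpha\beta^{-1})=\rho_r(\alpha\beta^{-1})_{|t=\mt}$) are applied consistently so that only $\mt$-quantities appear. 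Once $F^{(1)}U_{\bf k}$ is in hand, the fact that $E$ commutes nicely with most of these arcs (it only interacts with the leftmost cell, by Example \ref{Ecodeseq}) means the cancellation in $EF^{(1)}-F^{(1)}E$ is a telescoping sum, and the surviving boundary terms — the one where the new loop, after combing, collapses into the leftmost cell and then gets hit by $E$, versus the one where $E$ fires first — produce the two monomials $q^{\sum\alpha_i}\mt^{k_0}$ and $-q^{-\sum\alpha_i}\mt^{-k_0}$. An alternative, possibly cleaner route that I would keep in reserve is to deduce the relation from the already-established isomorphism circle: since Theorem \ref{monoidality} (assumed available) identifies $\CH$ with $V^{\otimes n}$ as a $\UqhL$-module once the homological operators are known to be well defined, and the relations among $E,F^{(1)},K$ hold in $\UqhL$ by Remark \ref{relationsUqhL} and Definition \ref{GoodVerma}; but since Proposition \ref{relationeFK} logically precedes and underpins Theorem \ref{homologicalHabiro}, I would present the direct homological computation as the real proof and mention the module identification only as a consistency check.
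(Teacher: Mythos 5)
Your handling of the two commutation relations with $K$ is exactly the paper's argument and is fine. For the commutator relation, however, your route diverges from the paper's in a way that both costs you a lot of work and leads you to a wrong intermediate claim. The paper never expands $F'\cdot U_{\bf k}$ in the code-sequence basis. It uses the fact (Remark \ref{Ecalculelebord}) that $E$ \emph{is} the boundary map $\partial_*$ of the pair $(X_r,X_r^-)$ up to canonical identifications, so $E\cdot(F'\cdot U_{\bf k})$ is read off directly from the relative boundary of the chain: the face where the leftmost point of the leftmost dashed arc reaches $w_0$ contributes $F'\cdot(E\cdot U_{\bf k})$, and the two remaining faces — the endpoints $\{t=0\}$ and $\{t=1\}$ of the added plain loop, both of which land at $w_0$ — contribute $(C_1-C_2)\,U_{\bf k}$, where $C_1$ and $C_2$ are single handle-rule coefficients. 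No combing, no fusion rules, no telescoping over a basis expansion is needed.

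The concrete error in your plan is the predicted surviving coefficient $\left(q^{\sum\alpha_i}\mt^{k_0}-q^{-\sum\alpha_i}\mt^{-k_0}\right)$. The correct value is $q^{\sum\alpha_i}\left(\mt^{r}-\mt^{-r}q^{-2\sum\alpha_i}\right)$ with $r=k_0+\cdots+k_{n-1}$: the loop added by $F'$ encircles \emph{all} the punctures, so when its endpoint degenerates to $w_0$ the handle-rule braid crosses all $r$ red strands (and, for the $\{t=1\}$ face, winds around all the $w_i$), producing $C_1=\mt^{r}$ and $C_2=\mt^{-r}q^{-2\sum\alpha_i}$ — not exponents in $k_0$ alone. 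Your sentence asserting that ``the telescoping isolates the dependence on $k_0$ precisely so that the answer reorganizes as $K-K^{-1}$'' is not an argument; since $K-K^{-1}$ acts on $U_{\bf k}$ by $q^{\sum\alpha_i}\mt^{r}-q^{-\sum\alpha_i}\mt^{-r}$, a coefficient depending only on $k_0$ cannot reorganize into it for $n>1$. If you insist on the basis-expansion route, you would additionally need to verify that the coefficient of $U(k_0,\ldots,k_i+1,\ldots,k_{n-1})$ in $F'\cdot U_{\bf k}$ is independent of $k_0$ for $i>0$ (which is what makes the non-diagonal terms cancel in the commutator); that is true but must be proved, and even then the diagonal term must be recomputed correctly. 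The boundary-map argument of the paper avoids all of this.
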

\begin{proof}
The first two relations are direct consequences of both facts that $F^{(1)} $ increases $r$ by one, $E$ decreases it by one, and of the (diagonal) definition of $K$. It remains to prove the last one. The proof can be performed without considering a basis of $\CH$, although we do it here using the basis of code sequences for an easier reading. Let $r \in \BN$, we recall that $\CU = \left( U_{\bf k} \right)_{{\bf k} \in \EZnr}$ is a basis of $\Hrelm_r$ as an $\Laurentmax$-module. Let ${\bf k } = (k_0 , \ldots , k_{n-1}) \in \EZnr$. First we compute the commutation between $E$ and $F'$ before renormalizing $F'$ to $F^{(1)} $. The class $F' \cdot \left( U_{\bf k} \right)$ corresponds to the following one:
\begin{equation*}
F'^{(1)} \cdot U_{\bf k} = \vcenter{\hbox{\begin{tikzpicture}[scale=0.7]
\node (w0) at (-5,0) {};
\node (w1) at (-3,0) {};
\node (w2) at (-1,0) {};
\node[gray] at (0.0,0.0) {\ldots};
\node (wn1) at (1,0) {};
\node (wn) at (2.5,0) {};  
%\tikzstyle{grisEncadre}=[thick, dashed, fill=gray!20]
\draw[dashed] (w0) -- (w1) node[midway,above] {$k_0$};
\draw[dashed] (w1) -- (w2) node[midway,above] {$k_1$};
\draw[dashed] (wn1) -- (wn) node[midway,above] {$k_{n-1}$};
%%\draw (-2,0) to (-1,0);
%
%\draw (1,0) to (2,0);
%%\draw (2,0) to (3,0);
\node[gray] at (w0)[left=5pt] {$w_0$};
\node[gray] at (w1)[above=5pt] {$w_1$};
\node[gray] at (w2)[above=5pt] {$w_2$};
\node[gray] at (wn1)[above=5pt] {$w_{n-1}$};
\node[gray] at (wn)[above=5pt] {$w_n$};
\foreach \n in {w1,w2,wn1,wn}
  \node at (\n)[gray,circle,fill,inner sep=3pt]{};
\node at (w0)[gray,circle,fill,inner sep=3pt]{};
\draw[double,thick,red] (-4,-0.1) -- (-4,-2);
\draw[double,thick,red] (1.75,-0.1) -- (1.75,-2);
\draw[double,thick,red] (-2,-0.1) -- (-2,-2);
\draw[dashed,gray] (-5,-2) -- (4,-2);
\draw[dashed,gray] (4,-2) -- (4,-3);
\node[gray,circle,fill,inner sep=0.8pt] at (-4.8,-3) {};
\node[below,gray] at (-4.8,-3) {$\xi_{r+1}$};
%\node[below=5pt,gray] at (-4.4,-3) {$\ldots$};
\node[gray,circle,fill,inner sep=0.8pt] at (-3.9,-3) {};
\node[below,gray] at (-3.9,-3) {$\xi_2$};
\node[gray,circle,fill,inner sep=0.8pt] at (-3.2,-3) {};
\node[below,gray] at (-3.2,-3) {$\xi_{1}$};
\draw[postaction={decorate}] (w0) to[bend right=90] (3.5,0) to[bend right=90] (w0); 
\draw[red] (3.5,0) -- (3.5,-2);
\draw[red] (-4.8,-3) -- (-4,-2);
\draw[red] (-3.9,-3) -- (1.75,-2);
\draw[red] (-3.2,-3) to[bend right=5] (3.5,-2);
\node[red] at (-4,-2.8) {$\ldots$};
\draw[gray] (-5,0) -- (-5,1);
\draw[gray] (-5,0) -- (-5,-3);
\draw[gray] (-5,-3) -- (4.5,-3) node[right] {$\partial D_n$};
\end{tikzpicture}}}
\end{equation*}

Applying $E$ to this class gives the part of its boundary lying in $X_r^-$. There are $r+1$ points embedded in this class, $r$ of them in the dashed arcs, and the last one in the plain arc. The part of the boundary lying in $X_r^-$ is the sum of:
\begin{itemize}
\item the leftmost point of dashed arcs going to $w_0$ (i.e. given by one boundary component from the simplex defined by the leftmost dashed arc, the image of $\lbrace t_1 = 0 \rbrace$ where $t_1$ is the first parameter of the latter simplex),% corresponding to the leftmost dashed arc), 
\item and of the two boundary parts corresponding to ``back and front faces'' parametrized by the plain arc (the image of $\lbrace t=
0\rbrace$ and $\lbrace t=1 \rbrace$ where $t$ is the coordinate sent to the plain arc)\end{itemize}
This corresponds to the following equality.

\begin{align*}
E \cdot \left(\vcenter{\hbox{
\begin{tikzpicture}
\node (w0) at (-2,0) {};
\node (w1) at (-0.5,0) {};
%\node (w2) at (-1,0) {};
\node[gray] at (0.0,0.0) {\ldots};
\node (wn1) at (0.5,0) {};
\node (wn) at (1.5,0) {};  
%\tikzstyle{grisEncadre}=[thick, dashed, fill=gray!20]
\draw[dotted] (w0) -- (w1) node[midway] {\small $k_0$};
%\draw[dotted] (w1) -- (w2) node[midway] {\tiny $k_1$};
\draw[dotted] (wn1) -- (wn) node[midway] {\small $k_{n-1}$};
%%\draw (-2,0) to (-1,0);
%
%\draw (1,0) to (2,0);
%%\draw (2,0) to (3,0);
%\node[gray] at (w0)[above left=2pt] {\tiny $w_0$};
\node[gray] at (w1)[above=2pt] {\tiny $w_1$};
%\node[gray] at (w2)[above=2pt] {\tiny $w_2$};
\node[gray] at (wn1)[above=2pt] {\tiny $w_{n-1}$};
\node[gray] at (wn)[above=2pt] {\tiny $w_n$};
\foreach \n in {w1,wn1,wn}
  \node at (\n)[gray,circle,fill,inner sep=2pt]{};
\node at (w0)[gray,circle,fill,inner sep=2pt]{};
\draw[double,thick,red] (-1.25,-0.1) -- (-1.25,-1.2);
%\draw[double,thick,red] (2,-0.1) -- (2,-2);
\draw[double,thick,red] (1,-0.1) -- (1,-1.2);
%\draw[dashed,gray] (-5,-2) -- (3,-2);
%\draw[dashed,gray] (3,-2) -- (3,-3);
%\node[gray,circle,fill,inner sep=0.8pt] at (-4.8,-3) {};
%\node[below,gray] at (-4.8,-3) {$\xi_0$};
%\node[below=5pt,gray] at (-4.4,-3) {$\ldots$};
%\node[gray,circle,fill,inner sep=0.8pt] at (-3.9,-3) {};
%\node[below,gray] at (-3.9,-3) {$\xi_r$};
%\node[gray,circle,fill,inner sep=0.8pt] at (-3.2,-3) {};
%\node[below,gray] at (-3.2,-3) {$\xi_{r+1}$};
\draw[postaction={decorate}] (w0) to[bend right=70] (2,0) to[bend right=70] (w0); 
\draw[red] (2,0) -- (2,-1.2);
%\draw[red] (-4.8,-3) -- (-4,-2);
%\draw[red] (-3.9,-3) -- (2,-2);
%\draw[red] (-3.2,-3) to[bend right=5] (0.2,-2.5);
%\node[red] at (-4,-2.8) {$\ldots$};
\draw[gray] (-2,0) -- (-2,0.5);
\draw[gray] (-2,0) -- (-2,-1);
%\draw[gray] (-5,-3) -- (4,-3) node[right] {$\partial D_n$};
\end{tikzpicture}
}} \right) = & 
\left(\vcenter{\hbox{
\begin{tikzpicture}
\node (w0) at (-2,0) {};
\node (w1) at (-0.5,0) {};
%\node (w2) at (-1,0) {};
\node[gray] at (0.0,0.0) {\ldots};
\node (wn1) at (0.5,0) {};
\node (wn) at (1.5,0) {};
%\tikzstyle{grisEncadre}=[thick, dashed, fill=gray!20]
\draw[dotted] (w0) -- (w1) node[midway] {\small $k_0-1$};
%\draw[dotted] (w1) -- (w2) node[midway] {\small $k_1$};
\draw[dotted] (wn1) -- (wn) node[midway] {\small $k_{n-1}$};
%%\draw (-2,0) to (-1,0);
%
%\draw (1,0) to (2,0);
%%\draw (2,0) to (3,0);
%\node[gray] at (w0)[above left=2pt] {\tiny $w_0$};
\node[gray] at (w1)[above=2pt] {\tiny $w_1$};
%\node[gray] at (w2)[above=2pt] {\tiny $w_2$};
\node[gray] at (wn1)[above=2pt] {\tiny $w_{n-1}$};
\node[gray] at (wn)[above=2pt] {\tiny $w_n$};
\foreach \n in {w1,wn1,wn}
  \node at (\n)[gray,circle,fill,inner sep=2pt]{};
\node at (w0)[gray,circle,fill,inner sep=2pt]{};
\draw[double,thick,red] (-1.25,-0.1) -- (-1.25,-1.2);
%\draw[double,thick,red] (2,-0.1) -- (2,-2);
\draw[double,thick,red] (1,-0.1) -- (1,-1.2);
%\draw[dashed,gray] (-5,-2) -- (3,-2);
%\draw[dashed,gray] (3,-2) -- (3,-3);
%\node[gray,circle,fill,inner sep=0.8pt] at (-4.8,-3) {};
%\node[below,gray] at (-4.8,-3) {$\xi_0$};
%\node[below=5pt,gray] at (-4.4,-3) {$\ldots$};
%\node[gray,circle,fill,inner sep=0.8pt] at (-3.9,-3) {};
%\node[below,gray] at (-3.9,-3) {$\xi_r$};
%\node[gray,circle,fill,inner sep=0.8pt] at (-3.2,-3) {};
%\node[below,gray] at (-3.2,-3) {$\xi_{r+1}$};
\draw[postaction={decorate}] (w0) to[bend right=70] (2,0) to[bend right=70] (w0); 
\draw[red] (2,0) -- (2,-1.2);
%\draw[red] (-4.8,-3) -- (-4,-2);
%\draw[red] (-3.9,-3) -- (2,-2);
%\draw[red] (-3.2,-3) to[bend right=5] (0.2,-2.5);
%\node[red] at (-4,-2.8) {$\ldots$};
\draw[gray] (-2,0) -- (-2,0.5);
\draw[gray] (-2,0) -- (-2,-1);
%\draw[gray] (-5,-3) -- (4,-3) node[right] {$\partial D_n$};
\end{tikzpicture}
}} \right) 
%\\ & 
+C \times U\left( k_0 , \ldots , k_{n-1} \right)
\end{align*}
where the coefficient $C$ is the computation of the relative boundary part coming from the plain arc. One sees that:
\begin{equation*}
\left(\vcenter{\hbox{
\begin{tikzpicture}
\node (w0) at (-2,0) {};
\node (w1) at (-0.5,0) {};
%\node (w2) at (-1,0) {};
\node[gray] at (0.0,0.0) {\ldots};
\node (wn1) at (0.5,0) {};
\node (wn) at (1.5,0) {};
%\tikzstyle{grisEncadre}=[thick, dashed, fill=gray!20]
\draw[dotted] (w0) -- (w1) node[midway] {\small $k_0-1$};
%\draw[dotted] (w1) -- (w2) node[midway] {\small$k_1$};
\draw[dotted] (wn1) -- (wn) node[midway] {\small $k_{n-1}$};
%%\draw (-2,0) to (-1,0);
%
%\draw (1,0) to (2,0);
%%\draw (2,0) to (3,0);
%\node[gray] at (w0)[above left=2pt] {\tiny $w_0$};
\node[gray] at (w1)[above=2pt] {\tiny $w_1$};
%\node[gray] at (w2)[above=2pt] {\tiny $w_2$};
\node[gray] at (wn1)[above=2pt] {\tiny $w_{n-1}$};
\node[gray] at (wn)[above=2pt] {\tiny $w_n$};
\foreach \n in {w1,wn1,wn}
  \node at (\n)[gray,circle,fill,inner sep=2pt]{};
\node at (w0)[gray,circle,fill,inner sep=2pt]{};
\draw[double,thick,red] (-1.25,-0.1) -- (-1.25,-1.2);
%\draw[double,thick,red] (2,-0.1) -- (2,-2);
\draw[double,thick,red] (1,-0.1) -- (1,-1.2);
%\draw[dashed,gray] (-5,-2) -- (3,-2);
%\draw[dashed,gray] (3,-2) -- (3,-3);
%\node[gray,circle,fill,inner sep=0.8pt] at (-4.8,-3) {};
%\node[below,gray] at (-4.8,-3) {$\xi_0$};
%\node[below=5pt,gray] at (-4.4,-3) {$\ldots$};
%\node[gray,circle,fill,inner sep=0.8pt] at (-3.9,-3) {};
%\node[below,gray] at (-3.9,-3) {$\xi_r$};
%\node[gray,circle,fill,inner sep=0.8pt] at (-3.2,-3) {};
%\node[below,gray] at (-3.2,-3) {$\xi_{r+1}$};
\draw[postaction={decorate}] (w0) to[bend right=70] (2,0) to[bend right=70] (w0); 
\draw[red] (2,0) -- (2,-1.2);
%\draw[red] (-4.8,-3) -- (-4,-2);
%\draw[red] (-3.9,-3) -- (2,-2);
%\draw[red] (-3.2,-3) to[bend right=5] (0.2,-2.5);
%\node[red] at (-4,-2.8) {$\ldots$};
\draw[gray] (-2,0) -- (-2,0.5);
\draw[gray] (-2,0) -- (-2,-1);
%\draw[gray] (-5,-3) -- (4,-3) node[right] {$\partial D_n$};
\end{tikzpicture}
}} \right) = F' \cdot \left( E \cdot U(k_0, \ldots , k_{n-1}) \right) 
\end{equation*}
using Example \ref{Ecodeseq}. We also mention that this term is zero if $k_0=0$. This gives:
\[
\left[ E, F' \right] \cdot U(k_0 , \ldots , k_{n-1}) = C \times U(k_0 , \ldots , k_{n-1}) 
\]
so that it remains to compute the coefficient $C$. The coefficient $C$ is the difference $C_1 - C_2$ where $C_1$ and $C_2$ satisfy the following equations:
\begin{align*}
\left(\vcenter{\hbox{
\begin{tikzpicture}
\node (w0) at (-2,0) {};
\node (w1) at (-0.5,0) {};
%\node (w2) at (-1,0) {};
\node[gray] at (0.0,0.0) {\ldots};
\node (wn1) at (0.5,0) {};
\node (wn) at (1.5,0) {};  
%\tikzstyle{grisEncadre}=[thick, dashed, fill=gray!20]
\draw[dotted] (w0) -- (w1) node[midway] {\tiny $k_0$};
%\draw[dotted] (w1) -- (w2) node[midway] {\tiny $k_1$};
\draw[dotted] (wn1) -- (wn) node[midway] {\tiny $k_{n-1}$};
%%\draw (-2,0) to (-1,0);
%
%\draw (1,0) to (2,0);
%%\draw (2,0) to (3,0);
%\node[gray] at (w0)[above left=2pt] {\tiny $w_0$};
\node[gray] at (w1)[above=2pt] {\tiny $w_1$};
%\node[gray] at (w2)[above=2pt] {\tiny $w_2$};
\node[gray] at (wn1)[above=2pt] {\tiny $w_{n-1}$};
\node[gray] at (wn)[above=2pt] {\tiny $w_n$};
\foreach \n in {w1,wn1,wn}
  \node at (\n)[gray,circle,fill,inner sep=2pt]{};
\node at (w0)[gray,circle,fill,inner sep=2pt]{};
\draw[double,thick,red] (-1.25,-0.1) -- (-1.25,-1.2);
%\draw[double,thick,red] (2,-0.1) -- (2,-2);
\draw[double,thick,red] (1,-0.1) -- (1,-1.2);
%\draw[dashed,gray] (-5,-2) -- (3,-2);
%\draw[dashed,gray] (3,-2) -- (3,-3);
%\node[gray,circle,fill,inner sep=0.8pt] at (-4.8,-3) {};
%\node[below,gray] at (-4.8,-3) {$\xi_0$};
%\node[below=5pt,gray] at (-4.4,-3) {$\ldots$};
%\node[gray,circle,fill,inner sep=0.8pt] at (-3.9,-3) {};
%\node[below,gray] at (-3.9,-3) {$\xi_r$};
%\node[gray,circle,fill,inner sep=0.8pt] at (-3.2,-3) {};
%\node[below,gray] at (-3.2,-3) {$\xi_{r+1}$};
\draw[postaction={decorate}] (w0) to[bend right=70] (2,0) to[bend right=70] (w0); 
\draw[red] (2,0) -- (2,-1.2);
%\draw[red] (-4.8,-3) -- (-4,-2);
%\draw[red] (-3.9,-3) -- (2,-2);
%\draw[red] (-3.2,-3) to[bend right=5] (0.2,-2.5);
%\node[red] at (-4,-2.8) {$\ldots$};
\draw[gray] (-2,0) -- (-2,0.5);
\draw[gray] (-2,0) -- (-2,-1);
%\draw[gray] (-5,-3) -- (4,-3) node[right] {$\partial D_n$};
\end{tikzpicture}
}} \right) = C_1 
\left(\vcenter{\hbox{
\begin{tikzpicture}
\node (w0) at (-2,0) {};
\node (w1) at (-0.5,0) {};
%\node (w2) at (-1,0) {};
\node[gray] at (0.0,0.0) {\ldots};
\node (wn1) at (0.5,0) {};
\node (wn) at (1.5,0) {};  
%\tikzstyle{grisEncadre}=[thick, dashed, fill=gray!20]
\draw[dotted] (w0) -- (w1) node[midway] {\tiny $k_0$};
%\draw[dotted] (w1) -- (w2) node[midway] {\tiny $k_1$};
\draw[dotted] (wn1) -- (wn) node[midway] {\tiny $k_{n-1}$};
%%\draw (-2,0) to (-1,0);
%
%\draw (1,0) to (2,0);
%%\draw (2,0) to (3,0);
%\node[gray] at (w0)[above left=2pt] {\tiny $w_0$};
\node[gray] at (w1)[above=2pt] {\tiny $w_1$};
%\node[gray] at (w2)[above=2pt] {\tiny $w_2$};
\node[gray] at (wn1)[above=2pt] {\tiny $w_{n-1}$};
\node[gray] at (wn)[above=2pt] {\tiny $w_n$};
\foreach \n in {w1,wn1,wn}
  \node at (\n)[gray,circle,fill,inner sep=2pt]{};
\node at (w0)[gray,circle,fill,inner sep=2pt]{};
\draw[double,thick,red] (-1.25,-0.1) -- (-1.25,-1.2);
%\draw[double,thick,red] (2,-0.1) -- (2,-2);
\draw[double,thick,red] (1,-0.1) -- (1,-1.2);
%\draw[dashed,gray] (-5,-2) -- (3,-2);
%\draw[dashed,gray] (3,-2) -- (3,-3);
%\node[gray,circle,fill,inner sep=0.8pt] at (-4.8,-3) {};
%\node[below,gray] at (-4.8,-3) {$\xi_0$};
%\node[below=5pt,gray] at (-4.4,-3) {$\ldots$};
%\node[gray,circle,fill,inner sep=0.8pt] at (-3.9,-3) {};
%\node[below,gray] at (-3.9,-3) {$\xi_r$};
%\node[gray,circle,fill,inner sep=0.8pt] at (-3.2,-3) {};
%\node[below,gray] at (-3.2,-3) {$\xi_{r+1}$};
\draw[postaction={decorate}] (w0) to[bend right=70] (2,0) to[bend right=70] (w0); 
\draw[red] (-1.6,-0.65) -- (-1.6,-1.2);
%\draw[red] (-4.8,-3) -- (-4,-2);
%\draw[red] (-3.9,-3) -- (2,-2);
%\draw[red] (-3.2,-3) to[bend right=5] (0.2,-2.5);
%\node[red] at (-4,-2.8) {$\ldots$};
\draw[gray] (-2,0) -- (-2,0.5);
\draw[gray] (-2,0) -- (-2,-1);
%\draw[gray] (-5,-3) -- (4,-3) node[right] {$\partial D_n$};
\end{tikzpicture}
}} \right) \\
\left(\vcenter{\hbox{
\begin{tikzpicture}
\node (w0) at (-2,0) {};
\node (w1) at (-0.5,0) {};
%\node (w2) at (-1,0) {};
\node[gray] at (0.0,0.0) {\ldots};
\node (wn1) at (0.5,0) {};
\node (wn) at (1.5,0) {};  
%\tikzstyle{grisEncadre}=[thick, dashed, fill=gray!20]
\draw[dotted] (w0) -- (w1) node[midway] {\tiny $k_0$};
%\draw[dotted] (w1) -- (w2) node[midway] {\tiny $k_1$};
\draw[dotted] (wn1) -- (wn) node[midway] {\tiny $k_{n-1}$};
%%\draw (-2,0) to (-1,0);
%
%\draw (1,0) to (2,0);
%%\draw (2,0) to (3,0);
%\node[gray] at (w0)[above left=2pt] {\tiny $w_0$};
\node[gray] at (w1)[above=2pt] {\tiny $w_1$};
%\node[gray] at (w2)[above=2pt] {\tiny $w_2$};
\node[gray] at (wn1)[above=2pt] {\tiny $w_{n-1}$};
\node[gray] at (wn)[above=2pt] {\tiny $w_n$};
\foreach \n in {w1,wn1,wn}
  \node at (\n)[gray,circle,fill,inner sep=2pt]{};
\node at (w0)[gray,circle,fill,inner sep=2pt]{};
\draw[double,thick,red] (-1.25,-0.1) -- (-1.25,-1.2);
%\draw[double,thick,red] (2,-0.1) -- (2,-2);
\draw[double,thick,red] (1,-0.1) -- (1,-1.2);
%\draw[dashed,gray] (-5,-2) -- (3,-2);
%\draw[dashed,gray] (3,-2) -- (3,-3);
%\node[gray,circle,fill,inner sep=0.8pt] at (-4.8,-3) {};
%\node[below,gray] at (-4.8,-3) {$\xi_0$};
%\node[below=5pt,gray] at (-4.4,-3) {$\ldots$};
%\node[gray,circle,fill,inner sep=0.8pt] at (-3.9,-3) {};
%\node[below,gray] at (-3.9,-3) {$\xi_r$};
%\node[gray,circle,fill,inner sep=0.8pt] at (-3.2,-3) {};
%\node[below,gray] at (-3.2,-3) {$\xi_{r+1}$};
\draw[postaction={decorate}] (w0) to[bend right=70] (2,0) to[bend right=70] (w0); 
\draw[red] (2,0) -- (2,-1.2);
%\draw[red] (-4.8,-3) -- (-4,-2);
%\draw[red] (-3.9,-3) -- (2,-2);
%\draw[red] (-3.2,-3) to[bend right=5] (0.2,-2.5);
%\node[red] at (-4,-2.8) {$\ldots$};
\draw[gray] (-2,0) -- (-2,0.5);
\draw[gray] (-2,0) -- (-2,-1);
%\draw[gray] (-5,-3) -- (4,-3) node[right] {$\partial D_n$};
\end{tikzpicture}
}} \right) = C_2 
\left(\vcenter{\hbox{
\begin{tikzpicture}
\node (w0) at (-2,0) {};
\node (w1) at (-0.5,0) {};
%\node (w2) at (-1,0) {};
\node[gray] at (0.0,0.0) {\ldots};
\node (wn1) at (0.5,0) {};
\node (wn) at (1.5,0) {};  
%\tikzstyle{grisEncadre}=[thick, dashed, fill=gray!20]
\draw[dotted] (w0) -- (w1) node[midway] {\tiny $k_0$};
%\draw[dotted] (w1) -- (w2) node[midway] {\tiny $k_1$};
\draw[dotted] (wn1) -- (wn) node[midway] {\tiny $k_{n-1}$};
%%\draw (-2,0) to (-1,0);
%
%\draw (1,0) to (2,0);
%%\draw (2,0) to (3,0);
%\node[gray] at (w0)[above left=2pt] {\tiny $w_0$};
\node[gray] at (w1)[above=2pt] {\tiny $w_1$};
%\node[gray] at (w2)[above=2pt] {\tiny $w_2$};
\node[gray] at (wn1)[above=2pt] {\tiny $w_{n-1}$};
\node[gray] at (wn)[above=2pt] {\tiny $w_n$};
\foreach \n in {w1,wn1,wn}
  \node at (\n)[gray,circle,fill,inner sep=2pt]{};
\node at (w0)[gray,circle,fill,inner sep=2pt]{};
\draw[double,thick,red] (-1.25,-0.1) -- (-1.25,-1.2);
%\draw[double,thick,red] (2,-0.1) -- (2,-2);
\draw[double,thick,red] (1,-0.1) -- (1,-1.2);
%\draw[dashed,gray] (-5,-2) -- (3,-2);
%\draw[dashed,gray] (3,-2) -- (3,-3);
%\node[gray,circle,fill,inner sep=0.8pt] at (-4.8,-3) {};
%\node[below,gray] at (-4.8,-3) {$\xi_0$};
%\node[below=5pt,gray] at (-4.4,-3) {$\ldots$};
%\node[gray,circle,fill,inner sep=0.8pt] at (-3.9,-3) {};
%\node[below,gray] at (-3.9,-3) {$\xi_r$};
%\node[gray,circle,fill,inner sep=0.8pt] at (-3.2,-3) {};
%\node[below,gray] at (-3.2,-3) {$\xi_{r+1}$};
\draw[postaction={decorate}] (w0) to[bend right=70] (2,0) to[bend right=70] (w0); 
\draw[red] (-1.6,0.65) -- (-1.6,-1.2);
%\draw[red] (-4.8,-3) -- (-4,-2);
%\draw[red] (-3.9,-3) -- (2,-2);
%\draw[red] (-3.2,-3) to[bend right=5] (0.2,-2.5);
%\node[red] at (-4,-2.8) {$\ldots$};
\draw[gray] (-2,0) -- (-2,0.5);
\draw[gray] (-2,0) -- (-2,-1);
%\draw[gray] (-5,-3) -- (4,-3) node[right] {$\partial D_n$};
\end{tikzpicture}
}} \right).
\end{align*}
This from the same handle argument as in Example \ref{Ecodeseq} (application of path $\varphi^{-1}$ from Definition \ref{addingpoint}). We compute these coefficients using the handle rule. The coefficient $C_1$ corresponds to the local system coefficient of the following braid:

\begin{figure}[h!]
\begin{center}
%\fontsize{10pt}{15pt}\selectfont% or whatever fontsize you like
\def\svgwidth{0.5\columnwidth}
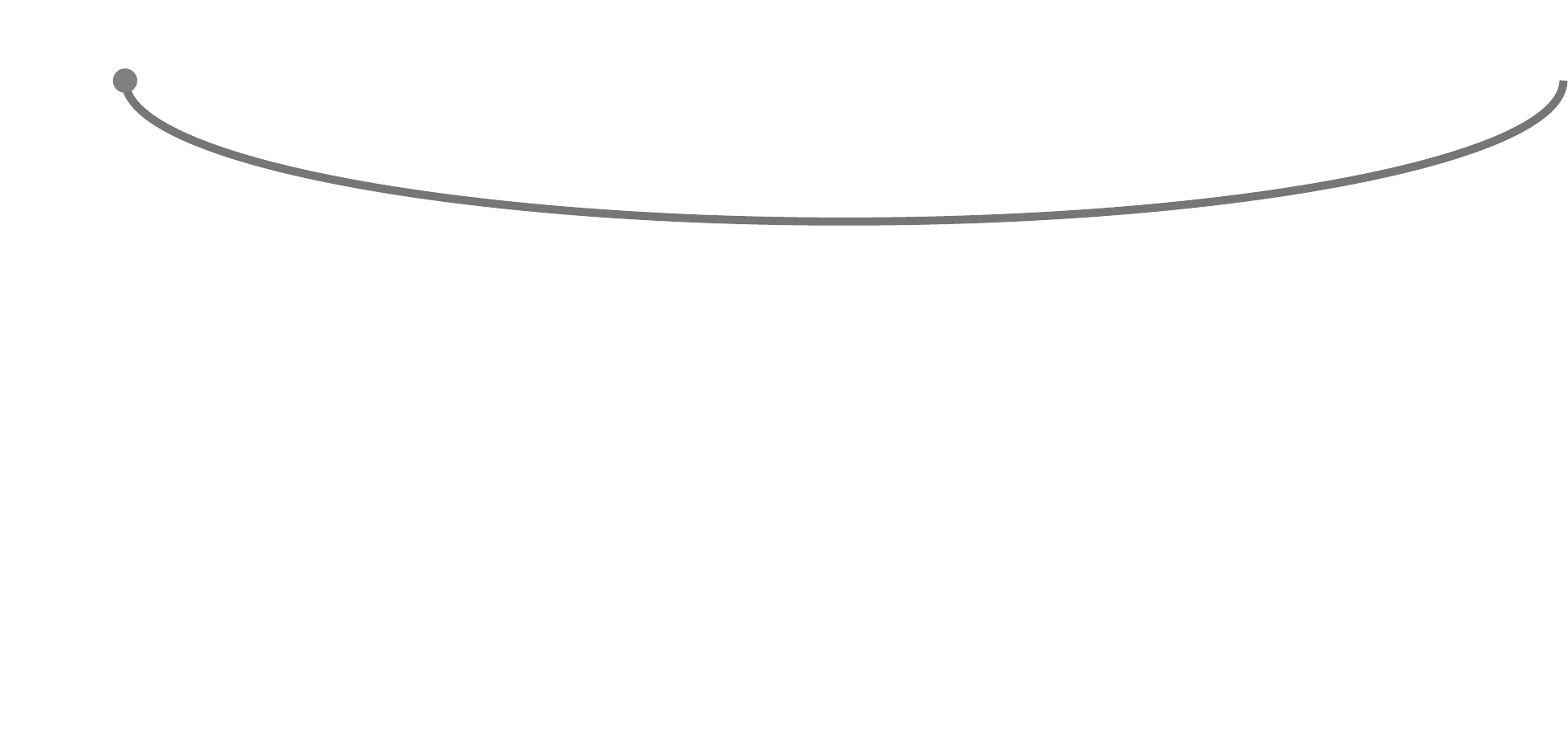
%\caption{The surface $\Sigma=\Sigma_1 \cup \Sigma_2 \subset M$ in green, and the dual graph $T_v/\pi_1(M)$ in red. \label{1}}
\end{center}
\end{figure}
while $C_2$ corresponds to the same braid but with the red front strand passing in the back. We emphasize that in the braid picture we got rid of parts of red handles lying outside parentheses. Outside the parentheses, the paths consist in going to the base point without crossing each other staying in front of the $w_i$'s, so that upper and lower the box, the contributions to the handle local system coefficient balance each other. Then it is straightforward to compute the local system coefficient of these braids, we get:
\begin{align*}
C_1= \mt^{\left. \sum_{i=0}^{n-1} k_i \right.} = \mt^r  \text{ }  & \text{, } C_2= \mt^{-r} q^{-2 \left. \sum_{i=1}^{n} \alpha_i \right.} 
\end{align*}
so that:
\[
\left[ E, F' \right] \cdot U(k_0 , \ldots , k_{n-1}) = \left({\mt}^r - {\mt}^{-r} q^{-2 \left. \sum_{i=1}^{n} \alpha_i \right.}\right) \times U(k_0 , \ldots , k_{n-1}) .
\]
We recall that:
\[
\left[ E, F^{(1)}  \right]  = q^{\sum \alpha_i} \left[ E, F' \right]
\]
which concludes the proof. 
\end{proof}

\begin{thm}\label{homologicalHabiro}
Let $q^{-2}=\mt$. The infinite module $\CH$ together with the above described action of $E, F^{(1)}, K^{\pm 1 }$ and $F^{(k)}$ for $k \ge 2$ yields a representation of the integral algebra $\UqhL$.% (defined in Section \ref{halfLusztigversion}).
\end{thm}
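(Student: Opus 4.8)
The plan is to check directly that the homological operators $E$, $K^{\pm 1}$ and $F^{(k)}$ ($k\ge 1$) satisfy the relations presenting $\UqhL$, namely those of Definition \ref{Uqnaif} together with those of Remark \ref{relationsUqhL}. Much of this is already in hand. Proposition \ref{relationeFK} gives $KE=\mt^{-1}EK$, $KF^{(1)}=\mt F^{(1)}K$ and $[E,F^{(1)}]=K-K^{-1}$; under the hypothesis $q^{-2}=\mt$ and the identification $F^{(1)}=(q-q^{-1})F$ these read $KEK^{-1}=q^{2}E$, $KF^{(1)}K^{-1}=q^{-2}F^{(1)}$ and $[E,F]=\frac{K-K^{-1}}{q-q^{-1}}$, while $KK^{-1}=K^{-1}K=\Id$ holds by the definition of $K^{-1}$ in Definition \ref{actionofK}. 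So the remaining work is to verify, for $k\ge 1$, the three relations involving higher divided powers: $KF^{(k)}K^{-1}=q^{-2k}F^{(k)}$, $\,F^{(k)}F^{(l)}=\qbin{k+l}{k}_q F^{(k+l)}$ and $[E,F^{(k+1)}]=F^{(k)}(q^{-k}K-q^{k}K^{-1})$.

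The first two I would read off from the grading and from Proposition \ref{dividedpower}. By construction $F^{(k)}$ sends $\Hrelm_r$ into $\Hrelm_{r+k}$, whereas $K$ acts on $\Hrelm_r$ by the scalar $q^{\sum_i\alpha_i}\mt^{r}$; hence on $\Hrelm_r$ one has $KF^{(k)}K^{-1}=\mt^{k}F^{(k)}=q^{-2k}F^{(k)}$. For the product relation, Proposition \ref{dividedpower} identifies $(F^{(1)})^{k}$ with $\left[k\right]_q!\,F^{(k)}$ as maps $\Hrelm_r\to\Hrelm_{r+k}$; comparing $(F^{(1)})^{k}(F^{(1)})^{l}=(F^{(1)})^{k+l}$ then gives $\left[k\right]_q!\left[l\right]_q!\,F^{(k)}F^{(l)}=\left[k+l\right]_q!\,F^{(k+l)}$, and since $\left[k+l\right]_q!=\left[k\right]_q!\left[l\right]_q!\,\qbin{k+l}{k}_q$ this forces $F^{(k)}F^{(l)}=\qbin{k+l}{k}_q F^{(k+l)}$. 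One can also obtain this relation geometrically from Corollary \ref{dashedtodashed}: $F^{(k)}\circ F^{(l)}$ produces two concentric dashed loops around $\partial D_n$ which fuse into a single $(k+l)$-indexed loop with coefficient $\qbin{k+l}{k}_{\mt}$, and Remark \ref{quantumtq} together with the normalising prefactor $q^{k(1-k)/2}q^{k\sum_i\alpha_i}$ built into Definition \ref{actionofF} converts $\qbin{k+l}{k}_{\mt}$ into $\qbin{k+l}{k}_q$.

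For the remaining relation $[E,F^{(k+1)}]=F^{(k)}(q^{-k}K-q^{k}K^{-1})$ the plan is induction on $k$, with base case $k=0$ being exactly $[E,F^{(1)}]=K-K^{-1}$ from Proposition \ref{relationeFK}. For the step I would use $F^{(k)}F^{(1)}=\left[k+1\right]_q F^{(k+1)}$ (the case $l=1$ of the product relation just proved), expand $[E,F^{(k)}F^{(1)}]=[E,F^{(k)}]F^{(1)}+F^{(k)}[E,F^{(1)}]$, substitute the inductive hypothesis for $[E,F^{(k)}]$ and the relations $KF^{(1)}=\mt F^{(1)}K$ and $EK=\mt KE$ to move every $K^{\pm1}$ past $F^{(1)}$, and then simplify: the scalar coefficients collapse to elementary $q$-number identities and the right-hand side becomes $\left[k+1\right]_q\cdot F^{(k)}(q^{-k}K-q^{k}K^{-1})$, after which one cancels the common factor $\left[k+1\right]_q$ (legitimate, the coefficient ring being a domain). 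Alternatively one observes that the whole divided-power package is a formal consequence of the three basic relations plus $F^{(k)}=\frac{(q-q^{-1})^k}{\left[k\right]_q!}F^{k}$, as these identities already hold in $\Uq$: once Proposition \ref{relationeFK} furnishes the three basic relations for the homological $E$, $F=\frac{1}{q-q^{-1}}F^{(1)}$, $K^{\pm1}$ and Proposition \ref{dividedpower} identifies the homological $F^{(k)}$ with $\frac{1}{\left[k\right]_q!}(F^{(1)})^{k}$, all the higher relations follow at once.

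I expect the one point that needs genuine care, rather than being mere assembly of Propositions \ref{relationeFK} and \ref{dividedpower}, to be the coefficient-ring bookkeeping behind the word \emph{integral} in the statement: $\UqhL$ is an algebra over $\Laurent_0=\BZ\left[q^{\pm1}\right]$ while $\CH$ is a $\Laurentmax$-module, and the action has to be read through $q^{\alpha_i}=s_i$ and $q^{-2}=\mt$ (equivalently, over the ring obtained from $\Laurentmax$ by adjoining $q$, which the hypothesis $q^{-2}=\mt$ prescribes). One must check that every structure constant appearing above — the prefactors $q^{k(1-k)/2}q^{k\sum_i\alpha_i}$, the quantum factorials and binomials of Remark \ref{quantumtq}, and the coefficient $\mt^{r}-\mt^{-r}q^{-2\sum_i\alpha_i}$ coming from Proposition \ref{relationeFK} — is an honest element of this ring, so that no division by a non-invertible quantum integer is ever performed and the $\UqhL$-module structure descends to the integral level. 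This is precisely what the divided-power formalism is designed to guarantee; granting it, the relations above are all verified and $\CH$ is a representation of $\UqhL$.
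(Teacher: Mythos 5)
Your proposal is correct and follows essentially the same route as the paper: the paper's proof is exactly the assembly of Proposition \ref{relationeFK} (for the relations among $E$, $F^{(1)}$, $K^{\pm 1}$) with Proposition \ref{dividedpower} (identifying $F^{(k)}$ as the divided powers), from which the remaining relations of Remark \ref{relationsUqhL} follow formally. Your extra details — the grading argument for $KF^{(k)}K^{-1}$, the cancellation of quantum factorials (justified by the freeness of $\Hrelm_r$ from Proposition \ref{HrelTrick}), and the integrality bookkeeping — are all consistent with, and merely expand upon, what the paper leaves implicit.
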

\begin{proof}
The algebra $\UqhL$ is presented in Section \ref{halfLusztigversion}, Definition \ref{Halflusztig}. We use same notations (from Section \ref{halfLusztigversion}) for generators and we recover the same relations. Namely, the relations between $E$, $F^{(1)}$ and $K^{\pm 1 }$ are recovered using Proposition \ref{relationeFK}, while the fact that $F^{(k)}$ are the so called divided powers of $F^{(1)} $, see Proposition \ref{dividedpower}, ensures that the relations involving them hold. 
\end{proof}

\begin{rmk}
Even if it is not necessary to prove them knowing Proposition \ref{dividedpower} (divided power property), we can check homologically the relations involving the divided powers of $F^{(1)}$ (relations introduced in Remark \ref{relationsUqhL}). Namely:
\[
\left[ E, F^{(n+1)}  \right] = F^{(n)} \left( q^{-n} K - q^n K^{-1}  \right)
\]
is a simple computation of the relative boundary of a class as in the proof of Proposition \ref{relationeFK}. While:
\[
F^{(n)} F^{(m)} = \qbin{n}{n+m}_q F^{(n+m)}
\]
is a direct consequence of the homological Corollary \ref{dashedtodashed}. 

We have a complete homological description of the relations holding in $\UqhL$. 
\end{rmk}

\begin{rmk}
Using Proposition \ref{relationeFK}, one sees that we have a representation of the {\em simply connected} rational version of $\Uq$, for which are introduced generators that correspond to square roots of $K$ and $K^{-1}$. See \cite[\S~9]{DCP}, Remark 2.2 of \cite{Bas}, or \cite[\S~9.1]{C-P} for information about this version of $\Uq$. 
\end{rmk}

%\begin{thm}\label{Uqrep}
%Using specialization $t=q^2$, the space $\CH$ provides an algebra representation of $\Uq$. 
%\end{thm}

\subsection{Computation of the $\UqhL$-action}\label{computationUqaction}

In this section we compute the action of the operators $E, F^{(1)} $ and $K$ in the basis of multi arcs, in order to recognize the representation of $\Uq$ obtained over $\CH$. First we define a normalized version of the multi-arc basis.

\begin{defn}[Normalized multi-arcs.]\label{NormMultiA}
Let ${\bf k} \in  \EZnr$, and $A(k_0,\ldots , k_{n-1})$ be the following element of $\Hrelm_r$:
\[
A({\bf k}) = q^{\alpha_1(k_1 + \cdots + k_{n-1} ) + \alpha_2 ( k_2+ \cdots +k_{n-1}) + \cdots + \alpha_{n-1} k_{n-1} }A'({\bf k}). 
\]
Let $\CA = \left( A({\bf k}) \right)_{{\bf k} \in \EZnr}$ be the corresponding family indexed by $\EZnr$. 
By convention, $A(k_0 , \ldots , k_{n-1})$ is defined to be $0 \in \Hrelm_r$ when ever $k_i = -1$ for some $i \in \lbrace 0,\ldots , n-1 \rbrace$.
\end{defn}

\begin{rmk}
The family $\CA$ is obtained from $\CA'$ by a diagonal matrix of invertible coefficients in $\Laurentmax$ so that $\CA$ is still a basis of $\Hrelm_r$ as an $\Laurentmax$-module. As for the definition of divided powers of $F$, we could have chosen to avoid the normalization coefficient but to draw more complicated handles. In following computations we will work with $\CA'$ drawings and add the coefficient ad-hoc to work with the family $\CA$. 
\end{rmk}
%\begin{rmk}
%The same normalization allows one to pass from the $r$-loops family defined in \cite{FW} to the one defined in Section \ref{homologicalfamily} of this work. This fact will allow us to deal with the homological conjectures of \cite{FW} using our normalization. 
%\end{rmk}

We are going to compute the action of operators in this basis, and we will see that it recovers the basis of $\UqhL$ Verma-modules. 

\subsubsection{Action of $E$.}

First we need a lemma to reorganize handles.

\begin{lemma}
Let $A'(k_0 ,\ldots , k_{n-1})$ be the standard multi-arc associated with $(k_0 ,\ldots,k_{n-1}) \in \EZnr$. For $i=1,\ldots ,n$, there is the following relation holding in $\Hrelm_r$:
\begin{align*}
\left(\vcenter{\hbox{
\begin{tikzpicture}[decoration={
    markings,
    mark=at position 0.5 with {\arrow{>}}}
    ]
\node (w0) at (-1,0) {};
\node (w1) at (0,0) {};
\node[gray] at (0.5,0) {\tiny $\ldots$};
\node (wn) at (2,0) {};
\node[gray] at (1.6,0) {\tiny $\ldots$};
\node (wi) at (1,0) {};
\coordinate (a) at (-2,-1.3);
\draw[dashed] (w0) to node[above,pos=0.5] (km) {\small $k_0$} (w1);
\draw[dashed] (w0) to[bend right=50] node[pos=0.85] (k0) {\small $k_{n-1}$} (wn);
\draw[dashed] (w0) to[bend right=30] node[pos=0.7] (k1) {\small $k_{i-1}$} (wi);
\node[gray] at (w0)[left=5pt] {$w_0$};
\node[gray] at (wn)[above=5pt] {$w_n$};
\node[gray] at (wi)[above=5pt] {$w_{i}$};
\foreach \n in {w0,wn,wi,w1}
  \node at (\n)[gray,circle,fill,inner sep=3pt]{};
\draw[double,red,thick] (km) -- (km|-a);
\draw[double,red,thick] (k0) -- (k0|-a);
\draw[double,red,thick] (k1) -- (k1|-a);
\draw[gray,thick] (-1,0.3) -- (w0) -- (-1,-1.6);
\draw[dashed,gray] (-1,-1.3) -- (2,-1.3) -- (2,-1.6);
\end{tikzpicture}
}} \right)
& =  \mt^{k_0+ \cdots + k_{i-2}}
\left(\vcenter{\hbox{
\begin{tikzpicture}[decoration={
    markings,
    mark=at position 0.5 with {\arrow{>}}}
    ]
\node (w0) at (-1,0) {};
\node (w1) at (0,0) {};
\node[gray] at (0.5,0) {\tiny $\ldots$};
\node (wn) at (2,0) {};
\node[gray] at (1.6,0) {\tiny $\ldots$};
\node (wi) at (1,0) {};
\coordinate (a) at (-2,-1.3);
\draw[dashed] (w0) to node[above,pos=0.8] (km) {\small $k_0$} (w1);
\draw[dashed] (w0) to[bend right=50] node[pos=0.85] (k0) {\small $k_{n-1}$} (wn);
\draw[dashed] (w0) to[bend right=30] node[pos=0.7] (k1) {\small $k_{i-1}$} (wi);
\node[gray] at (w0)[left=5pt] {$w_0$};
\node[gray] at (wn)[above=5pt] {$w_n$};
\node[gray] at (wi)[above=5pt] {$w_{i}$};
\foreach \n in {w0,wn,wi,w1}
  \node at (\n)[gray,circle,fill,inner sep=3pt]{};
\draw[double,red,thick] (km) -- (km|-a);
\draw[double,red,thick] (k0) -- (k0|-a);
\node (solo) at (-0.5,-0.1) {};
\draw[red] (solo) -- (solo|-a);
\draw[double,red,thick] (k1) -- (k1|-a);
\draw[gray,thick] (-1,0.3) -- (w0) -- (-1,-1.6);
\draw[dashed,gray] (-1,-1.3) -- (2,-1.3) -- (2,-1.6);
\end{tikzpicture}
}} \right)
\end{align*}
where, in the right term, only one component of the red tube indexed by $k_i$ had been moved to the extreme left of other red handles. Namely only the leftmost handle composing the $(k_i)$-handle (tube of $k_i$ parallel handles) had been pushed to the left of the $(k_0)$-handle.  Down the parentheses, red handles are joining the base point following a usual dashed box, without crossing with each other. The left class follows this box:
\begin{equation*}
\begin{tikzpicture}[scale=0.65]
\node (w0) at (-5,1) {};
\node (w1) at (-1,1) {};
\node (w2) at (1,1) {};
\node[gray] at (2.0,1.0) {\ldots};
\node (wn1) at (3,1) {};
\node (wn) at (5,1) {};

%\node (x0) at (-4.8,-3) {};
%%\node at (-4.7,-3) {$\ldots$};
%\node  (x1) at (-4.3,-3) {};
%\node  (x2) at (-4.25,-3) {};
%\node  (xn1) at (,-3) {};
%\node  (xn) at (4,-3) {};

\draw[dashed,gray] (-5,0) -- (5,0) -- (5,-3);
\draw[gray] (-5,1.5) -- (-5,-3) -- (6,-3) node[right] {$\partial D_n$};

\node[above,red] at (-3.5,0) {$k_0$};
\node[above,red] at (0,0) {$k_1$};
\node[above,red] at (4,0) {$k_{n-1}$};

\draw[red] (-4,0.3) -- (-4,0) -- (-4.8,-3) node[below] {$\xi_r$};
\draw[red] (-3,0.3) -- (-3,0) -- (-4.3,-3);% node[below] {$\xi_{r'}$};
\draw[red] (-0.5,0.3) -- (-0.5,0) -- (-4,-3);
\draw[red] (0.5,0.3) -- (0.5,0) -- (-3.8,-3);
\draw[red] (3.5,0.3) -- (3.5,0)-- (-3.2,-3);% node[below] {$\xi_{k_{n-1}}$};
\draw[red] (4.5,0.3) -- (4.5,0)-- (-2.5,-3) node[below] {$\xi_1$};

\node[red] at (-3.8,-1) {$\ldots$};
\node[red] at (-1.2,-0.9) {$\ldots$};
\node[red] at (0.25,-0.9) {$\ldots$};
\node[red] at (2.9,-0.5) {$\ldots$};

\node[gray] at (w0)[left=5pt] {$w_0$};
\node[gray] at (w1)[above=5pt] {$w_1$};
\node[gray] at (w2)[above=5pt] {$w_2$};
\node[gray] at (wn1)[above=5pt] {$w_{n-1}$};
\node[gray] at (wn)[above=5pt] {$w_n$};
\foreach \n in {w1,w2,wn1,wn}
  \node at (\n)[gray,circle,fill,inner sep=3pt]{};
\node at (w0)[gray,circle,fill,inner sep=3pt]{};
\end{tikzpicture}
\end{equation*}
while the right one has the leftmost single handle following the leftmost path of the above dashed box. All other handles are right shifted. 
\end{lemma}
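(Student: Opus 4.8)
The plan is to derive the identity directly from the handle rule (Remark~\ref{handlerule}) combined with the sign bookkeeping of Remark~\ref{example0}. First I would isolate the geometric move. On the left-hand class the leftmost of the $k_{i-1}$ parallel handles forming the tube attached to the arc ending at $w_i$ joins the base point along the standard dashed box, so it occupies the slot $\xi_{r-(k_0+\cdots+k_{i-2})}$ and runs up to the leftmost configuration point of that arc; on the right-hand class this same handle instead starts from $\xi_r$, i.e. it has become the leftmost handle, every other handle being shifted one slot to the right, while the underlying dashed arcs and all remaining handles are left untouched. Since only this one handle is rerouted, the associated $\BZ$-chain is unchanged, and by the handle rule the two lifted classes differ by $\sign(\gamma)\,\rho_r(\gamma)$, where $\gamma=\alpha\beta^{-1}$ is the braid comparing the old handle-path $\alpha$ to the new one $\beta$, the factor $\sign(\gamma)$ being forced by the fact that changing the order in which handles reach the base point permutes the parametrisation of the underlying simplex (Remark~\ref{example0}).

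Next I would compute $\gamma$. Outside the drawing box the two handle-paths coincide, both returning to the base point staying in front of the punctures, so only the in-box portion contributes, exactly as in Figures~\ref{exemple0} and~\ref{handlerulecompress}. Inside the box the rerouted red strand must be brought to the leftmost position; choosing the reroute so that it passes in front of the red strands carrying the configuration points lying on the arcs ending at $w_1,\dots,w_{i-1}$, and without winding around any puncture, it crosses precisely $k_0+k_1+\cdots+k_{i-2}$ red strands. Reading off the image under $\rho_r$ (Definition~\ref{localsystXr}, Remark~\ref{pi_1X_r}), each such crossing contributes a factor $t$ and no $q^{\pm\alpha_k}$ appears, so $\rho_r(\gamma)=t^{\,k_0+\cdots+k_{i-2}}$, while the induced permutation is a product of that many transpositions, hence $\sign(\gamma)=(-1)^{\,k_0+\cdots+k_{i-2}}$.

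Finally I would invoke the identity $\sign(\gamma)\rho_r(\gamma)=\rho_r(\gamma)_{|t=\mt}$ from Remark~\ref{example0} to obtain $\sign(\gamma)\rho_r(\gamma)=\mt^{\,k_0+\cdots+k_{i-2}}$, which is the asserted coefficient; the case $i=1$ (empty sum, factor $1$) is consistent since the arc to $w_1$ is already the leftmost one. The main obstacle I anticipate is the geometric bookkeeping of the reroute: one must choose the path of the moved handle so that it crosses \emph{exactly} the configuration points on the arcs to $w_1,\dots,w_{i-1}$ and \emph{no} puncture, since a stray winding around some $w_k$ would introduce an unwanted $q^{2\alpha_k}$, and then confirm the sign through the transposition count. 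This is the same style of handle-chasing that Remark~\ref{example0} and Lemma~\ref{plaintodashed} were set up to make routine, but it still requires an explicit choice of rerouting arc and a supporting picture to be fully convincing.
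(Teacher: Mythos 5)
Your proposal is correct and follows essentially the same route as the paper: the paper's proof is a one-line appeal to the handle rule, with the comparison braid drawn in a figure from which the coefficient $\mt^{\,k_0+\cdots+k_{i-2}}$ is read off. You make explicit what the paper leaves implicit in the notation $\mt=-t$ — namely that the $t$-power comes from the $k_0+\cdots+k_{i-2}$ crossings of the rerouted strand under $\rho_r$ and the sign from the induced permutation of the simplex parametrisation, combined via $\sign(\gamma)\rho_r(\gamma)=\rho_r(\gamma)_{|t=\mt}$ — which is a faithful unpacking of the same argument.
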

\begin{proof}
It is a straightforward consequence of the handle rule. The braid involved is drawn in Figure \ref{braidactionE}, so that one sees the local system coefficient (we did not draw the punctures as they don't play any role). 

\begin{figure}[h!]
\begin{center}
\def\svgscale{0.4}
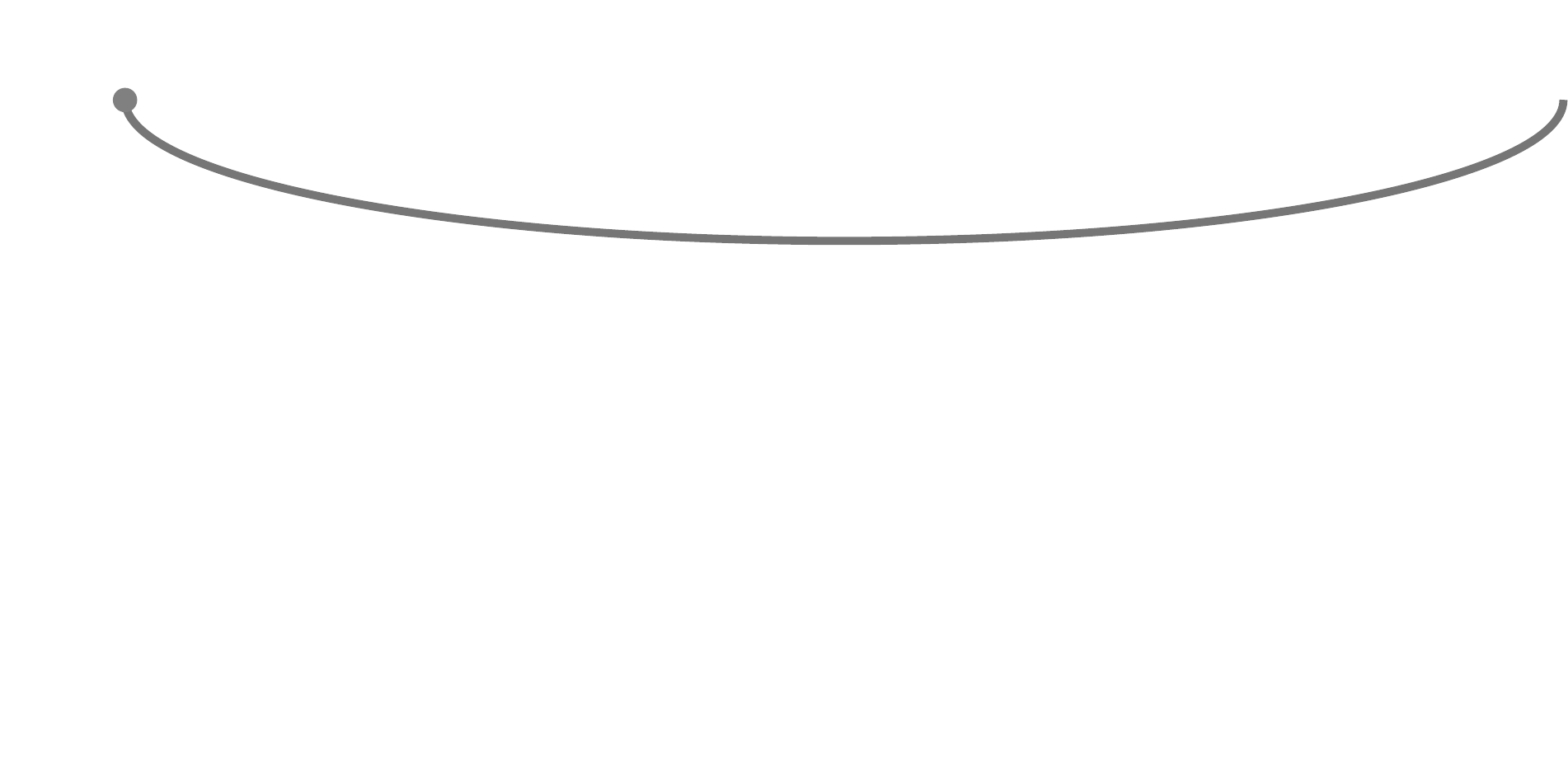
\caption{Handle rule. \label{braidactionE}}
\end{center}
\end{figure}
\end{proof}

\begin{lemma}
For any ${\bf k} = (k_0 ,\ldots , k_{n-1}) \in \EZnr$, the action of $E$ over the standard multi arcs is the following:
\[
E\cdot A'(k_0 ,\ldots , k_{n-1}) = \sum_{i=0}^{n-1} \mt^{k_0 + \cdots + k_{i-1}} A'(k_0 , \ldots , k_{i-1} , k_i-1 , k_{i+1}, \ldots , k_{n-1}) .
\] 
\end{lemma}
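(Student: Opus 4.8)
The plan is to unwind the definition of $E$ (Definition \ref{actionofE}): it is the relative boundary map $\partial_*$ of the pair $(X_r,X_r^-)$ followed by the identification $(\Phi^r)^{-1}$, so by Remark \ref{Ecalculelebord} it simply extracts the part of the boundary of the cycle representing $A'({\bf k})$ that lies in $X_r^-$, i.e. the locus where a configuration point sits at $w_0$. First I would describe that cycle: $A'(k_0,\ldots,k_{n-1})$ is the product of $n$ disjoint chains $\phi^{k_{i-1}}\colon \Delta^{k_{i-1}}\to X_{k_{i-1}}$ supported on the $n$ dashed arcs, the $i$-th arc joining $w_0$ to $w_i$ (Remark \ref{chainwithdisjointsupport}). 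With the parametrization $\Delta^{k}=\{0\le t_1\le\cdots\le t_k\le 1\}$ and the arcs oriented away from $w_0$, the only boundary face of $\phi^{k_{i-1}}$ landing in $X_r^-$ is $\{t_1=0\}$, on which $\phi^{k_{i-1}}$ restricts --- after the shift $(0,t_2,\ldots,t_{k_{i-1}})\mapsto(t_2,\ldots,t_{k_{i-1}})$, which involves no permutation of coordinates, hence no sign --- to the chain $\phi^{k_{i-1}-1}$ on the same arc. All other faces of the product cycle correspond either to a collision of two configuration points or to a collision with some $w_j$, $j\neq 0$; these lie in $X_r\setminus A_\epsilon$ and are killed in the relative complex, exactly as in the plain-arc computation inside the proof of Proposition \ref{relationeFK}. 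So over $\BZ$ the relative boundary is $\sum_{i=1}^{n}A'(k_0,\ldots,k_{i-2},k_{i-1}-1,k_i,\ldots,k_{n-1})$, with the $i$-th term understood to be $0$ when $k_{i-1}=0$ (the face $\{t_1=0\}$ of a $0$-simplex being empty), matching the convention $A'(\ldots,-1,\ldots)=0$.

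Next I would track the local coefficients produced by $(\Phi^r)^{-1}$, which transports the base point back along the path $\varphi^r$ of Definition \ref{addingpoint}. As in Example \ref{Ecodeseq}, this change of handle contributes no local-system coefficient precisely when the point sent to $w_0$ is joined by the \emph{leftmost} handle (the strand attached to $\xi_r$). For the $i$-th dashed arc the point sent to $w_0$ is its leftmost embedded point, but the handle joining it is only the leftmost strand of the $(k_{i-1})$-handle, which in the standard box picture sits to the right of the $(k_0),\ldots,(k_{i-2})$-handles. Here I invoke the handle-reorganization lemma stated immediately above: pushing that single strand to the extreme left, past the $(k_0),\ldots,(k_{i-2})$-handles, costs exactly the factor $\mt^{k_0+\cdots+k_{i-2}}$; once it is leftmost, the argument of Example \ref{Ecodeseq} applies verbatim and $(\Phi^r)^{-1}$ adds nothing further. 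Hence the term coming from the arc to $w_i$ carries the coefficient $\mt^{k_0+\cdots+k_{i-2}}$.

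Finally I would reindex $i\mapsto i+1$, so that $i$ runs over $0,\ldots,n-1$ and labels the arc ending at $w_{i+1}$ (whose index $k_i$ drops by one), turning the coefficient into $\mt^{k_0+\cdots+k_{i-1}}$ and yielding
\[
E\cdot A'(k_0,\ldots,k_{n-1})=\sum_{i=0}^{n-1}\mt^{k_0+\cdots+k_{i-1}}\,A'(k_0,\ldots,k_{i-1},k_i-1,k_{i+1},\ldots,k_{n-1}),
\]
the $i=0$ term having the empty exponent $\mt^0=1$ and the $k_i=0$ terms vanishing. I expect the only real obstacle to be the bookkeeping of the local coefficients: one must be sure that $\partial_*$ picks up exactly the faces $\{t_1=0\}$ of the individual arc-simplices (and nothing else), that the reparametrization shift carries no sign, and that the preceding handle lemma is applied to precisely the right single strand with the right exponent. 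The geometric input --- which faces land in $X_r^-$, and that the handle argument of Example \ref{Ecodeseq} adapts once the strand is moved leftmost --- is routine once this is set up.
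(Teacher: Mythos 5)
Your proposal is correct and follows essentially the same route as the paper's proof: the paper likewise invokes the handle-reorganization lemma stated just above to push the relevant single strand of the $(k_{i-1})$-handle to the extreme left at the cost of $\mt^{k_0+\cdots+k_{i-2}}$, and then applies the boundary computation of Example \ref{Ecodeseq} arc by arc before summing. The only (cosmetic) difference is the order of operations — you compute the $\BZ$-boundary first and track local coefficients afterwards, while the paper reorganizes handles before taking the relative boundary — and your explicit treatment of the $k_{i-1}=0$ case is a welcome addition the paper leaves implicit.
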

\begin{proof}
Every dashed component of $A'(k_0 , \ldots , k_{n-1})$ has its leftmost component having one end in $w_0$. For $i= 1,\ldots , n-1$ we have from the above lemma: 
\begin{align*}
A'(k_0 , \ldots , k_{n-1})  = \left(\vcenter{\hbox{
\begin{tikzpicture}[decoration={
    markings,
    mark=at position 0.5 with {\arrow{>}}}
    ]
\node (w0) at (-1,0) {};
\node (w1) at (0,0) {};
\node[gray] at (0.5,0) {\tiny $\ldots$};
\node (wn) at (2,0) {};
\node[gray] at (1.6,0) {\tiny $\ldots$};
\node (wi) at (1,0) {};
\coordinate (a) at (-2,-2);
\draw[dashed] (w0) to node[above,pos=0.5] (km) {\small $k_0$} (w1);
\draw[dashed] (w0) to[bend right=50] node[pos=0.85] (k0) {\small $k_{n-1}$} (wn);
\draw[dashed] (w0) to[bend right=30] node[pos=0.7] (k1) {\small $k_{i-1}$} (wi);
\node[gray] at (w0)[left=5pt] {$w_0$};
\node[gray] at (wn)[above=5pt] {$w_n$};
\node[gray] at (wi)[above=5pt] {$w_{i}$};
\foreach \n in {w0,wn,wi,w1}
  \node at (\n)[gray,circle,fill,inner sep=3pt]{};
\draw[double,red,thick] (km) -- (km|-a);
\draw[double,red,thick] (k0) -- (k0|-a);
\draw[double,red,thick] (k1) -- (k1|-a);
\draw[gray,thick] (-1,0.3) -- (w0) -- (-1,-2.5);
\end{tikzpicture}
}} \right)
= \mt^{k_0+ \cdots + k_{i-2}}
\left(\vcenter{\hbox{
\begin{tikzpicture}[decoration={
    markings,
    mark=at position 0.5 with {\arrow{>}}}
    ]
\node (w0) at (-1,0) {};
\node (w1) at (0,0) {};
\node[gray] at (0.5,0) {\tiny $\ldots$};
\node (wn) at (2,0) {};
\node[gray] at (1.6,0) {\tiny $\ldots$};
\node (wi) at (1,0) {};
\coordinate (a) at (-2,-2);
\draw[dashed] (w0) to node[above,pos=0.8] (km) {\small $k_0$} (w1);
\draw[dashed] (w0) to[bend right=50] node[pos=0.85] (k0) {\small $k_{n-1}$} (wn);
\draw[dashed] (w0) to[bend right=30] node[pos=0.7] (k1) {\small $k_{i-1}$} (wi);
\node[gray] at (w0)[left=5pt] {$w_0$};
\node[gray] at (wn)[above=5pt] {$w_n$};
\node[gray] at (wi)[above=5pt] {$w_{i}$};
\foreach \n in {w0,wn,wi,w1}
  \node at (\n)[gray,circle,fill,inner sep=3pt]{};
\draw[double,red,thick] (km) -- (km|-a);
\draw[double,red,thick] (k0) -- (k0|-a);
\node (solo) at (-0.5,-0.1) {};
\draw[red] (solo) -- (solo|-a);
\draw[double,red,thick] (k1) -- (k1|-a);
\draw[gray,thick] (-1,0.3) -- (w0) -- (-1,-2.5);
\end{tikzpicture}
}} \right)
\end{align*}

Using exactly same arguments from Example \ref{Ecodeseq}, we have:
\begin{align*}
\partial_* \left(\vcenter{\hbox{
\begin{tikzpicture}[decoration={
    markings,
    mark=at position 0.5 with {\arrow{>}}}
    ]
\node (w0) at (-1,0) {};
\node (w1) at (0,0) {};
\node[gray] at (0.5,0) {\tiny $\ldots$};
\node (wn) at (2,0) {};
\node[gray] at (1.6,0) {\tiny $\ldots$};
\node (wi) at (1,0) {};
\coordinate (a) at (-2,-2);
\draw[dashed] (w0) to node[above,pos=0.8] (km) {\small $k_0$} (w1);
\draw[dashed] (w0) to[bend right=50] node[pos=0.85] (k0) {\small $k_{n-1}$} (wn);
\draw[dashed] (w0) to[bend right=30] node[pos=0.7] (k1) {\small $k_{i-1}$} (wi);
\node[gray] at (w0)[left=5pt] {$w_0$};
\node[gray] at (wn)[above=5pt] {$w_n$};
\node[gray] at (wi)[above=5pt] {$w_{i}$};
\foreach \n in {w0,wn,wi,w1}
  \node at (\n)[gray,circle,fill,inner sep=3pt]{};
\draw[double,red,thick] (km) -- (km|-a);
\draw[double,red,thick] (km) -- (km|-a);
\node (solo) at (-0.5,-0.1) {};
\draw[red] (solo) -- (solo|-a);
\draw[double,red,thick] (k1) -- (k1|-a);
\draw[gray,thick] (-1,0.3) -- (w0) -- (-1,-2.5);
\end{tikzpicture}
}} \right) &
= \left(\vcenter{\hbox{
\begin{tikzpicture}[decoration={
    markings,
    mark=at position 0.5 with {\arrow{>}}}
    ]
\node (w0) at (-1,0) {};
\node (w1) at (0,0) {};
\node[gray] at (0.5,0) {\tiny $\ldots$};
\node (wn) at (2.4,0) {};
\node[gray] at (1.8,0) {\tiny $\ldots$};
\node (wi) at (1.2,0) {};
\coordinate (a) at (-2,-2);
\draw[dashed] (w0) to node[above,pos=0.8] (km) {\small $k_0$} (w1);
\draw[dashed] (w0) to[bend right=50] node[pos=0.85] (k0) {\small $k_{n-1}$} (wn);
\draw[dashed] (w0) to[bend right=40] node[pos=0.8] (k1) {\small $(k_{i-1}-1)$} (wi);
\node[gray] at (w0)[left=5pt] {$w_0$};
\node[gray] at (wn)[above=5pt] {$w_n$};
\node[gray] at (wi)[above=5pt] {$w_{i}$};
\foreach \n in {w0,wn,wi,w1}
  \node at (\n)[gray,circle,fill,inner sep=3pt]{};
\draw[double,red,thick] (km) -- (km|-a);
\draw[double,red,thick] (k0) -- (k0|-a);
\draw[double,red,thick] (k1) -- (k1|-a);
\draw[gray,thick] (-1,0.3) -- (w0) -- (-1,-2.5);
\end{tikzpicture}
}} \right) + \cdots
\end{align*}
where the rest of the terms concerns boundary terms coming from other arcs (different from the $k_{i-1}$ indexed one). Every dashed arc indexed by $k_i$ for $i=0 , \ldots , n-1$ can be treated the same way.  
The boundary of $A(k_0 , \ldots , k_{n-1})$ relative to $w_0$ is then the sum of these terms, and one gets the statement of the lemma.
\end{proof}

One has the following action over the normalized multi-arcs.

\begin{prop}[Action of $E$ over multi-arcs]\label{Eonarcs}
For any ${\bf k} = (k_0 ,\ldots , k_{n-1}) \in \EZnr$, the action of $E$ over the (normalized) multi-arc is the following:
\[
E\cdot A(k_0 ,\ldots , k_{n-1}) = \sum_{i=0}^{n-1} q^{\alpha_1 + \cdots + \alpha_i} \mt^{k_0 + \cdots + k_{i-1}} A(k_0 , \ldots , k_{i-1} , k_i-1 , k_{i+1}, \ldots , k_{n-1}) .
\]
\end{prop}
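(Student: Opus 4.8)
The plan is to deduce Proposition~\ref{Eonarcs} directly from the preceding Lemma computing $E \cdot A'(k_0,\ldots,k_{n-1})$, by tracking how the normalization coefficient in Definition~\ref{NormMultiA} transforms under the operator $E$. This is purely a bookkeeping computation with the $q^{\alpha_i}$-exponents, so I do not expect a genuine obstacle, only the need to keep the indices straight.

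First I would recall that $A(\mathbf k) = q^{c(\mathbf k)} A'(\mathbf k)$ where, writing $c(k_0,\ldots,k_{n-1}) = \alpha_1(k_1+\cdots+k_{n-1}) + \alpha_2(k_2+\cdots+k_{n-1}) + \cdots + \alpha_{n-1}k_{n-1} = \sum_{j=1}^{n-1}\alpha_j \sum_{\ell \ge j} k_\ell$. Then I would apply the previous Lemma to get
\[
E \cdot A'(k_0,\ldots,k_{n-1}) = \sum_{i=0}^{n-1} \mt^{k_0+\cdots+k_{i-1}} A'(\ldots,k_i-1,\ldots),
\]
multiply through by $q^{c(\mathbf k)}$, and in the $i$-th summand rewrite $q^{c(\mathbf k)} = q^{c(\mathbf k^{(i)})} \cdot q^{c(\mathbf k) - c(\mathbf k^{(i)})}$, where $\mathbf k^{(i)}$ denotes the tuple with $k_i$ replaced by $k_i-1$. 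Since lowering $k_i$ by one decreases the sum $\sum_{\ell\ge j}k_\ell$ by exactly $1$ for each $j \le i$ and leaves it unchanged for $j > i$, one computes $c(\mathbf k) - c(\mathbf k^{(i)}) = \alpha_1 + \alpha_2 + \cdots + \alpha_i$ (and this difference is $0$ when $i=0$, consistent with the convention). Hence $q^{c(\mathbf k)} \mt^{k_0+\cdots+k_{i-1}} A'(\mathbf k^{(i)}) = q^{\alpha_1+\cdots+\alpha_i}\,\mt^{k_0+\cdots+k_{i-1}}\,\bigl(q^{c(\mathbf k^{(i)})}A'(\mathbf k^{(i)})\bigr) = q^{\alpha_1+\cdots+\alpha_i}\,\mt^{k_0+\cdots+k_{i-1}}\,A(\mathbf k^{(i)})$, which is exactly the $i$-th term in the asserted formula.

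Summing over $i$ from $0$ to $n-1$ then yields
\[
E\cdot A(k_0,\ldots,k_{n-1}) = \sum_{i=0}^{n-1} q^{\alpha_1+\cdots+\alpha_i}\,\mt^{k_0+\cdots+k_{i-1}}\,A(k_0,\ldots,k_{i-1},k_i-1,k_{i+1},\ldots,k_{n-1}),
\]
as claimed. The only point requiring a small amount of care — and the closest thing to an obstacle — is the sign/boundary bookkeeping already absorbed into the previous Lemma (that $\partial_*$ picks out exactly the leftmost-point-to-$w_0$ face with no extra orientation sign, via the shift argument of Example~\ref{Ecodeseq}), together with the verification that terms with $k_i = 0$ vanish; these are inherited from the cited results and need only be invoked, not redone. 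I would close by noting the consistency with the convention $A(k_0,\ldots,k_{n-1})=0$ whenever some entry is $-1$, so that the $i$-th summand automatically drops out when $k_i=0$.
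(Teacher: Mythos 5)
Your proposal is correct and follows essentially the same route as the paper: apply the preceding lemma for $E\cdot A'(\mathbf k)$ and then track the normalization exponent, computing $c(\mathbf k)-c(\mathbf k^{(i)})=\alpha_1+\cdots+\alpha_i$. The paper compresses this into a two-line computation labelled ``simple,'' whereas you spell out the exponent bookkeeping explicitly; there is no substantive difference.
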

\begin{proof}
It is a simple computation:
\begin{align*}
E\cdot A(k_0 ,\ldots , k_{n-1}) & = q^{\alpha_1(k_1 + \cdots + k_{n-1} ) + \cdots + \alpha_{n-1} k_{n-1} } \sum_{i=0}^{n-1}  \mt^{k_0 + \cdots + k_{i-1}} A'(k_0 , \ldots , k_i-1 , \ldots , k_{n-1}) \\
& = \sum_{i=0}^{n-1} q^{\alpha_1 + \cdots + \alpha_i} \mt^{k_0 + \cdots + k_{i-1}} A(k_0 , \ldots , k_{i-1} , k_i-1 , k_{i+1}, \ldots , k_{n-1}) .
\end{align*}
\end{proof}

We emphasize the action in the case of one puncture.

\begin{coro}[$n=1$]\label{en=1}
Let $n=1$, $k \in \BN$, and $A(k)$ be the associated element of $\CH$. Then:
\[
E \cdot A(k) = A(k-1). 
\]
\end{coro}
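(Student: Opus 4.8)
\textbf{Proof plan for Corollary \ref{en=1}.}

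The plan is to deduce the formula directly from Proposition \ref{Eonarcs} by specializing to $n = 1$. When $n = 1$ the set $\EZnr$ consists of the single tuples $(k_0) = (k)$ with $k_0 = r$, so a normalized multi-arc is simply $A(k)$, and the only index $w_i$ available to an arc is $w_0$ and $w_1$. The normalization coefficient in Definition \ref{NormMultiA} is an empty product when $n = 1$ (there are no $\alpha_j$-factors $\alpha_1(k_1 + \cdots) + \cdots$ since there is no $k_1, \ldots, k_{n-1}$), hence $A(k) = A'(k)$ in this case.

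Next I would write out the sum in Proposition \ref{Eonarcs} for $n = 1$: the index $i$ ranges over $i = 0, \ldots, n-1 = 0$, so there is exactly one term, corresponding to $i = 0$. For that term the prefactor is $q^{\alpha_1 + \cdots + \alpha_i}$ with $i = 0$, which is the empty product, i.e.\ $q^0 = 1$; likewise the power of $\mt$ is $\mt^{k_0 + \cdots + k_{i-1}}$ with the upper index $i - 1 = -1$, again an empty sum, so this is $\mt^0 = 1$. Thus the whole sum collapses to the single summand $A(k_0 - 1) = A(k-1)$, which is precisely the claimed identity $E \cdot A(k) = A(k-1)$. (The convention in Definition \ref{NormMultiA} that $A(k) = 0$ when $k = -1$ handles the boundary case $k = 0$ consistently, giving $E \cdot A(0) = 0$.)

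I do not expect any genuine obstacle here: the statement is purely a matter of reading off the $n = 1$ specialization of the already-established Proposition \ref{Eonarcs}, checking that the empty products and empty sums in the prefactors evaluate to $1$ and $0$ respectively. If one prefers an independent verification, one can instead invoke Example \ref{Ecodeseq} (which shows $E \cdot U_{\bf k} = U(k_0 - 1, \ldots)$) together with the fact that for $n = 1$ the families $\CA'$, $\CU$ and $\CA$ all coincide, since a multi-arc with a single puncture $w_1$ is literally the code sequence $U(k)$; this gives the formula without even passing through Proposition \ref{Eonarcs}. Either way the argument is a one-line specialization, and the only thing to be careful about is the bookkeeping of the normalization coefficient, which is trivial precisely because $n = 1$.
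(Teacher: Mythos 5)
Your proposal is correct and coincides with the paper's (implicit) argument: the corollary is stated immediately after Proposition \ref{Eonarcs} precisely as the $n=1$ specialization, with the single summand $i=0$ and both prefactors reducing to $1$ as empty products, and with $A(k)=A'(k)=U(k)$ since the normalization exponent is vacuous for one puncture. Your bookkeeping of the empty sums and of the boundary case $k=0$ is exactly what is needed, so nothing further is required.
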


\subsubsection{Action of $F^{(k)}$.}

Let $i \in \{ 1,\ldots , n \}$, and $S_i$ be the following class:
\begin{equation*}
S_i = \left(\vcenter{\hbox{
\begin{tikzpicture}[decoration={
    markings,
    mark=at position 0.5 with {\arrow{>}}}
    ]
\node (w0) at (-1,0) {};
\node (w1) at (0,0) {};
\node[gray] at (0.5,0) {\tiny $\ldots$};
\node (wn) at (2,0) {};
\node (wi) at (1,0) {};
\node[gray] at (2.5,0) {\tiny $\ldots$};
\node (wnn) at (3,0) {};
\coordinate (a) at (-2,-2);
\draw[dashed] (w0) to node[above,pos=0.5] (km) {\tiny $k_0$} (w1);
\draw[dashed] (w0) to[bend right=50] node[pos=0.75] (k0) {\small $k_{i}$} (wn);
\draw[dashed] (w0) to[bend right=30] node[pos=0.7] (k1) {\small $k_{i-1}$} (wi);
\draw[dashed] (w0) to[bend right=60] node[pos=0.85] (knn) {\small $k_{n-1}$} (wnn);
\draw (w0) to[bend left=50] node[above,pos=0.9] (solo) {} (wn);
\node[gray] at (w0)[left=5pt] {$w_0$};
\node[gray] at (wn)[above=5pt] {$w_{i+1}$};
\node[gray] at (wi)[above=5pt] {$w_{i}$};
\node[gray] at (wnn)[above=5pt] {$w_{n}$};
\foreach \n in {w0,wn,wi,w1,wnn}
  \node at (\n)[gray,circle,fill,inner sep=3pt]{};
\draw[double,red,thick] (km) -- (km|-a);
\draw[double,red,thick] (k0) -- (k0|-a);
\draw[double,red,thick] (k1) -- (k1|-a);
\draw[double,red,thick] (knn) -- (knn|-a);
\draw[red] (solo) -- (solo|-a);
\draw[gray,thick] (-1,0.3) -- (w0) -- (-1,-2.5);
\end{tikzpicture}
}} \right) \in \Hrelm_r.
\end{equation*}
Namely one recognizes a standard $(k_0 , \ldots , k_{n-1})$-multi arc to which a plain arc as in the picture has been added. 
To compute the action of $F^{(1)} $ we need the following lemma allowing us to deal with $S_i$ by recursion.

\begin{lemma}\label{recursionSi}
For $i \in \{ 2,\ldots , n \}$, the following equality holds in $\Hrelm_r$:
\begin{align*}
S_i = & (k_{i}+1)_{\mt^{-1}} A'(k_0 , \ldots , k_{i}+1,k_{i+1}, \ldots ,k_{n-1}) \\ & - \mt^{-k_{i}}(k_{i-1}+1)_{\mt} A'(k_0 , \ldots , k_{i-1}+1 , k_{i},\ldots ,k_{n-1}) \\ & + q^{- 2\alpha_i} \mt^{-k_i} S_{i-1} .
\end{align*}
\end{lemma}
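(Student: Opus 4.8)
Recall that $S_i$ is the standard multi-arc $A'(k_0,\dots,k_{n-1})$ together with one additional plain arc $P$ running from $w_0$, above the punctures $w_1,\dots,w_i$, down to $w_{i+1}$. The recursion will be obtained by rewriting $P$ with the elementary moves of Section~\ref{computationrules}. First I would push $P$ towards the real line by the compressing trick (Proposition~\ref{compress2}). Since $P$ passes above the puncture $w_i$, it cannot be compressed without routing around $w_i$, which forces a breaking of $P$ near $w_i$ by the combing process (Proposition~\ref{combing}, in the form of Example~\ref{breakingplain}); as $P$ carries a single configuration point, this combing sum has only two terms, so $S_i = T_{\mathrm l} + T_{\mathrm r}$, where in $T_{\mathrm l}$ the point lies on the portion of $P$ joining $w_0$ to $w_i$ and in $T_{\mathrm r}$ it lies on the short portion joining $w_i$ to $w_{i+1}$.

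I would then reduce the two summands. The summand $T_{\mathrm l}$ is already a diagram of the shape $S_{i-1}$: its only discrepancy with $S_{i-1}$ is the prescribed attaching data of its handle, and reconciling the two by the handle rule (Remark~\ref{handlerule}) multiplies by $q^{-2\alpha_i}\mt^{-k_i}$, i.e.\ a monodromy $q^{-2\alpha_i}$ around $w_i$ together with the $\mt^{-k_i}$ recorded when the handle is dragged past the $k_i$ strands of the $k_i$-indexed dashed arc --- so $T_{\mathrm l} = q^{-2\alpha_i}\mt^{-k_i}\,S_{i-1}$. For $T_{\mathrm r}$, once the disk is compressed the short plain arc lies between the $k_{i-1}$-indexed dashed arc (ending at $w_i$) and the $k_i$-indexed dashed arc (ending at $w_{i+1}$), and I would absorb its configuration point by the plain-to-dashed fusion rule (Lemma~\ref{plaintodashed}): the slots lying along the $k_i$-indexed arc collect into $(k_i+1)_{\mt^{-1}}\,A'(k_0,\dots,k_i+1,\dots,k_{n-1})$, while those reached by pulling the point past the puncture $w_i$ onto the $k_{i-1}$-indexed arc collect into $-\mt^{-k_i}\,(k_{i-1}+1)_{\mt}\,A'(k_0,\dots,k_{i-1}+1,k_i,\dots,k_{n-1})$, the factor $\mt^{-k_i}$ again coming from crossing the $k_i$ strands and the sign being folded into $t\mapsto\mt$ as in Remark~\ref{example0}. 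Collecting $T_{\mathrm l}$ and the two pieces of $T_{\mathrm r}$ gives the stated identity.

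The topological content of each move is routine given Section~\ref{computationrules}; the genuine work, and the main obstacle, is the local-system bookkeeping. Every isotopy of $P$, every choice of routing around $w_i$, and every repositioning of a handle has to be turned into an explicit braid whose image under $\rho_r$ (Definition~\ref{localsystXr}) is a monomial in the $q^{\pm\alpha_j}$ and $t^{\pm1}$, and each permutation incurred must be absorbed into $\mt=-t$ exactly as in the proof of Lemma~\ref{plaintodashed} and in Example~\ref{Ecodeseq}. Verifying that these monomials are precisely the exponents $-k_i$ and the monodromy $q^{-2\alpha_i}$ appearing in the statement --- and that no stray powers of $q^{\alpha_j}$ with $j\neq i$ survive --- is where the care is needed; once this is checked the lemma follows.
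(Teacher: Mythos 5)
Your proposal follows essentially the same route as the paper: break the plain arc at $w_i$ via Example \ref{breakingplain}, identify the left piece with $q^{-2\alpha_i}\mt^{-k_i}S_{i-1}$ by the handle rule, and convert the right piece into the two multi-arc terms via Lemma \ref{plaintodashed}, with all coefficients as stated. The only imprecision is in handling $T_{\mathrm r}$: Lemma \ref{plaintodashed} requires the plain and dashed arcs to share both endpoints, so the short arc from $w_i$ to $w_{i+1}$ must first be rewritten (by a second application of Example \ref{breakingplain}) as the difference of two arcs emanating from $w_0$ — this is where the minus sign actually originates — before the fusion rule can be applied to each piece, exactly as the paper does.
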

\begin{proof}
By a breaking of a plain arc (see Example \ref{breakingplain}), one gets the following decomposition for $S_i$:
\begin{align*}
S_i= \left(\vcenter{\hbox{
\begin{tikzpicture}[decoration={
    markings,
    mark=at position 0.5 with {\arrow{>}}}
    ]
\node (w0) at (-1,0) {};
\node (w1) at (0,0) {};
\node[gray] at (0.5,0) {\tiny $\ldots$};
\node (wn) at (2,0) {};
\node (wi) at (1,0) {};
\node[gray] at (2.5,0) {\tiny $\ldots$};
\node (wnn) at (3,0) {};
\coordinate (a) at (-2,-2);
\draw[dashed] (w0) to node[above,pos=0.5] (km) {\small $k_0$} (w1);
\draw[dashed] (w0) to[bend right=50] node[pos=0.75] (k0) {\small $k_{i}$} (wn);
\draw[dashed] (w0) to[bend right=30] node[pos=0.7] (k1) {\small $k_{i-1}$} (wi);
\draw[dashed] (w0) to[bend right=60] node[pos=0.85] (knn) {\small $k_{n-1}$} (wnn);
\draw (w0) to[bend left=50] node[pos=0.9] (solo) {} (wi);
\node[gray] at (w0)[left=5pt] {$w_0$};
\node[gray] at (wn)[above=5pt] {$w_{i+1}$};
\node[gray] at (wi)[above=5pt] {$w_{i}$};
\node[gray] at (wnn)[above=5pt] {$w_{n}$};
\foreach \n in {w0,wn,wi,w1,wnn}
  \node at (\n)[gray,circle,fill,inner sep=3pt]{};
\draw[double,red,thick] (km) -- (km|-a);
\draw[double,red,thick] (k0) -- (k0|-a);
\draw[double,red,thick] (k1) -- (k1|-a);
\draw[double,red,thick] (knn) -- (knn|-a);
\draw[red] (solo) arc (180:0:0.4) node[above] (solop) {};
\draw[red] (solop) -- (solop|-a);
\draw[gray,thick] (-1,0.3) -- (w0) -- (-1,-2.5);
\end{tikzpicture}
}} \right) +
\left(\vcenter{\hbox{
\begin{tikzpicture}[decoration={
    markings,
    mark=at position 0.5 with {\arrow{>}}}
    ]
\node (w0) at (-1,0) {};
\node (w1) at (0,0) {};
\node[gray] at (0.5,0) {\tiny $\ldots$};
\node (wn) at (2,0) {};
\node (wi) at (1,0) {};
\node[gray] at (2.5,0) {\tiny $\ldots$};
\node (wnn) at (3,0) {};
\coordinate (a) at (-2,-2);
\draw[dashed] (w0) to node[above,pos=0.5] (km) {\small $k_0$} (w1);
\draw[dashed] (w0) to[bend right=50] node[pos=0.75] (k0) {\small $k_{i}$} (wn);
\draw[dashed] (w0) to[bend right=30] node[pos=0.7] (k1) {\small $k_{i-1}$} (wi);
\draw[dashed] (w0) to[bend right=60] node[pos=0.85] (knn) {\small $k_{n-1}$} (wnn);
\draw (wi) to node[above,pos=0.6] (solo) {} (wn);
\node[gray] at (w0)[left=5pt] {$w_0$};
\node[gray] at (wn)[above=5pt] {$w_{i+1}$};
\node[gray] at (wi)[above=5pt] {$w_{i}$};
\node[gray] at (wnn)[above=5pt] {$w_{n}$};
\foreach \n in {w0,wn,wi,w1,wnn}
  \node at (\n)[gray,circle,fill,inner sep=3pt]{};
\draw[double,red,thick] (km) -- (km|-a);
\draw[double,red,thick] (k0) -- (k0|-a);
\draw[double,red,thick] (k1) -- (k1|-a);
\draw[double,red,thick] (knn) -- (knn|-a);
%\draw[red] (solo) arc (180:0:0.4) node[above] (solop) {};
\draw[red] (solo) -- (solo|-a);
\draw[gray,thick] (-1,0.3) -- (w0) -- (-1,-2.5);
\end{tikzpicture}
}} \right)
\end{align*}
We treat both right terms independently. From the first one we get:
\begin{align*}
\left(\vcenter{\hbox{
\begin{tikzpicture}[decoration={
    markings,
    mark=at position 0.5 with {\arrow{>}}}
    ]
\node (w0) at (-1,0) {};
\node (w1) at (0,0) {};
\node[gray] at (0.5,0) {\tiny $\ldots$};
\node (wn) at (2,0) {};
\node (wi) at (1,0) {};
\node[gray] at (2.5,0) {\tiny $\ldots$};
\node (wnn) at (3,0) {};
\coordinate (a) at (-2,-2);
\draw[dashed] (w0) to node[above,pos=0.5] (km) {\small $k_0$} (w1);
\draw[dashed] (w0) to[bend right=50] node[pos=0.75] (k0) {\small $k_{i}$} (wn);
\draw[dashed] (w0) to[bend right=30] node[pos=0.7] (k1) {\small $k_{i-1}$} (wi);
\draw[dashed] (w0) to[bend right=60] node[pos=0.85] (knn) {\small $k_{n-1}$} (wnn);
\draw (w0) to[bend left=50] node[pos=0.9] (solo) {} (wi);
\node[gray] at (w0)[left=5pt] {$w_0$};
\node[gray] at (wn)[above=5pt] {$w_{i+1}$};
\node[gray] at (wi)[above=5pt] {$w_{i}$};
\node[gray] at (wnn)[above=5pt] {$w_{n}$};
\foreach \n in {w0,wn,wi,w1,wnn}
  \node at (\n)[gray,circle,fill,inner sep=3pt]{};
\draw[double,red,thick] (km) -- (km|-a);
\draw[double,red,thick] (k0) -- (k0|-a);
\draw[double,red,thick] (k1) -- (k1|-a);
\draw[double,red,thick] (knn) -- (knn|-a);
\draw[red] (solo) arc (180:0:0.4) node[above] (solop) {};
\draw[red] (solop) -- (solop|-a);
\draw[gray,thick] (-1,0.3) -- (w0) -- (-1,-2.5);
\end{tikzpicture}
}} \right) = q^{- 2\alpha_i} \mt^{-k_{i}} S_{i-1}
\end{align*}
which follows from the handle rule.

Again, to treat the second term, breaking the plain arc (Example \ref{breakingplain}) leads to the following equality:

$
\begin{array}{l}
\left(\vcenter{\hbox{
\begin{tikzpicture}[decoration={
    markings,
    mark=at position 0.5 with {\arrow{>}}}
    ]
\node (w0) at (-1,0) {};
\node (w1) at (0,0) {};
\node[gray] at (0.5,0) {\tiny $\ldots$};
\node (wn) at (2,0) {};
\node (wi) at (1,0) {};
\node[gray] at (2.5,0) {\tiny $\ldots$};
\node (wnn) at (3,0) {};
\coordinate (a) at (-2,-2);
\draw[dashed] (w0) to node[above,pos=0.5] (km) {\small $k_0$} (w1);
\draw[dashed] (w0) to[bend right=50] node[pos=0.75] (k0) {\small $k_{i}$} (wn);
\draw[dashed] (w0) to[bend right=30] node[pos=0.7] (k1) {\small $k_{i-1}$} (wi);
\draw[dashed] (w0) to[bend right=60] node[pos=0.85] (knn) {\small $k_{n-1}$} (wnn);
\draw (wi) to node[above,pos=0.6] (solo) {} (wn);
\node[gray] at (w0)[left=5pt] {$w_0$};
\node[gray] at (wn)[above=5pt] {$w_{i+1}$};
\node[gray] at (wi)[above=5pt] {$w_{i}$};
\node[gray] at (wnn)[above=5pt] {$w_{n}$};
\foreach \n in {w0,wn,wi,w1,wnn}
  \node at (\n)[gray,circle,fill,inner sep=3pt]{};
\draw[double,red,thick] (km) -- (km|-a);
\draw[double,red,thick] (k0) -- (k0|-a);
\draw[double,red,thick] (k1) -- (k1|-a);
\draw[double,red,thick] (knn) -- (knn|-a);
%\draw[red] (solo) arc (180:0:0.4) node[above] (solop) {};
\draw[red] (solo) -- (solo|-a);
\draw[gray,thick] (-1,0.3) -- (w0) -- (-1,-2.5);
\end{tikzpicture}
}} \right) =
\end{array}
$
\begin{align*}
 \left(\vcenter{\hbox{
\begin{tikzpicture}[decoration={
    markings,
    mark=at position 0.5 with {\arrow{>}}}
    ]
\node (w0) at (-1,0) {};
\node (w1) at (0,0) {};
\node[gray] at (0.5,0) {\tiny $\ldots$};
\node (wn) at (2,0) {};
\node (wi) at (1,0) {};
\node[gray] at (2.5,0) {\tiny $\ldots$};
\node (wnn) at (3,0) {};
\coordinate (a) at (-2,-2);
\draw[dashed] (w0) to node[above,pos=0.5] (km) {\small $k_0$} (w1);
\draw[dashed] (w0) to[bend right=50] node[pos=0.75] (k0) {\small $k_{i}$} (wn);
\draw[dashed] (w0) to[bend right=30] node[pos=0.65] (k1) {\small $k_{i-1}$} (wi);
\draw[dashed] (w0) to[bend right=60] node[pos=0.85] (knn) {\small $k_{n-1}$} (wnn);
\draw (w0) to[bend right=35] node[above,pos=0.9] (solo) {} (wn);
\node[gray] at (w0)[left=5pt] {$w_0$};
\node[gray] at (wn)[above=5pt] {$w_{i+1}$};
\node[gray] at (wi)[above=5pt] {$w_{i}$};
\node[gray] at (wnn)[above=5pt] {$w_{n}$};
\foreach \n in {w0,wn,wi,w1,wnn}
  \node at (\n)[gray,circle,fill,inner sep=3pt]{};
\draw[double,red,thick] (km) -- (km|-a);
\draw[double,red,thick] (k0) -- (k0|-a);
\draw[double,red,thick] (k1) -- (k1|-a);
\draw[double,red,thick] (knn) -- (knn|-a);
%\draw[red] (solo) arc (180:0:0.4) node[above] (solop) {};
\draw[red] (solo) -- (solo|-a);
\draw[gray,thick] (-1,0.3) -- (w0) -- (-1,-2.5);
\end{tikzpicture}
}} \right) -
 \left(\vcenter{\hbox{
\begin{tikzpicture}[decoration={
    markings,
    mark=at position 0.5 with {\arrow{>}}}
    ]
\node (w0) at (-1,0) {};
\node (w1) at (0,0) {};
\node[gray] at (0.5,0) {\tiny $\ldots$};
\node (wn) at (2,0) {};
\node (wi) at (1,0) {};
\node[gray] at (2.5,0) {\tiny $\ldots$};
\node (wnn) at (3,0) {};
\coordinate (a) at (-2,-2);
\draw[dashed] (w0) to node[above,pos=0.5] (km) {\small $k_0$} (w1);
\draw[dashed] (w0) to[bend right=50] node[pos=0.75] (k0) {\small $k_{i}$} (wn);
\draw[dashed] (w0) to[bend right=30] node[pos=0.65] (k1) {\small $k_{i-1}$} (wi);
\draw[dashed] (w0) to[bend right=60] node[pos=0.85] (knn) {\small $k_{n-1}$} (wnn);
\draw (w0) to[bend right=50] node[pos=0.9] (solo) {} (wi);
\node[gray] at (w0)[left=5pt] {$w_0$};
\node[gray] at (wn)[above=5pt] {$w_{i+1}$};
\node[gray] at (wi)[above=5pt] {$w_{i}$};
\node[gray] at (wnn)[above=5pt] {$w_{n}$};
\foreach \n in {w0,wn,wi,w1,wnn}
  \node at (\n)[gray,circle,fill,inner sep=3pt]{};
\draw[double,red,thick] (km) -- (km|-a);
\draw[double,red,thick] (k0) -- (k0|-a);
\draw[double,red,thick] (k1) -- (k1|-a);
\draw[double,red,thick] (knn) -- (knn|-a);
\draw[red] (solo) arc (90:80:4) node[above] (solop) {};
\draw[red] (solop) -- (solop|-a);
\draw[gray,thick] (-1,0.3) -- (w0) -- (-1,-2.5);
\end{tikzpicture}
}} \right).
\end{align*}
To decompose these two terms in the standard multi-arc basis, one must apply Lemma \ref{plaintodashed} to crash a plain arc over a dashed one, after a simple application of the handle rule to reorganize the handles of the right term. This recovers the lemma.
\end{proof}
We use this lemma to compute the action of $F^{(1)}$ in the multi-arcs basis. 
\begin{lemma}
Let ${\bf k}=(k_0 , \ldots , k_{n-1}) \in \EZnr$, the action of $F^{(1)}$ over the associated standard multi arc is the following:
\begin{equation*}
F^{(1)}  \cdot A'({\bf k}) = \sum_{i=0}^{n-1} q^{\sum_{j=1}^{i+1} \alpha_j} q^{-  \sum_{j=i+2}^n \alpha_j} \mt^{-\sum_{j=i+1}^{n-1} k_j}(k_i+1)_{\mt} (1-q^{-2\alpha_{i+1}} \mt^{-k_i} ) A'({\bf k})_i
\end{equation*}where $A'({\bf k})_i$ means $A'(k_0 , \ldots , k_{i-1} , k_i+1 , k_{i+1} , \ldots , k_{n-1})$.
\end{lemma}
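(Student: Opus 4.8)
The plan is to reduce the computation to an iterated application of the recursion of Lemma \ref{recursionSi} for the auxiliary classes $S_i$, followed by a telescoping simplification of the resulting coefficients, and finally a renormalization to pass from $(F')^{(1)}$ back to $F^{(1)}$.

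First I would use the normalization fixed after Definition \ref{actionofF}, namely $(F')^{(1)} = q^{\sum_i \alpha_i} F^{(1)}$, so that it suffices to compute $(F')^{(1)}\cdot A'(\mathbf k)$. By definition this class is the standard multi-arc $A'(\mathbf k)$ with one extra plain arc running once around the boundary in the counterclockwise direction. An ambient isotopy of $D_n$ pushing this loop toward the real line (which preserves the homology class) together with a handle-rule rearrangement of the straight handle identifies $(F')^{(1)}\cdot A'(\mathbf k)$ with the class $S_n$ — the multi-arc with a plain arc running parallel to and outside the $k_{n-1}$-dashed arc and closing up past $w_n$ — up to an explicit monomial of $\Laurentmax$ that one records along the way.

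Next I would apply Lemma \ref{recursionSi} repeatedly, $S_n \mapsto S_{n-1} \mapsto \cdots \mapsto S_2$, each step peeling off an $A'(\mathbf k)_j$-term (with coefficient $(k_j+1)_{\mt^{-1}}$) and an $A'(\mathbf k)_{j-1}$-term (with coefficient $-\mt^{-k_j}(k_{j-1}+1)_{\mt}$), and carrying forward $S_{j-1}$ with the accumulating prefactor $q^{-2\alpha_j}\mt^{-k_j}$. The base case $S_1$, whose plain arc runs from $w_0$ to $w_2$, is disposed of directly: breaking that plain arc at $w_1$ (Example \ref{breakingplain}, i.e. the combing process of Proposition \ref{combing}) and crushing the two resulting pieces onto the relevant dashed arcs by Lemma \ref{plaintodashed} produces precisely the $A'(\mathbf k)_1$- and $A'(\mathbf k)_0$-terms, with no $S_0$ surviving. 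This gives an expansion $S_n = \sum_{i=0}^{n-1} c_i\, A'(\mathbf k)_i$ with explicit $c_i \in \Laurentmax$. To evaluate the $c_i$ one observes that each $A'(\mathbf k)_i$ receives exactly two contributions, from the steps $j=i$ and $j=i+1$, both multiplied by the accumulated prefactor $\prod_{j=i+2}^{n} q^{-2\alpha_j}\mt^{-k_j}$; using the elementary identity $(m+1)_{\mt^{-1}} = \mt^{-m}(m+1)_{\mt}$ (immediate from Definition \ref{quantumt}) these two contributions combine into a single term proportional to $(k_i+1)_{\mt}\bigl(1 - q^{-2\alpha_{i+1}}\mt^{-k_i}\bigr)$. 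Multiplying back by $q^{-\sum_i\alpha_i}$ and rearranging the $\alpha$-exponents into $\sum_{j=1}^{i+1}\alpha_j - \sum_{j=i+2}^{n}\alpha_j$ then yields the claimed formula.

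The topological content is essentially forced — which arcs appear is dictated by the combing and crushing moves, and the whole substance of the statement lies in the $\Laurentmax$-coefficients. Consequently the main obstacle is the bookkeeping of the local-system (handle) monomials: the one introduced by the isotopy of the first step, and the ones hidden inside the handle reorganizations used in Lemma \ref{recursionSi}, must be tracked through the entire iteration and shown to reassemble into the stated powers of $\mt$ and $q^{\alpha_j}$. Keeping the signs and the accumulating prefactors straight through the telescoping sum is where the real care is needed.
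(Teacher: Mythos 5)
Your proposal reproduces the paper's own proof in all essentials: pass to $(F')^{(1)}$, break the added boundary loop so that its lower piece crashes onto the $k_{n-1}$-dashed arc (the $i=n-1$ contribution with coefficient $(k_{n-1}+1)_{\mt}$) while its upper piece becomes $-q^{-2\alpha_n}S_{n-1}$, iterate Lemma \ref{recursionSi}, resolve the base case $S_1$ by hand, and telescope the two contributions to each $A'({\bf k})_i$ via $(m+1)_{\mt^{-1}}=\mt^{-m}(m+1)_{\mt}$. Two bookkeeping corrections: the class you call $S_n$ is not among the paper's $S_i$ (those end at a puncture $w_{i+1}$), so the top of the induction is the loop-breaking step just described rather than an instance of Lemma \ref{recursionSi} — in particular the $i=n-1$ term enters with $(k_{n-1}+1)_{\mt}$, not $(k_{n-1}+1)_{\mt^{-1}}$; and the final renormalization is multiplication by $q^{+\sum_j\alpha_j}$, since Definition \ref{actionofF} gives $F^{(1)}=q^{\sum_j\alpha_j}(F')^{(1)}$ (the paper's remark states this relation inverted, but comparison with the stated formula confirms the direction), so dividing by $q^{\sum_j\alpha_j}$ as you propose would leave the answer off by a global monomial.
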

\begin{proof}
First, we compute the element $F' \cdot A'(k_0 , \ldots , k_{n-1})$ of $\Hrelm_r$. It corresponds to the following class for which we give a decomposition in $\Hrelm_r$:
\[
\begin{array}{ccc}
\left(\vcenter{\hbox{
\begin{tikzpicture}[decoration={
    markings,
    mark=at position 0.5 with {\arrow{>}}}
    ]
\node (w0) at (-1,0) {};
\node (w1) at (0,0) {};
\node[gray] at (0.5,0) {\tiny $\ldots$};
\node (wn) at (2,0) {};
\node[gray] at (1.6,0) {\tiny $\ldots$};
\node (wi) at (1,0) {};
\coordinate (a) at (-2,-2);
\draw[dashed] (w0) to node[above,pos=0.5] (km) {\small $k_0$} (w1);
\draw[dashed] (w0) to[bend right=50] node[pos=0.85] (k0) {\small $k_n$} (wn);
\draw[dashed] (w0) to[bend right=30] node[pos=0.7] (k1) {\small $k_i$} (wi);
\draw[postaction={decorate}] (w0) to[bend right=70] (2.5,0) node[above] (solo) {};
\draw[postaction={decorate}] (2.5,0) to[bend right=70] (w0);
\draw[red] (solo) -- (solo|-a); 
\node[gray] at (w0)[left=5pt] {$w_0$};
\node[gray] at (wn)[above=5pt] {$w_n$};
\node[gray] at (wi)[above=5pt] {$w_{i}$};
\draw[gray,dashed] (-1,-2) -- (3,-2) -- (3,-2.5);
\foreach \n in {w0,wn,wi,w1}
  \node at (\n)[gray,circle,fill,inner sep=3pt]{};
\draw[double,red,thick] (km) -- (km|-a);
\draw[red] (km|-a) -- (-0.8,-2.5);
\node[red] at (-0.3,-2.3) {$\ldots$};
\draw[double,red,thick] (k0) -- (k0|-a);
\draw[double,red,thick] (k1) -- (k1|-a);
\draw[red] (k0|-a) -- (-0.3,-2.5);
\draw[red] (solo|-a) -- (-0.2,-2.5);
\draw[gray,thick] (-1,0.3) -- (w0) -- (-1,-2.5) -- (3.2,-2.5);
\end{tikzpicture}
}} \right) & = &\\
\left(\vcenter{\hbox{
\begin{tikzpicture}[decoration={
    markings,
    mark=at position 0.5 with {\arrow{>}}}
    ]
\node (w0) at (-1,0) {};
\node (w1) at (0,0) {};
\node[gray] at (0.5,0) {\tiny $\ldots$};
\node (wn) at (2,0) {};
\node[gray] at (1.6,0) {\tiny $\ldots$};
\node (wi) at (1,0) {};
\coordinate (a) at (-2,-2);
\draw[dashed] (w0) to node[above,pos=0.5] (km) {\small $k_0$} (w1);
\draw[dashed] (w0) to[bend right=50] node[above,pos=0.85] (k0) {\small $k_n$} (wn);
\draw[dashed] (w0) to[bend right=30] node[pos=0.7] (k1) {\small $k_i$} (wi);
\draw[postaction={decorate}] (w0) to[bend right=70] node[above,pos=0.9] (solo) {} (wn);
%\draw[postaction={decorate}] (2.5,0) to[bend right=70] (w0);
\draw[red] (solo) -- (solo|-a); 
\node[gray] at (w0)[left=5pt] {$w_0$};
\node[gray] at (wn)[above=5pt] {$w_n$};
\node[gray] at (wi)[above=5pt] {$w_{i}$};
%\draw[gray,dashed] (-1,-2) -- (3,-2) -- (3,-2.5);
\foreach \n in {w0,wn,wi,w1}
  \node at (\n)[gray,circle,fill,inner sep=3pt]{};
\draw[double,red,thick] (km) -- (km|-a);
%\draw[red] (km|-a) -- (-0.8,-2.5);
%\node[red] at (-0.3,-2.3) {$\ldots$};
\draw[double,red,thick] (k0) -- (k0|-a);
\draw[double,red,thick] (k1) -- (k1|-a);
%\draw[red] (k0|-a) -- (-0.3,-2.5);
%\draw[red] (solo|-a) -- (-0.2,-2.5);
\draw[gray,thick] (-1,0.3) -- (w0) -- (-1,-2.2);
% -- (3.2,-2.5);
\end{tikzpicture}
}} \right) & - & 
\left(\vcenter{\hbox{
\begin{tikzpicture}[decoration={
    markings,
    mark=at position 0.5 with {\arrow{>}}}
    ]
\node (w0) at (-1,0) {};
\node (w1) at (0,0) {};
\node[gray] at (0.5,0) {\tiny $\ldots$};
\node (wn) at (2,0) {};
\node[gray] at (1.6,0) {\tiny $\ldots$};
\node (wi) at (1,0) {};
\coordinate (a) at (-2,-2);
\draw[dashed] (w0) to node[above,pos=0.5] (km) {\small $k_0$} (w1);
\draw[dashed] (w0) to[bend right=50] node[pos=0.85] (k0) {\small $k_n$} (wn);
\draw[dashed] (w0) to[bend right=30] node[pos=0.7] (k1) {\small $k_i$} (wi);
\draw[postaction={decorate}] (w0) to[bend left=70]  node[pos=0.9] (solo) {} (wn);
%\draw[postaction={decorate}] (2.5,0) to[bend right=70] (w0);
\draw[red] (solo) arc (90:-90:0.5) -- (solo|-a); 
\node[gray] at (w0)[left=5pt] {$w_0$};
\node[gray] at (wn)[above right] {$w_n$};
\node[gray] at (wi)[above=5pt] {$w_{i}$};
%\draw[gray,dashed] (-1,-2) -- (3,-2) -- (3,-2.5);
\foreach \n in {w0,wn,wi,w1}
  \node at (\n)[gray,circle,fill,inner sep=3pt]{};
\draw[double,red,thick] (km) -- (km|-a);
%\draw[red] (km|-a) -- (-0.8,-2.5);
%\node[red] at (-0.3,-2.3) {$\ldots$};
\draw[double,red,thick] (k0) -- (k0|-a);
\draw[double,red,thick] (k1) -- (k1|-a);
%\draw[red] (k0|-a) -- (-0.3,-2.5);
%\draw[red] (solo|-a) -- (-0.2,-2.5);
\draw[gray,thick] (-1,0.3) -- (w0) -- (-1,-2.2);
% -- (3.2,-2.5);
\end{tikzpicture}
}} \right)
\end{array}.
\]
This decomposition follows from a breaking of a plain arc (Example \ref{breakingplain}). The minus sign is due to the reverse of the orientation of right term's plain arc. The first term of the decomposition is in position to apply Lemma \ref{plaintodashed}, while after a handle rule one recognizes $S_{n-1}$ in the second term. Finally we get the following formula:
\[
F' \cdot A'(k_0 , \ldots , k_{n-1}) = (k_{n-1}+1)_{\mt} A'(k_0 , \ldots , k_{n-1}+1) - q^{- 2\alpha_n} S_{n-1} .
\]
Thank to the recursive property of $S_{n-1}$ the proof is achieved using Lemma \ref{recursionSi}, so that one gets:
\[
\begin{array}{l}
F' \cdot A'({\bf k}) = \sum_{i=0}^{n-1} q^{- 2 \sum_{j=i+2}^n \alpha_j} \mt^{-\sum_{j=i+1}^{n-1} k_j}(k_i+1)_{\mt} (1-q^{-2\alpha_{i+1}} \mt^{-k_i}) A'({\bf k})_i.
\end{array}
\]
By multiplication of the action by $q^{\sum \alpha_i}$, one obtains the expected action for $F^{(1)}$ over the multi arc basis. 

%%\\ q^{-2\alpha_n} \sum_{i=0}^{n-1} q^{ - 2(\alpha_{n-1} + \cdots + \alpha_{i+2})} t^{-(k_{n-1} + \cdots + k_{i+1}} \left( q^{2 \alpha_{i+1}} (k_i+1)_t -  q^{-2 \alpha_{i+1}} (k_i+1)_{-t} \right) A({\bf k})_i\\ = q^{-2 \alpha_n} \sum_{i=0}^{n-1} q^{- 2 \sum_{j=i+1}^n \alpha_j} t^{-\sum_{j=i}^{n-1} k_j}(k_i+1)_t (q^{2\alpha_{i+1}} t^{k_i} -1) A({\bf k})_i

%which is the formula of the proposition.
\end{proof}

\begin{prop}[Action of $F^{(1)}$ over multi-arcs] \label{Fonarcs}
Let ${\bf k}=(k_0 , \ldots , k_{n-1}) \in \EZnr$, the action of $F^{(1)}$ over the associated standard (normalized) multi-arc is the following
\[
F^{(1)} \cdot A({\bf k}) = \sum_{i=0}^{n-1} q^{-  \sum_{j=i+2}^n \alpha_j} \mt^{-\sum_{j=i+1}^{n-1} k_j} q^{\alpha_{i+1}} (k_i+1)_{\mt} (1-q^{-2\alpha_{i+1}} \mt^{-k_i} ) A({\bf k})_i .
\]
\end{prop}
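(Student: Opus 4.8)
The plan is to obtain the normalized statement directly from the preceding (unnumbered) lemma that computes $F^{(1)}\cdot A'(\mathbf k)$, simply by tracking how the normalization coefficient of Definition~\ref{NormMultiA} interacts with the already-established formula. So the first step is to recall that
\[
A(\mathbf k) = q^{\alpha_1(k_1+\cdots+k_{n-1})+\alpha_2(k_2+\cdots+k_{n-1})+\cdots+\alpha_{n-1}k_{n-1}}\,A'(\mathbf k),
\]
write $\lambda(\mathbf k)$ for that exponent, and likewise write $\lambda(\mathbf k)_i$ for the exponent attached to $A'(\mathbf k)_i = A'(k_0,\ldots,k_i+1,\ldots,k_{n-1})$, where the $i$-th entry $k_i$ is incremented. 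The key bookkeeping computation is the difference $\lambda(\mathbf k)_i - \lambda(\mathbf k)$: incrementing $k_i$ changes only those terms $\alpha_j(k_j+\cdots+k_{n-1})$ with $j\le i$, each of which gains an $\alpha_j$, so $\lambda(\mathbf k)_i - \lambda(\mathbf k) = \alpha_1+\cdots+\alpha_i$ (and for $i=0$ the difference is $0$).

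Next I would combine this with the previously proved lemma. That lemma, after the normalization factor $q^{\sum\alpha_i}$ was applied to pass from $F'$ to $F^{(1)}$, states
\[
F^{(1)}\cdot A'(\mathbf k) = \sum_{i=0}^{n-1} q^{\sum_{j=1}^{i+1}\alpha_j}\, q^{-\sum_{j=i+2}^{n}\alpha_j}\, \mt^{-\sum_{j=i+1}^{n-1}k_j}\,(k_i+1)_{\mt}\bigl(1-q^{-2\alpha_{i+1}}\mt^{-k_i}\bigr)\,A'(\mathbf k)_i.
\]
Applying $q^{\lambda(\mathbf k)}$ to both sides and using $A'(\mathbf k)_i = q^{-\lambda(\mathbf k)_i} A(\mathbf k)_i$, the coefficient of $A(\mathbf k)_i$ picks up the extra factor $q^{\lambda(\mathbf k)-\lambda(\mathbf k)_i} = q^{-(\alpha_1+\cdots+\alpha_i)}$. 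Multiplying that into $q^{\sum_{j=1}^{i+1}\alpha_j} = q^{\alpha_1+\cdots+\alpha_i+\alpha_{i+1}}$ collapses the first $i$ terms and leaves exactly $q^{\alpha_{i+1}}$, which reproduces the claimed formula
\[
F^{(1)}\cdot A(\mathbf k) = \sum_{i=0}^{n-1} q^{-\sum_{j=i+2}^{n}\alpha_j}\,\mt^{-\sum_{j=i+1}^{n-1}k_j}\, q^{\alpha_{i+1}}\,(k_i+1)_{\mt}\bigl(1-q^{-2\alpha_{i+1}}\mt^{-k_i}\bigr)\,A(\mathbf k)_i.
\]

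There is essentially no hard analytic content here; the proposition is a corollary of the computational lemma that precedes it, once the normalization is carried through. The only point deserving a little care — and the step I would write out most carefully — is the convention $A(k_0,\ldots,k_{n-1}) = 0$ whenever some entry equals $-1$ (from Definition~\ref{NormMultiA}): since $F^{(1)}$ only \emph{increments} indices, this convention is never actually triggered on the right-hand side, so no degenerate terms need to be discarded, but it is worth noting that the identity holds verbatim for all $\mathbf k\in\EZnr$ without exceptional cases. I would also double-check the index ranges in the two sums $\sum_{j=i+2}^{n}$ and $\sum_{j=i+1}^{n-1}$ against the $i=0$ and $i=n-1$ extremes to confirm they behave as empty sums where expected. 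With those sanity checks in place, the proof is just the substitution described above.
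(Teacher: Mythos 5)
Your proposal is correct and follows exactly the route the paper takes: the paper's proof of Proposition~\ref{Fonarcs} is the one-line observation that it is "a straightforward consequence of previous lemma and of the normalization sending the family $\CA'$ to $\CA$," and your computation of $\lambda(\mathbf k)_i-\lambda(\mathbf k)=\alpha_1+\cdots+\alpha_i$ collapsing $q^{\sum_{j=1}^{i+1}\alpha_j}$ to $q^{\alpha_{i+1}}$ is precisely the bookkeeping being left implicit there. Your side remarks (the $-1$ convention never being triggered, the empty-sum checks at $i=0$ and $i=n-1$) are accurate but not load-bearing.
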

\begin{proof}
It is a straightforward consequence of previous lemma and of the normalization sending the family $\CA'$ to $\CA$. 
\end{proof}

We emphasize again the case $n=1$.

\begin{coro}[$n=1$]\label{fn=1}
Let $n=1$, $k \in \BN$, and $A(k)$ be the associated element of $\CH$. Then:
\[
F^{(1)} \cdot A(k) = q^{\alpha_1} (k+1)_{\mt} (1-q^{-2 \alpha_1}\mt^{-k}) A(k+1) .
\]
\end{coro}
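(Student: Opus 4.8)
The plan is to obtain the identity by specializing Proposition \ref{Fonarcs} to the single-puncture case $n=1$. For $n=1$ the indexing set $\EZnr$ reduces to the single sequence $(r)$, the exponent appearing in Definition \ref{NormMultiA} is an empty sum so that $A(k)=A'(k)=U(k)$, and this class generates the rank-one $\Laurentmax$-module $\Hrelm_k$. Hence it suffices to evaluate the general formula
\[
F^{(1)}\cdot A({\bf k}) = \sum_{i=0}^{n-1} q^{-\sum_{j=i+2}^{n}\alpha_j}\,\mt^{-\sum_{j=i+1}^{n-1}k_j}\,q^{\alpha_{i+1}}\,(k_i+1)_{\mt}\,\bigl(1-q^{-2\alpha_{i+1}}\mt^{-k_i}\bigr)\,A({\bf k})_i
\]
of Proposition \ref{Fonarcs} at ${\bf k}=(k)$.

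When $n=1$ the summation index runs only over $i=0$; both sums $\sum_{j=2}^{1}\alpha_j$ and $\sum_{j=1}^{0}k_j$ are over empty ranges and hence vanish, $\alpha_{i+1}=\alpha_1$, $k_i=k_0=k$, and $A({\bf k})_0=A(k_0+1)=A(k+1)$. Substituting these values, the right-hand side collapses to $q^{\alpha_1}(k+1)_{\mt}\bigl(1-q^{-2\alpha_1}\mt^{-k}\bigr)A(k+1)$, which is the asserted value of $F^{(1)}\cdot A(k)$. In particular nothing has to be discarded by the convention $A(k_0,\dots,k_{n-1})=0$ once an entry equals $-1$ (Definition \ref{NormMultiA}), since $k+1\geq 1$.

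There is no real obstacle: the corollary is a direct evaluation of Proposition \ref{Fonarcs}, recorded separately because $n=1$ is precisely the case later matched, in Section \ref{computationUqaction} and afterwards, against the $\UqhL$-Verma module $V^{s}$ over a disk with a single puncture. As a sanity check of that comparison, after the substitutions $s=q^{\alpha_1}$ and $\mt=q^{-2}$ Remark \ref{quantumtq} turns the scalar $q^{\alpha_1}(k+1)_{\mt}(1-q^{-2\alpha_1}\mt^{-k})$ into $[k+1]_q\bigl(sq^{-k}-s^{-1}q^{k}\bigr)$, which is exactly the coefficient of $v_{k+1}$ in $F^{(1)}v_k$ as prescribed by Definition \ref{GoodVerma} when $n=1$. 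Were one to insist on a proof from first principles, one would retrace the moves behind Proposition \ref{Fonarcs} (break the plain arc of $F^{(1)}\cdot U(k)$ as in Example \ref{breakingplain}, reorganise handles by the handle rule, and crush the resulting plain segment onto the dashed arc via Lemma \ref{plaintodashed}); for $n=1$ the $S_i$-recursion of Lemma \ref{recursionSi} is empty, so invoking the closed formula of Proposition \ref{Fonarcs} is the most economical route.
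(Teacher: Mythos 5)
Your proposal is correct and matches the paper exactly: the paper states Corollary \ref{fn=1} without a separate proof, as an immediate specialization of Proposition \ref{Fonarcs} to $n=1$, which is precisely the evaluation you carry out (empty sums, trivial normalization so $A(k)=A'(k)$, single summand $i=0$). The added consistency check against the Verma-module action of Definition \ref{GoodVerma} is a nice touch but not needed.
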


We end this section by giving the action of the divided powers $F^{(l)}$ but only in the case of one puncture. We need the following remark.
\begin{rmk}\label{looptodashed}
We observe the following relations between homology classes:

\[
\begin{array}{rl}
\left(\vcenter{\hbox{ \begin{tikzpicture}[decoration={
    markings,
    mark=at position 0.5 with {\arrow{>}}}
    ]
\coordinate (w0) at (-1,0) {};
\coordinate (w1) at (1,0) {};
\coordinate (x0) at (-1,-1) {};
\coordinate (x1) at (1,-1) {};

\draw[dashed,postaction={decorate}] (w0) -- node[above,pos=0.3] (k1) {$k$} (w1);
\draw[postaction={decorate}] (w0) to[bend right=90] node[above,pos=0.75] (k0) {} (1.4,0);
\draw[postaction={decorate}] (1.4,0) to[bend right=90] (w0);

\draw[red] (k0) -- node[pos=0.66] (b) {} (k0|-x0);
\draw[red] (k1) -- (k1|-x1);

\node[gray] at (w0)[left=5pt] {$w_0$};
\node[gray] at (w1)[right=5pt] {$w_i$};
\foreach \n in {w0,w1}
  \node at (\n)[gray,circle,fill,inner sep=3pt]{};
  
\draw[gray] (w0) -- (-1,0.5);
\draw[gray] (w0) -- (-1,-0.5);

\end{tikzpicture} }}\right) 
& =
\left(\vcenter{\hbox{ \begin{tikzpicture}[decoration={
    markings,
    mark=at position 0.5 with {\arrow{>}}}
    ]
\coordinate (w0) at (-1,0) {};
\coordinate (w1) at (1,0) {};
\coordinate (x0) at (-1,-1) {};
\coordinate (x1) at (1,-1) {};

\draw[dashed,postaction={decorate}] (w0) -- node[above,pos=0.3] (k1) {$k$} (w1);
\draw[postaction={decorate}] (w0) to[bend right=30] node[above,pos=0.75] (k0) {} (w1);

\draw[red] (k0) -- node[pos=0.66] (b) {} (k0|-x0);
\draw[red] (k1) -- (k1|-x1);

\node[gray] at (w0)[left=5pt] {$w_0$};
\node[gray] at (w1)[right=5pt] {$w_i$};
\foreach \n in {w0,w1}
  \node at (\n)[gray,circle,fill,inner sep=3pt]{};
  
\draw[gray] (w0) -- (-1,0.5);
\draw[gray] (w0) -- (-1,-0.5);

\end{tikzpicture} }}\right) 
-\left(\vcenter{\hbox{ \begin{tikzpicture}[decoration={
    markings,
    mark=at position 0.5 with {\arrow{>}}}
    ]
\coordinate (w0) at (-1,0) {};
\coordinate (w1) at (1,0) {};
\coordinate (x0) at (-1,-1) {};
\coordinate (x1) at (1,-1) {};

\draw[dashed,postaction={decorate}] (w0) -- node[pos=0.3] (k0) {$k$} (w1);
\draw[postaction={decorate}] (w0) to[bend left=30] node[pos=0.8] (k1) {} (w1);

\draw[red] (k0) -- node[midway] (a) {} (k0|-x0);
\draw[red] (k1) arc (180:90:0.4) arc (90:-90:0.4) -- (k1|-x1);

\draw[gray] (w0) -- (-1,0.5);
\draw[gray] (w0) -- (-1,-0.5);

\node[gray] at (w0)[left=5pt] {$w_0$};
\node[gray] at (w1)[right=5pt] {$w_i$};
\foreach \n in {w0,w1}
  \node at (\n)[gray,circle,fill,inner sep=3pt]{};

\end{tikzpicture} }}\right) \\
& = \left( (k+1)_{\mt} - q^{-2 \alpha_i} (k+1)_{\mt^{-1}} \right)  
\left(\vcenter{\hbox{ \begin{tikzpicture}[decoration={
    markings,
    mark=at position 0.5 with {\arrow{>}}}
    ]
\coordinate (w0) at (-1,0) {};
\coordinate (w1) at (1,0) {};
\coordinate (x0) at (-1,-1) {};
\coordinate (x1) at (1,-1) {};

\draw[dashed] (w0) -- node[above,pos=0.5] (k1) {\small $k+1$} (w1);

\draw[red] (k1) -- (k1|-x1);

\node[gray] at (w0)[left=5pt] {$w_0$};
\node[gray] at (w1)[right=5pt] {$w_i$};
\foreach \n in {w0,w1}
  \node at (\n)[gray,circle,fill,inner sep=3pt]{};
  
\draw[gray] (w0) -- (-1,0.5);
\draw[gray] (w0) -- (-1,-0.5);

\end{tikzpicture} }}\right) \\
& = (k+1)_{\mt}  \left( 1 - q^{-2 \alpha_i} \mt^{-k} \right)  
\left(\vcenter{\hbox{ \begin{tikzpicture}[decoration={
    markings,
    mark=at position 0.5 with {\arrow{>}}}
    ]
\coordinate (w0) at (-1,0) {};
\coordinate (w1) at (1,0) {};
\coordinate (x0) at (-1,-1) {};
\coordinate (x1) at (1,-1) {};

\draw[dashed] (w0) -- node[above,pos=0.5] (k1) {\small $k+1$} (w1);

\draw[red] (k1) -- (k1|-x1);

\node[gray] at (w0)[left=5pt] {$w_0$};
\node[gray] at (w1)[right=5pt] {$w_i$};
\foreach \n in {w0,w1}
  \node at (\n)[gray,circle,fill,inner sep=3pt]{};
  
\draw[gray] (w0) -- (-1,0.5);
\draw[gray] (w0) -- (-1,-0.5);

\end{tikzpicture} }}\right) 
\end{array}
\]
where everything stands inside a small neighborhood of the picture, without perturbating the rest of the class contained outside of it. The first equality comes from a breaking of plain arc, see Example \ref{breakingplain}. The second one is a consequence first of the application of a handle rule to get vertical handles, and then relations from Lemma \ref{plaintodashed}. 
\end{rmk}

\begin{prop}[Action of $F^{(l)}$, $n=1$]\label{Fln=1}
Let $n=1$, $k \in \BN$, and $A(k)$ be the associated element of $\CH$. Let $l \in \BN$, then:
\[
F^{(l)} \cdot A(k) = q^{\frac{-l(l-1)}{2}} q^{l \alpha_1} {{k+l}\choose{k}}_{\mt} \prod_{m=0}^l (1-q^{-2\alpha_1}\mt^{-m}) A(k+l). 
\]
\end{prop}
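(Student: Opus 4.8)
The plan is to avoid introducing any new geometry and to derive the formula from the divided-power identity together with the $n=1$ action of $F^{(1)}$ already computed in Corollary \ref{fn=1}.

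First I would invoke Proposition \ref{dividedpower}, which under $\mt=q^{-2}$ gives $(F^{(1)})^l = \left[ l \right]_q!\,F^{(l)}$ in $\Hom_{\Laurentmax}\left(\Hrelm_k,\Hrelm_{k+l}\right)$. Since Corollary \ref{fn=1} reads $F^{(1)} \cdot A(m) = q^{\alpha_1}(m+1)_{\mt}(1-q^{-2\alpha_1}\mt^{-m})A(m+1)$, applying it $l$ times for $m=k,k+1,\ldots,k+l-1$ yields
\[
(F^{(1)})^l \cdot A(k) = q^{l\alpha_1}\left(\prod_{m=k}^{k+l-1}(m+1)_{\mt}\right)\left(\prod_{m=k}^{k+l-1}\bigl(1-q^{-2\alpha_1}\mt^{-m}\bigr)\right)A(k+l).
\]
Then comes the bookkeeping of quantum integers: the telescoping product is $\prod_{m=k}^{k+l-1}(m+1)_{\mt} = (k+l)_{\mt}!/(k)_{\mt}! = (l)_{\mt}!\,{{k+l}\choose{k}}_{\mt}$ by Definition \ref{quantumt}, while Remark \ref{quantumtq} gives $[l]_q! = q^{l(l-1)/2}(l)_{\mt}!$. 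Substituting, the displayed right-hand side equals $[l]_q!$ times $q^{-l(l-1)/2}q^{l\alpha_1}{{k+l}\choose{k}}_{\mt}\prod_{m=k}^{k+l-1}(1-q^{-2\alpha_1}\mt^{-m})\,A(k+l)$. As $[l]_q!$ is a non-zero-divisor in the free $\Laurentmax$-module $\Hrelm_{k+l}$ (Proposition \ref{HrelTrick}), cancelling it in the equality $(F^{(1)})^l\cdot A(k) = [l]_q!\,F^{(l)}\cdot A(k)$ produces exactly the asserted value of $F^{(l)}\cdot A(k)$, the product of the scalars $1-q^{-2\alpha_1}\mt^{-m}$ running over $m=k,\dots,k+l-1$.

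I do not expect a real obstacle: the only point demanding care is the consistent translation between the two normalizations of quantum factorials, $(\cdot)_{\mt}!$ and $[\cdot]_q!$, under $\mt=q^{-2}$, and tracking the accompanying powers of $q$. Should one insist on a geometric proof bypassing Proposition \ref{dividedpower}, one could instead start from the diagram defining $F'^{(l)}\cdot A(k)$ — the dashed $k$-arc from $w_0$ to $w_1$ together with the dashed $l$-indexed loop encircling $w_1$ — and collapse the loop onto the arc by peeling off its strands one at a time, each peeling being an instance of Remark \ref{looptodashed} combined with the combing process of Proposition \ref{combing} and the fusion rule of Corollary \ref{dashedtodashed}; this reproduces the same product of factors $(k+j+1)_{\mt}(1-q^{-2\alpha_1}\mt^{-(k+j)})$, the only delicate part being the correct reorganization of handles at each step via the handle rule. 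Either route reaches the stated identity.
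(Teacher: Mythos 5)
Your main argument is correct but takes a genuinely different route from the paper. The paper's proof is geometric and self-contained: it writes $(l)_{\mt}!\,F'^{(l)}\cdot A(k)$ as the class with $l$ parallel plain loops around the puncture (via the analogue of Corollary \ref{1forkto1code}), then crashes those loops one at a time onto the dashed $k$-arc by iterating Remark \ref{looptodashed}, and finally divides by $(l)_{\mt}!$ and renormalizes $F'^{(l)}\to F^{(l)}$. You instead bypass all new geometry by combining the divided-power identity $(F^{(1)})^l=[l]_q!\,F^{(l)}$ of Proposition \ref{dividedpower} with $l$ iterations of Corollary \ref{fn=1}, and then cancelling $[l]_q!$, which is legitimate since $\Hrelm_{k+l}$ is free over an integral domain (Proposition \ref{HrelTrick}) and $[l]_q!$ is a nonzero scalar. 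Your telescoping $\prod_{m=k}^{k+l-1}(m+1)_{\mt}=(l)_{\mt}!\,{{k+l}\choose{k}}_{\mt}$ and the conversion $[l]_q!=q^{l(l-1)/2}(l)_{\mt}!$ are both right. What your route buys is economy — no new diagram manipulations, only bookkeeping already established; what the paper's route buys is independence from the cancellation argument's implicit use of $\mt=q^{-2}$ and a template reused verbatim in the proof of Lemma \ref{braidedn=2}. Your sketched geometric alternative is essentially the paper's actual proof.

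One point you should not have glossed over: your derivation produces the product $\prod_{m=k}^{k+l-1}\bigl(1-q^{-2\alpha_1}\mt^{-m}\bigr)$ ($l$ factors, exponents depending on $k$), whereas the displayed statement has $\prod_{m=0}^{l}$ ($l+1$ factors, independent of $k$); these are not equal, yet you assert your product ``is exactly the asserted value.'' Your range is in fact the correct one — it is the only one consistent with Corollary \ref{fn=1} at $l=1$ and with the Verma-module formula $F^{(n)}v_j=\qbin{n+j}{j}_q\prod_{m=0}^{n-1}(sq^{-m-j}-s^{-1}q^{j+m})v_{j+n}$ of Definition \ref{GoodVerma} — so the index range in the statement (and in the paper's own concluding display) is a misprint. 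You should say so explicitly rather than silently identifying two different expressions.
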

\begin{proof}
\[
\begin{array}{rl}
(l)_{\mt}! F'^{(l)}\cdot A(k) & =
(l)_{\mt}!\left(\vcenter{\hbox{ \begin{tikzpicture}[decoration={
    markings,
    mark=at position 0.5 with {\arrow{>}}}
    ]
\coordinate (w0) at (-1,0) {};
\coordinate (w1) at (1,0) {};
\coordinate (x0) at (-1,-1) {};
\coordinate (x1) at (1,-1) {};

\draw[dashed,postaction={decorate}] (w0) -- node[above,pos=0.3] (k1) {$k$} (w1);
\draw[dashed,postaction={decorate}] (w0) to[bend right=90] node[pos=0.75] (k0) {$l$} (1.4,0);
\draw[dashed,postaction={decorate}] (1.4,0) to[bend right=90] (w0);

\draw[double,red] (k0) -- node[pos=0.66] (b) {} (k0|-x0);
\draw[double,red] (k1) -- (k1|-x1);

\node[gray] at (w0)[left=5pt] {$w_0$};
\node[gray] at (w1)[right=5pt] {$w_1$};
\foreach \n in {w0,w1}
  \node at (\n)[gray,circle,fill,inner sep=3pt]{};
  
\draw[gray] (w0) -- (-1,0.5);
\draw[gray] (w0) -- (-1,-0.5);

\end{tikzpicture} }}\right) \\
& =
\left(\vcenter{\hbox{ \begin{tikzpicture}[decoration={
    markings,
    mark=at position 0.5 with {\arrow{>}}}
    ]
\coordinate (w0) at (-1,0) {};
\coordinate (w1) at (1,0) {};
\coordinate (x0) at (-1,-1) {};
\coordinate (x1) at (1,-1) {};

\draw[dashed,postaction={decorate}] (w0) -- node[above,pos=0.3] (k1) {$k$} (w1);
\draw[postaction={decorate}] (w0) to[bend right=90] node[above,pos=0.75] (k0) {} (1.4,0);
\draw[postaction={decorate}] (1.4,0) to[bend right=90] (w0);
\draw[postaction={decorate}] (w0) to[bend right=90] node[above,pos=0.75] (k2) {} (2,0);
\draw[postaction={decorate}] (2,0) to[bend right=90] (w0);
\node at (1.7,0) {$\ldots$};
\node at (1.7,0.2) {$l$};

\draw[red] (k0) -- node[pos=0.66] (b) {} (k0|-x0);
\draw[red] (k2) -- node[pos=0.66] (b) {} (k2|-x0);
\draw[double,red] (k1) -- (k1|-x1);

\node[gray] at (w0)[left=5pt] {$w_0$};
\node[gray] at (w1)[above=3pt] {$w_1$};
\foreach \n in {w0,w1}
  \node at (\n)[gray,circle,fill,inner sep=3pt]{};
  
\draw[gray] (w0) -- (-1,0.5);
\draw[gray] (w0) -- (-1,-0.5);

\end{tikzpicture} }}\right) \\
& = \prod_{m=0}^l (k+m)_{\mt}!  \left( 1 - q^{-2 \alpha_1} \mt^{-m} \right)  
\left(\vcenter{\hbox{ \begin{tikzpicture}[decoration={
    markings,
    mark=at position 0.5 with {\arrow{>}}}
    ]
\coordinate (w0) at (-1,0) {};
\coordinate (w1) at (1,0) {};
\coordinate (x0) at (-1,-1) {};
\coordinate (x1) at (1,-1) {};

\draw[dashed] (w0) -- node[above,pos=0.5] (k1) {\small $k+l$} (w1);

\draw[red] (k1) -- (k1|-x1);

\node[gray] at (w0)[left=5pt] {$w_0$};
\node[gray] at (w1)[right=5pt] {$w_1$};
\foreach \n in {w0,w1}
  \node at (\n)[gray,circle,fill,inner sep=3pt]{};
  
\draw[gray] (w0) -- (-1,0.5);
\draw[gray] (w0) -- (-1,-0.5);

\end{tikzpicture} }}\right) 
\end{array}
\]
The second equality comes from Corollary \ref{1forkto1code} and the last one is an iteration of the relations from Remark \ref{looptodashed}. Finally we have:
\[
F'^{(l)}\cdot A(k) = {{k+l}\choose{k}}_{\mt} \prod_{m=0}^l (1-q^{-2\alpha_1}\mt^{-m}) A(k+l) .
\]
The proposition is proved after the normalization passing from $F'^{(l)}$ to $F^{(l)}$.
\end{proof}

\subsubsection{Recovering monoidality of Verma modules for $\UqhL$.}\label{sectionmonoidality}

Since in this section $n$ (the number of punctures) is particularly important, we denote by $\CH^{\alpha_1,\ldots,\alpha_n}$ the module $\CH$ built from $X_r(w_0 , \ldots , w_n)$ with coefficients in $\Laurentmax = \BZ \left[ \mt^{\pm 1}, q^{\pm \alpha_1} , \ldots ,  q^{\pm \alpha_n} \right]$.  

\begin{rmk} We recall the action of $K$. We distinguish the cases whether $n$ is greater than $1$ or not. 
\begin{itemize}
\item[($n=1$)] Let $n=1$ so that $\Laurentmax = \BZ \left[ \mt^{\pm 1}, q^{\pm \alpha_1}  \right]$. Let $k \in \BN$, and $A(k)$ be the associated element of $\CH^{\alpha_1}$. Then:
\[
K \cdot A(k) = q^{\alpha_1} \mt^{k} A(k). 
\]
\item[($n>1$)] Let $n>1$, ${\bf k} \in \EZnr$ and $A({\bf k})$ be the associated element of $\Hrelm_r \in \CH^{\alpha_1 , \ldots , \alpha_n}$. Then:
\[
K \cdot A({\bf k}) = q^{\sum_{i=1}^n \alpha_i} \mt^{r} A({\bf k}). 
\]
\end{itemize}
\end{rmk}

\begin{prop}
Let $\mt=q^{-2}$. The module $\CH^{\alpha_1}$ is a Verma module for $\UqhL$ of highest weight $\alpha_1$. 
\end{prop}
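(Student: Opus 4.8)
The plan is to exhibit an explicit $\UqhL$-module isomorphism between $\CH^{\alpha_1}$ and the Verma module $V^{s}$ of Definition \ref{GoodVerma} with $s = q^{\alpha_1}$, using the basis $\CA = \{A(k)\}_{k \in \BN}$ of $\CH^{\alpha_1}$ (which is a basis by Corollary \ref{Arcsbasis}, since for $n=1$ the set $\EZnr$ is a single element for each $r$, so $\Hrelm_r$ is free of rank one generated by $A(r)$). First I would define the candidate map $\Theta : V^{s} \to \CH^{\alpha_1}$ on the standard generators by $v_j \mapsto A(j)$, extended $\Laurent_1$-linearly; here we identify the coefficient ring $\Laurent_1 = \BZ[q^{\pm1}, s^{\pm1}]$ with $\Laurentmax = \BZ[\mt^{\pm1}, q^{\pm\alpha_1}]$ via $s = q^{\alpha_1}$ and $\mt = q^{-2}$ (the latter being the standing hypothesis of the proposition), noting that $A(j)$ lands in $\Hrelm_j$ and the grading by $j$ matches the weight grading $\alpha_1 - 2j$. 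Since both sides are free modules on bases indexed by $\BN$ and $\Theta$ sends one basis to the other, $\Theta$ is automatically a $\Laurentmax$-module isomorphism; the only content is that it intertwines the $\UqhL$-actions.

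Next I would check compatibility with each generator by comparing the homological formulas computed earlier with the Verma formulas of Definition \ref{GoodVerma}. For $K$: the recalled action gives $K \cdot A(k) = q^{\alpha_1}\mt^k A(k) = s q^{-2k} A(k)$, which is exactly $K \cdot v_k = s q^{-2k} v_k$. For $E$: Corollary \ref{en=1} gives $E \cdot A(k) = A(k-1)$, matching $E \cdot v_k = v_{k-1}$ (and both vanish on $v_0$, resp. $A(0)$, by the convention $A(-1)=0$ of Definition \ref{NormMultiA}). For $F^{(1)}$: Corollary \ref{fn=1} gives $F^{(1)} \cdot A(k) = q^{\alpha_1}(k+1)_{\mt}(1 - q^{-2\alpha_1}\mt^{-k}) A(k+1)$; using $\mt = q^{-2}$ and Remark \ref{quantumtq}, $(k+1)_{\mt} = q^{-k}[k+1]_q$, and $q^{\alpha_1}(1 - q^{-2\alpha_1}q^{2k}) = q^{\alpha_1 - 2k}(q^{2k - \alpha_1}\cdot q^{\alpha_1} - \ldots)$ — more cleanly, $q^{\alpha_1}(1-q^{-2\alpha_1}\mt^{-k}) = q^{\alpha_1} - q^{-\alpha_1}q^{2k} = -q^{2k}(q^{-\alpha_1} - q^{\alpha_1}q^{-2k})$; after multiplying by $q^{-k}$ one recovers $\qbin{k+1}{1}_q(s q^{-k} - s^{-1}q^{k})$, which is precisely the $n=1$ coefficient of $F^{(1)} v_k = F^{(1)(1)}v_k$ in Definition \ref{GoodVerma} with $\qbin{1+k}{k}_q = \qbin{k+1}{1}_q = [k+1]_q$. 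Finally, for the divided powers $F^{(l)}$, Proposition \ref{Fln=1} gives $F^{(l)} \cdot A(k) = q^{-l(l-1)/2} q^{l\alpha_1} \qbin{k+l}{k}_{\mt} \prod_{m=0}^{l}(1 - q^{-2\alpha_1}\mt^{-m}) A(k+l)$; again by Remark \ref{quantumtq} one has $\qbin{k+l}{k}_{\mt} = q^{-kl}\qbin{k+l}{k}_q$ and the product over $m$ rewrites, after distributing the powers of $q$, as $\prod_{m=0}^{l-1}(sq^{-m-k} - s^{-1}q^{m+k})$, matching the formula $F^{(l)} v_k = \big(\qbin{l+k}{k}_q \prod_{m=0}^{l-1}(sq^{-m-k} - s^{-1}q^{m+k})\big) v_{k+l}$ — with the understanding that the range of the product in Proposition \ref{Fln=1} should be read as $m = 0, \dots, l-1$ consistently with Definition \ref{GoodVerma}.

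The one remaining point is to confirm that these generator-by-generator checks actually suffice, i.e. that a $\Laurentmax$-linear bijection intertwining $E$, $K^{\pm 1}$, and all the $F^{(l)}$ is an isomorphism of $\UqhL$-modules. This is immediate from the fact, recorded in the Proposition following Definition \ref{Halflusztig}, that $\UqhL$ is generated as an algebra by exactly these elements: a linear map respecting the action of a generating set of the algebra respects the action of the whole algebra. The statement that $\CH^{\alpha_1}$ has highest weight $\alpha_1$ then follows since $A(0)$ generates the module, satisfies $E \cdot A(0) = 0$, and has $K$-eigenvalue $q^{\alpha_1}$ (the defining properties of the highest weight vector $v_0$ of $V^s$), and $\Theta$ identifies $A(0) \leftrightarrow v_0$.

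The main obstacle is purely bookkeeping: matching the normalization prefactors (the powers of $q^{\alpha_1}$, $q^{-l(l-1)/2}$, and $\mt$-vs-$q$ quantum number conversions) in Proposition \ref{Fln=1} and Corollary \ref{fn=1} against the $[\cdot]_q$-shaped coefficients in Definition \ref{GoodVerma}. Here Remark \ref{quantumtq} does all the work, but one must be careful with the $q^{-kl}$ and $q^{-l(l-1)/2}$ factors and with rewriting each factor $1 - q^{-2\alpha_1}\mt^{-m} = 1 - s^{-2}q^{2m}$ as a multiple of $sq^{-m} - s^{-1}q^{m}$ (up to a power of $q$ that gets absorbed); I expect the identity $q^{l(1-l)/2} q^{l\alpha_1} q^{-kl} \prod_{m=0}^{l-1} q^{m}\cdot(\text{sign}) = 1$-type reconciliation to be the only place where a careful, if routine, computation is unavoidable.
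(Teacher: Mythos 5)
Your proposal follows essentially the same route as the paper's proof: compute the actions of $K$, $E$ and $F^{(1)}$ on the basis $\{A(k)\}$ via Corollaries \ref{en=1} and \ref{fn=1} and the recalled $K$-action, convert quantum numbers with Remark \ref{quantumtq}, and conclude that $A(k)\mapsto v_k$ is a $\UqhL$-equivariant isomorphism onto the Verma module of Definition \ref{GoodVerma}. The only divergence is your extra explicit check of the divided powers $F^{(l)}$, which is redundant — compatibility with $F^{(1)}$, the divided-power relation of Proposition \ref{dividedpower}, and torsion-freeness of the modules already force it — and in fact cannot be reconciled literally with the formula of Proposition \ref{Fln=1} as printed, since the product there lacks the $k$-dependence of the Verma coefficient $\prod_{m=0}^{l-1}\left(sq^{-m-k}-s^{-1}q^{m+k}\right)$; this does not affect the validity of your argument, whose load-bearing steps coincide with the paper's.
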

\begin{proof}
The presentation of the action over a Verma-module, is given in \cite{JK} (see relations (18)) and is recalled in Definition \ref{GoodVerma}. Using Corollaries \ref{en=1} and \ref{fn=1} and the above remark in the case $n=1$, one recognizes the presentation of the Verma module. Namely, let $\mt=q^{-2}$, and let $s=q^{\alpha_1}$. Then:
\[
K \cdot A({k}) = q^{\alpha_1} \mt^{k} A({k}) = s q^{-2k} A(k)
\]
\[
E \cdot A(k) = A(k-1)
\]
and
\[
F^{(1)} \cdot A(k) = q^{\alpha_1} (k+1)_{\mt} (1-q^{-2 \alpha_1}\mt^{-k}) A(k+1) = \left[ k+1 \right]_q(sq^{-k} - s^{-1} q^k) A(k+1).
\]
The last equality uses Remark \ref{quantumtq}. 

These expressions ensure that the isomorphism of $\BZ \left[ s^{\pm 1} , q^{\pm 1} \right]$-modules:
\[
\bfct
\CH^{\alpha_1} & \to & V^{\alpha_1} \\
A(k) & \mapsto & v_k \text{ for } k \in \BN
\efct
\]
is $\UqhL$ equivariant. 
\end{proof}

\begin{rmk}
There is an isomorphism of $\Laurentmax$-modules:
\[
\tens: \bfct
\CH^{\alpha_1 , \ldots , \alpha_n} & \to & \CH^{\alpha_1} \otimes \cdots \otimes \CH^{\alpha_n} \\
A(k_0 , \ldots , k_{n-1}) & \mapsto & A(k_0) \otimes \cdots \otimes A(k_{n-1}).
\efct
\]
\end{rmk}

\begin{thm}[Monoidality of Verma-modules.]\label{monoidality}
%Let $\CH^{\alpha_1} \otimes \cdots \otimes \CH^{\alpha_n}$ being given the $\Uq$ action provided by the coproduct of $\Uq$. Then 
For $\mt=q^{-2}$, the morphism:
\[
\tens: \CH^{\alpha_1 , \ldots , \alpha_n} \to \CH^{\alpha_1} \otimes \cdots \otimes \CH^{\alpha_n}
\]
is an isomorphism of $\UqhL$-modules. 
\end{thm}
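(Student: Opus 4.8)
The statement asserts that the $\Laurentmax$-module isomorphism $\tens$ from the preceding remark intertwines the $\UqhL$-actions, where on the left we have the homological action of Section~\ref{homologicalactionofUq} and on the right the tensor action coming from the coproduct of Remark~\ref{relationsUqhL}. Since $\UqhL$ is generated by $E$, $K^{\pm 1}$ and $F^{(1)}$ (the divided powers $F^{(k)}$ being expressible via $F^{(1)}$ by Proposition~\ref{dividedpower}, which holds on both sides), it suffices to check equivariance for these three generators on the $\Laurentmax$-basis $\CA = (A({\bf k}))_{{\bf k}\in\EZnr}$, using that $\tens(A(k_0,\ldots,k_{n-1})) = A(k_0)\otimes\cdots\otimes A(k_{n-1})$.

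\textbf{First} I would handle $K$: from the Remark recalling the action of $K$ in the case $n>1$ we have $K\cdot A({\bf k}) = q^{\sum_i \alpha_i}\mt^{r} A({\bf k})$ with $r = \sum k_i$, while on the tensor product $\Delta^{(n-1)}(K) = K^{\otimes n}$ acts on $A(k_0)\otimes\cdots\otimes A(k_{n-1})$ by $\prod_i (q^{\alpha_{i+1}}\mt^{k_i}) = q^{\sum_i\alpha_i}\mt^{\sum_i k_i}$ (using the $n=1$ formula $K\cdot A(k) = q^{\alpha_1}\mt^{k}A(k)$ established in the previous proposition). These agree, so $\tens$ is $K$-equivariant. \textbf{Next}, for $E$: the coproduct gives $\Delta(E) = E\otimes K + 1\otimes E$, and iterating, $\Delta^{(n-1)}(E) = \sum_{i=0}^{n-1} 1^{\otimes i}\otimes E\otimes K^{\otimes (n-1-i)}$ reindexed appropriately. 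Applying this to $A(k_0)\otimes\cdots\otimes A(k_{n-1})$ and using $E\cdot A(k) = A(k-1)$ (Corollary~\ref{en=1}) together with the $K$-eigenvalues picks out, in the $i$-th summand, the factor $\prod_{j<i}(q^{\alpha_{j+1}}\mt^{k_j})$ acting on the slots to the left — wait, one must be careful which side $K$ lands; with $\Delta(E)=E\otimes K+1\otimes E$ the $K$'s appear on the slots to the \emph{right} of the slot where $E$ acts, but after the standard reordering of the coproduct this becomes the $q^{\alpha_1+\cdots+\alpha_i}\mt^{k_0+\cdots+k_{i-1}}$ coefficient appearing in Proposition~\ref{Eonarcs}. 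So the comparison reduces to matching the explicit formula for $E\cdot A({\bf k})$ in Proposition~\ref{Eonarcs} with the expansion of $\Delta^{(n-1)}(E)$, a purely bookkeeping check on powers of $q^{\alpha_j}$ and $\mt$.

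\textbf{Then} comes $F^{(1)}$, which is the most delicate because $\Delta(F^{(1)}) = K^{-1}\otimes F^{(1)} + F^{(1)}\otimes 1$ (from Remark~\ref{relationsUqhL} with $n=1$, since $F^{(1)} = F^{(1)}$), so iterating gives $\Delta^{(n-1)}(F^{(1)}) = \sum_{i=0}^{n-1} K^{-\otimes i}\otimes F^{(1)}\otimes 1^{\otimes(n-1-i)}$ in suitable reindexing. Applying to $A(k_0)\otimes\cdots\otimes A(k_{n-1})$ and using the one-puncture formula $F^{(1)}\cdot A(k) = q^{\alpha_1}(k+1)_{\mt}(1-q^{-2\alpha_1}\mt^{-k})A(k+1)$ (Corollary~\ref{fn=1}), each summand contributes $\prod_{j}(q^{\alpha_{j+1}}\mt^{k_j})^{-1}$ over the appropriate range times the one-slot coefficient on the $i$-th factor. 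This must be matched against Proposition~\ref{Fonarcs}, i.e. the coefficient $q^{-\sum_{j=i+2}^n\alpha_j}\mt^{-\sum_{j=i+1}^{n-1}k_j}q^{\alpha_{i+1}}(k_i+1)_{\mt}(1-q^{-2\alpha_{i+1}}\mt^{-k_i})$. \textbf{The main obstacle} I anticipate is precisely this last comparison: one has to be scrupulous about the direction of coassociativity (the coproduct is not cocommutative) and about the change of variables $\mt = q^{-2}$, $s_i = q^{\alpha_i}$ linking Definition~\ref{quantumt} quantum numbers with those of Definition~\ref{quantumq} via Remark~\ref{quantumtq}; getting the exact powers of $q^{\alpha_j}$ to line up between the homological normalization (the factor $q^{\alpha_1(k_1+\cdots)+\cdots}$ in Definition~\ref{NormMultiA}) and the iterated coproduct is where sign/exponent errors would creep in. Once $E$, $K$, $F^{(1)}$ are checked, equivariance for all of $\UqhL$ follows since these generate the algebra and $\tens$ is already known to be an $\Laurentmax$-module isomorphism; the divided-power relations are then automatically respected because they are consequences of the generator relations on both sides (or, alternatively, one may invoke Proposition~\ref{Fln=1} and Corollary~\ref{dashedtodashed} to see $F^{(k)}$ maps correctly as well).
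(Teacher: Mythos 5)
Your proposal follows essentially the same route as the paper: it reduces to checking $\tens$-equivariance for the generators $E$, $K^{\pm 1}$, $F^{(1)}$ on the multi-arc basis, matching the explicit formulas of Propositions \ref{Eonarcs} and \ref{Fonarcs} (and the $n=1$ Corollaries \ref{en=1}, \ref{fn=1}) against the iterated coproduct, with the divided powers handled automatically via Proposition \ref{dividedpower}. The side-of-the-coproduct subtlety you flag is real --- the homological formulas produce $\sum_i K^{\otimes i}\otimes E\otimes 1^{\otimes(n-1-i)}$ and $\sum_i 1^{\otimes i}\otimes F^{(1)}\otimes (K^{-1})^{\otimes(n-1-i)}$, i.e.\ the factors of $K^{\pm1}$ land on the opposite side from a naive iteration of the coproduct of Remark \ref{relationsUqhL} --- and the paper's own proof records exactly these expressions and identifies them with the iterated coproduct without further comment.
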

\begin{proof}
From Proposition \ref{Fonarcs} one remarks that the formulae satisfy:
\[
\begin{array}{rl}
\tens \left( F^{(1)} \cdot A({\bf k}) \right) & = \tens \left( \sum_{i=0}^{n-1} q^{-  \sum_{j=i+2}^n \alpha_j} \mt^{-\sum_{j=i+1}^{n-1} k_j} q^{\alpha_{i+1}} (k_i+1)_{\mt} (1-q^{-2\alpha_{i+1}} \mt^{-k_i} ) A({\bf k})_i \right) \\
& = \sum_{i=0}^{n-1} A(k_0) \otimes \cdots \otimes \left( q^{\alpha_{i+1}} (k_i+1)_{\mt} (1-q^{-2\alpha_{i+1}} \mt^{-k_i} ) \right) A(k_i+1)\\
& \otimes q^{-\alpha_{i+2}} \mt^{-k_{i+1}} A(k_{i+1}) \otimes \cdots \otimes q^{-\alpha_{n}} \mt^{-k_{n-1}} A(k_{n-1})\\
& = \sum_{i=0}^{n-1} \left( 1\otimes 1 \otimes \cdots \otimes F^{(1)} \otimes K^{-1} \otimes \cdots \otimes K^{-1} \right) A(k_0) \otimes \cdots \otimes A(k_{n-1}) 
\end{array}
\]
where the $F^{(1)}$ in the sum is in the $(i+1)^{\text{st}}$ position, one recognizes the expression of $\Delta^n(F^{(1)})$.

We do the same for $E$, from Proposition \ref{Eonarcs} we have:
\[
\begin{array}{rl}
\tens \left( E\cdot A(k_0 ,\ldots , k_{n-1}) \right) & = \tens \left( \sum_{i=0}^{n-1} q^{\alpha_1 + \cdots + \alpha_i} \mt^{k_0 + \cdots + k_{i-1}} A(k_0 , \ldots , k_{i-1} , k_i-1 , k_{i+1}, \ldots , k_{n-1}) \right) \\
& = \sum_{i=0}^{n-1} (K \otimes \cdots \otimes K \otimes E \otimes 1 \otimes \cdots \otimes 1 ) A(k_0) \otimes \cdots \otimes A(k_{n-1}) 
\end{array}
\]
which proves that the action of $E$ over $\CH^{\alpha_1 , \ldots , \alpha_n}$ corresponds to the action of $\Delta^n(E)$ over the tensor product. The same proof works for the action of $K$ so that the theorem holds. 
\end{proof}

\begin{rmk}
The above theorem suggests that there should exist a homological interpretation of the $\UqhL$ coproduct. Probably in terms of gluing once punctured disks along arcs of their boundary. The morphism $\tens$ should then be the involved homological operator. 

We remark that:
\[
\tens (A'({\bf k})) = q^{\alpha_1(k_1 + \cdots + k_{n-1} ) + \alpha_2 ( k_2+ \cdots +k_{n-1}) + \cdots + \alpha_{n-1} k_{n-1} } A'(k_0) \otimes \cdots \otimes A'(k_{n-1})
\]
so that multi-arcs are divided into tensor products of single arcs, with coefficients appearing from the gluing operation. If one is able to draw the handles corresponding to the normalization coefficient, one should know how to glue once-punctured disks. 

%The homological meaning of the monoidality is missing: the homological construction is not yet monoidal. Namely, the morphism $\tens$ is defined algebraically but its homological meaning is not clear yet. In \cite{FW}, in the remark following Theorem 4.1, the authors suggest that the underlying topological operation consists in seeing the $n$ times punctured disk as the gluing of $n$ once punctured disks. The gluing operation must be translated in terms of homological operation, to get that the homology module divides in the tensor product of $n$ homology modules built from once punctured disks. 
%
%For instance, for $n=1$ we have $A(k) = A'(k)$ for all $k$. Let ${\bf k} = (k_0 , \ldots , k_{n-1}) \in \EZnr$, we recall that:
%\[
%A({\bf k}) = q^{\alpha_1(k_1 + \cdots + k_{n-1} ) + \alpha_2 ( k_2+ \cdots +k_{n-1}) + \cdots + \alpha_{n-1} k_{n-1} }A'({\bf k}) 
%\]
%and that the morphism $\tens$ sends $A({\bf k})$ to $A(k_0) \otimes \cdots \otimes A(k_{n-1})$, so that by linearity:
%\[
%\tens (A'({\bf k})) = q^{\alpha_1(k_1 + \cdots + k_{n-1} ) + \alpha_2 ( k_2+ \cdots +k_{n-1}) + \cdots + \alpha_{n-1} k_{n-1} } A'(k_0) \otimes \cdots \otimes A'(k_{n-1}).
%\]
%The latter tells us that the morphism $\tens$ is not just the decomposition of the multi-arc $A'$ into $n$ arcs, one in each once punctured disk. There is a coefficient that comes from the homological gluing operation. 
\end{rmk}

\begin{rmk}
Theorem \ref{monoidality} answers Conjecture 6.2 of \cite{FW}. In fact the isomorphism was suggested by the conjecture while the topological basis was not the one that fits with the integral coefficients setting. See Corollary \ref{answerConjecture61} for precisions. 
\end{rmk}

\subsection{Homological braid action}\label{homologicalbraid}

\subsubsection{Definition of the action.}

In this section we present an extension of Lawrence representations (\cite{Law}) for braid groups, following her ideas. The starting point is the fact that the braid group is the mapping class group of $D_n$.
\begin{recall}
The braid group on $n$ strands is the mapping class group of $D_n$.
\[
\Bn = \Mod(D_n) = \Homeo(D_n, \partial D) \big/ {\Homeo}_0(D_n, \partial D).
\]
This definition is isomorphic to the Artin presentation of the braid group (Definition \ref{Artinpres}) by sending generator $\sigma_i$ to the mapping class of the half Dehn twist swapping punctures $w_i$ and $w_{i+1}$. The {\em pure braid group} $\PBn$ is defined to be braids fixing the punctures pointwise. 
\end{recall}
The action of a homeomorphism of $D_n$ can be generalized to $X_r$ as homeomorphisms extend to the configuration space coordinate by coordinate. Namely, if $\phi$ is a homeomorphism of $D_n$, the application : 
\[
\bfct
X_r & \to & X_r\\
\lbrace z_1, \ldots , z_r \rbrace & \mapsto & \lbrace \phi(z_1) , \ldots , \phi(z_r) \rbrace
\efct
\]
is a homeomorphism. We show that the action of half Dehn twists passes to homology with local coefficients in $L_r$, treating separately the unicolored case ($\alpha_1 = \cdots = \alpha_n$) from the general one. In the unicolored case, we get a representation of the standard braid group.

\begin{lemma}[Representation of the braid group]\label{RepBn}
Let $\alpha= \alpha_1 = \cdots = \alpha_n$ so that $\Laurentmax = \BZ\left[ \mt^{\pm 1} , q^{\pm \alpha} \right]$. Let $\beta \in \CB_n$ be a braid, and $\widehat{\beta}$ a homeomorphism representing $\beta$. The action of $\widehat{\beta}$ on $X_r$ described above lifts to $\Hrelm_r$, so that it yields a homological representation of the braid group:
\[
\Rhom: \bapp
\Laurentmax \left[ \CB_n \right] & \to & \End_{\Laurentmax} (\CH^{\alpha , \ldots, \alpha})
\eapp .
\]
%Moreover the action of $\CB_n$ over $\CH^{\alpha , \ldots, \alpha}$ commutes with the $\Laurent$-module structure. 
\end{lemma}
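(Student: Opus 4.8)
The plan is to verify that the coordinate-wise action of a homeomorphism $\widehat\beta$ of $D_n$ on $X_r$ is compatible with the local system $L_r$, and then to invoke the functoriality of homology with local coefficients. First I would observe that, since $\widehat\beta$ fixes $\partial D$ pointwise, the induced self-homeomorphism of $X_r$ fixes the base point ${\pmb \xi^r}$ (all $\xi_i \in \partial D_n$ are fixed) and preserves the subspace $X_r^-$ (the leftmost boundary point $w_0$ is fixed). Thus $\widehat\beta$ induces an automorphism $\widehat\beta_*$ of $\pi_1(X_r,{\pmb\xi^r})$ preserving $X_r^-$, hence a map on the pair $(X_r,X_r^-)$. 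The crucial point is that $\rho_r \circ \widehat\beta_* = \rho_r$ as morphisms $\BZ[\pi_1(X_r,{\pmb\xi^r})] \to \Laurentmax$. This is where the unicolored hypothesis $\alpha_1=\cdots=\alpha_n$ enters: a half Dehn twist $\sigma_i$ permutes the punctures $w_i,w_{i+1}$, so it permutes the generators $B_{r,i}$ and $B_{r,i+1}$ of $\pi_1(X_r,{\pmb\xi^r})$ (up to conjugation), and $\rho_r$ sends $B_{r,k}\mapsto q^{2\alpha_k}$; this is invariant under permuting the $w_k$'s precisely because all $\alpha_k$ are equal. The generators $\sigma_j$ (swapping base point coordinates) are sent to $t$ and are preserved up to the abelianness of the local system.

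Once $\rho_r\circ\widehat\beta_* = \rho_r$ is checked on the generators listed in Remark \ref{pi_1X_r}, the homeomorphism $\widehat\beta$ lifts to a homeomorphism of the covering space $\widehat{X_r}$ associated with $L_r$ (after fixing the lift $\widehat{\pmb\xi}$ of the base point, using the chosen lift from the remark following Definition \ref{defofH}), and this lift is compatible with the deck action. By functoriality of Borel--Moore homology with local coefficients for maps of pairs, $\widehat\beta$ induces an $\Laurentmax$-linear automorphism of $\Hrelm_r = \Hlf_r(X_r,X_r^-;L_r)$ for each $r$, hence of $\CH^{\alpha,\ldots,\alpha}=\bigoplus_r\Hrelm_r$. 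I would then check that this depends only on the mapping class of $\widehat\beta$: if $\widehat\beta$ is isotopic to the identity rel $\partial D$, the induced map on $X_r$ is isotopic to the identity rel ${\pmb\xi^r}$ and rel $X_r^-$, and isotopic maps induce the same map on homology with local coefficients (the isotopy lifts to $\widehat{X_r}$ fixing $\widehat{\pmb\xi}$). Finally, functoriality gives $(\widehat{\beta_1\beta_2})_* = (\widehat\beta_1)_*\circ(\widehat\beta_2)_*$ (with the appropriate convention for composition order, reading braids top to bottom), so we obtain the algebra morphism $\Rhom: \Laurentmax[\CB_n] \to \End_{\Laurentmax}(\CH^{\alpha,\ldots,\alpha})$.

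The main obstacle, and the step deserving the most care, is the verification that $\rho_r$ is invariant under $\widehat{\sigma_i}_*$. One must track precisely how the half Dehn twist swapping $w_i$ and $w_{i+1}$ acts on the generating loops $B_{r,1},\dots,B_{r,n}$ of $\pi_1(X_r,{\pmb\xi^r})$: it fixes $B_{r,k}$ for $k\neq i,i+1$, and exchanges the classes around $w_i$ and $w_{i+1}$ up to conjugation by a loop whose $\rho_r$-value is a monomial. Because the target is abelian and $\rho_r(B_{r,i})=\rho_r(B_{r,i+1})=q^{2\alpha}$ under the unicolored hypothesis, the conjugation and the swap have no net effect on $\rho_r$. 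I would present this concretely with the movie-of-the-loop pictures in the style of Figures \ref{pi1sigma} and \ref{pi1B}. Everything else is a routine application of the covering-space description of local system homology and its functoriality, already used repeatedly in Section \ref{structureofhomo}.
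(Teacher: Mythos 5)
Your proposal is correct and follows essentially the same route as the paper: both reduce the lemma to the commutativity of $\rho_r$ with $\widehat{\sigma}_*$ on the generators $\sigma_l$ and $B_{r,k}$ of $\pi_1(X_r,{\pmb \xi^r})$ from Remark \ref{pi_1X_r}, then lift the action to the cover and conclude by isotopy invariance and functoriality. Your write-up actually supplies more detail than the paper's proof (which asserts the generator relations as "easy to see"), in particular the explicit role of the unicolored hypothesis and of the abelianness of the target in absorbing the conjugations.
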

\begin{proof}
Let $\sigma \in \CB_n$ be one of the Artin braid generator, the lemma is a direct consequence of the invariance of the local system representation under the braid action. Namely, the commutativity of the following diagram:
\begin{equation*}
\begin{tikzcd}[column sep=large]
\pi_1 (X_r , {\pmb \xi^r} ) \arrow[r,"\widehat{\sigma}_*"] \arrow[d,"\rho_r"]
   &  \pi_1 (X_r , {\pmb \xi^r} )  \arrow[d,"\rho_{r}"]\\ 
\BZ^{2} = \BZ \langle q^{\alpha} \rangle \oplus \BZ \langle t \rangle \arrow[r,"\Id"]
   & \BZ \langle q^{\alpha} \rangle \oplus \BZ \langle t \rangle
\end{tikzcd} 
\end{equation*}
where $\widehat{\sigma}$ is a half Dehn twist of $X_r$ associated with $\sigma$ and $\widehat{\sigma}_*$ its lift to the fundamental group. 
It is easy to see that for $l \in \lbrace 1 ,\ldots , r-1 \rbrace$ and $k \in \lbrace 1 , \ldots , n \rbrace$, the following relations hold:
\[
\rho_r \left( \widehat{\sigma}_*(\sigma_l) \right) = \rho_r(\sigma_l ) \text{ and } \rho_r \left( \widehat{\sigma}_*(B_{r,k}) \right) = \rho_r(B_{r,k} )
\]
considering generators $\sigma_l$ and $B_{r,k}$ of $\pi_1 (X_r , {\pmb \xi^r} )$ introduced in Remark \ref{pi_1X_r}. This ensures that the action lifts to the maximal abelian cover, and that it commutes with deck transformations, so that the action on homology with local coefficients is well defined. The action is invariant under isotopies, so that the action of $\Bn$ is well defined. 
\end{proof}
%
%\begin{rmk}
%The above lemma has already been proved for the case $r=2$ in Lemmas \ref{phipreserved} and \ref{phipreserved2} (unicolored and general cases). The same proof can be adapted also for the above general case. 
%\end{rmk}

To deal with different colors, we need a morphism to follow the change of colors in $\Laurentmax$. 

\begin{defn}
Let $s \in \Sk_n$ be a permutation. We define the following morphism:
\[
\hat{s}: \bfct
\Laurentmax & \to & \Laurentmax \\
q^{\alpha_i} & \mapsto & q^{\alpha_{s(i)}} \\
t & \mapsto & t
\efct .
\]
\end{defn}

\begin{lemma}[Representation of the pure braid group]
In the general case, let $\sigma_i$ be an Artin generator of $\Bn$, with $i \in \lbrace 1 , \ldots  , n-1 \rbrace$ and $s \in \Sk_n$. There exists a well defined action of $\sigma_i$ lifted to homology:
\[
\Rhom(\sigma_i ) \in \Hom_{\Laurentmax}\left( \CH^{s(\alpha_1)} \otimes \cdots \otimes \CH^{s(\alpha_n)} , \CH^{s \tau_i(\alpha_1)} \otimes \cdots \otimes \CH^{s \tau_i(\alpha_n)}  \right)
\]
where $\tau_i = (i,i+1) \in \Sk_n$. There is a well defined action of $\PBn$ over $\CH^{\alpha_1 , \ldots ,\alpha_n}$:
\[
\Rhom: \bapp
\Laurentmax \left[ \PBn \right] & \to & \End_{\Laurentmax} (\CH^{\alpha_1 , \ldots, \alpha_n})
\eapp .
\]
This action commutes with the $\Laurentmax$-structure. 
\end{lemma}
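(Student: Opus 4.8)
The plan is to reduce the statement to the lifting property for a single half Dehn twist and then assemble the two displayed claims. First I would recall that the mapping class group is generated by the $\sigma_i$, and that each $\sigma_i$ can be represented by a half Dehn twist $\widehat{\sigma_i}$ of $D_n$ supported in a small disk enclosing exactly $w_i$ and $w_{i+1}$; this extends coordinate-by-coordinate to a homeomorphism of $X_r$, hence induces a map on $\pi_1(X_r,{\pmb\xi^r})$ (after the usual choice of a path from the image base point back to ${\pmb\xi^r}$, as in Definition \ref{addingpoint}, or simply by noting $\widehat{\sigma_i}$ can be taken to fix ${\pmb\xi^r}$ since the $\xi_j$ lie away from the support). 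The key computation, exactly parallel to Lemma \ref{RepBn}, is that $\rho_r\circ(\widehat{\sigma_i})_* $ differs from $\rho_r$ only by the permutation $\tau_i$ acting on the labels $q^{\alpha_k}$: a half twist swapping $w_i,w_{i+1}$ sends the generator $B_{r,i}$ to (a conjugate of) $B_{r,i+1}$ and vice versa, fixes $B_{r,k}$ for $k\neq i,i+1$ up to conjugacy, and fixes each $\sigma_l$ up to conjugacy; since $L_r$ is abelian, conjugation is invisible and one gets $\rho_{r}\circ(\widehat{\sigma_i})_* = \widehat{\tau_i}\circ\rho_r$ after relabeling.

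Concretely, I would phrase this as the commutativity of
\[
\begin{tikzcd}[column sep=large]
\pi_1(X_r,{\pmb\xi^r}) \arrow[r,"(\widehat{\sigma_i})_*"] \arrow[d,"\rho_r"] & \pi_1(X_r,{\pmb\xi^r}) \arrow[d,"\rho_r"]\\
\BZ^{n+1} \arrow[r,"\widehat{\tau_i}"] & \BZ^{n+1}
\end{tikzcd}
\]
(where $\widehat{\tau_i}$ permutes the $q^{\alpha_k}$-coordinates and fixes $t$). This says precisely that $\widehat{\sigma_i}$ lifts to a map between the covers of $X_r$ attached to $L_r$ and to $\widehat{\tau_i}^*L_r$, equivariantly for the deck groups, hence induces a map on Borel--Moore homology with these (now different, if the colors differ) local systems. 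Since $\CH^{s(\alpha_1),\ldots,s(\alpha_n)}$ with the coloring is, by construction, the direct sum of the $\Hrelm_r$ computed with the local system whose $B_{r,k}$-weight is $q^{2s(\alpha_k)}$, applying $\widehat{\sigma_i}$ and relabeling by $\tau_i$ gives the asserted element
\[
\Rhom(\sigma_i)\in\Hom_{\Laurentmax}\bigl(\CH^{s(\alpha_1)}\otimes\cdots\otimes\CH^{s(\alpha_n)},\,\CH^{s\tau_i(\alpha_1)}\otimes\cdots\otimes\CH^{s\tau_i(\alpha_n)}\bigr),
\]
using the isomorphism $\tens$ of Theorem \ref{monoidality} to pass between $\CH^{\alpha_1,\ldots,\alpha_n}$ and the tensor product. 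Invariance under isotopy of $\widehat{\sigma_i}$ (and independence of the choice of representing homeomorphism) follows because isotopic homeomorphisms induce the same map on $\pi_1$ and hence the same lift, as in Lemma \ref{RepBn}.

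For the pure braid statement: a pure braid $\beta\in\PBn$ is a product of $\sigma_i$'s whose total permutation is trivial, so composing the maps $\Rhom(\sigma_i)$ along such a word the intermediate colorings $s,\,s\tau_i,\,s\tau_i\tau_j,\ldots$ return to the original coloring, and the composite is an endomorphism of $\CH^{\alpha_1,\ldots,\alpha_n}$. To see that $\Rhom$ is well defined on $\Laurentmax[\PBn]$ (not merely on words) one checks that the braid relations are respected: this is automatic because the functorial assignment ``homeomorphism $\mapsto$ induced map on homology with local coefficients'' is a monoid homomorphism on the level of mapping classes, and the braid relations hold already in $\Mod(D_n)$; the relabeling morphisms $\widehat{s}$ compose correctly since $s\mapsto\widehat{s}$ is a group anti/homomorphism matching the composition order. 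Finally, each $\Rhom(\sigma_i)$ is $\Laurentmax$-linear (it is a map of homology modules with coefficients in a local system of $\Laurentmax$-modules, and $\widehat{\sigma_i}$ acts trivially on the coefficient ring in the monochromatic directions and by $\widehat{\tau_i}$ on the colored ones, which is accounted for by the relabeling), so the $\PBn$-action commutes with the $\Laurentmax$-structure. The main obstacle is the color-bookkeeping in the first step: verifying carefully that a half twist swapping $w_i,w_{i+1}$ conjugates $B_{r,i}\leftrightarrow B_{r,i+1}$ (and leaves the other generators conjugacy-fixed), so that after abelianization one gets exactly $\widehat{\tau_i}$ and not some extra monomial; drawing the half-twist on the mixed braid picture of Remark \ref{pi_1X_r} makes this transparent but it must be done honestly.
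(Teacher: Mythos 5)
Your proposal is correct and follows essentially the same route as the paper: establish the commutative square intertwining $(\widehat{\sigma_i})_*$ with $\rho_r$ via the relabeling $\widehat{\tau_i}$ (by checking the effect on the generators $\sigma_l$ and $B_{r,k}$ of $\pi_1(X_r,{\pmb\xi^r})$), deduce the lift to Borel--Moore homology with the permuted local system, and compose the squares so that pureness of $\beta$ yields the identity on the bottom row. Your explicit remark that the half twist only sends $B_{r,i}$ and $B_{r,i+1}$ to \emph{conjugates} of one another, which is harmless because the local system is abelian, is a welcome sharpening of the paper's terser statement.
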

\begin{proof}
The proof is almost the same as the one for Lemma \ref{RepBn}. Namely, it is a consequence of the fact that the following diagram commutes:
\begin{equation*}
\begin{tikzcd}[column sep=large]
\pi_1 (X_r , {\pmb \xi^r} ) \arrow[r,"\widehat{\sigma_i}_*"] \arrow[d,"\rho_r"]
   &  \pi_1 (X_r , {\pmb \xi^r} )  \arrow[d,"\rho_{r}"]\\ 
\BZ \langle q^{\pm \alpha_i}, t^{\pm 1} \rangle_{i \in \lbrace 1 , \ldots , n \rbrace} \arrow[r,"\widehat{\tau_k}"]
   & \BZ \langle q^{\pm \alpha_i}, t^{\pm 1} \rangle_{i \in \lbrace 1 , \ldots , n \rbrace} 
\end{tikzcd} .
\end{equation*}
The fact that this diagram commutes comes from the following remark:
\[
\widehat{\sigma_i}_* \left(B_{r,k} \right) = B_{r,k+1}
\]
while other generators of $\pi_1 (X_r , {\pmb \xi^r} )$ are not perturbed by the action of $\sigma_i$. For a pure braid $\beta$, we have:
\[
\Rhom(\beta ) \in \End_{\Laurentmax}\left( \CH^{\alpha_1} \otimes \cdots \otimes \CH^{\alpha_n}   \right) ,
\]
and as $\beta$ can be written as a composition of generators $\sigma_i$'s, by composition of diagrams, one obtains the following commutative diagram:
\begin{equation*}
\begin{tikzcd}[column sep=large]
\pi_1 (X_r , {\pmb \xi^r} ) \arrow[r,"\widehat{\beta}_*"] \arrow[d,"\rho_r"]
   &  \pi_1 (X_r , {\pmb \xi^r} )  \arrow[d,"\rho_{r}"]\\ 
\BZ \langle q^{\pm \alpha_i}, t^{\pm 1} \rangle_{i \in \lbrace 1 , \ldots , n \rbrace} \arrow[r,"\Id"]
   & \BZ \langle q^{\pm \alpha_i}, t^{\pm 1} \rangle_{i \in \lbrace 1 , \ldots , n \rbrace} 
\end{tikzcd} .
\end{equation*}
with the identity morphism on the second line coming from the pureness of $\beta$. This ends the proof as for previous lemma, Lemma \ref{RepBn}. 
\end{proof}
%
%The idea of this construction was originally due to R. Lawrence in \cite{Law}. This is a generalization of her construction as we work in a relative case and obtain a larger representation. 

\subsubsection{Computation of the action.}\label{sectionbraidhabiro}

In the case of two punctures $w_1 , w_2$, we can perform the computation of the action of the single braid generator of $\CB_2$, and first we recall classical operators necessary to define the $R$-matrix of $\Uq$.

\begin{defn}
Let $q^{\frac{H\otimes H}{2}}$ be the following $\Laurentmax$-linear map:
\[
q^{\frac{H\otimes H}{2}} :
\bfct
\CH^{\alpha_1} \otimes \CH^{\alpha_2} & \to & \CH^{\alpha_1} \otimes \CH^{\alpha_2}\\
A^{\alpha_1}(k)\otimes A^{\alpha_2}(k') & \mapsto & q^{(\alpha_1- 2k)(\alpha_2 - 2k')/2} A^{\alpha_1}(k)\otimes A^{\alpha_2}(k')
\efct
\]
and $T$ be the following one:
\[
T: \bfct
\CH^{\alpha_1} \otimes \CH^{\alpha_2} & \to & \CH^{\alpha_2} \otimes \CH^{\alpha_1}\\
A^{\alpha_1}(k)\otimes A^{\alpha_2}(k') & \mapsto &  A^{\alpha_2}(k')\otimes A^{\alpha_1}(k)
\efct .
\]
where $A(k')^{\alpha_1}$, and $A(k)^{\alpha_2}$ are multi-arcs of $\CH^{\alpha_1}$, and $\CH^{\alpha_2}$ respectively.
\end{defn} 

\begin{lemma}[Braid action with two punctures]\label{braidedn=2}
Let $\mt=q^{-2}$. Let $k,k' \in \BN$, $\sigma_1$ be the standard generator of the braid group on two strands. Its action can be expressed as follows:
\[
\tens \left( \Rhom(\sigma_1) \left( A(k',k)^{\alpha_1 , \alpha_2} \right) \right) = \left[ q^{\frac{-\alpha_1 \alpha_2}{2}} q^{\frac{H \otimes H}{2}} \circ \sum_{l=0}^k \left( q^{\frac{l(l-1)}{2}} E^l \otimes F^{(l)} \right) \circ T \right] A(k')^{\alpha_1} \otimes A(k)^{\alpha_2} . 
\]
where $A(k',k)^{\alpha_1 , \alpha_2}$, $A(k')^{\alpha_1}$, and $A(k)^{\alpha_2}$ are vectors of $\CH^{\alpha_1,\alpha_2}$, $\CH^{\alpha_1}$, and $\CH^{\alpha_2}$ respectively. 
\end{lemma}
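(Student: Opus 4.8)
\textbf{Proof plan for Lemma \ref{braidedn=2}.}

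The plan is to compute the homological action of $\sigma_1$ directly on a standard multi-arc $A'(k',k)^{\alpha_1,\alpha_2}$ inside the twice-punctured disk $D_2$, and then to match the resulting expression term by term with the quantum $R$-matrix operator from Definition \ref{goodRmatrix}. First I would represent $A'(k',k)$ concretely: it consists of a dashed arc indexed by $k'$ based at $w_0$ and ending at $w_1$ (which I may take to be the straight segment $[w_0,w_1]$ after a compressing-trick isotopy, Proposition \ref{compress2}), together with a dashed arc indexed by $k$ going from $w_0$ to $w_2$, passing below $w_1$; red handles join both bundles to the base point in the standard box. Then I would apply a homeomorphism $\widehat{\sigma_1}$ realizing the half Dehn twist swapping $w_1$ and $w_2$, and push the image back to the standard position. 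The effect is that the $k$-arc (formerly ending at $w_2$) now ends at $w_1$ and winds once around the old position of $w_1$, i.e.\ around what is now $w_2$; the $k'$-arc ends at $w_2$. The crucial step is then to re-express this moved picture in the multi-arc basis $\CA'$ of $\Hrelm_{k+k'}$ using the computation rules of Section \ref{computationrules}.

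The heart of the computation is a combing of the $k$-bundle: using Proposition \ref{combing} (the combing process) I would break each of the $k$ strands ending at $w_1$ and encircling $w_2$ into a part running from $w_0$ to $w_1$ and a part winding around $w_2$, producing a sum over $l=0,\dots,k$ of diagrams with $l$ strands contributing near $w_1$ and $k-l$ near $w_2$. Each term, after a handle rule (Remark \ref{handlerule}) bookkeeping of $t$-powers and $q^{\alpha_i}$-powers, and after applying Lemma \ref{plaintodashed} / Corollary \ref{dashedtodashed} / Remark \ref{looptodashed} to fuse plain and looping arcs into dashed arcs, yields a multiple of $A'(k'+l,\,k-l)^{\alpha_1,\alpha_2}$ with $l$ of the original $k$ points shifted into the first slot. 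Tracking coefficients, the $l$ strands that get absorbed near $w_0$--$w_1$ produce a quantum factorial $(l)_{\mt}!$-type factor together with the $q^{-2\alpha}$-type factors coming from each full loop around $w_2$; by Remark \ref{quantumtq} and the normalization coefficients entering $A(\cdot)$ versus $A'(\cdot)$ (Definition \ref{NormMultiA}), these reorganize into exactly $q^{l(l-1)/2}$ times the matrix coefficient of $E^l\otimes F^{(l)}$ computed in Corollaries \ref{en=1} and \ref{Fln=1}. The overall twist $T$ appears because the $k'$-arc and the $k$-arc literally exchange their endpoints $w_1\leftrightarrow w_2$; the diagonal prefactor $q^{-\alpha_1\alpha_2/2}q^{H\otimes H/2}$ collects the remaining monomial in the handle-rule local coefficient, namely the total $t$-power $\mt^{(\cdots)}$ and the $q^{\alpha_i}$-powers that do not depend on $l$, which one checks equal $q^{(\alpha_1-2k')(\alpha_2-2k)/2}\cdot q^{-\alpha_1\alpha_2/2}$ after using $\mt=q^{-2}$.

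Concretely the steps are: (1) fix the diagram for $A'(k',k)$ and apply $\widehat{\sigma_1}$, identifying the resulting isotopy class of arcs; (2) comb the $k$-bundle via Proposition \ref{combing}, getting $\sum_{l=0}^k$ of intermediate diagrams; (3) for each $l$, apply the handle rule to normalize handles to the standard box, recording the monomial of $\Laurentmax$ produced; (4) fuse arcs into dashed arcs using Lemma \ref{plaintodashed}, Corollary \ref{dashedtodashed}, Remark \ref{looptodashed} and Corollary \ref{1forkto1code}, collecting the quantum-integer coefficients; (5) pass from $\CA'$ to $\CA$ via Definition \ref{NormMultiA} and from $t$ to $\mt=q^{-2}$ via Remark \ref{quantumtq}; (6) recognize the sum, using Corollaries \ref{en=1}, \ref{fn=1}, \ref{Fln=1} for the action of $E^l$ and $F^{(l)}$ on single-puncture multi-arcs and the definition of $\tens$, as the stated composite $q^{-\alpha_1\alpha_2/2}\,q^{H\otimes H/2}\circ\bigl(\sum_{l=0}^k q^{l(l-1)/2}E^l\otimes F^{(l)}\bigr)\circ T$.

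The main obstacle I expect is step (3)--(4): getting every power of $t$ and of $q^{\alpha_i}$ exactly right during the combing and handle-rule normalization, in particular the interaction between the $l$ loops around $w_2$ (each costing a $q^{-2\alpha_2}$-type factor, cf.\ Remark \ref{looptodashed}) and the reordering of red handles across the $k'$-bundle (each transposition costing a $\mt=-t$, cf.\ Remark \ref{example0}). One must be careful that the sign $(-1)^{\cdot}$ from permuting handles is precisely what converts the naive $t$-powers into $\mt$-powers, so that the final answer assembles into quantum integers $(l)_{\mt}!$ and binomials $\binom{k}{l}_{\mt}$ rather than into something unrecognizable; the identity $\sign(\alpha\beta^{-1})\rho_r(\alpha\beta^{-1})=\rho_r(\alpha\beta^{-1})_{|t=\mt}$ from Remark \ref{example0} is the tool that makes this bookkeeping uniform. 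Once the coefficient of $A(k'+l,k-l)$ is pinned down, comparison with the quantum formula is a routine algebraic identity.
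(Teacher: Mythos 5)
Your plan follows the paper's proof essentially step for step: apply the half twist to the standard multi-arc, break the resulting winding $k$-arc into a sum over $l$ by the combing/dashed-arc-breaking rule, normalize the handles with the handle rule, fuse the $l$-loop onto the $k'$-arc via the $F^{(l)}$-type computation of Proposition \ref{Fln=1}, and then match coefficients with the $R$-matrix after renormalizing $\CA'$ to $\CA$. Two bookkeeping slips to correct when you carry this out: the $l$-th term lands in $\CH^{\alpha_2,\alpha_1}$ as a multiple of $A'(k-l,k'+l)^{\alpha_2,\alpha_1}$ (not $A'(k'+l,k-l)^{\alpha_1,\alpha_2}$), and the residual monomial absorbed by $q^{-\alpha_1\alpha_2/2}q^{H\otimes H/2}$ is the eigenvalue of that operator on the \emph{output} vector $A(k-l)^{\alpha_2}\otimes A(k'+l)^{\alpha_1}$, hence it does depend on $l$ (it equals $q^{2(k'+l)(k-l)-(k-l)\alpha_1-(k'+l)\alpha_2}$, not the $l$-independent eigenvalue on the input).
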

\begin{proof}
We have the following relations between homology classes:
\begin{align*}
\Rhom(\sigma_1) \left( A'(k',k)^{\alpha_1 , \alpha_2} \right) & = 
\Rhom(\sigma_1) \left(\vcenter{\hbox{
\begin{tikzpicture}[decoration={
    markings,
    mark=at position 0.5 with {\arrow{>}}}
    ]
\node (w0) at (-1,0) {};
\node (wn) at (1,0) {};
\node (wn1) at (0,0) {};
\coordinate (a) at (-2,-2);
\draw[dashed] (w0) to[bend right=50] node[pos=0.5] (k0) {\small $k$} (wn);
\draw[dashed] (w0) to[bend right=40] node[pos=0.3] (k1) {\small $k'$} (wn1);
\node[gray] at (w0)[left=5pt] {$w_0$};
\node[gray] at (wn)[above=5pt] {$w_2$};
\node[gray] at (wn1)[above=5pt] {$w_1$};
\foreach \n in {w0,wn,wn1}
  \node at (\n)[gray,circle,fill,inner sep=3pt]{};
\draw[double,red,thick] (k0) -- (k0|-a);
\draw[double,red,thick] (k1) -- (k1|-a);
\draw[gray,thick] (-1,0.3) -- (w0) -- (-1,-2.5);
\end{tikzpicture}
}} \right)
& =
\left(\vcenter{\hbox{
\begin{tikzpicture}[decoration={
    markings,
    mark=at position 0.5 with {\arrow{>}}}
    ]
\node (w0) at (-1,0) {};
\node (wn) at (1,0) {};
\node (wn1) at (0,0) {};
\coordinate (a) at (-2,-2);
\draw[dashed] (w0) to[bend right=30] node[pos=0.3] (k1) {\small $k'$} (wn);
\draw[dashed] (w0) to[bend right=90] node[pos=0.7] (k0) {\small $k$} (1.5,0);
\draw[dashed] (1.5,0) to[bend right=90] (wn1);
\node[gray] at (w0)[left=5pt] {$w_0$};
\node[gray] at (wn)[above=5pt] {$w_1$};
\node[gray] at (wn1)[above=5pt] {$w_2$};
\foreach \n in {w0,wn,wn1}
  \node at (\n)[gray,circle,fill,inner sep=3pt]{};
\draw[double,red,thick] (k0) -- (k0|-a);
\draw[double,red,thick] (k1) -- (k1|-a);
\draw[gray,thick] (-1,0.3) -- (w0) -- (-1,-2.5);
\end{tikzpicture}
}} \right)\\ 
\end{align*}
\begin{align*}
= 
\sum_{l=0}^k \left(\vcenter{\hbox{
\begin{tikzpicture}[decoration={
    markings,
    mark=at position 0.5 with {\arrow{>}}}
    ]
\node (w0) at (-1,0) {};
\node (wn) at (1.2,0) {};
\node (wn1) at (0,0) {};
\coordinate (a) at (-2,-2);
\draw[dashed] (w0) to[bend right=50] node[pos=0.3] (k0) {\small $k'$} (wn);
\draw[dashed] (w0) to[bend right=90] node[pos=0.7] (k1) {\small $l$} (1.5,0);
\draw[dashed] (1.5,0) arc (0:180:0.3) to[bend left=40] (w0);
\draw[dashed] (w0) to node[above,pos=0.5] (k2) {\small $k-l$} (wn1);
\node[gray] at (w0)[left=5pt] {$w_0$};
\node[gray] at (wn)[above=5pt] {$w_1$};
\node[gray] at (wn1)[above=5pt] {$w_2$};
\foreach \n in {w0,wn,wn1}
  \node at (\n)[gray,circle,fill,inner sep=3pt]{};
\draw[double,red,thick] (k0) -- (k0|-a);
\draw[double,red,thick] (k1) -- (k1|-a);
\draw[double,red] (-0.5,0) to[bend right=70] (0.8,0.5);
\draw[double,red] (0.8,0.5) arc (180:0:0.5) node (k3) {} to (k3|-a);
\draw[gray,thick] (-1,0.3) -- (w0) -- (-1,-2.5);
\end{tikzpicture}
}} \right)
& = 
\sum_{l=0}^k \mt^{-k'(k-l)} \mt^{-l(k-l)} q^{-2(k-l) \alpha_1} \left(\vcenter{\hbox{
\begin{tikzpicture}[decoration={
    markings,
    mark=at position 0.5 with {\arrow{>}}}
    ]
\node (w0) at (-1,0) {};
\node (wn) at (1.2,0) {};
\node (wn1) at (0,0) {};
\coordinate (a) at (-2,-2);
\draw[dashed] (w0) to[bend right=50] node[pos=0.5] (k0) {\small $k'$} (wn);
\draw[dashed] (w0) to[bend right=90] node[pos=0.7] (k1) {\small $l$} (1.5,0);
\draw[dashed] (1.5,0) arc (0:180:0.3) to[bend left=40] (w0);
\draw[dashed] (w0) to node[above,pos=0.5] (k2) {\small $k-l$} (wn1);
\node[gray] at (w0)[left=5pt] {$w_0$};
\node[gray] at (wn)[above=5pt] {$w_1$};
\node[gray] at (wn1)[above=5pt] {$w_2$};
\foreach \n in {w0,wn,wn1}
  \node at (\n)[gray,circle,fill,inner sep=3pt]{};
\draw[double,red,thick] (k0) -- (k0|-a);
\draw[double,red,thick] (k1) -- (k1|-a);
\draw[double,red,thick] (k2) -- (k2|-a);
\draw[gray,thick] (-1,0.3) -- (w0) -- (-1,-2.5);
\end{tikzpicture}
}} \right) .
\end{align*}
The second equality comes from a breaking of a dashed arc (Example \ref{breakingdashed}), the last one is a handle rule, for which we draw the corresponding braid in Figure \ref{braidedrule}.

\begin{figure}[h!]
\begin{center}
\def\svgscale{0.4}
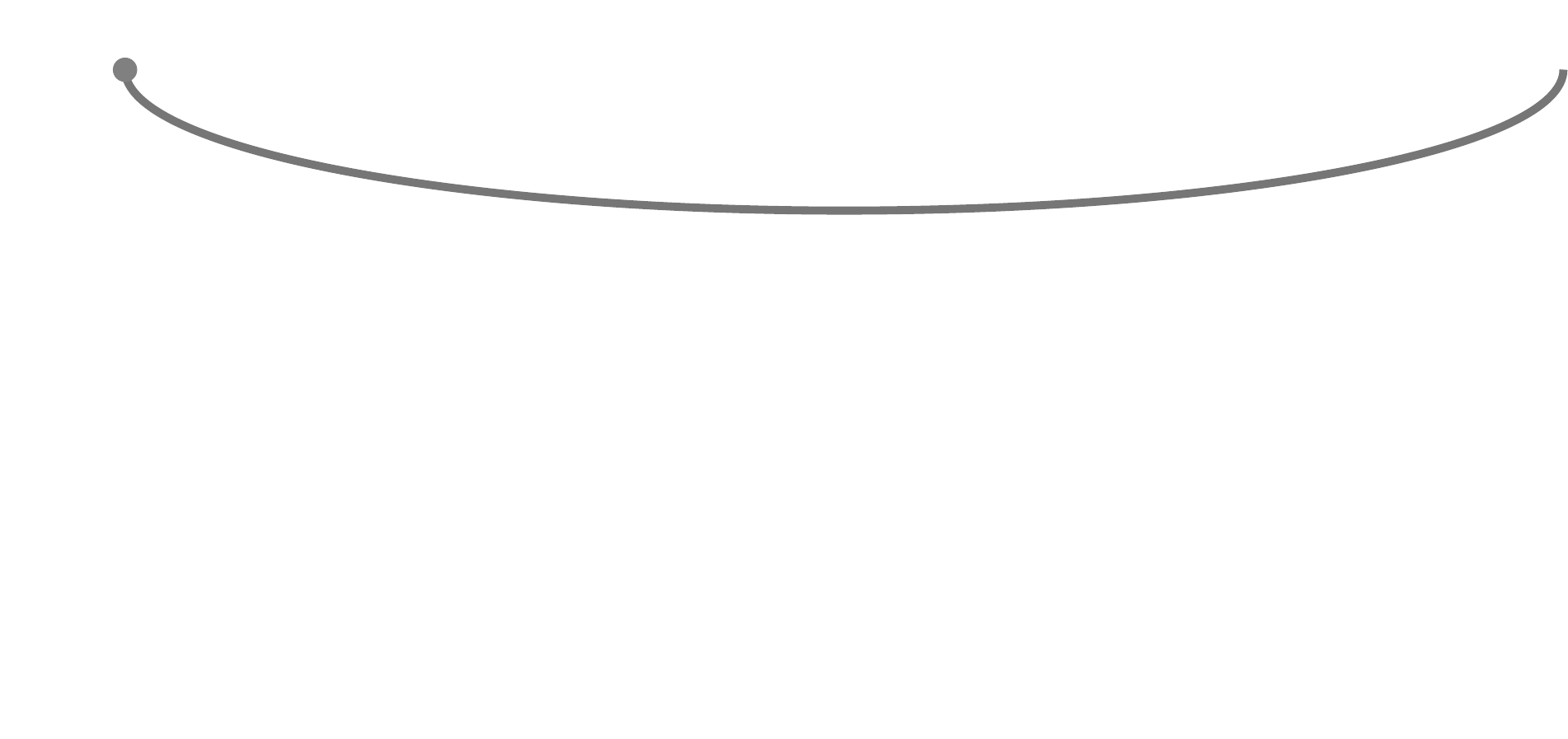
\caption{Braided handle rule. \label{braidedrule}}
\end{center}
\end{figure}

The bands represent a $(k-l)$-handle, a $(l)$-handle, and a $(k')$-handle. On the top and on the bottom of this box there is the part of the path corresponding to the dashed box. Namely red arcs are going back to ${\pmb \xi}$ without crossing themselves, passing in the front of $w_1$ and $w_2$. As this braid has $(k-l)$ strands passing successively in the back of $k'$ strands, $l$ strands and finally $w_2$, its local coefficient is $\mt^{-(k-l)(k'+l)}q^{-2 \alpha_2}$. From the local coefficient of this braid we deduce the coefficient appearing in the last term.

Finally, applying the proof of Proposition \ref{Fln=1} to crash a dashed loop on the indexed $k$ dashed arc, we get:
\[
\Rhom(\sigma_1) \left( A'(k',k)^{\alpha_1 , \alpha_2} \right) = \sum_{l=0}^k \mt^{-(k'+l)(k-l)} q^{-2(k-l) \alpha_1} {{k'+l}\choose{l}}_{\mt} \prod_{m=0}^l (1-q^{-2\alpha_1}\mt^{-m}) A'(k-l,k'+l)^{\alpha_2 , \alpha_1} .
\]
So that:
\[
\Rhom(\sigma_1) \left( A(k',k)^{\alpha_1 , \alpha_2} \right) = \sum_{l=0}^k \mt^{-(k'+l)(k-l)} q^{(-k+2l) \alpha_1} q^{-(k'+l)\alpha_2} {{k'+l}\choose{l}}_{\mt} \prod_{m=0}^l (1-q^{-2\alpha_1}\mt^{-m}) A(k-l,k'+l)^{\alpha_2 , \alpha_1} .
\]
Let $\mt=q^{-2}$, passing the above expression to $\tens$, we get for $\tens \left( \Rhom(\sigma_1^{\tau_1}) \left( A(k',k)^{\alpha_1 , \alpha_2} \right) \right)$ the following expression. 
\[
\sum_{l=0}^k q^{2(k'+l)(k-l) + (-k+2l) \alpha_1 -(k'+l)\alpha_2} {{k'+l}\choose{l}}_{\mt} \prod_{m=0}^l (1-q^{-2\alpha_1}\mt^{-m}) A(k-l)^{\alpha_2} \otimes A(k'+l)^{\alpha_1} .
\]
By use of the expression of the action of $F^{(l)}$ in Proposition \ref{Fln=1}, one recognizes:
\[
\left( \sum_{l=0}^k q^{2(k'+l)(k-l) + (-k+2l) \alpha_1 -(k'+l)\alpha_2} E^l \otimes F'^{(l)} \right) A(k)^{\alpha_2} \otimes A(k')^{\alpha_1}
\]
Finally, passing from $F'^{(l)}$ to $F^{(l)}$ %(defined in Proposition \ref{Fprime} and Definition \ref{actionofF} resp.), 
we get:
\begin{align*}
\tens \left( \Rhom(\sigma_1) \left( A(k',k)^{\alpha_1 , \alpha_2} \right) \right) & = \left( \sum_{l=0}^k q^{2(k'+l)(k-l) - (k-l) \alpha_1 -(k'+l)\alpha_2} q^{\frac{l(l-1)}{2}} E^l \otimes F^{(l)} \right) A(k)^{\alpha_2} \otimes A(k')^{\alpha_1} \\
& = \left[ q^{\frac{-\alpha_1 \alpha_2}{2}} q^{\frac{H \otimes H}{2}} \circ \sum_{l=0}^k \left( q^{\frac{l(l-1)}{2}} E^l \otimes F^{(l)} \right) \circ T \right] A(k')^{\alpha_1} \otimes A(k)^{\alpha_2}
\end{align*}

\end{proof}
%
%
%\begin{figure}[h!]
%\begin{center}
%%\fontsize{10pt}{15pt}\selectfont% or whatever fontsize you like
%\def\svgwidth{0.4\columnwidth}
%%\def\svgscale{0.3}
%\input{braidedhandlerule.pdf_tex}
%%\caption{The surface $\Sigma=\Sigma_1 \cup \Sigma_2 \subset M$ in green, and the dual graph $T_v/\pi_1(M)$ in red. \label{1}}
%\end{center}
%\end{figure}
%
%
%

%
%\begin{recall}[$R$-matrix]\label{Rmatrixdefn}
%the $R$-matrix of the category of $\UqhL$ modules that was recalled in Definition \ref{goodRmatrix} is the folloqing operator:
%\[
%R = q^{\frac{H \otimes H}{2}} \circ \sum_{l=0}^k \left( q^{\frac{l(l-1)}{2}} E^l \otimes F^{(l)} \right) \in \Hom_{\Laurent} \left(\CH^{\alpha_1} \otimes \CH^{\alpha_2} \right). 
%\]
%\end{recall}

\begin{thm}[Recovering the $R$-Matrix of $\Uq$]\label{homoquantumbraided}
The representation of the pure braid group over $\CH^{\alpha_1 , \ldots , \alpha_n}$ (resp. the one of $\Bn$ over $\CH^{\alpha, \ldots , \alpha}$) is isomorphic to the $R$-matrix representation over the product of $\UqhL$ Verma-modules $V^{\alpha_1} \otimes \cdots \otimes V^{\alpha_n}$ (resp. over the product $(V^{\alpha})^{\otimes n}$) from Proposition \ref{goodRmatrix} (and Remark \ref{coloredquantum} for the colored version).
\end{thm}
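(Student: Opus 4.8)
The plan is to transport everything through the isomorphism $\tens$ of Theorem \ref{monoidality} and to reduce to the two-puncture computation of Lemma \ref{braidedn=2}. Concretely, it suffices to prove that under $\tens$ the homological action of each Artin generator $\sigma_i$ on $\CH^{\alpha_1}\otimes\cdots\otimes\CH^{\alpha_n}$ coincides with $Q(\sigma_i)$, that is, with $\RR$ acting on the $i$-th and $(i+1)$-th tensor factors and the identity on the remaining ones, where $\RR$ is the braiding of Proposition \ref{UqhLbraiding}. In the colored setting this is read as an identification of the color-changing maps $\Rhom(\sigma_i)$, and composing them along words representing pure braids then identifies $\Rhom$ on $\PBn$ with $Q$; in the unicolored setting ($\alpha_1=\cdots=\alpha_n$) it yields directly the identification of the $\Bn$-representations. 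Since both $\Rhom$ and $Q$ are algebra (anti)morphisms out of the braid group algebra and respect the braid relations, agreement on the $\sigma_i$ propagates to all of $\Bn$, respectively $\PBn$. Combined with Theorem \ref{monoidality}, this also shows that the single map $\tens$ is at once an isomorphism of $\UqhL$-modules and of (pure) braid representations.

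First I would realize $\sigma_i$ by a half Dehn twist supported in an embedded sub-disk $D''\subset D_n$ whose interior contains exactly the punctures $w_i$ and $w_{i+1}$, and neither $w_0$ nor any other $w_j$. Given a basis multi-arc $A(k_0,\dots,k_{n-1})$, I would then use the combing process (Proposition \ref{combing}) together with the compressing trick (Proposition \ref{compress2}) to put it in a normal form adapted to $D''$: the arcs terminating at $w_j$ for $j\notin\{i,i+1\}$, together with all their handles, are isotoped to lie entirely outside $D''$, while the arc terminating at $w_i$ (indexed by $k_{i-1}$) and the one terminating at $w_{i+1}$ (indexed by $k_i$) run together along a common ``trunk'' from $w_0$ up to a boundary point of $D''$ and only bifurcate inside $D''$. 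Inside $D''$ this is precisely the geometric configuration treated in Lemma \ref{braidedn=2}, with the entry point of the trunk playing the role of $w_0$ and colors $\alpha_i,\alpha_{i+1}$.

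Applying the half twist then transforms the pair $(k_{i-1},k_i)$ exactly by the operator computed in Lemma \ref{braidedn=2}, which is $\RR$ on the corresponding two tensor factors. It remains to check that the twist leaves the rest of the data untouched: the arcs to $w_j$ ($j\neq i,i+1$) are disjoint from $D''$, hence fixed by the representing homeomorphism, and the handle rule produces no extra local-system monomial for strands that do not cross the twisted ones (the portions of handles traveling back to $\pmb{\xi}$ along the dashed box cancel, as in the computations of Section~\ref{computationUqaction}). Therefore $\tens\circ\Rhom(\sigma_i)=Q(\sigma_i)\circ\tens$, the desired identity on generators.

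The main obstacle is precisely this geometric normal-form step: one must verify that the combing and compressing manipulations used to pull the ``spectator'' arcs out of $D''$ and to organize the two active arcs into a trunk are genuine homology identities, that the resulting local picture in $D''$ really matches the conventions of Lemma \ref{braidedn=2} (base-point lift, orientation of the trunk, and the bookkeeping of the handles shared by the two active arcs), and that no hidden local coefficient is created. Once this locality statement is established, the passage from generators to the whole (pure) braid group, and the simultaneous $\UqhL$-equivariance, are formal consequences of Theorem \ref{monoidality} and Proposition \ref{UqhLbraiding}.
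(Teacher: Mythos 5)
Your proposal is correct and follows essentially the same route as the paper: the paper's proof likewise deduces the general case from Lemma \ref{braidedn=2} by observing that the half twist realizing $\sigma_i$ acts inside a disk containing only the arcs reaching $w_i$ and $w_{i+1}$, so the two-puncture computation gives commutativity of the diagram $\tens\circ\Rhom(\sigma_i)=Q(\sigma_i)\circ\tens$ on generators, and the rest is formal. The only difference is that you make explicit (as a ``normal form''/trunk argument) the locality step that the paper dispatches in a single sentence.
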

\begin{proof}
From Lemma \ref{braidedn=2}, the following diagram:
\begin{equation*}
\begin{tikzcd}[column sep=large]
\CH^{\alpha_1,\alpha_2} \arrow[r,"\tens"] \arrow[d,"\Rhom \left( \sigma_1 \right)"]
   &  \CH^{\alpha_1} \otimes \CH^{\alpha_2}  \arrow[d,"q^{\frac{-\alpha_1 \alpha_2}{2}} R \circ T"]\\ 
\CH^{\alpha_2,\alpha_1} \arrow[r,"\tens"]
   & \CH^{\alpha_2} \otimes \CH^{\alpha_1}
\end{tikzcd} .
\end{equation*}
commutes. 
The action of a braid generator $\sigma_i$ over a multi-arc is contained in a disk that contains the dashed arcs reaching $w_i$ and $w_{i+1}$ and no other so that the action does not perturbate the other arcs. This last fact shows that the proof with two punctures guarantees the general case, and that the following diagram commutes:
\begin{equation*}
\begin{tikzcd}[column sep=large]
\CH^{\alpha_1,\ldots ,\alpha_n} \arrow[r,"\tens"] \arrow[d,"\Rhom \left( \sigma_i \right)"]
   &  \CH^{\alpha_1} \otimes \cdots \otimes \CH^{\alpha_n}  \arrow[d,"Q(\sigma_i)"]\\ 
\CH^{\tau_i \lbrace \alpha_1 , \ldots , \alpha_n \rbrace } \arrow[r,"\tens"]
   & \CH^{\alpha_{\tau_i(1)}} \otimes \cdots \otimes \CH^{\alpha_{\tau_i(n)}}
\end{tikzcd} .
\end{equation*}
where $Q(\sigma_i)=\Id^{\otimes i-1}\otimes q^{\frac{- \prod \alpha_i}{2}} R \circ T \otimes \Id^{\otimes n-i-1}$. Moreover all the morphisms involved commute with the $\Uq$ structure. This proves the theorem. 
\end{proof}

\section{Links with previous works}

\subsection{Integral version for Kohno's theorem}\label{integralKohno}

The following corollary recovers T. Kohno's theorem (\cite{Koh}, \cite[Theorem~4.5]{Itogarside}, recalled in Theorem \ref{existingtheorems} (iii)) in an integral version, namely with coefficients in $\Laurentmax$. It relates the action of the braid group over elements in the kernel of the action of $E$ with the action over absolute homology modules. 

\begin{coro} \label{kohnotheorem}
Under the condition $q^{-2}=\mt$, the restriction of the representation of the braid group $\Bn$ to the kernel of the homological action of $E$ yields a sub-representation of $\Bn$ in $\CH$ isomorphic to $\CH^E=\bigoplus_{r\in \BN^*} H^{\mathrm{BM}}_r \left( X_r ; L_r \right)$. 
\end{coro}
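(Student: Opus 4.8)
The plan is to combine Theorem \ref{homoquantumbraided} with the structural exact sequence coming from the pair $(X_r, X_r^-)$ and the computation of $E$ as a boundary map (Definition \ref{actionofE}, Remark \ref{Ecalculelebord}). First I would recall the long exact sequence of the pair with local coefficients,
\[
\cdots \to \Hlf_r(X_r^-;L_r) \to \Hlf_r(X_r;L_r) \xrightarrow{i_*} \Hrelm_r \xrightarrow{\partial_*} \Hlf_{r-1}(X_r^-;L_r) \to \cdots ,
\]
and observe that by Proposition \ref{HrelTrick} the only non-vanishing relative module is in degree $r$, while by the $w_0$-free interpretation (Remark \ref{identificationHr1}) and Proposition \ref{relativetoabolute} the module $\Hlf_r(X_r^-;L_r)$ matches $\Habs_{r}$-type pieces. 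The upshot should be a short exact sequence $0 \to \Habs_r \to \Hrelm_r \xrightarrow{\partial_*} H^{\mathrm{BM}}_{r-1}(X_r^-;L_r) \to 0$ as announced in the introduction, and since $E$ is by definition $(\Phi^r)^{-1}\circ \partial_*$ with $\Phi^r$ an isomorphism (Lemma \ref{localsystemiso}), the kernel of $E$ on $\Hrelm_r$ is exactly the image of $i_*$, i.e. a copy of $\Habs_r = H^{\mathrm{BM}}_r(X_r;L_r)$ inside $\Hrelm_r$.

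Next I would check that this kernel is a sub-$\Bn$-representation. This is immediate from Proposition \ref{relationeFK}: the homological braid action commutes with the $\UqhL$-action (this is the content of the commuting diagrams in the proof of Theorem \ref{homoquantumbraided}, together with Proposition \ref{UqhLbraiding} on the quantum side), so $\Rhom(\beta)$ preserves $\Ker E$ for every $\beta \in \Bn$. Hence $\CH^E := \bigoplus_{r} \Ker(E|_{\Hrelm_r})$ is a graded sub-representation of $\CH$, and by the exact sequence above each graded piece is canonically $H^{\mathrm{BM}}_r(X_r;L_r)$. That the inclusion $i_*$ is itself $\Bn$-equivariant is clear because $i_*$ is induced by the inclusion of spaces $X_r \hookrightarrow X_r$ (as a map of pairs $X_r \to (X_r,X_r^-)$), which is natural under the homeomorphisms representing braids.

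To get the \emph{integral} form of Kohno's theorem one then invokes Theorem \ref{homoquantumbraided}: under $q^{-2}=\mt$ the $\Bn$-representation $\CH^{\alpha,\ldots,\alpha}_r = \Hrelm_r$ is isomorphic to the $R$-matrix representation on the subweight-$r$ space $W_{n,r}\subset (V^\alpha)^{\otimes n}$, and under this isomorphism the homological operator $E$ corresponds to the quantum $E$ acting by the coproduct $\Delta^n(E)$ (this is exactly what the proof of Theorem \ref{monoidality} establishes, $\tens(E\cdot A({\bf k})) = \Delta^n(E)\cdot \tens(A({\bf k}))$). Therefore $\Ker(E|_{\Hrelm_r}) \cong W_{n,r}\cap \Ker E = Y_{n,r}$ as $\Bn$-modules, and combined with the previous paragraph this yields $Y_{n,r}\cong H^{\mathrm{BM}}_r(X_r;L_r) = \Habs_r$, which is precisely Theorem \ref{existingtheorems}(iii), now over the ground ring $\Laurentmax$ rather than for generic complex parameters. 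I would phrase the final statement as the identification of sub-representations $\Ker E \subset \CH$ with $\CH^E = \bigoplus_{r\ge 1} \Habs_r$.

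The main obstacle I anticipate is making the identification $\Ker(\partial_*) = \mathrm{Im}(i_*) \cong \Habs_r$ genuinely canonical and compatible with the braid action at the level of \emph{local system} homology, not just $\BZ$-homology: one must be careful that the isomorphism of Proposition \ref{relativetoabolute} (which is stated for $\XR_r(w_0)$) transports correctly to $X_r$ via the compressing trick, and that this whole chain of identifications respects the choice of base-point lift $\widehat{\pmb\xi}$ used to define $L_r$-homology classes — a different lift rescales everything by a global monomial and so does not affect the isomorphism class, but this should be remarked. The second delicate point is exactness: I need that $\Hlf_{r+1}(X_r,X_r^-;L_r)=0$ and $\Hlf_{r-1}(X_r;L_r)=0$ so that the long exact sequence truncates to the short one; the first vanishing is Proposition \ref{HrelTrick}, and the second follows from Proposition \ref{relativetoabolute} together with the Bigelow-decomposition argument in the proof of Proposition \ref{HrelTrick}, so this is available but worth spelling out. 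Everything else is formal.
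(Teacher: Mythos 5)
Your argument is essentially the paper's own proof: the long exact sequence of the pair $(X_r,X_r^-)$ truncates to the short exact sequence $0 \to \Habs_r \to \Hrelm_r \xrightarrow{\partial_*} \Hlf_{r-1}(X_r^-;L_r) \to 0$ because $\Hlf_{r-1}(X_r;L_r)=0$ (Proposition \ref{relativetoabolute} plus the Bigelow decomposition) and $\Hlf_{r}(X_r^-;L_r)=0$ (via Remark \ref{identificationHr1}, since $X_r^-\cong X_{r-1}(w_0)$ has homology concentrated in degree $r-1$), whence $\Ker E=\Ker\partial_*=\mathrm{Im}(i_*)\cong\Habs_r$; the equivariance check and the identification with $Y_{n,r}$ are correct (if strictly supplementary) additions. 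The one slip is in your last paragraph: left-exactness requires $\Hlf_r(X_r^-;L_r)=0$, not $\Hlf_{r+1}(X_r,X_r^-;L_r)=0$ (the latter would only give injectivity of $\Hlf_r(X_r^-;L_r)\to\Hlf_r(X_r;L_r)$, which is not what is needed), but the correct vanishing is exactly what Remark \ref{identificationHr1} supplies, so the proof stands.
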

\begin{proof}
For $r \in \BN$, the relative long exact sequence of pairs gives this exact sequence of morphisms:
\begin{equation*}
\begin{tikzcd}
H_r(X_r^- ; L_r) \arrow[r] & H_r(X_r ; L_r) \arrow[r] & \Hrelm_r \arrow[r,"\partial_*"] & H_{r-1}(X_r^- ; L_r) \arrow[r] & H_{r-1}(X_r;L_r)
\end{tikzcd}
\end{equation*}
where we have avoided the notation $\mathrm{BM}$ as everything is Borel-Moore homology here.
Using Lemma 3.1 of \cite{Big0} one gets that $H_{r-1}(X_r;L_r)$ vanishes while Remark \ref{identificationHr1} above implies that $H_r(X_r^- ; L_r)$ vanishes. This provides a short exact sequence:
\[
\begin{tikzcd}
0 \arrow[r] & H_r(X_r ; L_r) \arrow[r] & \Hrelm_r \arrow[r,"\partial_*"] & H_{r-1}(X_r^- ; L_r) \arrow[r] & 0 .
\end{tikzcd}
\]
The kernel of the action of $E$ is exactly the kernel of the map $\partial_*$. This implies the corollary, as the kernel of the action of $E$ is isomorphic to the module of absolute homology. 
\end{proof}
%
%From this result, the proof of the following is immediate. 
%\begin{coro}\label{ForktoCode}
%Let ${\bf k} \in  \EZnr$, there is the following relation between the standards fork and code sequence associated to ${\bf k}$.
%\[
%F(k_0,\ldots , k_{n-1}) = \left( \prod_{i=0}^{n-1} {(k_i)_t!} \right)U(k_0,\ldots , k_{n-1}). 
%\] 
%\end{coro}

Kohno's theorem \cite{Koh} holds only for generic choice of parameters, while in the above corollary all morphisms are defined over the ring of Laurent polynomials. Kohno's theorem in terms of bases, as it is stated for instance in \cite{Itogarside}, uses the basis of {\em multiforks} that we recall in the following notations.

\begin{Not}
For ${\bf k} \in \EZnr$ we let the multifork $F(k_0 , \ldots , k_{n-1})$ be the class in $\Hrelm_r$ assigned to the following picture:
\begin{equation*}
\begin{tikzpicture}[scale=0.8]
\node (w0) at (-5,0) {};
\node (w1) at (-3,0) {};
\node (w2) at (-1,0) {};
\node[gray] at (0.0,0.0) {\ldots};
\node (wn1) at (1,0) {};
\node (wn) at (3,0) {};
\draw (w0) to[bend right=30] node {\midarrow} (w1);
\draw (w0) to[bend right=-30] node {\midarrow} (w1);
\draw (w1) to[bend right=30] node {\midarrow} (w2);
\draw (w1) to[bend right=-30] node {\midarrow} (w2);
\draw (wn1) to[bend right=30] node {\midarrow} (wn);
\draw (wn1) to[bend right=-30] node {\midarrow} (wn);
\node at (w1)[above=3pt,left=23pt] {.};
\node at (w1)[left=23pt] {.};
\node at (w1)[below=4pt,left=23pt] {.};
\node at (w1)[above=20pt,left=20pt] {$k_0$};
\node at (w2)[above=3pt,left=23pt] {.};
\node at (w2)[left=23pt] {.};
\node at (w2)[below=4pt,left=23pt] {.};
\node at (w2)[above=20pt,left=20pt] {$k_1$};
\node at (wn)[above=3pt,left=23pt] {.};
\node at (wn)[left=23pt] {.};
\node at (wn)[below=4pt,left=23pt] {.};
\node at (wn)[above=20pt,left=18pt] {$k_{n-1}$};

\node[gray] at (w0)[left=5pt] {$w_0$};
\node[gray] at (w1)[above=5pt] {$w_1$};
\node[gray] at (w2)[above=5pt] {$w_2$};
\node[gray] at (wn1)[above=5pt] {$w_{n-1}$};
\node[gray] at (wn)[above=5pt] {$w_n$};
\foreach \n in {w1,w2,wn1,wn}
  \node at (\n)[gray,circle,fill,inner sep=3pt]{};
\node at (w0)[gray,circle,fill,inner sep=3pt]{};

\draw[red] (-4.5,0.25) -- (-4.5,-2);
\draw[red] (-2.5,0.25) -- (-2.5,-2);
\draw[red] (1.5,0.25) -- (1.5,-2);
\node[red] at (-4,-0.8) {$\ldots$};
\node[red] at (-2,-0.8) {$\ldots$};
\node[red] at (2,-0.8) {$\ldots$};
\draw[red] (-3.5,-0.25) -- (-3.5,-2);
\draw[red] (-1.5,-0.25) -- (-1.5,-2);
\draw[red] (2.5,-0.25) -- (2.5,-2);

\draw[dashed,gray] (-5,-2) -- (3,-2);
\draw[dashed,gray] (3,-2) -- (3,-3);

\node[gray,circle,fill,inner sep=0.8pt] at (-4.8,-3) {};
\node[below,gray] at (-4.8,-3) {$\xi_r$};
\node[below=5pt,gray] at (-4.2,-3) {$\ldots$};
\node[gray,circle,fill,inner sep=0.8pt] at (-3.5,-3) {};
\node[below,gray] at (-3.5,-3) {$\xi_1$};

\draw[red] (-4.8,-3) -- (-4.5,-2);
\draw[red] (-3.5,-3) -- (2.5,-2);
\node[red] at (-3,-2.4) {$\ldots$};

\draw[gray] (-5,0) -- (-5,1);
\draw[gray] (-5,0) -- (-5,-3);
\draw[gray] (-5,-3) -- (4,-3) node[right] {$\partial D_n$};
\end{tikzpicture}
\end{equation*}
\end{Not}

This recovers the consequences of Kohno's theorem that can be found in \cite{Itogarside}, stating that the family of multiforks is generically a basis of $H_{r} (X_r(w_0);L_r)$. We state precisely genericity conditions in the following corollary. 

\begin{prop}\label{genericFork}
Let ${\bf k} \in  \EZnr$, there is the following relation between the standards fork and code sequence associated with ${\bf k}$.
\[
F(k_0,\ldots , k_{n-1}) = \left( \prod_{i=0}^{n-1} {(k_i)_{\mt}!} \right)U(k_0,\ldots , k_{n-1}). 
\] 
This shows that the family $\CF$ is a basis of $\Hrelm_r$ whenever one works over a ring $R$ where all the $(i)_{\mt}!$ are invertible for $i$ an integer lower or equal to $r$. 
\end{prop}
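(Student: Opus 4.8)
The plan is to prove the relation between multiforks and code sequences by an inductive arc-by-arc argument, exactly parallel to the proof of Proposition \ref{ArcstoCodes}, and then deduce the basis statement from invertibility of the coefficient.

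\textbf{Step 1: the local relation for a single double-arc.} The multifork $F(k_0,\ldots,k_{n-1})$ is the class drawn with, for each $i$, a bundle of $k_{i-1}$ plain arcs running from $w_{i-1}$ to $w_i$ (with vertical handles). I would first observe that each such bundle of $k_{i-1}$ parallel plain arcs joining two consecutive punctures can be turned into a single dashed arc indexed by $k_{i-1}$ at the cost of a factor $(k_{i-1})_{\mt}!$. This is precisely the content of Corollary \ref{1forkto1code} (which reads ``$k$ parallel plain arcs $=(k)_{\mt}!$ times one dashed arc''), applied locally inside a small neighborhood of the segment $[w_{i-1},w_i]$ that meets no other piece of the diagram, using the compressing trick (Proposition \ref{compress2}) to isolate that neighborhood. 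Since all the handles are vertical and the bundles for different $i$ have disjoint supports, applying this conversion independently to each of the $n$ bundles yields
\[
F(k_0,\ldots,k_{n-1}) = \left( \prod_{i=0}^{n-1} (k_i)_{\mt}! \right) U(k_0,\ldots,k_{n-1}),
\]
because the object obtained after converting every bundle to a dashed arc indexed by $k_{i-1}$ running along $[w_{i-1},w_i]$, with the same vertical handles, is by definition the code sequence $U(k_0,\ldots,k_{n-1})$.

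\textbf{Step 2: the basis statement.} Since $\CU=(U_{\bf k})_{{\bf k}\in\EZnr}$ is an $\Laurentmax$-basis of $\Hrelm_r$ by Corollary \ref{HrelStruc}, and the change-of-family matrix from $\CF$ to $\CU$ is diagonal with entries $\prod_{i=0}^{n-1}(k_i)_{\mt}!$, the family $\CF$ is a basis of $\Hrelm_r\otimes_{\Laurentmax} R$ precisely when every such product is invertible in $R$. As $\sum k_i=r$, each factor $(k_i)_{\mt}!$ divides $(r)_{\mt}!$, so it suffices that $(i)_{\mt}!$ be invertible for all integers $i\le r$; conversely if $(i)_{\mt}!$ is invertible for $i\le r$ then so is each diagonal entry. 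This gives the genericity condition stated.

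\textbf{Main obstacle.} The only subtle point is Step 1: one must be careful that the handle bookkeeping in Corollary \ref{1forkto1code} applies verbatim here, i.e. that the handles of the multifork (vertical segments going down to the base point in the standard dashed box, all of the same ``color'' $q^{\alpha_i}$ on the $i$-th bundle) match the handle configuration assumed in that corollary, and that converting one bundle does not disturb the handles of the others. This is handled by the remark in the statement of Corollary \ref{1forkto1code} that the identity holds ``whatever stands inside the circles'', together with the disjointness of supports; I would spell out that the relevant isotopies and handle rules (Remark \ref{handlerule}) are supported in disjoint neighborhoods, so no crossing local coefficients are introduced between different bundles. No genuinely new computation is needed beyond what is already packaged in Corollary \ref{1forkto1code} and Proposition \ref{compress2}.
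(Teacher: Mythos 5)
Your proof is correct and follows essentially the same route as the paper, which simply states that the fork-to-code-sequence relation is a direct consequence of Corollary \ref{1forkto1code}; your version just spells out the local, bundle-by-bundle application of that corollary and the resulting diagonal change-of-basis argument. No discrepancy with the paper's argument.
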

\begin{proof}
The proof for the relation between multiforks and code sequences is a direct consequence of Corollary \ref{1forkto1code}.
\end{proof}

\begin{rmk}
Multi-forks in the kernel of the action of $E$ are multi-forks not reaching $w_0$, namely of type $F(0,k_1,\ldots,k_{n-1}$. This is the link with the multi-forks basis used in \cite[Theorem~4.5]{Itogarside}. 
\end{rmk}

\subsection{Felder-Wieczerkowski's conjectures}\label{FWconjectures}

In \cite{FW}, the authors use as a basis for their module elements called {\em $r$-loops}, for which we give a homological definition as follows.
\begin{Not}
For ${\bf k} \in \EZnr$ we call $L(k_0, \ldots , k_{n-1})$ a {\em $r$-loops}, the class in $\Hrelm_r$ assigned to the following drawing.
\begin{equation*}
\begin{tikzpicture}[scale=0.8,decoration={
    markings,
    mark=at position 0.5 with {\arrow{>}}}
    ] 
\node (w0) at (-5,0) {};
\node (w1) at (-3,0) {};
\node (w2) at (-0.5,0) {};
\node[gray] at (1.4,0.0) {\ldots};
\node (wn) at (3,0) {};

%k_0
\draw[postaction={decorate}] (w0) to (-2.9,-0.25);
\draw[postaction={decorate}] (-2.9,-0.25) to[bend right=100] (-2.9,0.25);
\draw (-2.9,0.25) to (w0);
\node at (w1)[right=3pt] {$\ldots$};
\draw[postaction={decorate}] (w0) to (-2.3,-0.5);
\draw[postaction={decorate}] (-2.3,-0.5) to[bend right=100] (-2.3,0.5);
\draw (-2.3,0.5) to (w0);
\node at (w1)[above=20pt,right=5pt] {$k_0$};

%k_1
\draw (w0)[postaction={decorate}] to[bend right=5] (-0.5,-1.8);
\draw[postaction={decorate}] (-0.5,-1.8) to[bend right=10] (-0.2,0.25) to[bend right=100] (-0.8,0.25) to[bend right=5] (-0.8,-1.5) to (w0);
\node at (w2)[right=5pt] {$\ldots$};
\draw[postaction={decorate}] (w0) to[bend right=5] (-0.3,-2.1) to[bend right=20] (0.3,0.4) to[bend right=100] (-1.3,0.4) to[bend right=5] (-1.1,-1.2) to (w0);
\node at (w2)[above=30pt,right=5pt] {$k_1$};

%k_(n-1)
\draw[postaction={decorate}] (w0) to[->,bend right=15] (3,-3.2) to[bend right=5] (3.3,0.35) to[bend right=100] (2.7,0.35) to[bend right=5] (2.8,-2.9) to[bend left=15] (w0);
\node at (wn)[right=5pt] {$\ldots$};
\draw[postaction={decorate}] (w0) to[bend right=15] (3.4,-3.4) to[bend right=10] (4,0.5) to[bend right=100] (2,0.5) to[bend right=5] (2.4,-2.6) to[bend left=15] (w0);
\node at (wn)[above=18pt,right=2pt] {$k_{n-1}$};

\node[gray] at (w0)[left=5pt] {$w_0$};
\node[gray] at (w1)[left=5pt] {$w_1$};
\node[gray] at (w2)[below=5pt] {$w_2$};
\node[gray] at (wn)[below=5pt] {$w_n$};
\foreach \n in {w1,w2,wn}
  \node at (\n)[gray,circle,fill,inner sep=3pt]{};
\node at (w0)[gray,circle,fill,inner sep=3pt]{};

\draw[red] (-2.8,-0.22) -- (-2.8,-4);
\draw[red] (-2.1,-0.4) -- (-2.1,-4);
\node[red] at (-2.45,-3.7) {$\ldots$};
\draw[red] (-0.25,-0.9) -- (-0.25,-4);
\draw[red] (0.325,0.1) -- (0.325,-4);
\node[red] at (0.05,-3.7) {$\ldots$};
\draw[red] (3.2,-1.95) -- (3.2,-4);
\draw[red] (3.8,-2) -- (3.8,-4);
\node[red] at (3.5,-3.7) {$\ldots$};

\draw[dashed,gray] (-5,-4) -- (4,-4);
\draw[dashed,gray] (4,-4) -- (4,-5);

\node[gray,circle,fill,inner sep=0.8pt] at (-4.8,-5) {};
\node[below,gray] at (-4.8,-5) {$\xi_r$};
\node[below=5pt,gray] at (-4.2,-5) {$\ldots$};
\node[gray,circle,fill,inner sep=0.8pt] at (-3.5,-5) {};
\node[below,gray] at (-3.5,-5) {$\xi_1$};

%\draw[red] (-4.8,-5) -- (-4.5,-4.8);
%\draw[red] (-3.5,-5) -- (-3,-4.9);

\draw[red] (-4.8,-5) -- (-2.8,-4);
\draw[red] (-3.5,-5) -- (3.8,-4);
\node[red] at (-2,-4.4) {$\ldots$};

%\draw[red] (2.5,0) -- (2.5,-3);
\draw[gray] (-5,0) -- (-5,1);
\draw[gray] (-5,0) -- (-5,-5);
\draw[gray] (-5,-5) -- (5,-5) node[right] {$\partial D_n$};
\end{tikzpicture}
\end{equation*}
\end{Not}

\begin{prop}\label{loopstoarcs}
Let ${\bf k} \in \EZnr$. There is the following relation between standards multi-arc and $r$-loop associated with ${\bf k}$.
\[
L(k_0 , \ldots , k_{n-1}) = \left( \prod_{i=0}^{n-1} (k_i)_{\mt} ! \prod_{k=0}^{k_i} \left(1 - q^{-2 \alpha_i} \mt^{-k} \right) \right) A'(k_0 , \ldots , k_{n-1}).
\] 
\end{prop}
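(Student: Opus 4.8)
The plan is to treat the dashed arcs of $L(k_0,\dots,k_{n-1})$ one at a time, exactly as in the proof of Proposition \ref{ArcstoCodes}, peeling off each bundle of $k_i$ nested loops around the puncture $w_i$ and converting it into a dashed arc from $w_0$ to $w_i$, collecting the scalar $(k_i)_{\mt}!\prod_{k=0}^{k_i}(1-q^{-2\alpha_i}\mt^{-k})$ at each step. Since the $r$-loop differs from the standard multi-arc only in that each dashed arc has been replaced by a bundle of plain loops that wind once around the corresponding puncture before returning to $w_0$, and since all these pieces sit in disjoint sub-disks, it suffices to handle a single bundle of loops around one puncture.

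First I would reduce to the one-puncture, single-bundle local picture: a configuration of $k_i$ points each lying on a plain arc that leaves $w_0$, rounds once counterclockwise around $w_i$, and comes back, all handles attached in the standard fashion. Using Corollary \ref{1forkto1code} (applied to the bundle of $k_i$ parallel plain loops) this equals $(k_i)_{\mt}!$ times the class with a single dashed loop indexed by $k_i$ rounding once around $w_i$. Then I would invoke precisely the computation carried out in Remark \ref{looptodashed} and iterated in the proof of Proposition \ref{Fln=1}: crashing such a dashed loop onto the dashed arc from $w_0$ to $w_i$ produces the factor $\prod_{k=0}^{k_i}(1-q^{-2\alpha_i}\mt^{-k})$ (each ``unwinding'' of a loop strand contributing one factor $(1-q^{-2\alpha_i}\mt^{-k})$ via Lemma \ref{plaintodashed} and the handle rule), leaving the standard dashed arc indexed by $k_i$. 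Multiplying the two scalars gives the coefficient for the $i$-th arc.

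Next I would assemble the global statement: because the loop bundles for distinct punctures lie in disjoint sub-disks of $D_n$ (one bundle near each $w_i$), the combing/handle-rule manipulations for one bundle do not disturb the others — this is the same disjoint-support observation used throughout Section \ref{computationrules} and in the proof of Proposition \ref{combing}. Hence the coefficients multiply over $i=0,\dots,n-1$, and after the handles are reorganized into the standard dashed box (which, as in Example \ref{Ecodeseq} and the proof of Proposition \ref{ArcstoCodes}, costs no further local-system coefficient since the leftmost strands already reach the leftmost base-point coordinates), one recognizes exactly $A'(k_0,\dots,k_{n-1})$ on the right-hand side. This yields
\[
L(k_0,\dots,k_{n-1}) = \left(\prod_{i=0}^{n-1} (k_i)_{\mt}!\prod_{k=0}^{k_i}\bigl(1-q^{-2\alpha_i}\mt^{-k}\bigr)\right) A'(k_0,\dots,k_{n-1}).
\]

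The main obstacle I anticipate is bookkeeping of the local-system coefficients during the handle reorganizations: when a loop bundle around $w_i$ is unwound and compressed, its strands pass in front of or behind both the remaining loop strands of the same bundle and the nearby punctures, and one must check that the powers of $\mt$ and $q^{-2\alpha_i}$ combine precisely into $\prod_{k=0}^{k_i}(1-q^{-2\alpha_i}\mt^{-k})$ with no stray monomial — but this is exactly the verification already performed in Remark \ref{looptodashed} and Proposition \ref{Fln=1}, so the argument here is a careful transcription rather than a new computation. A secondary point worth stating explicitly is that the order in which one peels the bundles (say from $w_n$ down to $w_1$, as in Proposition \ref{ArcstoCodes}) does not affect the final scalar, which follows from the disjointness of supports.
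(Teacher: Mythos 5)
Your argument is correct and is essentially the paper's own proof: the paper likewise treats the loops around each puncture separately (using disjointness of supports) and reduces each bundle to a straightforward recursion via Remark \ref{looptodashed}, which is exactly the computation you invoke through Corollary \ref{1forkto1code} and the $k=0$ case of Proposition \ref{Fln=1}. Your version just spells out the intermediate step of first fusing the $k_i$ plain loops into one dashed loop before unwinding it, and inherits verbatim the paper's index range $\prod_{k=0}^{k_i}$ in the final coefficient.
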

\begin{proof}
To prove the proposition, one treats separately the loops winding around $w_1$, from those winding around $w_2$ etc. Every case is a straightforward recursion, using Remark \ref{looptodashed}, and leads to the formula of the proposition. 
\end{proof}

This answers Conjecture 6.1 from \cite{FW}. In fact it is a more precise statement saying exactly under which conditions the family of $r$-loops is a basis of the homology.

\begin{coro}[{\cite[Conjecture~6.1]{FW}}]\label{answerConjecture61}
If $R$ is a ring in which $(1 - q^{-2 \alpha_i} \mt^{-k})$ is invertible for all $i = 1 ,\ldots , n$ and so is $(k)_{\mt}!$ (for $k \leq r$), then $\Hrelm_r$ is a free $R$-module admitting the family $\CL$ of $r$-loops as basis.
\end{coro}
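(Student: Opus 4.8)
The plan is to deduce Corollary \ref{answerConjecture61} directly from Proposition \ref{loopstoarcs} together with the fact that $\CA'$ is a basis of $\Hrelm_r$ (Corollary \ref{Arcsbasis}). Since $\Hrelm_r$ is already known to be a free $\Laurentmax$-module with basis $\CA'=\{A'(\mathbf k)\}_{\mathbf k\in\EZnr}$, after base change to $R$ the family $\{A'(\mathbf k)\otimes 1\}_{\mathbf k}$ is an $R$-basis of $\Hrelm_r\otimes_{\Laurentmax}R$. Proposition \ref{loopstoarcs} shows that in this module each $r$-loop $L(\mathbf k)$ is a \emph{scalar} multiple of the corresponding multi-arc $A'(\mathbf k)$, namely
\[
L(k_0,\ldots,k_{n-1}) = \Big(\prod_{i=0}^{n-1} (k_i)_{\mt}!\prod_{k=0}^{k_i}\bigl(1-q^{-2\alpha_i}\mt^{-k}\bigr)\Big) A'(k_0,\ldots,k_{n-1}),
\]
so the change-of-family matrix from $\CL$ to $\CA'$ is diagonal with these products as diagonal entries.

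The key step is then to observe that, under the hypothesis on $R$, every diagonal entry is invertible: for each $\mathbf k\in\EZnr$ we have $k_i\le r$ for all $i$, so $(k_i)_{\mt}!$ is invertible by assumption, and each factor $(1-q^{-2\alpha_i}\mt^{-k})$ with $0\le k\le k_i\le r$ is invertible by assumption; a finite product of units is a unit. Hence the diagonal matrix is invertible over $R$, so it carries the $R$-basis $\{A'(\mathbf k)\otimes 1\}$ to another $R$-basis, which is exactly $\CL$. Therefore $\Hrelm_r\otimes_{\Laurentmax}R$ is free over $R$ with basis $\CL$, which is the statement.

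I would write this as a short argument: first recall that $\Hrelm_r$ is free over $\Laurentmax$ on $\CA'$ (Corollary \ref{Arcsbasis}), hence $\Hrelm_r\otimes_{\Laurentmax}R$ is free over $R$ on the image of $\CA'$; then invoke Proposition \ref{loopstoarcs} to identify the transition matrix as diagonal; then check invertibility of the diagonal coefficients using the stated hypotheses on $(k)_{\mt}!$ for $k\le r$ and on $(1-q^{-2\alpha_i}\mt^{-k})$; conclude that $\CL$ is an $R$-basis. One should note that ``$\Hrelm_r$'' in the statement of the corollary implicitly means the homology with coefficients in $L_r$ after the scalar extension $\Laurentmax\to R$, matching the setup of Proposition \ref{loopstoarcs}.

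The only genuine subtlety—and the main thing to be careful about—is the bookkeeping of which scalars actually occur: one must confirm that for \emph{every} $\mathbf k\in\EZnr$ and every index appearing in the product, the relevant $(k_i)_{\mt}!$ and $(1-q^{-2\alpha_i}\mt^{-k})$ fall within the range covered by the hypothesis (i.e. $k_i\le r$ and $0\le k\le k_i$), so that no unit outside the assumed set is silently required. This is immediate from $\sum k_i=r$, so there is no real obstacle; the proof is essentially a one-line consequence of Proposition \ref{loopstoarcs} once the freeness over $\Laurentmax$ and the diagonal shape of the transition matrix are in hand.
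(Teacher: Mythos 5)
Your proposal is correct and is exactly the argument the paper intends: the corollary is presented as an immediate consequence of Proposition \ref{loopstoarcs}, whose diagonal change-of-family relation between $\CL$ and the basis $\CA'$ (Corollary \ref{Arcsbasis}) has diagonal entries that are units under the stated hypotheses. Your explicit bookkeeping that every $k_i\le r$ (since $\sum k_i = r$) is the right check, and matches the paper's implicit reasoning.
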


Actually, the lifts of the $r$-loops chosen in \cite{FW} are not exactly the same as ours, namely the handles we've chosen do not correspond to their choice of lift. As a change of lift corresponds to the multiplication by an invertible monomial of $\Laurentmax$, the conditions to be a basis are the same.

\section{Appendix}\label{appendix}

\subsection{Local coefficients}

\begin{rmk}
We recall that the representation $\rho_r$ defining the local system $L_r$ is canonically equivalent to the construction of a covering map over $X_r$. Namely, one can consider the universal cover $\widetilde{X_r}$ of $X_r$, upon which there is an action of $\pi_1(X_r)$. By making the quotient of $\widetilde{X_r}$ by the action of $\Ker{\rho_r} \subset \pi_1(X_r)$, one gets a cover $\widehat{X_r}$ of $X_r$. The group of deck transformations is then isomorphic to $Im( \rho_r) = \BZ^{n+1}$. There are three equivalent ways to build the chain complex with local coefficients in $L_r$:
\[
C_{\bullet}(X_r ; L_r) \simeq C_{\bullet}(\widetilde{X_r} , \BZ ) \otimes_{\pi_1(X_r)} \Laurentmax \simeq C_{\bullet} (\widehat{X_r}) . 
\]
The first one corresponds to complex with coefficients in a locally trivial bundle. In the middle one, the action of $\pi_1(X_r)$ is the one over the universal cover on the left, and given by $\rho_r$ on the right. The last one corresponds to singular chain complex of $\widehat{X_r}$ with the deck transformations action of $\Laurentmax$. 

We use $L_r$ or $\rho_r$ to designate both the representation of $\pi_1(X_r)$ or the covering $\widehat{X_r}$  together with the deck transformations group action. 
\end{rmk}

\subsection{Locally finite chains}\label{nonocompacthomologies}

In this work we use the locally finite version for singular homology which is isomorphic in our case to the Borel-Moore homology. This version controls the non-compact phenomena arising at punctures. We give general ideas and definitions of these homologies in this section. Let $X$ be a locally compact topological space. 

\begin{defn}[Locally finite homology]\label{lfhomologydef}
The {\em locally finite chain complex} associated with $X$ is the chain complex for which we allow infinite chains under the condition that their geometrical realization in $X$ is locally finite (for the topology of $X$). The latter guarantees that the boundary map is well defined.

Let $Y \subset X$. The {\em relative to $Y$} locally finite chain complex corresponds to the locally finite chain complex of $X$ mod out by the one of $Y$. The homology of locally finite chains is the homology complex corresponding to this definition of chain complexes. We use the notation $\Hlf_{\bullet}(X)$ to denote the locally finite homology complex. 
\end{defn}

\begin{rmk}[\cite{Big0}]\label{BMtoLF}
The homology of locally finite chains is isomorphic to the Borel-Moore homology that can be defined as follows:
\[
H_{\bullet}^{\mathrm{BM}} (X) = \varprojlim H_{\bullet} \left( X , X \setminus A \right) 
\]
where the inverse limit is taken over all compact subsets $A$ of $X$. The relative case is then the following:
\[
H_{\bullet}^{\mathrm{BM}} (X,Y) = \varprojlim H_{\bullet} \left( X , \left( X \setminus A \right) \cup Y \right) 
\]
for $Y \subset \partial X$. 
\end{rmk}

The above fact that Borel-Moore homology consists in a limit of homology complexes over compact spaces allows generalizations of many compact singular homology properties. 

%\begin{rmk}
%All these definitions are identical in the case of homology with local coefficients.
%\end{rmk}

Locally finite homology have very different properties than the usual ones when the space is non compact. We give first examples:

\begin{example} We give the example of the real line being a locally finite cycle, and a related example.
\begin{itemize}
\item[(Real line)] Any $0$-chain is null homologous (so that the $0$-homology does not encode connectedness). Let $p$ be a point, the chain:
\[
\sigma = \sum_{i=0}^{\infty} \left[ p+i , p+i+1 \right)
\]
has $p$ as Borel-Moore boundary. While the chain:
\[
\sum_{-\infty<k<\infty} \left[k,k+1 \right)
\]
has no boundary and hence is a cycle. This shows that $H^{\mathrm{BM}}_k(\mathbb{R}) = \BZ$ if $k=1$ and is $0$ otherwise and can be generalized to $H^{\mathrm{BM}}_k(\mathbb{R}^n) = \BZ$ if $k=n$ and is $0$ otherwise. 
%\item[(Sphere)] The latter allows to compute the homology of spheres and balls using $H^{BM}_k(\mathbb{R}^n) = H^{BM}_k(\mathbb{S}^{n-1})$. 
\item[(Punctures)] Let $D_n$ be the punctured disk, and $c$ be a small circle running once around a puncture $p$. Then $c$ is a cycle using same kind of telescopic infinite chain as in the previous point. 
\end{itemize}
\end{example}

We emphasize that previous example generalizes.

\begin{rmk}\label{BMfirstcomputations}
\begin{itemize} There are the following facts.
\item[(Compact space)] If $X$ is compact, then the singular and locally finite homology are identical.
\item[(Submanifold)] In the spirit of previous example, any closed oriented submanifold defines a class in Borel–Moore homology, but not in ordinary homology unless the submanifold is compact. 
\end{itemize}
\end{rmk}

\end{document}